
\documentclass[11 pt]{amsart}
  

\usepackage{amsmath,amssymb,amsthm, mathrsfs, mathtools}
\usepackage{amscd,mathtools}
\usepackage{fullpage}
\usepackage{graphicx,epstopdf}
\usepackage{color,xcolor}
\usepackage{enumerate}
\usepackage{xspace}
\usepackage{tipa}
\usepackage{stmaryrd}
  
\usepackage[utf8]{inputenc}
\usepackage[T1]{fontenc}
\usepackage{imakeidx}
\makeindex[columns=3, title=Alphabetical Index, intoc]

\usepackage[shortlabels]{enumitem}
\usepackage{epsfig}
\usepackage{pinlabel}
\usepackage[all]{xy}

\usepackage{tikz}
\usetikzlibrary{shapes.geometric}
\usepackage{caption, subcaption}
\usepackage{tikz-cd}
\usetikzlibrary{calc,decorations.pathreplacing}
\usepackage{tikz}
\usetikzlibrary{calc}
\usetikzlibrary{arrows}
\usetikzlibrary{decorations.pathreplacing}
\usetikzlibrary{intersections}
\usetikzlibrary{decorations.pathmorphing}

\usepackage{mathrsfs}
\usetikzlibrary{matrix}
\usetikzlibrary{positioning}
\DeclareMathAlphabet{\pazocal}{OMS}{zplm}{m}{n}
\tikzset{>=stealth}

\usepackage{hyperref}



  \newcommand{\calB}{\mathcal{B}}
  \newcommand{\calC}{\mathcal{C}}

  \newcommand{\calG}{\mathcal{G}}

  \newcommand{\calL}{\mathcal{L}}
  
  \newcommand{\calN}{\mathcal{N}}
  
  \newcommand{\calP}{\mathcal{P}}

  \newcommand{\calU}{\mathcal{U}}
    \newcommand{\calV}{\mathcal{V}}
  \newcommand{\calW}{\mathcal{W}}

  \newcommand{\calZ}{\mathcal{Z}}


  \newcommand{\NN}{\mathbb{N}}

  \newcommand{\RR}{\mathbb{R}}

  
  \newcommand{\gothic}{\mathfrak}

  \newcommand{\go}{{\gothic o}}

  \newcommand{\hG}{\widehat{G}}



  \newtheorem{theorem}{Theorem}[section]
  \newtheorem{proposition}[theorem]{Proposition}
  \newtheorem{corollary}[theorem]{Corollary}
  \newtheorem{lemma}[theorem]{Lemma}

  \newtheorem{introthm}{Theorem}

  \theoremstyle{definition}
  \newtheorem{definition}[theorem]{Definition}
  \newtheorem{claim}[theorem]{Claim}
  
  \newtheorem*{claim*}{Claim}
  
  \newtheorem{example}[theorem]{Example}
  
  \newtheorem*{question*}{Question}
  \newtheorem*{answer*}{Answer}
  \newtheorem*{application*}{Application}

  \theoremstyle{remark}
  \newtheorem{remark}[theorem]{Remark}
  \newtheorem*{remark*}{Remark}
  


  \newcommand{\thmref}[1]{Theorem~\ref{#1}}
  
  \newcommand{\lemref}[1]{Lemma~\ref{#1}}
  \newcommand{\propref}[1]{Proposition~\ref{#1}}

  \newcommand{\defref}[1]{Definition~\ref{#1}}
  
  \newcommand{\eqnref}[1]{Equation~\eqref{#1}}


  \DeclareMathOperator{\genus}{g}
  \DeclareMathOperator{\boundary}{b}
  \DeclareMathOperator{\cent}{ctr}



  

  \newcommand{\pka}{\partial_{\kappa}}


   

   
   
   
   

  \newcommand{\Map}{\ensuremath{\operatorname{Map}}\xspace}  
    
   

   
   
   
   
  \newcommand{\EL}{\ensuremath{\mathcal{EL}}\xspace} 
  \newcommand{\PML}{\ensuremath{\mathcal{PML}}\xspace}  



  \hyphenation{geo-desics}




  \newcommand{\bfa}{{\textbf{a}}}  
  \newcommand{\bfb}{{\textbf{b}}}  
  \newcommand{\bfc}{{\textbf{c}}}



\DeclareMathOperator{\diam}{diam}

    
  \newcommand{\param}{{\mathchoice{\mkern1mu\mbox{\raise2.2pt\hbox{$
  \centerdot$}}
  \mkern1mu}{\mkern1mu\mbox{\raise2.2pt\hbox{$\centerdot$}}\mkern1mu}{
  \mkern1.5mu\centerdot\mkern1.5mu}{\mkern1.5mu\centerdot\mkern1.5mu}}}

\DeclarePairedDelimiterX{\norm}[1]{\lvert}{\rvert}{#1}
\DeclarePairedDelimiterX{\Norm}[1]{\lVert}{\rVert}{#1}

  \newcommand{\ST}{\mathbin{\Big|}} 
  \newcommand{\from}{\colon\thinspace}

\newcommand{\CAT}{\ensuremath{\operatorname{CAT}(0)}\xspace}         
    
\newcommand{\ps}{\partial_{s}}

\begin{document}

\title[Sublinearly Morse Boundary II]{Sublinearly Morse Boundary II: Proper geodesic spaces}
  
\author   {Yulan Qing}
\address{Department of Mathematics, Fudan University, China}
\email{yulan.qing@gmail.com}
\author   {Kasra Rafi}
\address{Department of Mathematics, University of Toronto, Toronto, ON, Canada}
\email{rafi@math.toronto.edu}
\author   {Giulio Tiozzo}
\address{Department of Mathematics, University of Toronto, Toronto, ON, Canada}
\email{tiozzo@math.toronto.edu}
 
\date{\today}
  
\maketitle

\begin{abstract}
We build an analogue of the Gromov boundary for any proper geodesic metric space, 
hence for any finitely generated group. More precisely, for any proper geodesic metric space $X$ and any sublinear function $\kappa$, we construct a boundary for $X$, denoted $\pka X$, that is quasi-isometrically invariant and metrizable. 
As an application, we show that when $G$ is the mapping class group of a finite type surface, or a relatively hyperbolic group, then with minimal assumptions the Poisson boundary of $G$ can be realized on the $\kappa$-Morse boundary of $G$ equipped
the word metric associated to any finite generating set.
\end{abstract}

\section{Introduction}
In this paper, we construct an analogue of the Gromov boundary for
a general proper geodesic metric space. That is, a notion of a boundary at infinity that is 
invariant under quasi-isometry, has good topological properties and is as large as possible. 
Our guiding principle is that, moving from the setting of Gromov hyperbolic 
spaces to general metric spaces, most key arguments still go through if we 
replace uniform bounds with sublinear bounds (with respect to distance
to some base point). Examples of this philosophy 
have appeared in the literature before, for example in \cite{Drutu, asympCAT(0), EFW12, EFW13, ACGH, EMR18}. 
In a prequel to this paper \cite{QRT19}, such a boundary was constructed in the setting of 
CAT(0) metric spaces.

\subsection*{Statement of Results}
Let $(X, d_X)$ be a proper, geodesic metric space with a base point $\go$. 
Recall that, when $X$ is Gromov hyperbolic, the Gromov boundary of $X$ is the set 
of equivalence classes of quasi-geodesic rays emanating from $\go$, equipped
with the cone topology. Two quasi-geodesic rays are considered equivalent 
if they stay within bounded distance from each other. In a Gromov hyperbolic
space, every quasi-geodesic ray $\beta$ is \emph{Morse}: 
that is, any other quasi-geodesic segment $\gamma$ 
with endpoints on $\beta$ stays in a bounded neighborhood of $\beta$. 

Similarly, we consider quasi-geodesic rays in $X$ (rays are
always assumed to be emanating from~$\go$). Roughly speaking, 
we say a quasi-geodesic ray $\beta$ is \emph{sublinearly Morse} if any other 
quasi-geodesic segment $\gamma$ 
with endpoints sublinearly close to $\beta$ stays in a sublinear neighborhood of $\beta$
(see \defref{D:k-morse} for the precise definition). We group the set of sublinearly Morse
quasi-geodesic rays into equivalence classes by setting quasi-geodesic rays
$\alpha$ and $\beta$ to be equivalent if they stay sublinearly close to each other. 
We call the set of equivalence classes of sublinearly Morse quasi-geodesic 
rays, equipped with a coarse version of the cone topology, the 
\emph{sublinearly Morse boundary} of $X$. 

In fact, the above construction works for any given \emph{sublinear function}
$\kappa \from [0,\infty) \to [1, \infty)$, where $\kappa$ is a concave, 
increasing function with
\[
\lim_{t \to \infty} \frac{\kappa(t)}t = 0. 
\] 
Then, we define the $\kappa$\emph{-Morse boundary} $\pka X$ to be the space of equivalence classes of $\kappa$-Morse
quasi-geodesic rays equipped with the coarse cone topology (see Definition~\ref{D:nbd}). 
We obtain a possibly large family of boundaries for $X$,
each associated to a different sublinear function $\kappa$. 

We show that $\pka X$ is metrizable and invariant under quasi-isometries; moreover, 
$\kappa$-boundaries 
associated to different sublinear functions are topological subspaces of each other.

\begin{introthm}\label{introthm-main}
Let $X$ be a proper, geodesic metric space, and let $\kappa$ be a sublinear function. 
Then we construct a topological space $\pka X$ with the following properties: 
\begin{enumerate}
\item \textup{(Metrizability)} 
The spaces $\pka X$ and $X \cup \pka X$ are metrizable, and $X \cup \pka X$ is a bordification of $X$; 
\item \textup{(QI-invariance)} Every $(k, K)$-quasi-isometry $\Phi \from X \to Y$ between proper geodesic metric spaces 
induces a homeomorphism $\Phi^\star \from \pka X \to \pka Y$;
\item \textup{(Compatibility)} For sublinear functions $\kappa$ and $\kappa'$ where
$\kappa \leq c \cdot \kappa'$ for some $c>0$, we have 
$\pka X \subset \partial_{\kappa'} X$ where the topology of $\pka X$ is the subspace 
topology. Further, letting $\partial X := \bigcup_\kappa \pka X$, 
we obtain a quasi-isometrically invariant topological space that contains all $\pka X$ 
as topological subspaces. We call $\partial X$ the \emph{sublinearly Morse boundary} 
of $X$. 
\end{enumerate}
\end{introthm}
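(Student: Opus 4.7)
The plan is to build $\pka X$ in three layers: a neighborhood system for $\kappa$-Morse quasi-geodesic rays, a well-defined topology on equivalence classes, and then verify the three claimed properties.

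The first step is to make precise the \emph{coarse cone topology}. For a $\kappa$-Morse ray $\beta$ and a radius $r \geq 1$, I would define $\mathcal{U}(\beta, r)$ to consist of all equivalence classes $[\alpha] \in \pka X$ possessing a representative that ``fellow-travels'' $\beta$ out to radius $r$, in the sense that $\alpha$ enters a $\kappa$-neighborhood of $\beta$ beyond distance $r$ from $\go$. I then verify that $\{\mathcal{U}(\beta, n) : n \in \NN\}$ is a countable neighborhood basis at $[\beta]$, independent of the chosen representative (using the fact that two equivalent rays are sublinearly close, so the basepoint of fellow-travel only shifts by a sublinear amount). For $(1)$, metrizability of $\pka X$ then follows from Urysohn's theorem, provided I check Hausdorffness and regularity, both of which reduce to a pigeonhole-style lemma: two distinct classes $[\alpha] \neq [\beta]$ must diverge at linear rate, so disjoint $\mathcal{U}(\cdot, r)$-neighborhoods exist for $r$ large. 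To extend the metric to $X \cup \pka X$ as a bordification, I would use closed balls in $X$ together with the neighborhoods $\mathcal{U}(\beta, r)$ restricted to rays reaching past radius $r$; properness of $X$ gives the needed second-countability.

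For $(2)$, QI-invariance, the plan is classical: given a $(k, K)$-quasi-isometry $\Phi\from X \to Y$, the image of any quasi-geodesic ray is a quasi-geodesic ray with constants depending only on $(k, K)$ and the original constants. The first substantive step is to show that if $\beta$ is $\kappa$-Morse in $X$, then $\Phi \circ \beta$ (after reparametrization and adjustment near $\go$) is $\kappa$-Morse in $Y$, possibly with different multiplicative constants absorbed into the definition of $\kappa$-Morseness. Here I would lean on the fact that $\Phi$ distorts distances linearly while $\kappa$ is sublinear, so sublinear neighborhoods are preserved up to a multiplicative constant that is irrelevant at infinity. Equivalence classes are preserved because $\Phi$ sends sublinearly-close rays to sublinearly-close rays. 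Continuity of the induced bijection $\Phi^\star$ follows by comparing the images of basic neighborhoods $\mathcal{U}(\beta, r)$ with $\mathcal{U}(\Phi \circ \beta, r')$ for appropriate $r'$; applying the same argument to a quasi-inverse of $\Phi$ yields that $\Phi^\star$ is a homeomorphism.

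For $(3)$, compatibility, suppose $\kappa \leq c \cdot \kappa'$. Since a $\kappa$-neighborhood is contained in a $\kappa'$-neighborhood up to a multiplicative constant, any $\kappa$-Morse ray is automatically $\kappa'$-Morse (with altered Morse gauge), and $\kappa$-equivalence implies $\kappa'$-equivalence. This yields a natural injection $\pka X \hookrightarrow \partial_{\kappa'} X$. To identify topologies, I would show that the basic neighborhoods $\mathcal{U}_\kappa(\beta, r)$ in $\pka X$ are exactly the intersections of the basic neighborhoods $\mathcal{U}_{\kappa'}(\beta, r)$ in $\partial_{\kappa'} X$ with $\pka X$, up to changes in the radius parameter. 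Taking the union $\partial X = \bigcup_\kappa \pka X$, I would equip it with the final topology from the inclusions; QI-invariance of $\partial X$ follows from part $(2)$ together with the observation that a quasi-isometry can only shift the relevant sublinear function by a multiplicative constant, hence cannot leave the union.

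The main obstacle I anticipate is in $(1)$, specifically verifying that the coarse cone topology is Hausdorff and that the neighborhood basis behaves well under change of representative. Because $\kappa$-Morse rays only fellow-travel sublinearly rather than within a uniform bound, the triangle-inequality-style estimates used in the Gromov-hyperbolic case must be replaced with sublinear estimates, and one has to check carefully that the sublinear error accumulated when switching representatives does not obstruct separation of distinct boundary points or stability of the basis. Everything else is in comparison largely bookkeeping once the $\kappa$-Morse lemma from the basic theory of $\pka X$ is in hand.
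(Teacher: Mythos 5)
Your outline of the construction and of parts (2) and (3) is broadly in line with what the paper does: QI-invariance comes from pushing forward quasi-geodesic rays and noting that a quasi-isometry distorts distances only linearly while $\kappa$ is sublinear, and compatibility comes from the containment of $\kappa$-neighborhoods inside $\kappa'$-neighborhoods. The substantive problem is in part (1).

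You propose to deduce metrizability of $\pka X$ from Urysohn's theorem after verifying Hausdorffness and regularity, but Urysohn's metrization theorem requires in addition that the space be second countable (or Lindel\"of), and you never establish this for the boundary. Properness of $X$ gives second countability of $X$, and hence of the space of geodesic rays from $\go$ in the compact-open topology, but $\pka X$ is a quotient of a subspace of that, and quotients need not inherit second countability; no geometric group action is assumed that could rescue it. This is not an incidental detail: the paper's explicit point of departure from Cashen--Mackay is that their metrizability result requires a geometric action by a countable group, while the present theorem does not. The paper instead applies Frink's metrization criterion, which asks only for a countable nested neighborhood base $\mathcal V_i(\bfb)$ at each point with $\bigcap_i \mathcal V_i(\bfb)=\{\bfb\}$ together with a triangle-type refinement condition (for each $\bfb$ and $i$ there is $j$ such that $\mathcal V_j(\bfa)\cap\mathcal V_j(\bfb)\neq\emptyset$ forces $\mathcal V_j(\bfa)\subset\mathcal V_i(\bfb)$). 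Verifying that condition is where the real work lies, and it is done via the Surgery Lemma and a careful bookkeeping of the radii $r'(\beta,r)$ and $r''(\alpha,r')$; your ``pigeonhole'' Hausdorff step does not substitute for it.

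A secondary issue worth flagging: your $\mathcal U(\beta,r)$ admits $[\alpha]$ as soon as \emph{some} representative of $[\alpha]$ fellow-travels $\beta$ past radius $r$, an existential condition. The paper's Definition~\ref{D:nbd} is universal: $\bfa\in\mathcal U(\beta,r)$ requires that \emph{every} $(q,Q)$-quasi-geodesic ray $\alpha\in\bfa$ with $m_\beta(q,Q)$ small compared to $r$ satisfies $\alpha|_r\subseteq\mathcal N_\kappa(\beta,m_\beta(q,Q))$. The universal quantifier with the ``small compared to $r$'' restriction is what makes Lemma~\ref{L:Ubeta}(4) and the Frink refinement go through; an existential definition would let an unrelated representative with large quasi-geodesic constants slip into the neighborhood and spoil the nesting and separation estimates. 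When you rebuild the proof you should adopt the universal form and then rely on the Surgery Lemma~\ref{Lem:surgery} to pass between representatives.
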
 

Note that from QI-invariance it follows that $\pka X$ and $\partial X$ do not depend on 
the base point $\go$. Moreover, it also implies that the $\kappa$-Morse boundary of 
a finitely generated group $G$ is independent of the generating set. Thus $\pka G$
and $\partial G$ are well defined. 

We now argue that, in different settings, the $\kappa$-Morse boundary $\pka X$ is large for an appropriate
choice of $\kappa$.
Recall that the \emph{Poisson boundary} is the maximal boundary from the measurable point of view (see Section \ref{S:Poisson}). 
In this paper, we let $G$ be either a mapping class group or a relatively hyperbolic group and $X$ be a Cayley graph of $G$ and show that $\pka X$ 
is a topological model for the Poisson boundary of $(G, \mu)$ associated to any non-elementary finitely supported measure $\mu$. 

In fact, we show the following general criterion: if almost every sample path of the random walk driven by $\mu$ sublinearly tracks a $\kappa$-Morse geodesic, then the $\kappa$-Morse boundary can be identified with the Poisson boundary. 
The following result was obtained in collaboration with Ilya Gekhtman.

\begin{introthm} \label{T:poiss-general}
Let $G$ be a finitely generated group, and let $(X, d_X)$ be a Cayley graph of $G$. 
Let $\mu$ be a probability measure on $G$ with finite first moment with respect to $d_X$, such that the semigroup generated 
by the support of $\mu$ is a non-amenable group. 
Let $\kappa$ be a sublinear function, and suppose that
for almost every sample path $\omega = (w_n)$, there exists a $\kappa$-Morse geodesic ray $\gamma_\omega$ such that 
\begin{equation} \label{E:sub-track}
\lim_{n \to \infty} \frac{d_X(w_n, \gamma_\omega)}{n} = 0.
\end{equation}
Then almost every sample path converges to a point in $\pka X$, and moreover the space $(\pka X, \nu)$, where $\nu$ is the hitting measure for the random walk, is a model for the Poisson boundary of $(G, \mu)$. 
\end{introthm}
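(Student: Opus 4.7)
The plan is to combine the sublinear tracking hypothesis with Kaimanovich's strip criterion. First I use the tracking estimate to produce a measurable $G$-equivariant boundary map $\mathbf{bnd}\colon (G^\NN,\PP) \to (\pka X,\nu)$ and prove convergence of sample paths; then I construct strips between pairs of boundary points and verify the subexponential growth condition that upgrades $(\pka X,\nu)$ from a $\mu$-boundary to the full Poisson boundary.

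\textbf{Convergence and the boundary map.} By hypothesis, for $\PP$-a.e.\ sample path $\omega = (w_n)$ there is a $\kappa$-Morse ray $\gamma_\omega$ with $d_X(w_n,\gamma_\omega)/n\to 0$. Non-amenability of the group generated by $\supp\mu$ forces positive drift, so $d_X(\go,w_n)\to\infty$. Combined with sublinear tracking, $(w_n)$ must eventually enter every coarse-cone neighborhood of the equivalence class $[\gamma_\omega]\in \pka X$, and the metrizability of $X\cup \pka X$ from Theorem A then gives $w_n\to [\gamma_\omega]$ in this bordification. The assignment $\omega\mapsto [\gamma_\omega]$ is measurable and $G$-equivariant; pushing forward $\PP$ yields a $\mu$-stationary hitting measure $\nu$ on $\pka X$, so $(\pka X,\nu)$ is a $\mu$-boundary (a measurable equivariant quotient of the Poisson boundary).

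\textbf{Maximality via the strip criterion.} To identify $(\pka X,\nu)$ with the entire Poisson boundary, I apply Kaimanovich's strip criterion. Let $\check\mu(g):=\mu(g^{-1})$, and apply the first step to the reflected walk to obtain a hitting measure $\check\nu$ on $\pka X$. For $(\nu\otimes\check\nu)$-a.e.\ pair $(\xi^-,\xi^+)$, select a bi-infinite $(1,C)$-quasi-geodesic $\gamma(\xi^-,\xi^+)$ joining $\xi^-$ to $\xi^+$; by the $\kappa$-Morse property any two such choices fellow-travel within a sublinear neighborhood, so the selection is canonical up to sublinear error. Define the strip
\[
S(\xi^-,\xi^+) := \bigl\{\, g\in G : d_X(g,\gamma(\xi^-,\xi^+))\le C'\,\kappa(d_X(g,\go))\,\bigr\},
\]
which is measurable and $G$-equivariant. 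The key estimate is that any $g\in S(\xi^-,\xi^+)\cap B_n(\go)$ lies within $C'\kappa(n)$ of a length-$n$ arc of $\gamma(\xi^-,\xi^+)$, and the ball of radius $r$ in the Cayley graph with generating set $\Sigma$ contains at most $(2|\Sigma|+1)^r$ elements, giving
\[
\bigl|S(\xi^-,\xi^+)\cap B_n(\go)\bigr| \;\le\; (2n+1)\,(2|\Sigma|+1)^{C'\kappa(n)}.
\]
Sublinearity of $\kappa$ yields $\tfrac{1}{n}\log \bigl|S(\xi^-,\xi^+)\cap B_n(\go)\bigr|\to 0$, which combined with the finite first-moment hypothesis (in particular finite logarithmic moment) supplies the subexponential growth input to Kaimanovich's criterion and concludes that $(\pka X,\nu)$ is the Poisson boundary of $(G,\mu)$.

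\textbf{Main obstacle.} The principal difficulty is constructing a measurable, $G$-equivariant choice of bi-infinite quasi-geodesic joining $(\nu\otimes\check\nu)$-a.e.\ pair of boundary points. Unlike the Gromov-hyperbolic setting, $\pka X$ is only a topological boundary and the existence of a connecting quasi-geodesic is not automatic; one needs a diagonal/Arzel\`a--Ascoli argument exploiting that \emph{both} $\xi^\pm$ are represented by $\kappa$-Morse rays, together with a measurable selection result to obtain the equivariant strip map. A secondary technicality is verifying the finite-entropy hypothesis of Kaimanovich's criterion: this is automatic for the finitely supported measures featured in the applications, and for general finite first-moment measures it requires a separate argument.
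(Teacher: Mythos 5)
Your proposal takes a genuinely different route from the paper. Both approaches go through Kaimanovich's entropy-theoretic framework, but the paper identifies the Poisson boundary via the \emph{ray approximation} criterion (Theorem~\ref{T:ray}, a one-sided condition), whereas you invoke the \emph{strip} criterion (a two-sided condition requiring equivariant strips between $\mu$- and $\check\mu$-boundary points). The convergence step and the use of metrizability of $X\cup\pka X$ to get measurability of the boundary map match the paper (the paper uses Lemma~\ref{L:point-conv} for this; note that $d_X(w_n,\gamma_\omega)=o(n)$ together with $\Vert w_n\Vert\asymp\ell n$ places $w_n$ in a sublinear neighborhood of $\gamma_\omega$ as that lemma requires). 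Where you diverge is the maximality step. The paper's route is strikingly shorter: having established positive drift $\ell>0$, it simply defines $\pi_n(\xi):=\alpha_{\lfloor\ell n\rfloor}$ for any geodesic representative $\alpha$ of $\xi$, and verifies directly from the hypothesis \eqref{E:sub-track} that $d_X(w_n,\pi_n(\mathbf{bnd}(\omega)))/n\to 0$. This needs no bidirectional structure whatsoever, no strip construction, and no measurable selection beyond the boundary map itself.

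The obstacle you flag at the end is not a ``secondary technicality'' but a genuine gap, and it is precisely what the ray criterion avoids. In a general proper geodesic metric space there is no reason that two $\kappa$-Morse boundary directions $\xi^-,\xi^+$ are joined by any bi-infinite quasi-geodesic, let alone one that can be selected in a measurable, $G$-equivariant fashion over $\nu\otimes\check\nu$-almost all pairs. An Arzel\`a--Ascoli argument gives a bi-infinite \emph{geodesic} as a limit of segments $[x_n,y_m]$ with $x_n\to\xi^-$, $y_m\to\xi^+$, but one then has to prove this limit actually accumulates on $\xi^-$ and $\xi^+$ in $\pka X$ (not trivial: the $\kappa$-Morse property controls quasi-geodesics with sublinearly close endpoints, and new estimates are needed to control the limit of $[x_n,y_m]$ as both endpoints escape), and the ``canonical up to sublinear error'' assertion needs a proof, not just the one-sided Morse property of each ray. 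Measurable selection over a product measure is an additional layer. None of this is insurmountable, but it would require a substantial amount of new work, whereas the paper's ray criterion delivers the same conclusion in a few lines once one has the tracking estimate. Your subexponential counting estimate, assuming the strip is constructed, is fine: a $\kappa(n)$-neighborhood of an arc of length $O(n)$ has at most $O(n)\cdot|\Sigma|^{O(\kappa(n))}$ elements, and sublinearity of $\kappa$ kills the exponential rate. But the route through strips introduces avoidable difficulties; if you want to use the strip criterion you would need to first prove a separate lemma on bi-infinite $\kappa$-Morse geodesics.
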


For comparison, recall that the \emph{visual boundary} of CAT(0) spaces is not invariant by quasi-isometries \cite{CK00}.
The \emph{Gromov boundary} \cite{Gro87}, on the other hand, is QI-invariant, but only defined if the group is hyperbolic; a natural generalization 
is the \emph{Morse boundary} \cite{Morse}, which is always well-defined and QI-invariant, but very often it is too small: in particular, it has measure zero 
with respect to the hitting measure for most random walks on relatively hyperbolic groups \cite{CDG20}. This is related to the fact that a 
typical sample path is expected to have unbounded excursions in the peripherals. 
Finally, the \emph{Floyd boundary} \cite{Floyd} is well-behaved for relatively hyperbolic groups, but trivial for mapping class groups \cite{KN}.
 
\subsection*{Mapping class groups}
Let $S$ be a surface of finite hyperbolic type and $\Map(S)$ be the mapping class
group of $S$. Let $d_w$ be the word metric on $\Map(S)$ with respect
to some finite generating set. Then letting $(X, d_X) = (\Map(S), d_w)$ we
can consider the $\kappa$-Morse boundary $\pka \Map(S)$ of the 
mapping class group. We show the following characterization.

\newpage
\begin{introthm} \label{T:PB-intro}
Let $\mu$ be a finitely supported, non-elementary probability measure on $\Map(S)$, and let $p := 3\genus(S) - 3 + \boundary(S)$ be the \emph{complexity} of $S$. Then for 
$\kappa(t) = \log^p(t)$, we have:
\begin{enumerate}
\item Almost every sample path $(w_n)$ converges to a point in 
$\partial_\kappa \Map(S)$; 
\item The $\kappa$-Morse boundary $(\partial_\kappa \Map(S), \nu)$ 
is a model for the Poisson boundary of $(\Map(S), \mu)$ where $\nu$ is the 
hitting measure associated to the random walk driven by $\mu$. 
\end{enumerate}
\end{introthm}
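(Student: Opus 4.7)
The plan is to derive both conclusions by applying \thmref{T:poiss-general}. It suffices to exhibit, for almost every sample path $\omega = (w_n)$, a $\kappa$-Morse quasi-geodesic ray $\gamma_\omega$ in the Cayley graph $X$ of $\Map(S)$ with $\kappa(t) = \log^p(t)$, satisfying the sublinear tracking condition $d_X(w_n, \gamma_\omega)/n \to 0$.

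The candidate ray is built from the Teichm\"uller geodesic toward the random-walk limit. By Kaimanovich--Masur, almost every orbit $(w_n \cdot o)$ in Teichm\"uller space $\T(S)$ converges to a uniquely ergodic point $\xi_\omega \in \PMF(S)$. Let $\beta_\omega$ be the Teichm\"uller geodesic ray from the basepoint $o$ to $\xi_\omega$, and take $\gamma_\omega$ to be a resolution in $\Map(S)\cdot o$ of the Masur--Minsky hierarchy shadowing $\beta_\omega$; by unique ergodicity this is a quasi-geodesic ray in $X$. Via Rafi's combinatorial distance formula, the $\kappa$-Morse condition for $\gamma_\omega$ reduces to a sublinear bound on subsurface projections $d_Y(o, \gamma_\omega(t)) \lesssim \log^p(t)$ for every proper subsurface $Y \subset S$, to be verified for $\nu$-typical limits. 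The logarithmic base rate comes from the exponential tails of the laws of subsurface projections along $\mu$-random walks; the polynomial exponent $p$ arises because, by Behrstock's inequality, at any scale only boundedly many proper subsurfaces carry non-negligible projection simultaneously, and iterating across the levels of a hierarchy on a surface of complexity $p = 3\genus(S) - 3 + \boundary(S)$ accrues the $(\log t)^p$ bound.

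Sublinear tracking $d_X(w_n, \gamma_\omega)/n \to 0$ then follows by combining three ingredients: (i) positive word-metric drift of the random walk, guaranteed by Kingman's subadditive ergodic theorem under the finite-first-moment and non-elementary hypotheses on $\mu$; (ii) the classical fact that the Teichm\"uller shadow $(w_n \cdot o)$ sublinearly tracks $\beta_\omega$ in $\T(S)$; and (iii) a sublinear comparison between $d_\T$ and $d_X$ along generic sample paths, where the contributions of short-curve excursions grow only logarithmically and are absorbed into $o(n)$. Together with the $\kappa$-Morseness of $\gamma_\omega$, this verifies the hypotheses of \thmref{T:poiss-general}, which immediately yields both the convergence of sample paths to points of $\partial_\kappa \Map(S)$ and the identification of $(\partial_\kappa \Map(S), \nu)$ as the Poisson boundary of $(\Map(S), \mu)$.

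The hardest step will be calibrating the polynomial exponent to the sharp value $p$, simultaneously across all nested proper subsurfaces for $\nu$-typical endpoints $\xi_\omega$. This requires more than generic random-walk machinery: it hinges on the fine combinatorial structure of the Masur--Minsky hierarchy, specifically on quantitative control of how often and how deeply a generic sample path penetrates thin parts at each nesting level, together with the uniform bound from Behrstock's inequality on the number of active subsurfaces at each scale. Once these ingredients are in place, \thmref{T:poiss-general} closes the argument.
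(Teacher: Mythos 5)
Your high-level architecture matches the paper's: construct a hierarchy-type quasi-geodesic ray toward the random-walk limit, show it is $\kappa$-Morse, establish sublinear tracking, and invoke \thmref{T:poiss-general}. Routing through Teichm\"uller space and uniquely ergodic points of $\PMF$ (via Kaimanovich--Masur) is an avoidable detour --- the paper works directly with convergence in the curve complex to $\EL(S)$ (\thmref{T:RW}), which spares the delicate $d_\T$-versus-$d_X$ comparison you defer to step (iii) --- but it is not in itself wrong.

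The genuine gap is in the claimed reduction of $\kappa$-Morseness and in where the exponent $p$ comes from. You assert that $\kappa$-Morseness of $\gamma_\omega$ reduces to $d_Y(o, \gamma_\omega(t)) \lesssim \log^p(t)$, and attribute the exponent $p$ to Behrstock's inequality bounding the number of simultaneously active subsurfaces. Neither matches the actual mechanism. The condition entering \propref{P:contracting-morse} is a \emph{single} logarithm in a different variable: $d_Y(\go, \xi_\omega) \leq c \cdot \log \Norm{Y}_S$ for every proper $Y$, where $\Norm{Y}_S = d_S(\theta, \partial Y)$. This bound (no power of $p$) is what exponential decay of subsurface projections delivers (Theorem~\ref{T:log-exc}, following Sisto--Taylor). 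The exponent $p$ appears only afterwards, via \propref{P:hierarchy}: if $d_Y(x,y)\leq E$ for all proper $Y$, then $d_w(x,y) \prec d_S(x,y)\cdot E^p$. That estimate is proved by induction on topological complexity, with each level of subsurface nesting contributing one factor of $E$; it converts the uniform $\log$ bound on $\calC(Y)$-projections into a $\log^p$ bound in the word metric via the hierarchy distance formula. It is not a statement about how many subsurfaces are active at a given scale, and if you tried to prove $d_Y(\go,\xi_\omega) \lesssim \log^p$ directly you would be both overcomplicating and misplacing the exponent.

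A second, subtler omission: establishing $\sup_Y d_Y(\go, \xi_\omega) \leq c\log\Norm{Y}_S$ for the \emph{limit} $\xi_\omega$ is not an immediate consequence of the per-$n$ bound $\sup_Y d_Y(\go, w_n) \lesssim \log n$. Passing from finite time to the limit point requires an argument comparing $w_n$ with its center $\cent(\go, w_n, \xi_\omega)$, using the Eskin--Masur--Rafi center construction, the bounded geodesic image theorem, positive drift with exponential decay, and Borel--Cantelli (Steps 1--3 in the proof of \thmref{T:converge}). You flag this as ``the hardest step'' but supply no mechanism; as written the proposal does not actually close it.
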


The proof uses the machinery of curve complexes introduced by Masur-Minsky 
\cite{MM00}, as well the study of random walks in mapping class groups carried 
by Maher \cite{Maher}, \cite{MaherExp}, Sisto \cite{sisto-track}, Maher-Tiozzo \cite{MaherTiozzo} and Sisto-Taylor \cite{ST}.  
In particular, by \cite{ST}, a typical sample path makes logarithmic progress in each subsurface, which explains the function $\log(t)$, 
while $p$ is related to the ``depth" of the hierarchy paths in $\Map(S)$. 

Moreover, we also obtain the following tracking result between geodesics and sample paths 
in the mapping class group. 

\begin{introthm} \label{T:tracking-intro}
Let $\mu$ be a finitely supported, non-elementary probability measure on $\Map(S)$, and let $p$ as above.
Then, for almost every sample path there exists a $\kappa$-Morse geodesic ray $\gamma_\omega$ in $\Map(S)$ such 
that  
$$\limsup_{n \to \infty} \frac{d_w(w_n, \gamma_\omega)}{\log^{p+1}(n)} < +\infty.$$
\end{introthm}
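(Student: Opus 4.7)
The plan is to construct the tracking geodesic $\gamma_\omega$ as a limit of geodesic segments $[\go, w_n]$ via the Arzelà–Ascoli compactness afforded by properness of $\Map(S)$, and then to verify simultaneously that $\gamma_\omega$ is $\kappa$-Morse with $\kappa(t) = \log^p(t)$ and that the tracking bound holds. The existence of a subsequential geodesic limit is immediate; the content lies in controlling both the direction of $\gamma_\omega$ and the error term $\log^{p+1}(n)$. Once $\gamma_\omega$ is shown to be $\kappa$-Morse and the asserted tracking holds, convergence of $(w_n)$ to $[\gamma_\omega] \in \pka \Map(S)$ follows from Theorem C.

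The main input would be the logarithmic excursion estimate of Sisto–Taylor: for $\mu$-almost every sample path $\omega = (w_n)$ and every proper subsurface $Y \subseteq S$,
$$d_Y(\go, w_n) = O(\log n),$$
uniformly in $Y$, with the implicit constant depending only on $\mu$. Feeding this into the Masur–Minsky distance formula, I would compare $w_n$ with the endpoint at time $\sim n$ of a hierarchy path from $\go$ approximating $\gamma_\omega$. At each of the at most $p$ levels of the hierarchy the deviation in subsurface distances is $O(\log n)$; the distance formula then sums these contributions with one extra logarithmic factor arising from the threshold function, yielding the word-metric bound $d_w(w_n, \gamma_\omega) = O(\log^{p+1}(n))$.

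For the $\kappa$-Morseness with $\kappa(t) = \log^p(t)$, I would argue that any quasi-geodesic $\eta$ with endpoints in the $\log^p$-neighborhood of $\gamma_\omega$ has all proper subsurface projections $d_Y(\eta) = O(\log^p t)$ along $\gamma_\omega$; by Behrstock's inequality and bounded geodesic image, this forces $\eta$ to shadow a hierarchy path close to $\gamma_\omega$, and an induction on the complexity $p$ upgrades this to the claimed neighborhood containment. The main obstacle is the simultaneous bookkeeping across all $p$ levels: the logarithmic projection bounds must be promoted, level by level, to ambient metric bounds while tracking where detours can occur, and this recursive interaction between the Sisto–Taylor estimates and hierarchy paths is what produces the exponent $p+1$. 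In particular, constants must be chosen so that the induction closes without the exponent deteriorating, which is the delicate step.
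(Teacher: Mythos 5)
The central gap is the construction of $\gamma_\omega$. You propose to take a subsequential Arzel\`a--Ascoli limit of the word-metric geodesics $[\go, w_n]$ and then verify that this limit is $\kappa$-Morse and tracks the walk. But an abstract limit of $[\go, w_n]$ comes with no a priori control on its subsurface projections $d_Y(\go, \cdot)$, and all of the machinery you invoke (Sisto--Taylor logarithmic excursions, bounded geodesic image, the distance formula, Behrstock's inequality) operates on hierarchy paths and curve-complex data, not on arbitrary word-metric geodesics. To relate the Arzel\`a--Ascoli limit to a hierarchy path you would already need to know that the hierarchy path is $\kappa$-Morse --- which is precisely what the paper establishes first --- so the proposed order of steps is circular. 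The paper instead reverses the construction: it uses convergence $w_n\theta \to \xi_\omega \in \EL(S)$ in $\calC(S)$ (Maher--Tiozzo), takes the explicit hierarchy resolution $\calG(\go, \xi_\omega)$, shows almost surely that $\sup_Y d_Y(\go,\xi_\omega) = O(\log d_S(\go, \partial Y))$ via a Borel--Cantelli argument with the centers $c_n = \cent(\go, w_n, \xi_\omega)$, deduces from Proposition~\ref{P:contracting-morse} (via the Duchin--Rafi Theorem~\ref{duchin-rafi} and the weakly-contracting-implies-Morse Theorem~\ref{Thm:W-Strong}, not a direct Behrstock/BGI shadowing induction) that $\calG$ is $\kappa$-Morse, and only at the very end replaces $\calG$ by an equivalent geodesic using Lemma~\ref{L:Ubeta}(1). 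The Arzel\`a--Ascoli step is the last move, after the Morse property is already known, not the first.

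Your account of the exponent $p+1$ is also slightly off. The extra factor of $\log n$ does not come from ``the threshold function'' in the distance formula; it comes from the curve-complex distance $d_S(w_n, c_n) = O(\log n)$ (Theorem~\ref{T:RW}(3)), which multiplies the $E^p = (\log n)^p$ contribution of the $p$-level hierarchy in Proposition~\ref{P:hierarchy}. Concretely, $d_w(w_n, c_n) \prec d_S(w_n, c_n) \cdot (\sup_Y d_Y(w_n, c_n))^p \prec \log n \cdot \log^p n$; the threshold only discards small terms. Finally, the claimed appeal to Theorem~C for convergence to $\pka\Map(S)$ should instead be to Lemma~\ref{L:point-conv} applied once $\gamma_\omega$ is known to be $\kappa$-Morse. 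Your intuition for which inputs are relevant --- Sisto--Taylor, the distance formula, hierarchy depth $p$ --- is correct, but the proposal as written does not produce a quasi-geodesic with the required logarithmic excursion bounds, so the Morse property and the tracking estimate cannot be closed.
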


This result improves the tracking result of Sisto \cite{sisto-track}, where the tracking function is $\sqrt{n \log n}$. 
Sublinear tracking for random walks with respect to the Teichm\"uller metric was obtained in \cite{Ti15}.

\subsection*{Relatively hyperbolic groups}
Now consider a finitely generated group $G$ equipped with a word
metric $d_w$ associated to a finite generating set. Recall that $G$
is relatively hyperbolic with respect to a family of subgroups $H_1, \dots, H_k$ if, 
after contracting the Cayley graph of $G$ along $H_i$-cosets, the resulting graph 
equipped with the usual graph metric is Gromov hyperbolic. Further, $H_i$-cosets
have to satisfy the technical condition of \emph{bounded coset penetration}. 
We say a relatively hyperbolic group $G$ is \emph{non-elementary} if it is infinite, not virtually cyclic, and each $H_i$ is infinite and with infinite index in $G$.
Similarly to above, we show:

\begin{introthm} \label{T:RH-intro}
Let $G$ be a non-elementary relatively hyperbolic group, and let $\mu$ be a
probability measure whose support is finite and generates $G$ as a semigroup. 
Then for $\kappa(t) = \log(t)$, we have:
\begin{enumerate}
\item Almost every sample path $(w_n)$ converges to a point in 
$\partial_\kappa G$; 
\item The $\kappa$-Morse boundary $(\partial_\kappa G, \nu)$ 
is a model for the Poisson boundary of $(G, \mu)$ where $\nu$ is the 
hitting measure associated to the random walk driven by $\mu$. 
\end{enumerate}
\end{introthm}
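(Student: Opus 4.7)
The strategy is to reduce Theorem~\ref{T:RH-intro} to Theorem~\ref{T:poiss-general} by verifying the sublinear tracking hypothesis \eqref{E:sub-track} with $\kappa(t) = \log(t)$. Concretely, I would show that for $\mu$-almost every sample path $(w_n)$ there exists a $\log$-Morse geodesic ray $\gamma_\omega$ in $G$, based at the identity, such that $d_w(w_n, \gamma_\omega)/n \to 0$. Once this is established, Theorem~\ref{T:poiss-general} immediately identifies $(\pka G, \nu)$ with the Poisson boundary of $(G, \mu)$ and yields almost-sure convergence of sample paths to $\pka G$.

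The first step is a geometric characterization of $\log$-Morse rays in $G$. Passing to the coned-off Cayley graph $\hG$, which is Gromov hyperbolic, any geodesic ray $\gamma$ in the word metric projects to an unparametrized quasi-geodesic in $\hG$, and by Farb's bounded coset penetration property the geometry of $\gamma$ in $(G, d_w)$ is controlled by the depths of its excursions into peripheral cosets $gH_i$. Using the Dru\c{t}u--Sapir machinery and Sisto's characterization of Morse/contracting geodesics in relatively hyperbolic groups, one checks that any quasi-geodesic segment with endpoints sublinearly close to $\gamma$ must shadow the same peripheral cosets with comparable depth. Consequently, a ray whose excursion into the peripheral coset met at distance $T$ from the basepoint has depth $O(\log T)$ is $\log$-Morse, and conversely any $\log$-Morse ray has this property.

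The second step is to produce such a ray for almost every sample path. Under the standing hypotheses, the convergence results of Maher--Tiozzo (refined by Gerasimov--Potyagailo for relatively hyperbolic groups) imply that almost every sample path converges in the Bowditch boundary to a non-parabolic conical limit point $\xi_\omega$; let $\gamma_\omega$ be any word-metric geodesic ray from the identity to $\xi_\omega$. The crucial probabilistic input, due to Sisto and Gekhtman--Sisto, is that excursions of the random walk into peripheral cosets grow at most logarithmically: almost surely, the deepest excursion of $(w_n)_{n \le N}$ into any peripheral coset is bounded by $C\log N$. Combining this with Sisto's relative fellow-traveling result, which says that $(w_n)$ shadows a relative geodesic from $e$ to $\xi_\omega$ in $\hG$ up to bounded error, one concludes both that $\gamma_\omega$ inherits logarithmic peripheral excursions (hence is $\log$-Morse by the previous step) and that $d_w(w_n, \gamma_\omega)$ is controlled by a polylogarithmic function of $n$, hence grows sublinearly.

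The main obstacle is coordinating the two quantitative estimates: the logarithmic excursion bound must hold uniformly along $\gamma_\omega$ at every scale $T$, while the tracking estimate must hold for all times $n$. Both rest on exponential decay of the probability that a sample path lingers in a fixed peripheral coset, combined with a Borel--Cantelli argument applied to the sequence of excursions of $(w_n)$ along the associated relative hierarchy. Once the $\log$-Morse property of $\gamma_\omega$ and the sublinear tracking of $(w_n)$ are both established, applying Theorem~\ref{T:poiss-general} completes the proof.
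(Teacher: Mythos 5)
Your overall strategy matches the paper's: reduce to Theorem~\ref{T:poiss-general}, characterize $\log$-Morse rays by logarithmic peripheral excursions, and combine Sisto's excursion estimates with a Borel--Cantelli argument along dyadic blocks. There is, however, a technical obstacle your outline glosses over and that the paper explicitly identifies as the main point of divergence from the mapping-class-group case: nearest-point projections to peripheral cosets in a relatively hyperbolic group are \emph{not} coarsely distance-decreasing, so the step that would transfer $\sup_P d_P(w_n,w_{2n})\lesssim \log n$ to a bound on $\sup_P d_P$ between the associated projected points (the analogue of Eq.~\eqref{E:C3}) cannot be carried out by a naive projection, and a generic invocation of ``Sisto's characterization of Morse/contracting geodesics'' does not supply the needed estimate. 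The paper resolves this by building a bespoke $\kappa$-projection $\Pi_\gamma$ onto a lift of a $\hG$-geodesic from transition points (the points $c_{\gamma,y}(x)$), proving the key coarse contraction estimate $d_P(x,x_\gamma)\leq d_P(x,y)+L$ in Lemma~\ref{L:P-bound}; this feeds into the $\kappa$-weak-contraction of $\kappa$-excursion rays (Propositions~\ref{P:bounded-proj} and \ref{excursionimpliescontracting}) and into Step~1 of the proof of Theorem~\ref{T:RH}. You would need something equivalent to this transition-point projection to make your second step precise. A smaller point: the paper's $\gamma_\omega$ is a lift of a $\hG$-geodesic, hence only a $(q_0,Q_0)$-quasi-geodesic in $(G,d_G)$, and one passes to a geodesic in the same $\kappa$-Morse class only at the very end via Lemmas~\ref{L:Ubeta} and \ref{L:equivalence}; taking an arbitrary word-metric geodesic to $\xi_\omega$ as you do requires a separate check that its peripheral excursions inherit the logarithmic bound.
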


\begin{remark}
The proof of Theorem \ref{T:PB-intro} also works in the setting of 
hierarchically hyperbolic spaces \cite{HHG}. Hierarchically hyperbolic spaces 
are a family of axiomatically defined spaces with properties that are 
modelled after the mapping class groups. Hence, all the tools we use, such as 
subsurface projections, distance formulas and the Bounded Geodesic Image 
Theorem also exist in the setting of hierarchically hyperbolic spaces. 
\end{remark}

\subsection*{Sublinearly Morse vs. Sublinearly Contracting}

In the construction of the $\kappa$-Morse boundary $\pka X$, 
several different definitions are possible for the notion of a $\kappa$-Morse quasi-geodesic. 
The goal is always to emulate the behavior of quasi-geodesics in a Gromov
hyperbolic space but with sublinear errors (instead of uniform additive errors).  

The definition of $\kappa$-Morse given in this paper (Definition \ref{D:k-morse}) is equivalent to the definition of 
\emph{strongly Morse} in \cite{QRT19}. 
Another natural condition is to require a quasi-geodesic ray $\beta$ 
to be $\kappa$-\emph{weakly contracting} (Definition \ref{Def:generalContracting}): that is, that the projection of a ball disjoint from 
$\beta$ to $\beta$ has a diameter that is bounded by a sublinear function of 
the distance of the center of the ball to the origin.  

In the setting of CAT(0) spaces, these two notions are equivalent (\cite[Theorem 3.8]{QRT19}), 
but this is no longer true for general metric spaces.  
In the Appendix, we prove that $\kappa$-weakly contracting 
quasi-geodesics are always $\kappa$-Morse. The converse is known
not to be the case in general: for example, when $\kappa$ is the constant function, 
a geodesic axis of a pseudo-Anosov element in the mapping class group is always 
Morse \cite{MM00}, but not always strongly contracting \cite{RV}. However, 
$\kappa$-Morse quasi-geodesics are always $\kappa'$-weakly contracting for a larger 
sublinear function $\kappa'$:

\begin{introthm}
Let $X$ be a proper geodesic metric space, let $\kappa$ be a sublinear function, and let 
$\beta$ be a quasi-geodesic ray in $X$. Then:
\begin{enumerate}
\item If $\beta$ is $\kappa$-weakly contracting, it is $\kappa$-Morse;
\item There is a sublinear function 
$\kappa'$ such that if $\beta$ is $\kappa$-Morse then it is $\kappa'$-weakly contracting.
\end{enumerate}
\end{introthm}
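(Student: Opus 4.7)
The plan is to handle the two directions separately: direction (1) is a standard covering argument adapted to the sublinear setting, while direction (2) is more delicate and requires extracting a quasi-geodesic detour from any ball whose projection onto $\beta$ is too large.

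For (1), I would assume $\beta$ is $\kappa$-weakly contracting with constant $C$, and let $\gamma$ be a $(q,Q)$-quasi-geodesic whose endpoints lie within $\kappa$-distance of $\beta$. Suppose toward contradiction that $\gamma$ exits the $R\kappa$-neighborhood of $\beta$ for some large multiplicative constant $R$. Extract a maximal subarc $\gamma_0$ lying outside the $(R/2)\kappa$-neighborhood, and cover $\gamma_0$ by a chain of consecutive balls of radius comparable to $(R/4)\kappa$ evaluated at their centers, each disjoint from $\beta$. The weakly contracting hypothesis bounds each ball's projection diameter by $C\kappa$ of the center, summing to the upper bound $\diam\pi_\beta(\gamma_0)=O(\ell(\gamma_0)\cdot C/R)$. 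On the other hand, the $(q,Q)$-quasi-geodesic condition on $\gamma$ forces $\diam\pi_\beta(\gamma_0)\geq \ell(\gamma_0)/q-O(1)$. For $R$ larger than $qC$ (up to constants), these are incompatible, which establishes $\gamma\subset N_{R\kappa}(\beta)$ and hence the $\kappa$-Morse property.

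For (2), I would proceed by contrapositive: assuming $\beta$ is $\kappa$-Morse with gauge $m_\kappa$, I aim to construct $\kappa'$ so that for every ball $B(y,r)$ disjoint from $\beta$ the projection diameter $D:=\diam\pi_\beta(B(y,r))$ satisfies $D\leq \kappa'(\|y\|)$. Setting $r_0=d(y,\beta)>r$, pick $z_1,z_2\in B(y,r)$ with $d(\pi_\beta(z_1),\pi_\beta(z_2))\geq D/2$ and form the three-segment concatenation
\[
P=[\pi_\beta(z_1),z_1]\cup[z_1,z_2]\cup[z_2,\pi_\beta(z_2)],
\]
whose total length is at most $2r_0+4r$ and whose endpoints on $\beta$ are at distance $\geq D/2$. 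A direct computation shows $P$ is a $(k,0)$-quasi-geodesic with $k$ of order $r_0/D$. Applying the $\kappa$-Morse hypothesis to $P$, and noting that the interior points $z_1,z_2$ lie at distance at least $r_0-r$ from $\beta$, yields
\[
r_0-r\leq m_\kappa(k)\cdot\kappa(\|y\|+O(r_0)).
\]
Combined with the triangle-inequality bound $D\leq 2r_0+4r$ and the inequality $r_0\leq \|y\|$, solving this implicitly for $D$ produces the desired sublinear bound $D\leq \kappa'(\|y\|)$.

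The main obstacle is the behavior of the Morse gauge $m_\kappa(k)$ when $k$ is large, which corresponds to the regime $D\ll r_0$ where the quasi-geodesic constants of $P$ deteriorate. Provided $m_\kappa$ grows at most polynomially in $k$ (which holds in the settings of the paper), the implicit estimate above gives a bound of shape $D\leq \|y\|^{1-1/d}\kappa(\|y\|)^{1/d}$ for some $d>0$, which is sublinear in $\|y\|$ since $\kappa(t)/t\to 0$. A minor edge case is $r$ close to $r_0$, where $r_0-r$ is small and the Morse inequality degenerates; this is handled by treating the near-tangent regime separately and absorbing it into the constants defining $\kappa'$.
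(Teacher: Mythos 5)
Your sketch for part (1) follows the same basic covering-and-comparison strategy as the paper's Claim~A.5: discretize the excursion outside a sublinear neighborhood of $\beta$ into a chain of points at distances controlled by the contracting constant, bound the total projection by summing, and compare this upper bound to the lower bound forced by the quasi-geodesic condition. That is the right idea, though your write-up is phrased for quasi-geodesics ``with endpoints near $\beta$'', whereas Definition~\ref{D:k-morse} in this paper tests quasi-geodesic \emph{rays} from $\go$ whose radius-$R$ point is $\kappa'(R)$-close to $\beta$. Translating your covering estimate into a proof of that statement, and extracting an explicit Morse gauge, requires the bookkeeping the paper supplies (the choice of $m_0$, $m_1$, and the final surgery on the last excursion interval), so your sketch is a fair outline but not a complete replacement.

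For part (2) you and the paper genuinely diverge. The paper invokes Arzhantseva--Cashen--Gruber--Hume's Proposition~4.2 (which, for a uniformly Morse set, bounds the projection of any disjoint ball of radius $r$ by a sublinear function $\rho(r)$ \emph{regardless of the growth rate of the Morse gauge}) applied to $Z\cap B(\go, 4\|x\|)$, and then finishes with a boundedness-by-contradiction argument to convert $\rho(r)$ into a bound in $\kappa'(\|x\|)$. Your route is a direct detour-path construction, which is closer in spirit to the proof \emph{of} the ACGH proposition. It has real gaps as written. First, the assertion that the three-segment concatenation $P$ is a $(k,0)$-quasi-geodesic needs care: the global length-to-endpoint ratio does not by itself make a concatenation a quasi-geodesic, and you must verify the intermediate subsegments, which works only because the contracting condition restricts attention to balls of radius $r\leq C_1 d(y,Z)$ with $C_1<1$ (this is also what dispatches your ``near-tangent'' edge case --- it never occurs, rather than being ``absorbed into constants''). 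Second, the definition of $\kappa$-Morse in this paper applies to quasi-geodesic \emph{rays from $\go$}, so $P$ must be extended to such a ray before the Morse hypothesis can be applied; this extension step is absent. Third, and most seriously, your final implicit estimate relies on the unjustified hypothesis that $m_\kappa(k)$ grows at most polynomially in $k$ --- nothing in the definition of a $\kappa$-Morse set guarantees this. The assumption is also unnecessary: since $r\leq C_1 r_0$, if one had $D_n/\|y_n\|\geq c>0$ along a sequence, then $k_n\asymp r_0/D_n$ would be \emph{bounded} (because $r_0\leq\|y\|$), so $m_\kappa(k_n)$ is bounded, and then $r_0-r\geq(1-C_1)r_0\gtrsim c\|y_n\|$ contradicts the sublinear right-hand side $m_\kappa(k_n)\kappa(\cdot)=o(\|y_n\|)$. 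Replacing your explicit polynomial-solving step with this contradiction argument both removes the unwarranted assumption and brings your proof in line with what the paper actually establishes.
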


The definition of $\kappa$-Morse we use in this paper  is the one that matches our philosophical approach the best, 
is the most flexible and makes the arguments simplest.  Hence, we 
think it is the \emph{correct} definition with minimal assumptions. 

In some places, the same arguments as in \cite{QRT19} apply directly, while in others new ideas are needed; 
for the sake of brevity, we shall skip the proofs when the arguments are exactly the same.

\subsection*{History}
The notion of a \emph{Morse geodesic} is classical \cite{Mor24}, and much progress has been made in recent years using Morse geodesics to define boundaries of groups.
In \cite{Morse}, Cordes, inspired by the  \emph{sublinear boundary} for CAT(0) spaces of Charney and Sultan \cite{CS15}, constructed the 
\emph{Morse boundary} for all proper geodesic spaces, where a quasi-geodesic  $\gamma$ is \emph{Morse} if there are uniform neighborhoods, with size depending on $(q, Q)$, in which all $(q, Q)$-quasi-geodesic segments with endpoints on $\gamma$ lie.  The Morse boundaries are equipped with a \emph{direct limit topology} and  are invariant under 
quasi-isometries. However, this space does not have good topological properties; for example, 
it is not first countable. Cashen-Mackay \cite{cashenmackay}, 
following the work of Arzhantseva-Cashen-Gruber-Hume \cite{ACGH}, 
defined a different topology on the Morse boundary.
They showed that it is Hausdorff and when there is a geometric action by a countable group, it is also metrizable; note that \thmref{introthm-main} does not assume any geometric action. 
Another notion, more closely related to our definition of $\kappa$-Morse boundary, is defined by Kar \cite{asympCAT(0)}: a geodesic space is \emph{asymptotically \CAT} if balls of radius $r$ are coarsely \CAT with an error of $f(r)$, where the function $f(r)$ is sublinear. That is to say, a space is asymptotically \CAT if it has the same $\kappa$-Morse boundary as a \CAT space, where $\kappa= f(r)$.

The definition of a \emph{contracting} geodesic originates from \cite{Mor24} and has been brought back to attention by \cite{Gro87}. 
In \cite{MM00} Masur-Minsky proved that axes of pseudo-Anosov elements are contracting, and since then various versions of this condition have been discussed, in particular the notions of \emph{strongly contracting} and \emph{weakly contracting}, see e.g. \cite{Be06}, \cite{BF09}, \cite{AK11}, \cite{ACT}, \cite{Si18}, \cite{Ya20}, and \cite{RV}. Our definition of $\kappa$-weakly contracting is even weaker, and for that reason it is expected to be generic with respect to many notions of genericity. For random walks, genericity of sublinearly contracting geodesics in relatively hyperbolic
groups follows from \cite{sisto-track}, in hierarchically hyperbolic groups it follows 
from \cite{ST} and in CAT(0) groups it follows from \cite{GQR}. For the counting 
measure, genericity of 
log-weakly contracting geodesics in RAAGs has been shown in \cite{QT19}.

The Poisson boundary of $(G, \mu)$ is trivial for all non-degenerate measures $\mu$ on abelian groups \cite{CD60} and nilpotent groups \cite{DM60}. Discrete subgroups of $SL(d, \mathbb{R})$ are treated in \cite{Furstenberg}, where the Poisson boundary is related to the space of flags. For random walks on Lie groups, the study and the description of the Poisson boundary  was extensively developed in the 70's and the 80's by many authors, most notably Furstenberg. The Poisson boundary of some Fuchsian groups has also been described by Series \cite{Series} as being the limit set of the group. Kaimanovich \cite{Kai1} identified the Poisson boundary of hyperbolic groups with their Gromov boundaries with the associated hitting measures. Karlsson and Margulis proved that visual boundaries of nonpositively curved spaces serve as models for their Poisson boundaries \cite{KarlMarg}. Kaimanovich and Masur \cite{KM2, KM} proved that the Poisson boundary of the mapping class group is the boundary of Thurston's compactification of Teichm\"uller space. Their description also runs for the Poisson boundary of the braid group; see \cite{braids}.

\subsection*{Acknowledgement}
Y.Q.  thanks the University of Toronto for its hospitality in Fall 2020.
K.R. is partially supported by NSERC. 
G.T. is partially supported by NSERC and the Alfred P. Sloan foundation. 
This material is based upon work supported by the National Science Foundation under Grant No. DMS-1928930 while G.T. participated in a program hosted by the Mathematical Sciences Research Institute in Berkeley, California, during the Fall 2020 semester.  
We thank Ilya Gekhtman for several useful conversations.

\section{Preliminaries}

Let $(X, d_X)$ be a proper, geodesic metric space, and let $\go \in X$ be a base point. 
Given a point $p \in X$, we denote $\Vert p \Vert := d_X(\go, p)$.
Let $\alpha$ be a quasi-geodesic ray starting at $\go$. For $r>0$, let $t_r$ be the first time where $\Norm{\alpha(t_r)}=r$ and denote the point  $\alpha(t_r)$ with
\[
\alpha_r : = \alpha(t_r),\]
while the segment from $\alpha(0)$ to $\alpha(t_r)$ will be denoted as
\[\alpha|_{r} := \alpha([0,t_r]).\]

We collect here the two basic geometric properties of the space that we need:
\begin{lemma}
Let $(X, d_X)$ be a proper, geodesic metric space. Then:
\begin{itemize}
\item For any closed set $Z \subset X$ and any point $x \in X$, there is a closed
set $\pi_Z(x)$ of nearest points in $Z$ to $x$. We refer to any point in $\pi_Z(x)$ 
as a nearest point projection from $x$ to $Z$.
\item Any sequence of geodesics $\beta_{n} \from [0, n] \to X$ with $\beta_{0} =\go$ has a subsequence that converges uniformly on compact sets to a geodesic ray $\beta_{0} \from [0, \infty) \to X$. 
\end{itemize} 
\end{lemma}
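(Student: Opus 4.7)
Both statements are standard consequences of properness (closed balls are compact), and the plan is to reduce each to a compactness argument.

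For the first bullet, fix $x \in X$ and a closed set $Z$, and set $d := \inf_{z \in Z} d_X(x,z)$. The plan is to show that this infimum is attained. Choose a minimizing sequence $z_n \in Z$ with $d_X(x,z_n) \to d$. For all large $n$, the points $z_n$ lie in the closed ball $\overline{B}(x, d+1)$, which is compact by properness of $X$. Extract a convergent subsequence $z_{n_k} \to z_\infty$; since $Z$ is closed, $z_\infty \in Z$, and by continuity of $d_X(x,\param)$ we get $d_X(x,z_\infty) = d$. Hence $\pi_Z(x)$ is nonempty. It is closed because it is the preimage of $\{d\}$ under the continuous map $z \mapsto d_X(x,z)$ restricted to $Z$. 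This part is routine and should be straightforward.

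For the second bullet, the plan is a direct Arzel\`a--Ascoli argument. Each geodesic $\beta_n \from [0,n] \to X$ is $1$-Lipschitz (in the arclength parametrization) and anchored at $\beta_n(0) = \go$, so the family $\{\beta_n\}$ is equicontinuous. To obtain pointwise precompactness, note that for each fixed $t \geq 0$, the point $\beta_n(t)$ (defined for $n \geq t$) lies in the closed ball $\overline{B}(\go, t)$, which is compact since $X$ is proper. By a diagonal extraction over an exhaustion of $[0,\infty)$ by compact intervals $[0,N]$, combined with the Arzel\`a--Ascoli theorem, there is a subsequence converging uniformly on compact sets to a $1$-Lipschitz map $\beta_\infty \from [0,\infty) \to X$ with $\beta_\infty(0) = \go$.

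It remains to upgrade the limit from $1$-Lipschitz to an isometric embedding (i.e., a geodesic ray). For any $0 \leq s \leq t$, choosing $n$ in the subsequence with $n \geq t$, we have $d_X(\beta_n(s), \beta_n(t)) = t - s$. Passing to the limit, $d_X(\beta_\infty(s), \beta_\infty(t)) = t - s$, so $\beta_\infty$ is a geodesic ray emanating from $\go$. The only step that requires a moment of care is the diagonal extraction, but this is entirely standard and I don't foresee any substantive obstacle in either part of the lemma.
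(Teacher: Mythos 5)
Your proof is correct. The paper states this lemma without proof, treating both assertions as standard consequences of properness, and your argument is exactly the standard one: closed balls are compact, so a minimizing sequence for the distance to $Z$ subconverges to a realizing point (nonemptiness), and $\pi_Z(x)$ is closed as a level set of a continuous function; for the second bullet, $1$-Lipschitzness plus pointwise precompactness in compact balls gives Arzel\`a--Ascoli with a diagonal extraction, and the isometry condition $d_X(\beta_n(s),\beta_n(t)) = t-s$ passes to the limit. No gaps. (One cosmetic remark: the statement in the paper has a typo, writing $\beta_0 = \go$ for the hypothesis and reusing $\beta_0$ for the limit ray; you correctly read the hypothesis as $\beta_n(0) = \go$.)
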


The following lemma about nearest point projections will be used several times. 

\begin{lemma} \label{L:twice}
Let $\alpha, \beta$ be two quasi-geodesic rays starting at the base point $\go \in X$. Let $x$ be a point on $\alpha$, 
and let $p$ be a nearest point projection of $x$ onto $\beta$. Then we have 
$$\Vert p \Vert \leq 2 \Vert x \Vert.$$
\end{lemma}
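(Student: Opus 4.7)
The proof should be a quick application of the triangle inequality together with the defining property of nearest-point projection. The plan is as follows.

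First I would observe that since $\beta$ is a quasi-geodesic ray starting at $\go$, the base point $\go$ itself lies on $\beta$. Consequently, $\go$ is one candidate for the nearest point projection of $x$ onto $\beta$, so by the minimality property of $p$ we get
\[
d_X(x, p) \;\le\; d_X(x, \go) \;=\; \Vert x \Vert.
\]

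Next, I would apply the triangle inequality to the triple $(\go, x, p)$:
\[
\Vert p \Vert \;=\; d_X(\go, p) \;\le\; d_X(\go, x) + d_X(x, p) \;\le\; \Vert x \Vert + \Vert x \Vert \;=\; 2\Vert x \Vert,
\]
which gives the desired bound. There is no real obstacle here; the only subtlety worth flagging is the fact that $\go \in \beta$, which is immediate from the convention (stated just before the lemma) that all rays are parametrized from the base point.
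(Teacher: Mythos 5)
Your proof is correct and takes exactly the same route as the paper: use $\go \in \beta$ to deduce $d_X(x,p) \le d_X(x,\go)$ from minimality of the nearest-point projection, then apply the triangle inequality to $(\go, x, p)$. Nothing to add.
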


\begin{proof}
Note that, since $p$ is a nearest point projection of $x$ onto $\beta$ and $\go$ belongs to $\beta$, we have 
$$d_X(x, p) \leq d_X(\go, x).$$
Hence, by the triangle inequality, 
\begin{equation*}\Vert p \Vert = d_X(\go, p) \leq d_X(\go, x) + d_X(x, p) \leq 2 d_X(\go, x) = 2 \Vert x \Vert.
\qedhere
\end{equation*}
\end{proof}

\subsection{Sublinear functions}

In this paper, a \emph{sublinear function} will be a function $\kappa \from [0, \infty) \to [1, \infty)$ such that 
\[
\lim_{t \to \infty} \frac{\kappa(t)}{t}  = 0.
\]
Moreover, we say that $\kappa \from [0, \infty) \to [1, \infty)$ is a \emph{concave sublinear function} if it is sublinear and moreover it is increasing and concave. 

\begin{remark}
The assumptions that $\kappa$ is increasing and concave make certain arguments
cleaner, otherwise they are not really needed. One can always replace any 
sublinear function $\kappa$ with another sublinear function $\overline \kappa$
so that $\kappa(t) \leq \overline \kappa(t) \leq C \, \kappa(t)$ for some constant $C$ 
and $\overline \kappa$ is monotone increasing and concave. For example, define 
\[
\overline \kappa(t) := \sup \Big\{ \lambda \kappa(u) + (1-\lambda)\cdot \kappa(v) \ST 
\ 0 \leq \lambda \leq 1, \ u,v>0, \ \text{and}\ \lambda u + (1-\lambda)v =t \Big\}.
\]
\end{remark}

\begin{lemma}
If $\kappa \from [0, \infty) \to [1, \infty)$ is a concave sublinear function and $\lambda > 1$, then 
$$\kappa(\lambda t) \leq \lambda \kappa(t)$$
for any $t \geq 0$.
\end{lemma}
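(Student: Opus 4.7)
The plan is to exploit concavity directly by writing $t$ as a convex combination of $\lambda t$ and $0$. Since $\lambda > 1$, we have $1/\lambda \in (0,1)$, and
\[
t = \frac{1}{\lambda}\cdot (\lambda t) + \left(1-\frac{1}{\lambda}\right)\cdot 0.
\]
Applying concavity of $\kappa$ to this convex combination yields
\[
\kappa(t) \;\geq\; \frac{1}{\lambda}\,\kappa(\lambda t) + \left(1-\frac{1}{\lambda}\right)\kappa(0).
\]

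Next I would use that $\kappa$ takes values in $[1,\infty)$, so in particular $\kappa(0) \geq 1 > 0$, and the coefficient $1 - 1/\lambda$ is nonnegative. Hence the second term on the right is nonnegative and can be dropped, giving
\[
\kappa(t) \;\geq\; \frac{1}{\lambda}\,\kappa(\lambda t),
\]
which rearranges to the desired inequality $\kappa(\lambda t) \leq \lambda \kappa(t)$. The case $t=0$ is immediate from $\kappa(0) \leq \lambda \kappa(0)$.

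There is no real obstacle here: concavity together with the positivity of $\kappa$ at the origin is exactly the correct hypothesis. Note that neither the monotonicity of $\kappa$ nor sublinearity is actually invoked — the statement is a general property of nonnegative concave functions on $[0,\infty)$ and a scalar $\lambda \geq 1$.
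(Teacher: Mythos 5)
Your proof is correct and uses exactly the same argument as the paper: write $t$ as the convex combination $\tfrac{1}{\lambda}(\lambda t) + (1 - \tfrac{1}{\lambda})\cdot 0$, apply concavity, and drop the nonnegative $\kappa(0)$ term. The added remarks (the $t=0$ case, and that monotonicity and sublinearity are not used) are accurate but inessential.
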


\begin{proof}
By concavity, 
$$\kappa(t) = \kappa \left( \frac{1}{\lambda} \lambda t + (1 - \lambda^{-1}) \cdot 0 \right) \geq \frac{1}{\lambda} \kappa(\lambda t) + (1 - \lambda^{-1})\cdot \kappa(0) \geq \frac{1}{\lambda} \kappa(\lambda t) $$
from which the claim follows.
\end{proof}

\section{The $\kappa$-Morse boundary}

We now introduce the definition of $\kappa$-Morse quasi-geodesic, which will be fundamental for our construction. 
To set the notation, we say a quantity $D$ \emph{is small compared to} a radius $r >0$ if 
\begin{equation} \label{Eq:Small} 
D \leq \frac{r}{2\kappa(r)}. 
\end{equation} 

We will fix once and for all a base point $\go \in X$, and all quasi-geodesic rays we consider will be based at $\go$. 
Given a quasi-geodesic ray $\alpha$ and a constant $m$, we define 
$$\mathcal{N}_{\kappa}(\alpha, m) := \Big\{ x \in X \ : \ d_X(x, \alpha) \leq m \cdot \kappa(\Vert x \Vert) \Big\}.$$

The following observation will be useful. 

\begin{lemma}\label{L:symmetry}
Let $\beta$ be a quasi-geodesic ray and $\alpha$ be a geodesic ray, both based at $\go \in X$. Suppose 
that 
$$\beta \subseteq \mathcal{N}_{\kappa}(\alpha, m)$$
for some function $\kappa$ and some constant $m$. Then we also have
$$\alpha \subseteq \mathcal{N}_{\kappa}(\beta , 2 m).$$
\end{lemma}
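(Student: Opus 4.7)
The plan is to find, for each $x \in \alpha$, an explicit nearby point on $\beta$ by exploiting the fact that $\alpha$ is a geodesic ray emanating from $\go$. First I would set $r := \Vert x \Vert$ so that $x = \alpha(r)$, and invoke the point $y := \beta_r$ of $\beta$ introduced in the Preliminaries, i.e.\ the first point of $\beta$ at distance $r$ from $\go$, which exists by continuity of $\beta$ and the intermediate value theorem.

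Next I would apply the hypothesis $\beta \subseteq \calN_\kappa(\alpha, m)$ to the point $y$, producing a nearest-point projection $p \in \alpha$ with
\[
d_X(y, p) \leq m \kappa(\Vert y \Vert) = m \kappa(r).
\]
The crucial observation is that, since both $x$ and $p$ lie on the geodesic ray $\alpha$ based at $\go$, we have $d_X(x, p) = \bigl| \Vert x \Vert - \Vert p \Vert \bigr|$. Two applications of the triangle inequality, using $\Vert y \Vert = r$, then give $\bigl| \Vert p \Vert - r \bigr| \leq d_X(y, p) \leq m\kappa(r)$, and hence $d_X(x, p) \leq m\kappa(r)$.

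A final triangle inequality yields
\[
d_X(x, \beta) \leq d_X(x, y) \leq d_X(x, p) + d_X(p, y) \leq 2m \kappa(r) = 2m\kappa(\Vert x \Vert),
\]
which is exactly the statement that $x \in \calN_\kappa(\beta, 2m)$.

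I do not foresee any substantive obstacle: the argument is essentially a double triangle inequality, and the factor of $2$ in the conclusion reflects that $\pi_\alpha(y)$ is not literally equal to $x$ but only within $m\kappa(r)$ of it along $\alpha$. The only mild subtlety is that the choice of $y$ at the exact $\Vert\cdot\Vert$-value $r$ uses the continuity of $\beta$ already implicit in the definition of $\beta_r$ in the Preliminaries; were one to drop that convention, an approximate choice with $\Vert y \Vert \approx r$ would suffice, and monotonicity of $\kappa$ would absorb the discrepancy at the cost of a slightly larger constant.
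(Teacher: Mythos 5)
Your proof is correct and follows essentially the same route as the paper's: both arguments pick the point of $\beta$ at norm $r$, project it to $\alpha$, use that $\alpha$ is a geodesic to convert the distance bound into a bound along $\alpha$ via the reverse triangle inequality, and conclude with one more triangle inequality. The only difference is notational (your $x, y, p$ correspond to the paper's $y, z, q$).
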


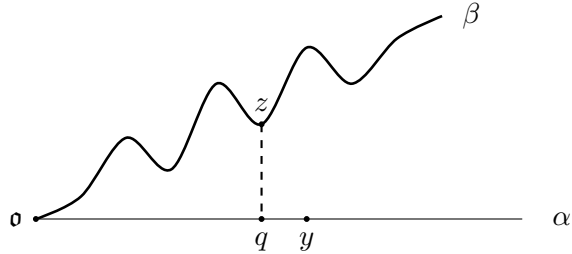
\begin{figure}[h!]
\begin{tikzpicture}[scale=0.6]
 \tikzstyle{vertex} =[circle,draw,fill=black,thick, inner sep=0pt,minimum size=.5 mm]
[thick, 
    scale=1,
    vertex/.style={circle,draw,fill=black,thick,
                   inner sep=0pt,minimum size= .5 mm},
                  
      trans/.style={thick,->, shorten >=6pt,shorten <=6pt,>=stealth},
   ]

  \node[vertex] (o) at (0,0)[label=left:$\go$] {}; 
  \node(a) at (11,0)[label=right:$\alpha$] {}; 
  
  \draw (o)--(a){};
  \node at (9,4.5) [label=right:$\beta$] {};
   
  \draw[thick, dashed] (5, 2.1) to(5, 0) {}; 
  \node[vertex] at(6, 0)[label=below:$y$] {}; 
  \node[vertex] at(5, 0)[label=below:$q$] {}; 
  \node[vertex] at(5, 2.1)[label=above:$z$] {}; 

  \pgfsetlinewidth{1pt}
  \pgfsetplottension{.55}
  \pgfplothandlercurveto
  \pgfplotstreamstart
  \pgfplotstreampoint{\pgfpoint{0cm}{0cm}}
  \pgfplotstreampoint{\pgfpoint{1cm}{0.5cm}}
  \pgfplotstreampoint{\pgfpoint{2cm}{1.8cm}}
  \pgfplotstreampoint{\pgfpoint{3cm}{1.1cm}}
  \pgfplotstreampoint{\pgfpoint{4cm}{3cm}}
  \pgfplotstreampoint{\pgfpoint{5cm}{2.1cm}}
  \pgfplotstreampoint{\pgfpoint{6cm}{3.8cm}}
  \pgfplotstreampoint{\pgfpoint{7cm}{3cm}}
  \pgfplotstreampoint{\pgfpoint{8cm}{4cm}}
  \pgfplotstreampoint{\pgfpoint{9cm}{4.5cm}}
  \pgfplotstreamend
  \pgfusepath{stroke} 
\end{tikzpicture}
\caption{$\Norm y = \Norm z$ and $q \in \pi_{\alpha} (z)$ as in the proof of Lemma~\ref{L:symmetry}. }
\end{figure}

\begin{proof}

Let $y \in \alpha$ be a point and let $r : = \Norm {y}$. Let $z \in \beta$ be a point such that $\Vert z \Vert = r$
and let $q$ be a nearest point projection of $z$ to $\alpha$. 
By assumption, 
\[
d_X(z, q) \leq m \cdot \kappa(r).
\] 
On the other hand,
\begin{align*}
d_X(y, q) & = | \Vert y \Vert- \Norm q | & \text{since $\alpha$ is geodesic} \\
 	   & = | \Vert z  \Vert - \Norm q | & \\
           & \leq d_X(z, q) & \text{by triangle inequality}.
\end{align*}
Therefore we have
\begin{align*}
d_X(y, \beta) \leq d_X(y, z) &\leq d_X(y, q) + d_X(q, z) \\
                                              & \leq 2 d_X(z, q) \\ 
                                              & \leq 2 m \cdot \kappa(r)
\end{align*}
which completes the proof.
\end{proof}

\begin{definition} \label{D:k-morse}
Let $Z \subseteq X$ be a closed set, and let $\kappa$ be a concave sublinear function. 
We say that $Z$ is \emph{$\kappa$-Morse} if there exists a proper function 
$m_Z : \mathbb{R}^2 \to \mathbb{R}$ such that for any sublinear function $\kappa'$ 
and for any $r > 0$, there exists $R$ such that for any $(q, Q)$-quasi-geodesic ray $\beta$
with $m_Z(q, Q)$ small compared to $r$, if 
$$d_X(\beta_R, Z) \leq \kappa'(R)
\qquad\text{then}\qquad
\beta|_r \subset \calN\big(Z, m_Z(q, Q)\big)$$
The function $m_Z$ will be called a $\emph{Morse gauge}$ of $Z$. 
\end{definition}

Note that we can always assume without loss of generality that $\max \{ q, Q \} \leq m_Z(q, Q)$, and we will assume this in the following.

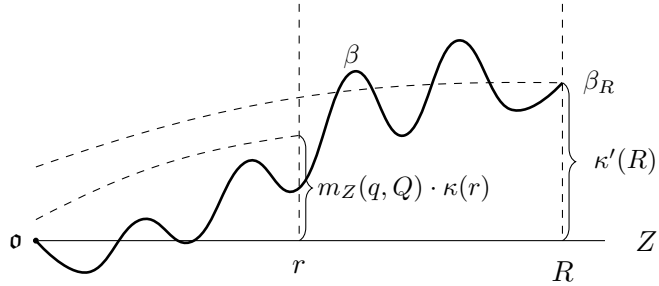
\begin{figure}[h!]
\begin{tikzpicture}[scale=0.7]
 \tikzstyle{vertex} =[circle,draw,fill=black,thick, inner sep=0pt,minimum size=.5 mm]
[thick, 
    scale=1,
    vertex/.style={circle,draw,fill=black,thick,
                   inner sep=0pt,minimum size= .5 mm},
                  
      trans/.style={thick,->, shorten >=6pt,shorten <=6pt,>=stealth},
   ]

  \node[vertex] (o) at (0,0)[label=left:$\go$] {}; 
  \node(a) at (11,0)[label=right:$Z$] {}; 
  
  \draw (o)--(a){};
  \draw [dashed] (0, 1.4) to [bend left = 10] (10,3){};
  \draw [dashed] (0, 0.4) to [bend left = 10] (5,2){};
  
   \draw [decorate,decoration={brace,amplitude=5pt},xshift=0pt,yshift=0pt]
  (5, 2) -- (5,0)  node [thick, black,midway,xshift=0pt,yshift=0pt] {};   
  \node at (7,1){\small $m_{Z}(q, Q) \cdot \kappa(r)$};
     
 \draw [dashed] (10, 4.5) to (10,0) {};
 \node at (10,0)[label=below:$R$] {};
 \draw [dashed] (5, 4.5) to (5,0){};
 \node at (5,0)[label=below:$r$] {};
        
  \draw [decorate,decoration={brace,amplitude=5pt},xshift=0pt,yshift=0pt]
  (10, 3) -- (10,0)  node [thick, black,midway,xshift=0pt,yshift=0pt] {};       
  \node at (11.2, 1.5) { \small $ \kappa'(R)$};
   \node at (10.7, 3) {\small  $ \beta_{R}$};
    \node at (6, 3.5) {\small  $\beta$};
        
  \pgfsetlinewidth{1pt}
  \pgfsetplottension{.75}
  \pgfplothandlercurveto
  \pgfplotstreamstart
  \pgfplotstreampoint{\pgfpoint{0cm}{0cm}}  
  \pgfplotstreampoint{\pgfpoint{1cm}{-.6cm}}   
  \pgfplotstreampoint{\pgfpoint{2cm}{0.4cm}}
  \pgfplotstreampoint{\pgfpoint{3cm}{0cm}}
  \pgfplotstreampoint{\pgfpoint{4cm}{1.5cm}}
  \pgfplotstreampoint{\pgfpoint{5cm}{1cm}}
  \pgfplotstreampoint{\pgfpoint{6cm}{3.2cm}}
  \pgfplotstreampoint{\pgfpoint{7cm}{2cm}}
  \pgfplotstreampoint{\pgfpoint{8cm}{3.8cm}}
  \pgfplotstreampoint{\pgfpoint{9cm}{2.5cm}}
  \pgfplotstreampoint{\pgfpoint{10cm}{3cm}}
  \pgfplotstreamend
  \pgfusepath{stroke} 
\end{tikzpicture}
\caption{Definition of $\kappa$-Morse set $Z$: Every quasi-geodesic ray $\beta$ has the property that there exists $R(Z, r, q, Q, \kappa')$, such that if $\beta_{R}$ is distance $\kappa'(R)$ from $Z$, then $\beta|_{r}$ is in the neighborhood $\calN_{\kappa}(Z, m_{Z}(q, Q))$. }
\label{Fig:Strong1} 
\end{figure}

\subsection{Equivalence classes}

\begin{definition}
Given two quasi-geodesic rays $\alpha$, $\beta$ based at $\go$, we say that $\beta \sim \alpha$ if they sublinearly track each other: 
i.e. if 
$$\lim_{r \to \infty} \frac{d_X(\alpha_r, \beta_r)}{r} = 0.$$
\end{definition}

By the triangle inequality, $\sim$ is an equivalence relation on the space of quasi-geodesic rays based at $\go$, hence also 
on the space of $\kappa$-Morse quasi-geodesic rays.

\begin{lemma} \label{L:equivalence}
Let $\alpha$ be a $\kappa$-Morse quasi-geodesic rays with Morse gauge $m_\alpha$, and let $\beta \sim \alpha$ be a $(q, Q)$-quasi-geodesic ray. 
Then:
\begin{enumerate}[(i)]
\item $\beta$ is $\kappa$-Morse, and moreover
$$\beta \subseteq \mathcal{N}_{\kappa}(\alpha, m_\alpha(q, Q));$$
\item if in addition $\alpha$ is a geodesic ray, then 
\[
\alpha \subseteq \mathcal{N}_{\kappa}(\beta, 2m_\alpha(q, Q)).
\]
\end{enumerate}
\end{lemma}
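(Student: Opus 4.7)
The plan is to prove the claims in three stages: (A) the containment in (i), namely $\beta \subseteq \mathcal{N}_\kappa(\alpha, m_\alpha(q,Q))$; (B) statement (ii); and (C) the $\kappa$-Morse property of $\beta$ in (i). The main tools are the $\kappa$-Morse property of $\alpha$, Lemma~\ref{L:twice} for nearest-point projections, and Lemma~\ref{L:symmetry} for symmetrizing containments against a geodesic.

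For (A), I would apply the $\kappa$-Morse definition of $\alpha$ directly to the quasi-geodesic $\beta$ with the sublinear function $\kappa'(R) := \max\{d_X(\alpha_R, \beta_R), 1\}$; sublinearity of $\kappa'$ follows from $\beta \sim \alpha$, and $d_X(\beta_R, \alpha) \leq \kappa'(R)$ holds automatically since $\alpha_R \in \alpha$. For each $r$ large enough that $m_\alpha(q,Q)$ is small compared to $r$, the Morse definition produces an $R$ yielding $\beta|_r \subseteq \mathcal{N}_\kappa(\alpha, m_\alpha(q,Q))$; letting $r \to \infty$ completes the containment. Statement (B) is then an immediate application of Lemma~\ref{L:symmetry} to (A) under the extra hypothesis that $\alpha$ is a geodesic.

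For (C), given a $(q',Q')$-quasi-geodesic $\gamma$ with $d_X(\gamma_R, \beta) \leq \kappa''(R)$ for some sublinear $\kappa''$, the strategy is a three-step transfer: first, using Lemma~\ref{L:twice}, the nearest point $p \in \beta$ to $\gamma_R$ satisfies $\Vert p \Vert \leq 2R$, so by (A) and the preliminary concavity lemma $d_X(p, \alpha) \leq 2m_\alpha(q,Q) \kappa(R)$, giving $d_X(\gamma_R, \alpha) \leq \kappa''(R) + 2m_\alpha(q,Q)\kappa(R)$, still sublinear; second, the $\kappa$-Morse property of $\alpha$ then places $\gamma|_r \subseteq \mathcal{N}_\kappa(\alpha, m_\alpha(q',Q'))$ for a suitable $R$; third, one transfers back via a reverse containment $\alpha \subseteq \mathcal{N}_\kappa(\beta, C)$: for $x \in \gamma|_r$ with nearest point $y \in \alpha$, Lemma~\ref{L:twice} gives $\Vert y \Vert \leq 2 \Vert x \Vert$, hence $d_X(x, \beta) \leq (m_\alpha(q',Q') + 4m_\alpha(q,Q)) \kappa(\Vert x \Vert)$, producing the Morse gauge $m_\beta(q',Q') := m_\alpha(q',Q') + 4m_\alpha(q,Q)$.

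The principal obstacle is supplying the reverse containment $\alpha \subseteq \mathcal{N}_\kappa(\beta, C)$ in step three when $\alpha$ is merely a quasi-geodesic, as Lemma~\ref{L:symmetry} supplies it only in the geodesic case: a backtracking $\alpha$ could in principle have interior loop points at a given radius far from $\beta$, even though the first-visit points $\alpha_r$ sublinearly track $\beta_r$. My plan here is to extract a geodesic ray $\alpha^* \sim \alpha$ as a subsequential Arzelà–Ascoli limit of the segments $[\go, \alpha_{R_n}]$ with $R_n \to \infty$, available by properness of $X$. A limiting application of the $\kappa$-Morse property of $\alpha$ to the rays obtained by extending each $[\go, \alpha_{R_n}]$ along $\alpha$ past $\alpha_{R_n}$ establishes $\alpha^* \subseteq \mathcal{N}_\kappa(\alpha, m_\alpha(1,0))$, from which (together with Lemma~\ref{L:symmetry} applied to $\alpha^*$) both $\alpha \subseteq \mathcal{N}_\kappa(\alpha^*, C_1)$ and $\alpha^* \subseteq \mathcal{N}_\kappa(\beta, C_2)$ follow, closing the loop to give $\alpha \subseteq \mathcal{N}_\kappa(\beta, C)$ and completing (C).
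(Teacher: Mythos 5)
Your parts (A) and (B) coincide exactly with the paper's: the same choice of sublinear function $\kappa'(R):=d_X(\alpha_R,\beta_R)$ for the containment, and Lemma~\ref{L:symmetry} for (ii). For (C) you reproduce the paper's three-step transfer (nearest point projections controlled by Lemma~\ref{L:twice}, triangle inequality, ending with the same Morse gauge $m_\beta(q',Q') = m_\alpha(q',Q') + 4m_\alpha(q,Q)$), and you are right to single out the ``reverse containment'' $\alpha \subseteq \mathcal{N}_\kappa(\beta,C)$ as the delicate point: the paper's own step (``if $q$ is a point on $\alpha$ and $s$ is a nearest point projection of $q$ to $\beta$, \dots\ $d_X(s,q) \leq m_\alpha(q,Q)\kappa(\Vert s\Vert)$'') is exactly the assertion $\alpha\subseteq\mathcal{N}_\kappa(\beta,m_\alpha(q,Q))$, which by part (ii) they only justify when $\alpha$ is geodesic. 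The paper handles this tacitly by effectively working with a geodesic representative (cf.\ Corollary~\ref{Cor:m_beta}, where the gauge of $\beta$ is defined through a geodesic $\beta_0$).

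Your proposed repair, however, does not close the loop as stated. You claim that ``Lemma~\ref{L:symmetry} applied to $\alpha^*$'' yields $\alpha \subseteq \mathcal{N}_\kappa(\alpha^*, C_1)$ from $\alpha^* \subseteq \mathcal{N}_\kappa(\alpha, m_\alpha(1,0))$. But Lemma~\ref{L:symmetry} only passes from ``quasi-geodesic contained in a sublinear neighborhood of a \emph{geodesic}'' to ``geodesic contained in a sublinear neighborhood of that quasi-geodesic''; it cannot be applied to the containment you have, where the geodesic $\alpha^*$ is already the one on the left and $\alpha$, on the right, is only a quasi-geodesic. Symmetrizing $\alpha^* \subseteq \mathcal{N}_\kappa(\alpha, \cdot)$ to get $\alpha \subseteq \mathcal{N}_\kappa(\alpha^*, \cdot)$ is precisely the nontrivial direction, and neither your sketch nor the paper's one-line invocation of ``the Morse property'' supplies it. The correct route is to first show that the geodesic representative $\alpha^*$ is itself $\kappa$-Morse (a statement that the paper also uses without a separate argument) and that $\alpha \sim \alpha^*$, and then run the whole of (i)--(ii) with $\alpha^*$ in place of $\alpha$; as written, your step ``$\alpha \subseteq \mathcal{N}_\kappa(\alpha^*, C_1)$'' begs the same question you set out to fix, so part (C) is not complete.
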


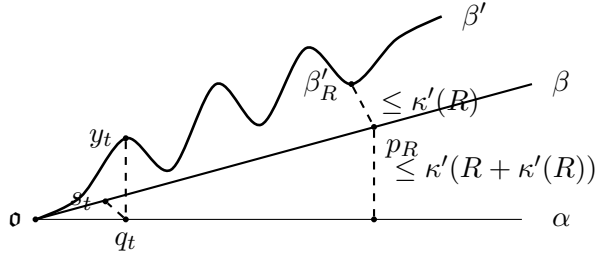
\begin{figure}[h!]
\begin{tikzpicture}[scale=0.6]
 \tikzstyle{vertex} =[circle,draw,fill=black,thick, inner sep=0pt,minimum size=.5 mm]
[thick, 
    scale=1,
    vertex/.style={circle,draw,fill=black,thick,
                   inner sep=0pt,minimum size= .5 mm},
                  
      trans/.style={thick,->, shorten >=6pt,shorten <=6pt,>=stealth},
   ]

  \node[vertex] (o) at (0,0)[label=left:$\go$] {}; 
  \node(a) at (11,0)[label=right:$\alpha$] {}; 
  
  \draw (o)--(a){};
  \draw [-, thick] (o) to (11, 3){};
  \node at (11,3) [label=right:$\beta$] {}; 
  \node at (9,4.5) [label=right:$\beta'$] {};
   
  \node at (7.2, 2.6) [label=right:$\leq \kappa'(R)$] {}; 
  \node at (7.5, 1.1) [label=right:$\leq \kappa'(R + \kappa'(R))$] {}; 
   
  \node[vertex] (a1) at (2, 1.8)[label=left:$y_t$] {}; 
  \node[vertex] (a2) at(2, 0)[label=below:$q_{t}$] {}; 
  \node[vertex] (a3) at(1.55, 0.4)[label=left:$s_{t}$] {}; 
  \node[vertex] (b1) at(7, 3)[label=left:$\beta'_R$] {}; 
  \node[vertex](b2) at (7.5, 2.05)[label=below right:$p_{R}$] {}; 
  \node[vertex] (b3) at(7.5, 0)[label=left:$$] {}; 
   
  \draw[thick, dashed] (a1)to(a2) {}; 
  \draw[thick, dashed] (a2)to(a3) {}; 
   
  \draw[thick, dashed] (b1) to (b2) {}; 
  \draw[thick, dashed] (b2) to (b3) {}; 

  \pgfsetlinewidth{1pt}
  \pgfsetplottension{.55}
  \pgfplothandlercurveto
  \pgfplotstreamstart
  \pgfplotstreampoint{\pgfpoint{0cm}{0cm}}
  \pgfplotstreampoint{\pgfpoint{1cm}{0.5cm}}
  \pgfplotstreampoint{\pgfpoint{2cm}{1.8cm}}
  \pgfplotstreampoint{\pgfpoint{3cm}{1.1cm}}
  \pgfplotstreampoint{\pgfpoint{4cm}{3cm}}
  \pgfplotstreampoint{\pgfpoint{5cm}{2.1cm}}
  \pgfplotstreampoint{\pgfpoint{6cm}{3.8cm}}
  \pgfplotstreampoint{\pgfpoint{7cm}{3cm}}
  \pgfplotstreampoint{\pgfpoint{8cm}{4cm}}
  \pgfplotstreampoint{\pgfpoint{9cm}{4.5cm}}
  \pgfplotstreamend
  \pgfusepath{stroke} 
  \end{tikzpicture}
  \caption{The setup in the proof of Lemma \ref{L:equivalence} (i).}
\end{figure}

\begin{proof}
(i) Define $\kappa'(r) := d_X(\alpha_r, \beta_r)$. By definition of $\sim$, the function $\kappa'$ is sublinear, 
and moreover, 
for any $R > 0$,
$$d_X(\beta_R, \alpha) \leq \kappa'(R).$$
Hence, since $\alpha$ is $\kappa$-Morse, for any $r$ we have 
$$\beta |_r \leq m_\alpha(q, Q)\cdot \kappa(r)$$
thus
\begin{equation} \label{E:Morse1}
\beta \subseteq \mathcal{N}_{\kappa}(\alpha, m_\alpha(q, Q)),
\end{equation}
which proves the second part of (i).
Let us now prove that $\beta$ is $\kappa$-Morse. Let $\kappa'$ be a sublinear function, let $r > 0$ 
and let $\beta'$ be a $(q', Q')$-quasi-geodesic ray such that 
$$d_X(\beta'_R, \beta) \leq \kappa'(R)$$
for some sufficiently large $R$. 
Let $p_R$ be a nearest point projection 
of $\beta'_R$ to $\beta$; by Lemma \ref{L:twice} we have 
$$\Vert p_R \Vert \leq 2 \Vert \beta_R' \Vert = 2 R.$$
Then, by the triangle inequality and equation \eqref{E:Morse1}, 
\begin{align*}
d_X(\beta'_R, \alpha) & \leq d_X(\beta'_R, p_R) + d_X(p_R, \alpha) \\
& \leq  \kappa'(R) + m_\alpha(q, Q)\cdot \kappa (\Vert p_R \Vert) \\
& \leq \kappa'(R) + m_\alpha(q, Q)\cdot \kappa(2 R).
\end{align*}
Since $\kappa''(R) := \kappa'(R) + m(q, Q)\cdot \kappa(2 R)$ is also a sublinear function, and since $\alpha$ is $\kappa$-Morse, 
this implies that 
$$\beta|_{r} \subseteq \mathcal{N}_{\kappa}(\alpha, m_\alpha(q', Q')). $$
Let $y_t$ be any point on $\beta'$ with $\Vert y_t \Vert = t \leq r$. 
By Lemma \ref{L:twice}, if $q_t$ is a nearest point projection of $y_t$ to $\alpha$, we have 
 $$\Vert q_t \Vert \leq 2 \Vert y_t \Vert = 2 t. $$
Now, if $q$ is a point on $\alpha$ and $s$ is a nearest point projection of $q$ to $\beta$,  by the triangle inequality and the Morse property,
$$\Vert q \Vert \geq \Vert s \Vert - d_X(s, q) \geq  \Vert s \Vert - m_\alpha(q, Q)\cdot \kappa(\Vert s \Vert).$$
Moreover, by Lemma \ref{L:twice}, $\Vert s \Vert \leq 2 \Vert q \Vert$,
hence by concavity
$$d_X(s, q) \leq m_\alpha(q, Q)\cdot \kappa(\Vert s \Vert) \leq  2 m_\alpha(q, Q)\cdot \kappa (\Vert q \Vert).$$
Now, if $s_t$ is a nearest point projection of $q_t$ to $\beta$, the above estimate yields
\begin{align*}
d_X(q_t, s_t) & \leq 2 m_\alpha(q, Q)\cdot \kappa (\Vert q_t \Vert) \\
& \leq 4 m_\alpha(q, Q)\cdot \kappa (t)
\end{align*}
hence, putting everything together, 
\begin{align*}
d_X(y_t, \beta) & \leq d_X(y_t, q_t) + d_X(q_t, s_t) \\
& \leq m_\alpha(q', Q')\cdot \kappa(t) +  4 m_\alpha(q, Q)\cdot \kappa (t)
\end{align*}
which, by setting $m_\beta(q', Q') := m_\alpha(q', Q')+ 4 m_\alpha(q, Q)$, 
proves the claim.

(ii) It follows immediately from (i) and Lemma \ref{L:symmetry}. 
\end{proof}

\begin{corollary} \label{Cor:m_beta}
If $\beta$ is a $(q, Q)$-quasi-geodesic ray, and $\beta_{0}$ is a $\kappa$-Morse geodesic ray such that $\beta \sim \beta_{0}$, then the function
\[
m_\beta(\param, \param) := m_{\beta_{0}}(\param, \param) + 4 m_{\beta_{0}}(q, Q)
\]
is a Morse gauge for $\beta$. In particular, the Morse gauge depends only on
$m_{\beta_{0}}$, $q$ and $Q$ and not on the particular quasi-geodesic $\beta$. 
\end{corollary}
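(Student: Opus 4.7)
The plan is to observe that the proof of Lemma~\ref{L:equivalence}(i), applied with $\alpha = \beta_0$, already yields an explicit Morse gauge for $\beta$, and that the gauge it produces depends only on the data $(m_{\beta_0}, q, Q)$ rather than on $\beta$ itself. So essentially no new work is needed: the corollary is a bookkeeping statement extracting the constants constructed inside that proof.

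More concretely, I would proceed as follows. First, apply Lemma~\ref{L:equivalence}(i) with $\alpha := \beta_0$ to get
\[
\beta \subseteq \calN_\kappa\bigl(\beta_0, m_{\beta_0}(q,Q)\bigr).
\]
Now fix an arbitrary $(q', Q')$-quasi-geodesic ray $\beta'$ and a sublinear function $\kappa'$, and suppose $d_X(\beta'_R, \beta) \leq \kappa'(R)$ for some large $R$. Following the proof of Lemma~\ref{L:equivalence}(i), project $\beta'_R$ to a nearest point $p_R \in \beta$; by Lemma~\ref{L:twice}, $\Vert p_R\Vert \leq 2R$. The triangle inequality combined with the containment above gives
\[
d_X(\beta'_R, \beta_0) \leq \kappa'(R) + m_{\beta_0}(q,Q)\cdot \kappa(2R),
\]
and the right-hand side is a sublinear function of $R$. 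Since $\beta_0$ is $\kappa$-Morse with gauge $m_{\beta_0}$, this forces
\[
\beta'|_r \subseteq \calN_\kappa\bigl(\beta_0, m_{\beta_0}(q', Q')\bigr).
\]

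Next, to transfer this containment from $\beta_0$ back to $\beta$, I would pick any $y_t \in \beta'$ with $\Vert y_t\Vert = t \leq r$, let $q_t$ be its nearest point projection to $\beta_0$, and $s_t$ be the nearest point projection of $q_t$ to $\beta$. Two applications of Lemma~\ref{L:twice} together with concavity of $\kappa$ (as already used in the proof of Lemma~\ref{L:equivalence}(i)) yield
\[
d_X(y_t, q_t) \leq m_{\beta_0}(q', Q') \cdot \kappa(t), \qquad
d_X(q_t, s_t) \leq 4 m_{\beta_0}(q, Q) \cdot \kappa(t).
\]
Adding these two gives $d_X(y_t, \beta) \leq \bigl(m_{\beta_0}(q',Q') + 4 m_{\beta_0}(q,Q)\bigr)\cdot \kappa(t)$, which is the required bound.

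I do not anticipate any real obstacle; the only thing to be careful about is to verify that the additive term $4 m_{\beta_0}(q, Q)$ truly depends only on the fixed data $(\beta_0, q, Q)$ and not on $\beta'$ or on any auxiliary choices, which is clear from the argument since the constant $4$ comes from applying Lemma~\ref{L:twice} twice and then bounding via the gauge of $\beta_0$. Properness of the resulting gauge $m_\beta$ in $(q', Q')$ follows from properness of $m_{\beta_0}$, completing the verification that $m_\beta$ is indeed a Morse gauge for $\beta$.
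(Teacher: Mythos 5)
Your proposal is correct, and the core observation---that the proof of Lemma~\ref{L:equivalence}(i), run with $\alpha=\beta_0$, already produces the gauge $m_{\beta_0}(\param,\param)+4m_{\beta_0}(q,Q)$---is exactly the right way to read the corollary. You then re-run that argument explicitly, verifying the full Morse-gauge property by taking an \emph{arbitrary} $(q',Q')$-quasi-geodesic $\beta'$ with $d_X(\beta'_R,\beta)\leq\kappa'(R)$, projecting $\beta'_R\to\beta\to\beta_0$ to build a sublinear approach to $\beta_0$, invoking the gauge of $\beta_0$, and then transferring the bound back to $\beta$ via the two applications of Lemma~\ref{L:twice}; all the constants land as claimed. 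The paper's displayed proof of the corollary is structurally a little different and shorter: it only treats a quasi-geodesic $\beta'\sim\beta$, picks $z'_r\in\beta'$, projects to $\beta_0$ at $p$ and then to $\beta$ at $q$, and estimates $d_X(z'_r,\beta)\leq d_X(z'_r,p)+d_X(p,q)$ by citing Lemma~\ref{L:equivalence}(i) and~(ii) directly (the factor $4$ arising from $\Norm{p}\leq 2r$ combined with the $2$ in part~(ii)). That computation re-derives the formula cleanly but leaves the defining condition of a Morse gauge for a general $\beta'$ (not necessarily $\sim\beta$) resting implicitly on the argument in Lemma~\ref{L:equivalence}(i); your write-up checks that condition explicitly, which is the more self-contained route. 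The final sentence of your proof, noting properness of $m_\beta$ from properness of $m_{\beta_0}$, is a small but legitimate extra check that the paper leaves tacit.
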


\begin{figure}[h!]
\begin{tikzpicture}[scale=0.5]
 \tikzstyle{vertex} =[circle,draw,fill=black,thick, inner sep=0pt,minimum size=.5 mm]
[thick, 
    scale=1,
    vertex/.style={circle,draw,fill=black,thick,
                   inner sep=0pt,minimum size= .5 mm},
                  
      trans/.style={thick,->, shorten >=6pt,shorten <=6pt,>=stealth},
   ]

  \node[vertex] (o) at (0,0)[label=left:$\go$] {}; 
  \node(a) at (11,0)[label=right:$\beta_{0}$] {}; 
  
  \draw (o)--(a){};
  \node at (9,4.5) [label=right:$\beta'$] {};
  \node at (9,-3) [label=right:$\beta$] {};
   
  \draw[thick, dashed] (5, 2.1) to(5, 0) {}; 
  \draw[thick] (5, 0.2) to(5.2, 0.2) {}; 
  \draw[thick] (5.2, 0.2) to(5.2, 0) {};
       
  \draw[thick, dashed] (5, 0) to(5.3,-1) {}; 
  \draw[thick] (5.25, -0.75) to(5.4, -0.7) {}; 
  \draw[thick] (5.4, -0.7) to(5.5, -1) {};
    
  \node[vertex] at(5, 0)[label=below left:$p$] {}; 
  \node[vertex] at(5.3, -1)[label=below:$q$] {}; 
  \node[vertex] at(5, 2.1)[label=above:$z'_{r}$] {}; 

  \pgfsetlinewidth{1pt}
  \pgfsetplottension{.55}
  \pgfplothandlercurveto
  \pgfplotstreamstart
  \pgfplotstreampoint{\pgfpoint{0cm}{0cm}}
  \pgfplotstreampoint{\pgfpoint{1cm}{-0.5cm}}
  \pgfplotstreampoint{\pgfpoint{2cm}{-1.8cm}}
  \pgfplotstreampoint{\pgfpoint{3cm}{-1.1cm}}
  \pgfplotstreampoint{\pgfpoint{4cm}{-2cm}}
  \pgfplotstreampoint{\pgfpoint{5.5cm}{-1cm}}
  \pgfplotstreampoint{\pgfpoint{7cm}{-3cm}}
  \pgfplotstreampoint{\pgfpoint{8cm}{-2cm}}
  \pgfplotstreampoint{\pgfpoint{9cm}{-3cm}}
  \pgfplotstreamend
  \pgfusepath{stroke} 
    
  \pgfsetlinewidth{1pt}
  \pgfsetplottension{.55}
  \pgfplothandlercurveto
  \pgfplotstreamstart
  \pgfplotstreampoint{\pgfpoint{0cm}{0cm}}
  \pgfplotstreampoint{\pgfpoint{1cm}{0.5cm}}
  \pgfplotstreampoint{\pgfpoint{2cm}{1.8cm}}
  \pgfplotstreampoint{\pgfpoint{3cm}{1.1cm}}
  \pgfplotstreampoint{\pgfpoint{4cm}{3cm}}
  \pgfplotstreampoint{\pgfpoint{5cm}{2.1cm}}
  \pgfplotstreampoint{\pgfpoint{6cm}{3.8cm}}
  \pgfplotstreampoint{\pgfpoint{7cm}{3cm}}
  \pgfplotstreampoint{\pgfpoint{8cm}{4cm}}
  \pgfplotstreampoint{\pgfpoint{9cm}{4.5cm}}
  \pgfplotstreamend
  \pgfusepath{stroke} 
  \end{tikzpicture}
\caption{Corollary \ref{Cor:m_beta}: $\Norm {z'_{r}} = r$ and $p = \pi_{\beta_{0}}(z'_{r})$ and $q = \pi_{\beta}(p)$. }
\end{figure}
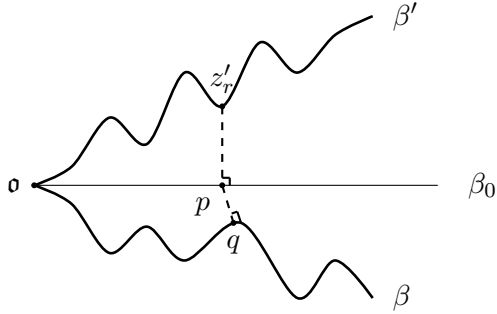

\begin{proof}
Let $\beta' \sim \beta$ be a $(q', Q')$-quasi-geodesic ray. Let $z'_r$ be 
a point along $\beta'$ with norm $r$, let $p \in \pi_{\beta_{0}}(z'_r)$ and let $q$ be a 
nearest point in $\beta$ to $p$. Note that, by Lemma \ref{L:twice},
$\Norm p \leq 2r$. Hence, 
\begin{align*} 
d_X(z'_r, \beta) &\leq d_X(z'_r, p) + d_X(p, q) \\
\intertext{and, by Lemma \ref{L:equivalence} (i) and (ii),}
  &\leq m_{\beta_{0}}(q', Q') \cdot \kappa(r) + 2 m_{\beta_{0}}(q, Q) \cdot \kappa(\Vert p \Vert) \\
  & \leq m_{\beta_{0}}(q', Q') \cdot \kappa(r) + 4 m_{\beta_{0}}(q, Q) \cdot \kappa(r)\\
  &\leq \big( m_{\beta_{0}}(q', Q')+ 4 m_{\beta_{0}}(q, Q)\big) \cdot \kappa(r).
\end{align*} 
This finishes the proof. 
\end{proof} 

We also need the following surgery statement from \cite{QRT19}.
The statement is slightly altered but the proof is identical. 

\begin{lemma}[Surgery Lemma,  Lemma 4.3 \cite{QRT19}]
\label{Lem:surgery}
For every $q, Q, r>0$ there exists $R>0$ such that the following holds. 
Let $\gamma$ be a geodesic ray of length at least $R$ and $\alpha$ be 
a $(q, Q)$-quasi-geodesic ray.  Assume that $d_X(\gamma_r, \alpha)\leq r/2$. 
Then, there exists a $(9 q, Q)$-quasi-geodesic ray $\gamma'$ so that 
\[
\alpha|_{r/2} = \gamma'|_{r/2}
\]
and the portion of $\gamma'$ outside of $B(\go, R)$ is the same as $\gamma$. 
\end{lemma}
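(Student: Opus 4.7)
The strategy is to build $\gamma'$ explicitly as a concatenation of three pieces: the initial segment $\alpha|_{r/2}$, a geodesic ``bridge'' from $\alpha_{r/2}$ to $\gamma_r$, and the tail of $\gamma$. Concretely, let $s_0$ be the first time with $\Vert \alpha(s_0) \Vert = r/2$, let $L := d_X(\alpha_{r/2}, \gamma_r)$, and let $c \colon [0, L] \to X$ be a unit-speed geodesic from $\alpha_{r/2}$ to $\gamma_r$. Define
\[
\gamma'(t) := \begin{cases} \alpha(t), & t \in [0, s_0], \\ c(t - s_0), & t \in [s_0, s_0 + L], \\ \gamma(t - s_0 - L + r), & t \geq s_0 + L. \end{cases}
\]

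By construction $\gamma'|_{r/2} = \alpha|_{r/2}$. Since $L \leq r/2 + r = 3r/2$ by the triangle inequality, every point on the bridge has norm at most $2r$. Hence, choosing $R$ sufficiently large (at minimum $R \geq 2r$, and otherwise depending on $q$, $Q$, $r$ as dictated by the estimates below) ensures the middle piece lies inside $B(\go, R)$, while outside $B(\go, R)$ the ray $\gamma'$ coincides precisely with the tail of $\gamma$.

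It remains to verify that $\gamma'$ is a $(9q, Q)$-quasi-geodesic. The upper bound $d_X(\gamma'(s), \gamma'(t)) \leq q \, |s-t| + Q$ is routine from the triangle inequality together with the quasi-geodesic (or geodesic) bound on each piece. For the lower bound $d_X(\gamma'(s), \gamma'(t)) \geq |s-t|/(9q) - Q$, one does case analysis on the pieces containing $s$ and $t$. The same-piece cases are immediate from that piece's own quasi-geodesic property. For mixed cases, one combines two estimates: (i) the $(q, Q)$-quasi-geodesic property of $\alpha$ applied via $\alpha_{r/2}$, giving a bound such as $d_X(\gamma'(s), \gamma'(t)) \geq (s_0 - s)/q - (t - s_0) - Q$ for $s$ in the initial piece and $t$ further along; and (ii) the reverse triangle inequality on norms, using that $\Vert \alpha(s) \Vert < r/2$ for $s \in [0, s_0)$ and that $\gamma$ is a geodesic through $\go$, yielding a bound of the shape $d_X(\gamma'(s), \gamma'(t)) \geq \Vert \gamma'(t) \Vert - \Vert \gamma'(s) \Vert$. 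Taking the maximum of these two bounds covers all pairs: bound (i) dominates when the initial piece contributes the bulk of $|s-t|$, while bound (ii) dominates when the bridge or tail does. The factor of $9$ in the constant $9q$ absorbs the inefficiency introduced by the possible detour of the bridge.

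The main obstacle is the mixed-case lower bound when $s$ is in the initial $\alpha$-piece and $t$ lies on the bridge, since the bridge can head in a direction unaligned with the overall trajectory and $\alpha$ can oscillate. The hypothesis $d_X(\gamma_r, \alpha) \leq r/2$ keeps $\alpha$ close to $\gamma$ at radius $r$ and, combined with the bound $L \leq 3r/2$, forces the bridge to be a small perturbation of the direct route; this is what permits a uniform choice of $R = R(q, Q, r)$ beyond which all estimates close up, as in the proof of Lemma~4.3 of \cite{QRT19}.
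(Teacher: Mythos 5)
Your construction has a real gap in the choice of the bridge. You connect $\alpha_{r/2}$ directly to $\gamma_r$ by \emph{some} geodesic $c$, and this bridge can have length up to $3r/2$. Crucially, nothing in the hypothesis prevents such a geodesic from passing through, or arbitrarily near, the base point $\go$, and when it does, the lower quasi-geodesic inequality for $\gamma'$ fails badly --- and neither of your two claimed lower bounds (i), (ii) covers this regime. Your bound (ii) reads $d_X(\gamma'(s),\gamma'(t)) \ge \Vert \gamma'(t)\Vert - \Vert\gamma'(s)\Vert$, but $\Vert \gamma'(t)\Vert$ can be near zero on the bridge, so (ii) gives nothing; your bound (i) reduces to $(s_0-s)/q - (t-s_0) - Q$, which can also be $\le 0$ at the same instant. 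Your closing assertion that the hypothesis $d_X(\gamma_r,\alpha)\le r/2$ ``forces the bridge to be a small perturbation of the direct route'' is not correct.

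Here is a concrete counterexample. Let $X$ be the metric graph on vertices $O,A,B,C$ with edges $OA$ of length $r/2$, $AB$ of length $r$, $BC$ of length $r/2$, $OC$ of length $r$, with an infinite ray attached at each of $B$ and $C$; set $\go:=O$. Then $\alpha := O\to A\to B\to(\text{ray})$ and $\gamma := O\to C\to(\text{ray})$ are geodesic rays from $\go$, i.e.\ $(1,0)$-quasi-geodesics. One has $\gamma_r=C$ and $d_X(\gamma_r,\alpha)=d_X(C,B)=r/2$, so the hypothesis holds. Here $\alpha_{r/2}=A$ and $d_X(A,C)=3r/2$, realized by \emph{two} geodesics, one through $O$ and one through $B$. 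If $c$ is the geodesic through $O$, then $\gamma'(0)=O=\gamma'(r)$, so $d_X(\gamma'(0),\gamma'(r))=0$ while $\tfrac{r}{9\cdot 1}-0 = r/9>0$: the path $\gamma'$ is not a $(9,0)$-quasi-geodesic, and indeed is not a $(9q,Q)$-quasi-geodesic for any $Q<r/9$. Since $r$ is arbitrary, no universal additive constant rescues the construction.

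The underlying issue is that the hypothesis hands you a point $\alpha(s^*)$ of $\alpha$ with $d_X(\alpha(s^*),\gamma_r)\le r/2$, and the bridge should be attached there, not at $\alpha_{r/2}$. If $\gamma'$ follows $\alpha$ all the way out to $\alpha(s^*)$ (one checks $s^*\ge s_0$ automatically since $\Vert\alpha(s^*)\Vert\ge r/2$) and only then takes a geodesic of length at most $r/2$ to $\gamma_r$, the bridge endpoints have norm at least $r/2$ and the short bridge never drops below norm $r/2$, so the norm estimate becomes effective precisely where your version breaks down. One still has $\Vert\alpha(s^*)\Vert\le 3r/2$, so $R\ge 2r$ (adjusted by the quasi-geodesic constants to cover the $\alpha$-segment up to $s^*$) still confines the surgery to $B(\go,R)$. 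Without this rerouting, the quantitative bookkeeping you outline does not close.
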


\section{The topology on the $\kappa$-Morse boundary}

We denote as $\pka X$ the set of all equivalence classes of $\kappa$-Morse quasi-geodesic rays. In this section we will define
a topology on $\pka X$ to make it into a topological space. Even more, we will construct a bordification on $X \cup \pka X$ and we will show that both $X$ and 
$X \cup \pka X$ have good topological properties. 

\subsection{Bordification}
Let $\beta$ be a $\kappa$-Morse quasi-geodesic ray based at $\go$ and let $m_\beta$ be a Morse gauge for $\beta$. 

\begin{definition} \label{D:nbd}
We define the set $\calU(\beta, r) \subseteq X \cup \partial_\kappa X$ as follows.

\begin{itemize}
\item  
An equivalence class $\textbf{a} \in \partial_\kappa X$ belongs to $\calU(\beta, r)$ if,
for any $(q, Q)$-quasi-geodesic ray $\alpha \in \textbf{a}$, where  $m_\beta(q, Q)$ is 
small compared to $r$ (in the sense of Equation~\ref{Eq:Small}), we have the inclusion 
\[ 
\alpha\vert_r \subseteq \mathcal{N}_{\kappa}\big(\beta, m_\beta(q, Q)\big).
\]
\item 
A point $p \in X$ belongs to $\calU(\beta, r)$ if $d_X(\go, p) \geq r$ and, 
for every $(q, Q)$-quasi-geodesic $\alpha$ between $\go$ and $p$ 
where  $m_\beta(q, Q)$ is small compared to $r$, we have
\[
\alpha|_{r} \subseteq \mathcal{N}_{\kappa}\big(\beta, m_{\beta}(q, Q)\big).
\]
\end{itemize}
We denote $\calU(\beta, r) \cap \pka X$ by $\partial \calU(\beta, r)$. 
\end{definition}

We now verify the following basic properties of $\calU(\beta, r)$. 
\begin{lemma} \label{L:Ubeta}
Let $\beta$ be a quasi-geodesic ray which belongs to the class $\textbf{b}$. 
Then: 
\begin{enumerate}
\item There exists a (not necessarily unique) geodesic ray in the class $\bfb$;
\item The class $\textbf{b}$ belongs to $\calU(\beta, r)$ for any $r > 0$; 
\item $\bigcap_{r > 0} \calU(\beta, r) = \{ \textbf{b} \}$;
\item If $\beta_1 \sim \beta_2$ are two $\kappa$-Morse quasi-geodesic rays, 
for any $r_ 1 > 0$, there exists $r_2 > 0$ such that 
$$\calU(\beta_1, r_1) \supseteq \calU(\beta_2, r_2).$$
\end{enumerate}
\end{lemma}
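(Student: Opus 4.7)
The plan is to establish the four parts in order, using Lemmas~\ref{L:equivalence}, \ref{L:symmetry}, \ref{L:twice} and Corollary~\ref{Cor:m_beta} as the main tools. The geodesic representative of $\mathbf{b}$ produced in (1) will serve as a bridge for parts (3) and (4).

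For (1), I would take the geodesic segments $\gamma_n$ from $\go$ to $\beta(n)$, extend each arbitrarily to a geodesic ray $\overline\gamma_n$, and by properness extract a subsequence converging on compact sets to a geodesic ray $\gamma^*$. The key observation is that $\overline\gamma_n(L_n) = \beta(n)\in\beta$ for $L_n := \|\beta(n)\|$, and that consecutive values $L_n$ differ by a bounded amount (controlled by the quasi-geodesic constants of $\beta$). Fixing $r>0$ and applying the $\kappa$-Morse property of $\beta$ with a sufficiently large constant sublinear $\kappa'$ would yield a threshold $R(r)$; choosing $n(r)$ with $L_{n(r)}$ within the step-size bound of $R(r)$ gives $d(\overline\gamma_{n(r)}(R(r)), \beta)\leq\kappa'(R(r))$, so the Morse property forces $\gamma_{n(r)}|_r\subseteq\calN_\kappa(\beta, m_\beta(1, 0))$. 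A diagonal argument in $r$ then yields $\gamma^*\subseteq\calN_\kappa(\beta, m_\beta(1, 0))$, and Lemma~\ref{L:symmetry} gives the reverse inclusion, so $\gamma^*\sim\beta$. Part (2) is then immediate from Lemma~\ref{L:equivalence}(i) applied to any representative.

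For (3), given $\mathbf{a}\in\bigcap_{r>0}\calU(\beta, r)$ and a $(q, Q)$-quasi-geodesic $\alpha\in\mathbf{a}$, the smallness condition on $(q, Q)$ holds for all large $r$ (since $r/\kappa(r)\to\infty$), yielding $d(\alpha_r, \beta)\leq m_\beta(q, Q)\kappa(r)=o(r)$. Combining with the geodesic $\beta_0\in\mathbf{b}$ from (1) via Lemma~\ref{L:equivalence}, and using that for a geodesic $\beta_0$ from $\go$ any nearest point $p\in\beta_0$ to $x$ satisfies $d(x, (\beta_0)_{\|x\|})\leq 2 d(x, \beta_0)$, I obtain $d(\alpha_r, (\beta_0)_r) = o(r)$. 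Hence $\alpha\sim\beta_0\sim\beta$ and $\mathbf{a}=\mathbf{b}$. No $p\in X$ can lie in every $\calU(\beta, r)$ since membership requires $\|p\|\geq r$.

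Part (4) will be the hard part. Given $\beta_1\sim\beta_2$ and $r_1$, I would choose $r_2\geq r_1$ large enough that any $(q, Q)$ passing the $\beta_1$-smallness test at $r_1$ also passes the $\beta_2$-smallness test at $r_2$; by Corollary~\ref{Cor:m_beta} the two gauges differ by an additive constant, so this is possible as $r/\kappa(r)\to\infty$. For $\alpha$ of type $(q, Q)$ in $\mathbf{a}\in\calU(\beta_2, r_2)$, the hypothesis yields $\alpha|_{r_2}\subseteq\calN_\kappa(\beta_2, m_{\beta_2}(q, Q))$. For $x\in\alpha|_{r_1}$, a nearest-point projection $p_x\in\beta_2$ satisfies $\|p_x\|\leq 2\|x\|$ by Lemma~\ref{L:twice}; combined with the sublinear inclusion $\beta_2\subseteq\calN_\kappa(\beta_1, m_{\beta_1}(q_2, Q_2))$ from Lemma~\ref{L:equivalence}(i), this produces a bound $d(x, \beta_1)\leq C\kappa(\|x\|)$ for a constant $C$ depending only on the data. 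The delicate point is the constant bookkeeping: the naive $C = m_{\beta_2}(q, Q)+2m_{\beta_1}(q_2, Q_2)$ can exceed the minimal $m_{\beta_1}(q, Q)$ prescribed by Corollary~\ref{Cor:m_beta}, so the Morse gauges of $\beta_1, \beta_2$ must be taken with extra slack beyond the minimum (enlarging a Morse gauge preserves its validity), so that $C\leq m_{\beta_1}(q, Q)$ holds. The same argument handles the case of a point $p\in X\cap\calU(\beta_2, r_2)$ when applied to any quasi-geodesic from $\go$ to $p$.
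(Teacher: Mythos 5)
Parts (1)--(3) are fine and take essentially the paper's route: Arzel\`a--Ascoli plus the $\kappa$-Morse property for (1), Lemma~\ref{L:equivalence}(i) for (2), and comparison to a geodesic representative for (3). Your handling of the ``step size'' issue in (1) is a bit more elaborate than the paper's one-line remark, but correct.

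Part (4) has a genuine gap. You attempt to verify $\bfa\in\calU(\beta_1,r_1)$ by a \emph{direct} pointwise estimate: project $x\in\alpha|_{r_1}$ first to $\beta_2$, then to $\beta_1$, to get $d_X(x,\beta_1)\leq C\cdot\kappa(\|x\|)$ with $C=m_{\beta_2}(q,Q)+2m_{\beta_1}(q_2,Q_2)$, and then hope to absorb $C$ into $m_{\beta_1}(q,Q)$. This cannot work: since Morse gauges are (wlog) monotone and positive, $m_{\beta_1}(q_2,Q_2)\geq m_{\beta_1}(1,0)$, so already for $(q,Q)=(1,0)$ you would need $m_{\beta_2}(1,0)+2m_{\beta_1}(1,0)\leq m_{\beta_1}(1,0)$, which is impossible. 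Your proposed fix --- ``take the gauges with extra slack'' --- does not help: enlarging $m_{\beta_1}$ by a constant $c$ enlarges $m_{\beta_1}(q_2,Q_2)$ by the same $c$, so $C$ grows at twice the rate of $m_{\beta_1}(q,Q)$, and the required inequality only gets worse. Moreover, the sets $\calU(\beta_i,r_i)$ are defined with respect to \emph{fixed} gauges, so the gauge is not a free parameter you may tune after the fact.

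The missing idea is that you must invoke the $\kappa$-Morse property of $\beta_1$ (Definition~\ref{D:k-morse}) rather than estimate directly. Your computation does correctly establish the useful bound
\[
d_X(\alpha_{r},\beta_1)\leq\kappa'(r),\qquad \kappa'(r):=\bigl(m_{\beta_2}(q,Q)+2m_{\beta_1}(q_2,Q_2)\bigr)\kappa(r),
\]
which is a sublinear function. The Morse property of $\beta_1$ then supplies, for the given $r_1$ and this $\kappa'$, a threshold $R$ such that $d_X(\alpha_R,\beta_1)\leq\kappa'(R)$ already forces $\alpha|_{r_1}\subseteq\calN_\kappa(\beta_1,m_{\beta_1}(q,Q))$, with the correct target constant $m_{\beta_1}(q,Q)$ coming from the Morse definition itself and not from the naive chain of triangle inequalities. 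Setting $r_2:=R$ (and taking a supremum of $\kappa'$ over the bounded set of $(q,Q)$ relevant at radius $r_1$, as you already arranged) closes the argument. Without this step your bound lands at the wrong constant and the inclusion $\calU(\beta_2,r_2)\subseteq\calU(\beta_1,r_1)$ does not follow.
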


\begin{proof}
(1) Consider the geodesic segment $\beta_n$ connecting $\go$ and 
$\beta(n)$ for $n =1, 2, 3...$. The sequence of geodesic segments has a convergent subsequence converging to a geodesic ray $\beta'$ by Arzel\'a-Ascoli. Since $\beta$ is 
$\kappa$-Morse, $\beta_n \subset \calN_{\kappa}\big(\beta, m_{\beta}(1, 0)\big)$.
Therefore the same is true for $\beta'$ and hence $\beta'$ belongs to the class $\bfb$.

(2) If $\beta'$ is a $(q, Q)$-quasi-geodesic ray which belongs to $\textbf{b}$, then by Lemma \ref{L:equivalence} 
$$\beta' \subseteq \mathcal{N}_{\kappa}(\beta, m_\beta(q, Q))$$
hence also 
$$\beta'\vert_r \subseteq \mathcal{N}_{\kappa}(\beta, m_\beta(q, Q))$$
for any $r > 0$, as needed.

(3) Let $\textbf{c} \in \partial_\kappa X$ be a class which belongs to $\calU(\beta, r)$ for any $r$, let $(q', Q')$ be the quasi-geodesic 
constants of $\beta$, and let $\gamma \in \textbf{c}$ be a $(q, Q)$-quasi-geodesic ray. Let 
$\beta'$ be a geodesic ray based at $\go$ with $\beta' \sim \beta$.

Take $y \in \gamma$ such that $\Vert y \Vert = r$. Then, by assumption, 
$$\gamma\vert_r \subseteq \calN_{\kappa}\big(\beta, m_\beta(q, Q)\big),$$
for any $r$. 
Hence, if $p$ is a nearest point projection of $y$ onto $\beta$, we have 
$$d_X(y, \beta) = d_X(y, p) \leq m_\beta(q, Q) \cdot \kappa(r).$$
Moreover,  by Lemma \ref{L:twice}, we have $\Vert p \Vert \leq 2 \Vert y \Vert  = 2 r$
and, by Lemma \ref{L:equivalence} (i), we have 
\begin{align*}
d_X(y, \beta') & \leq d_X(y, p) + d_X(p, \beta') \\
& \leq m_\beta(q, Q)\cdot \kappa(r) + 2 m_{\beta'}(q', Q') \cdot \kappa(r).
\end{align*}
Setting 
\[
\widetilde{m}_{\beta'}(q, Q) := m_\beta(q, Q) + 2 m_{\beta'}(q', Q'),
\] 
we get,
$$\gamma\vert_r \subseteq \calN_{\kappa}\big(\beta', \widetilde{m}_{\beta'}(q, Q)\big),$$
for any $r > 0$. 

Let now $p_r$ be a nearest point projection of $\gamma_r$ to $\beta'$. Then 
$$| d_X(\go, p_r) - r |  = | d_X(\go, p_r) - d_X(\go, \gamma_r) | \leq d_X(\gamma_r, p_r) \leq \widetilde{m}_{\beta'}(q, Q) \cdot \kappa(r).$$
Since $\beta'$ is geodesic,  we have 
$$d_X(p_r, \beta'_r) =  | d_X(\go, p_r) - r | \leq m_{\beta'}(q, Q) \cdot \kappa(r)$$
and 
\[
d_X(\gamma_r, \beta'_r) \leq d_X(\gamma_r, p_r) + d_X(p_r, \beta'_r) 
  \leq 2 \widetilde{m}_{\beta'}(q, Q) \cdot \kappa(r).
\]
But $\kappa(r)$ is sublinear, therefore, 
$$\lim_{r \to \infty} \frac{d_X(\gamma_r, \beta'_r)}{r} = 0$$
which implies $\gamma \in \textbf{b}$ and $\textbf{c} = \textbf{b}$. Finally, since 
$p \in \calU(\beta, r)$ implies $d_X(\go, p) \geq r$, the intersection 
$\bigcap_{r > 0} \calU(\beta, r)$ does not contain any point of $X$.

(4) Let $\beta_1$ be a $(q_1, Q_1)$-quasi-geodesic ray, $\beta_2$ a $(q_2, Q_2)$-quasi-geodesic ray with $\beta_1 \sim \beta_2$, and let $r_1 > 0$. 
For $r > 0$, let $\bfa \in \calU(\beta_2, r)$, and pick $\alpha \in \bfa$ be a $(q, Q)$-quasi-geodesic ray such that $m_{\beta_2}(q, Q)$ is small compared to $r$. By definition of $\calU(\beta_2, r)$ we have 
$$d_X(\alpha_r, \beta_2) \leq  m_{\beta_2}(q, Q) \cdot \kappa(r).$$
Let $p_r$ be a nearest point projection of $\alpha_r$ to $\beta_2$. By Lemma \ref{L:twice}, 
$$\Vert p_r \Vert \leq 2 r.$$
Moreover, by Lemma \ref{L:equivalence} (i), 
$$d_X(p_r, \beta_1) \leq \kappa(\Vert p_r \Vert ) m_{\beta_1}(q_2, Q_2) 
\leq 2 m_{\beta_1}(q_2, Q_2) \cdot  \kappa(r)$$
hence 
\begin{equation} \label{E:k-p}
d_X(\alpha_r, \beta_1)  \leq m_{\beta_2}(q, Q) \cdot  \kappa(r) 
  +  2 m_{\beta_1}(q_2, Q_2) \cdot  \kappa(r).
\end{equation}
Now, if we take 
\[
\kappa'(r) := m_{\beta_2}(q, Q) \cdot \kappa(r) 
  + 2 m_{\beta_1}(q_2, Q_2)\cdot  \kappa(r),
\] by Definition \ref{D:k-morse}, there exists $r_2$ 
such that \eqref{E:k-p} for $r = r_2$ implies
$$\alpha \vert_{r_1} \subseteq \mathcal{N}_{\kappa}\big(\beta_1,  m_{\beta_1}(q, Q)\big)$$
hence $\bfa \in \calU(\beta_1, r_1)$, as required.
\end{proof}

We now verify that a sequence of points of $X$ that sublinearly tracks
a quasi-geodesic ray $\gamma$, converges to the class of $\gamma$ in $\pka X$. 

\begin{lemma} \label{L:point-conv}
Let $\gamma \in \bfc$ be a $\kappa$-Morse quasi-geodesic ray based at $\go \in X$ and let $(x_n) \subseteq X$ be a sequence of points with $\Vert x_n \Vert \to \infty$. Moreover, suppose that there exists a constant $C > 0$ such that 
\begin{equation} \label{E:k-track}
d_X(x_n, \gamma) \leq C \cdot \kappa(\Vert x_n \Vert)
\end{equation}
for all $n$. Then the sequence $(x_n)$ converges to $\bfc$ in the topology of $X \cup \pka X$. 
\end{lemma}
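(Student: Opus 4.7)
The plan is to reduce $\bfc$ to a geodesic representative and then verify that $x_n$ enters each neighborhood $\calU(\gamma_0, r)$ by applying the $\kappa$-Morse property to extensions of the quasi-geodesic segments from $\go$ to $x_n$.

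First, by Lemma~\ref{L:Ubeta}~(1) I pick a geodesic ray $\gamma_0 \in \bfc$; by Lemma~\ref{L:equivalence}~(i), $\gamma_0$ is also $\kappa$-Morse, with some Morse gauge $m_{\gamma_0}$. Using Lemma~\ref{L:equivalence}~(i) to bound points of $\gamma$ by points of $\gamma_0$, and Lemma~\ref{L:twice} to control norms under projection, the tracking hypothesis $d_X(x_n, \gamma) \leq C \kappa(\Vert x_n \Vert)$ upgrades to $d_X(x_n, \gamma_0) \leq C' \kappa(\Vert x_n \Vert)$ for some constant $C' > 0$. By Lemma~\ref{L:Ubeta}~(4), the sets $\{\calU(\gamma_0, r)\}_{r > 0}$ are cofinal among neighborhoods of $\bfc$ of the form $\calU(\beta, r)$ with $\beta \in \bfc$, so it suffices to show that for each fixed $r > 0$ one has $x_n \in \calU(\gamma_0, r)$ for all $n$ sufficiently large.

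Fix $r > 0$ and let $\alpha \colon [0, L_n] \to X$ be any $(q, Q)$-quasi-geodesic from $\go$ to $x_n$ with $m_{\gamma_0}(q, Q)$ small compared to $r$. The goal is $\alpha|_r \subseteq \calN_\kappa(\gamma_0, m_{\gamma_0}(q, Q))$. I would construct a quasi-geodesic ray $\hat\alpha \colon [0, \infty) \to X$ extending $\alpha$ by first following $\alpha$ from $\go$ to $x_n$, then traversing a geodesic segment from $x_n$ to a nearest-point projection $q_n \in \gamma_0$ (of length $d_n \leq C' \kappa(\Vert x_n \Vert)$), then following the tail of $\gamma_0$ from $q_n$ out to infinity. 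Because $\hat\alpha$ eventually coincides with $\gamma_0$, one has $d_X(\hat\alpha_R, \gamma_0) = 0$ for all $R$ sufficiently large, so Definition~\ref{D:k-morse} (applied with any sublinear $\kappa'$) yields $\hat\alpha|_r \subseteq \calN_\kappa(\gamma_0, m_{\gamma_0}(q, Q))$. For $n$ large enough that $r \leq \Vert x_n \Vert$ we have $\alpha|_r \subseteq \hat\alpha|_r$, so the inclusion transfers to $\alpha|_r$ and proves $x_n \in \calU(\gamma_0, r)$.

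The main technical obstacle is ensuring that the extension $\hat\alpha$ is actually a $(q, Q)$-quasi-geodesic ray, so that the Morse gauge value delivered by Definition~\ref{D:k-morse} is exactly the $m_{\gamma_0}(q, Q)$ demanded by Definition~\ref{D:nbd}. A naive concatenation produces constants $(q, Q + O(d_n))$ with $d_n = O(\kappa(\Vert x_n \Vert))$ growing with $n$, and correspondingly a Morse gauge value $m_{\gamma_0}(q, Q + O(d_n))$ that does not immediately match $m_{\gamma_0}(q, Q)$. To address this I would invoke the Surgery Lemma~\ref{Lem:surgery} to replace $\hat\alpha$ by a quasi-geodesic ray whose constants depend only on $(q, Q)$ and on the fixed constants of $\gamma_0$ (and not on $n$); alternatively one can absorb the discrepancy by shrinking the radius via Lemma~\ref{L:Ubeta}~(4), reducing verification to a smaller $\calU(\gamma_0, r')$ with $r' \leq r$ where the enlarged Morse gauge is still admissible.
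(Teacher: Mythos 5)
You take a genuinely different route from the paper, and it runs into an obstacle that the paper's argument avoids entirely. The paper does not extend $\alpha$ to a ray: it applies Definition~\ref{D:k-morse} directly to the $(q,Q)$-quasi-geodesic segment $\alpha$ joining $\go$ and $x_n$, with the tracking estimate $d_X(x_n,\beta)\leq(C+2m_\beta(q',Q'))\kappa(\Vert x_n\Vert)$ supplying the Morse hypothesis with $x_n$ playing the role of $\alpha_R$ at radius $R=\Vert x_n\Vert$. Because it is $\alpha$ itself, with its own constants $(q,Q)$, that is fed into the definition, the Morse gauge value obtained is exactly $m_\beta(q,Q)$, matching what Definition~\ref{D:nbd} requires; there is nothing to repair. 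This also makes your preliminary reduction to a geodesic representative $\gamma_0$ and the cofinality appeal to Lemma~\ref{L:Ubeta}~(4) superfluous, since the paper works with an arbitrary $\beta\in\bfc$.

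The technical obstacle you flag in your construction is real, and neither of your proposed fixes removes it. The concatenation $\hat\alpha$ has additive quasi-geodesic constant at least $Q+O(d_n)$ with $d_n\asymp\kappa(\Vert x_n\Vert)$ unbounded in $n$, and the Surgery Lemma~\ref{Lem:surgery} cannot help: it replaces a $(q'',Q'')$-ray by a $(9q'',Q'')$-ray, so the additive constant is preserved, not reduced, and any surgery on $\hat\alpha$ still outputs constants of the form $(9q,Q+O(d_n))$. Shrinking the radius does not absorb the discrepancy either: the set $\calN_\kappa(\gamma_0,m)$ does not depend on $r$ at all, Lemma~\ref{L:Ubeta}~(4) compares $\calU$-sets for different \emph{representatives} of one class rather than accommodating a change of Morse gauge, and the requirement that $m_{\gamma_0}(q,Q+O(d_n))$ be small compared to $r'$ gets strictly harder as $r'$ decreases because $r\mapsto r/(2\kappa(r))$ is increasing. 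There is also an upstream issue you leave unaddressed: it is not clear that $\hat\alpha$ is a quasi-geodesic ray at all, since the lower distance bound between a point on the $\alpha$-part and a point far along the $\gamma_0$-tail requires control on how much the projection of $\alpha$ to $\gamma_0$ can backtrack, which is a contracting-type statement that would need Theorem~\ref{sublinearlyequivalence} rather than $\kappa$-Morseness alone. The clean resolution is the paper's: apply the $\kappa$-Morse property to the finite segment $\alpha$, whose endpoint $x_n$ already serves as the far point, instead of building an extension whose constants you cannot control.
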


\begin{proof}
In order to show the claim, we need to prove that for any quasi-geodesic ray $\beta \in \bfc$ and any $r > 0$ there exists $n_0$ such that 
for all $n \geq n_0$ we have 
$$x_n \in \mathcal{U}(\beta, r).$$ 
Equivalently, we need to show that, for any $r$ and any $(q, Q)$-quasi-geodesic segment $\alpha$ joining $\go$ and $x_n$ 
with $m_\beta(q, Q)$ small with respect to $r$, we have 
$$\alpha \vert_r \subseteq \mathcal{N}_\kappa(\beta, m_\beta(q, Q)).$$
Let $p_n$ be a nearest point projection of $x_n$ onto $\gamma$;  by Lemma \ref{L:twice}, $\Vert p_n \Vert \leq 2 \Vert x_n \Vert$. 
Now, by \eqref{E:k-track} and Lemma \ref{L:equivalence},  
\begin{align*}
d_X(x_n, \beta) & \leq d_X(x_n, p_n) + d_X(p_n, \beta) \\
& \leq C \cdot \kappa( \Vert x_n \Vert) + 2 m_\beta(q', Q') \cdot \kappa( \Vert x_n \Vert)
\end{align*}
where $(q', Q')$ are the quasi-geodesic constants of $\gamma$.
Note moreover that $\beta$ is $\kappa$-Morse by Lemma \ref{L:equivalence}. Hence, consider the sublinear function 
\[
\kappa'(r) := \big(C + 2 m_\beta(q', Q')\big) \cdot \kappa( r)
\] 
and apply the definition of $\kappa$-Morse, obtaining $R$ such that 
if $d_X(x_n, \beta) \leq \kappa'(R)$ then $\alpha \vert_r \subseteq \mathcal{N}_\kappa(\beta, m_\beta(q, Q))$. 
Thus, if we choose $n_0$ so that $\Vert x_n \Vert \geq R$ for all $n \geq n_0$, the definition of $\kappa$-Morse 
implies the following as needed, 
\begin{equation*}
\alpha \vert_r \subseteq \mathcal{N}_\kappa(\beta, m_\beta(q, Q)).
\qedhere
\end{equation*}

\end{proof}

We now show that the sets $\calU(\beta, r)$ and  $\partial \calU(\beta, r)$ as a 
neighborhood basis to define topologies on $X \cup \pka X$ and $\pka X$. 

\begin{definition}\label{nbhd-of-an-element}
Given an equivalence class $\textbf{b}$, we define the set $\mathcal{B}(\textbf{b})$
as the set of subsets $\mathcal{V} \subseteq X \cup \partial_{\kappa}X$ such that there 
exists $\beta \in \textbf{b}$ and $r > 0$ for which $\calU(\beta, r) \subseteq \mathcal{V}$.
Let $\partial \calB(\bfb)$ be the set of subsets of $\pka X$ of the form 
$\calV \cap \pka X$ where $\calV \in \mathcal{B}(\textbf{b})$, equivalently 
a set in $\partial \calB(\bfb)$ contains a set of the form $\partial \calU(\beta, r)$. 
Also, for $x \in X$, define $\calB(x)$ to be the set of subsets $\calV$ of $X$ such that 
$\calV$ contains a ball $B(x,r)$ of radius $r$ centered at $x$. 
\end{definition}

\begin{lemma}\label{topologyconditions}
For every $\bfb \in \partial_\kappa X$, the set $\calB(\bfb)$ satisfies the following properties:
\begin{enumerate}[{\rm (i)}]
\item Every subset of $X \cup \pka X$ which contains a set belonging to $\calB(\bfb)$ 
itself belongs to $\calB(\bfb)$;
\item Every finite intersection of sets of $\calB(\bfb)$ belongs to $\calB(\bfb)$;
\item The element $\bfb$ is in every set of $\calB(\bfb)$;
\item If $\calV \in \calB(\bfb)$ then there is $\calW \in \calB(\bfb)$ such that, 
for every $\bfa \in \calW$, we have $\calV \in \calB(\bfa)$.
\end{enumerate}
Furthermore, the same is true for subsets of $\partial \calB(\bfb)$ and
$\calB(x)$. 
\end{lemma}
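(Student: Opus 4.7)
The four clauses are Kelley's axioms for a neighborhood system, so once verified they automatically produce a topology on $X \cup \partial_\kappa X$ in which each $\mathcal{B}(\mathbf{b})$ is the system of neighborhoods of $\mathbf{b}$. Clause (i) is immediate from the definition of $\mathcal{B}(\mathbf{b})$ as the upward closure of the sets $\mathcal{U}(\beta, r)$. Clause (iii) is exactly Lemma~\ref{L:Ubeta}(2). For clause (ii), I first observe that $r \mapsto \mathcal{U}(\beta, r)$ is monotone decreasing: for $r_1 \leq r_2$, the smallness condition $m_\beta(q, Q) \leq r_1/(2 \kappa(r_1))$ is stronger than the one at scale $r_2$ (since $t/(2\kappa(t))$ is monotone increasing) and $\alpha|_{r_1} \subseteq \alpha|_{r_2}$. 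Given $\mathcal{V}_i \supseteq \mathcal{U}(\beta_i, r_i) \in \mathcal{B}(\mathbf{b})$ for $i=1,2$, I fix a geodesic $\beta \in \mathbf{b}$ (Lemma~\ref{L:Ubeta}(1)), apply Lemma~\ref{L:Ubeta}(4) twice to find $s_i$ with $\mathcal{U}(\beta, s_i) \subseteq \mathcal{U}(\beta_i, r_i)$, and conclude $\mathcal{U}(\beta, \max\{s_1, s_2\}) \subseteq \mathcal{V}_1 \cap \mathcal{V}_2$.

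Clause (iv) is the substantive one. Let $\mathcal{V} \supseteq \mathcal{U}(\beta, r)$ with $\beta$ a geodesic representative of $\mathbf{b}$. The plan is to take $\mathcal{W} := \mathcal{U}(\beta, R)$ for $R \geq 2r$ large enough that $m_\beta(1, 0)$ is small compared to $R$, and, for each $\mathbf{a} \in \mathcal{W}$, to exhibit a geodesic representative $\alpha \in \mathbf{a}$ (via Lemma~\ref{L:Ubeta}(1)) together with a radius $r_\mathbf{a}$ for which $\mathcal{U}(\alpha, r_\mathbf{a}) \subseteq \mathcal{U}(\beta, r)$; this gives $\mathcal{V} \in \mathcal{B}(\mathbf{a})$. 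The geometric heart of the argument: since $\alpha$ is a $(1, 0)$-quasi-geodesic in $\mathbf{a} \in \mathcal{U}(\beta, R)$, one has $\alpha|_R \subseteq \mathcal{N}_\kappa(\beta, m_\beta(1, 0))$. For $\mathbf{c} \in \mathcal{U}(\alpha, r_\mathbf{a})$ and a $(q, Q)$-quasi-geodesic representative $\gamma \in \mathbf{c}$ satisfying the smallness conditions, each $y \in \gamma|_r$ with $\|y\|=s$ projects to a nearest point $p \in \alpha$ satisfying $\|p\| \leq 2s \leq R$ by Lemma~\ref{L:twice}, and hence $p \in \alpha|_R$. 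Combining the triangle inequality with the sublinear bound $\kappa(2s) \leq 2\kappa(s)$ from Section~2 yields
\[
d_X(y, \beta) \leq d_X(y, p) + d_X(p, \beta) \leq \big(m_\alpha(q, Q) + 2 m_\beta(1, 0)\big) \kappa(s).
\]

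The main obstacle I anticipate is matching the Morse gauges so that this bound is absorbed by $m_\beta(q, Q)\kappa(s)$, as is required for the inclusion $\gamma|_r \subseteq \mathcal{N}_\kappa(\beta, m_\beta(q, Q))$. Two observations resolve this. First, any proper function pointwise dominating a Morse gauge is itself a valid Morse gauge, so I may a priori enlarge $m_\beta$ to absorb the additive term $2 m_\beta(1, 0)$ and enlarge $m_\alpha$ to dominate $m_\beta$ pointwise. Second, $r_\mathbf{a}$ is chosen large enough that every $(q, Q)$ with $m_\beta(q, Q)$ small compared to $r$ also has $m_\alpha(q, Q)$ small compared to $r_\mathbf{a}$; by properness of $m_\beta$ the set of such $(q, Q)$ is bounded, $m_\alpha$ is bounded on it, and a sufficiently large $r_\mathbf{a}$ exists. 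The arguments above transfer verbatim to $\partial\mathcal{B}(\mathbf{b})$ by intersecting throughout with $\partial_\kappa X$, and the axioms for $\mathcal{B}(x)$, $x \in X$, are the classical axioms for the metric topology on $(X, d_X)$ and are immediate.
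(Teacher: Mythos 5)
Your treatment of clauses (i)--(iii) matches the paper. For (iv), however, there is a genuine gap in the geometric step, and the fix you propose does not close it.

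The triangle inequality produces the bound $d_X(y, \beta) \leq \big(m_\alpha(q, Q) + 2 m_\beta(1, 0)\big)\kappa(\|y\|)$, but what the definition of $\calU(\beta, r)$ demands is $d_X(y, \beta) \leq m_\beta(q, Q)\kappa(\|y\|)$, and there is no reason the first constant should be at most the second. Your proposed remedy --- enlarging $m_\beta$ so as to absorb the surplus, and enlarging $m_\alpha$ to dominate it --- is circular: the neighborhoods $\calU(\beta, r)$ in the statement are defined with a fixed gauge $m_\beta$, so replacing $m_\beta$ by a larger function changes the very sets you are trying to nest. Worse, enlarging $m_\alpha$ to dominate $m_\beta$ moves the constant $m_\alpha(q, Q) + 2m_\beta(1, 0)$ strictly above $m_\beta(q, Q)$, so the inequality you need is then false by construction. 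The only way to tighten the constant back down to $m_\beta(q, Q)$ is to invoke the $\kappa$-Morse property of $\beta$ itself, but your argument cannot do this: the threshold radius $R(\beta, r, \kappa')$ from Definition~\ref{D:k-morse} depends on the auxiliary sublinear function $\kappa'$, which in your setup is $\big(\sup m_\alpha(q,Q) + 2m_\beta(1,0)\big)\kappa$ and therefore depends on $\alpha$; yet $R$, which defines the intermediate neighborhood $\calW = \calU(\beta, R)$, must be chosen \emph{before} $\bfa$ (hence before $\alpha$). Your bound also only controls $d_X(y,\beta)$ for $y \in \gamma|_r$, which in general is too short a segment to trigger the Morse definition at the required radius.

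The paper's proof of Claim~\ref{claim1} resolves exactly this by routing through the Surgery Lemma~\ref{Lem:surgery}: $\gamma$ is replaced on a large initial segment by a $(9q, Q)$-quasi-geodesic $\gamma'$ lying in the class $\bfa$, so that the hypothesis $\bfa \in \calU(\beta, r')$ can be applied directly to $\gamma'$ and yields the bound $\gamma'|_{r'} \subset \calN_\kappa(\beta, m_\beta(9q, Q))$. The resulting constant is $m_\beta(9q, Q)$, which is controlled by $M := \sup_{m_\beta(q,Q) \leq r} m_\beta(9q, Q)$ --- a quantity depending only on $\beta$ and $r$, not on $\alpha$. This allows $r' := R(\beta, r, M\kappa)$ to be fixed a priori, and the $\kappa$-Morse property of $\beta$ then tightens the estimate on $\gamma|_r$ down to $m_\beta(q, Q)\kappa$, as required. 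Bypassing the Surgery Lemma with a direct projection argument, as you do, loses precisely this uniformity over $\alpha$.
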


\begin{proof}
We prove the Lemma for $\calB(\bfb)$. The proof for  $\partial \calB(\bfb)$
is identical. The proof for $\calB(x)$ is immediate from the fact that the 
open balls in $X$ define a neighborhood basis for $X$. 

\noindent 
(i) This is immediate from the definition of $\calB(\bfb)$.

\noindent 
(ii) It is enough to show that, for $\beta_1, \dots, \beta_k \in \bfb$ and 
$r_1, \dots, r_k >0$, the intersection  
\[
\calU(\beta_{1}, r_{1}) \cap \calU(\beta_{2}, r_{2})\cap \dots \cap \calU(\beta_{k}, r_{k}),
\]
belongs to $\calB(\bfb)$.
By Lemma \ref{L:Ubeta} (4), for any $i = 1, \dots, k$ there exists $R_i$ such that 
$$\calU(\beta_i, r_i) \supseteq \calU(\beta_1, R_i).$$
Thus, if we set 
\[
r := \max_{1 \leq i \leq k} \{ R_i \}
\qquad\text{we have}\qquad
\bigcap_{i=1}^k \calU(\beta_i, r_i) \supseteq \calU(\beta_1, r)
\]
and hence the intersection belongs to $\calB(\bfb)$. 

\noindent 
(iii) Established by Lemma \ref{L:Ubeta} (2). 

\noindent 
(iv) We need to prove the following claim.
\begin{claim}\label{claim1}
For any $\calU(\beta, r)$, there exists $r'$ (usually larger than $r$) such that if $\bfa \in \calU(\beta, r')$ then there exists $r''$ (depending on $\alpha$ and $r'$ but not on $\beta$)
such that $\calU(\alpha, r'') \subseteq \calU(\beta, r)$ for some $\alpha$ representative 
of $\bfa$. 
\end{claim}

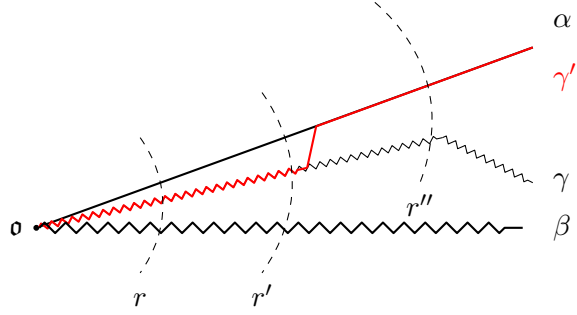
\begin{figure}
\begin{tikzpicture}[scale=0.6]
\tikzstyle{vertex} =[circle,draw,fill=black,thick, inner sep=0pt,minimum size=.5 mm]
[thick, 
    scale=1,
    vertex/.style={circle,draw,fill=black,thick,
                   inner sep=0pt,minimum size= .5 mm},
                  
      trans/.style={thick,->, shorten >=6pt,shorten <=6pt,>=stealth},
]

  \node[vertex] (o) at (0,0)[label=left:$\go$] {}; 
  \node(a) at (11,0)[label=right:$\beta$] {}; 

  \draw[thick](0,0)--(11, 4){};
  \node at (11,4) [label=above right:$\alpha$] {};
  \node at (11,4.1) [label=below right:{\color{red}$\gamma'$}] {};
   
  \draw [-,
line join=round,
decorate, decoration={
    zigzag,
    segment length=4,
    amplitude=1.2,post=lineto,
    post length=2pt
}] (o)--(9, 2){};

  \draw [-, red, thick,
line join=round,
decorate, decoration={
    zigzag,
    segment length=4,
    amplitude=1.2,post=lineto,
    post length=2pt
}] (o)--(6, 1.33){};

  \draw [thick, red] (6, 1.33) --(6.2, 2.25);
  \draw [thick, red] (6.2, 2.25)--(11, 4);
  \draw [-,
line join=round,
decorate, decoration={
    zigzag,
    segment length=4,
    amplitude=1.2,post=lineto,
    post length=2pt
}] (9,2)--(11, 1){};
  \node at (11,1) [label=right:$\gamma$] {};

  \draw [dashed](2.3, 2) to [bend left = 35] (2.3, -1){};
  \node at (2.3, -1) [label=below:$r$] {};
  \draw [dashed](5, 3) to [bend left = 35] (5, -1){};
  \node at (5, -0.8) [label=below:$r'$] {};
  \draw [dashed] (7.5, 5) to [bend left = 35] (8.5, 1){};
  \node at (8.5, 1.2) [label=below:$r''$] {};

  \draw [-,thick,
line join=round,
decorate, decoration={
    zigzag,
    segment length=8,
    amplitude=2,post=lineto,
    post length=4pt
}] (o)--(a){};

\end{tikzpicture}
\caption{The proof of Claim \ref{claim1}.}
\end{figure}

In particular, we will prove the claim for a geodesic representative $\alpha \in \bfa$, 
whose existence is established by Lemma \ref{L:Ubeta} (1). We adapt here the proof from \cite{QRT19}. 

Let us pick a $\kappa$-Morse quasi-geodesic ray $\beta$ and $r > 0$.  
Let 
$$M := \sup_{m_{\beta}(q, Q) \leq r} m_{\beta}(9q, Q)$$
and let  $r'$ be such that 
\begin{enumerate}
\item $r' > 2r$,
\item $M \leq \frac{r'}{2 \kappa(r')}$,
\item $r' > R(\beta,  r, M \kappa)$.
\end{enumerate}
Let $\alpha$ be a geodesic representative of $\bfa \in \calU(\beta, r')$. 
Choose $r''$ such that, 
\[
r'' \geq 2 r'
\qquad\text{and}
\qquad \sup_{m_\beta(q, Q) \leq r} m_{\alpha}(q, Q) \leq \frac{r''}{4\kappa(r'')}.
\]
Now consider $\bfc \in \calU(\alpha, r'')$. Let $\gamma \in \bfc$ be a
$(q, Q)$-quasi-geodesic ray, with $m_{\beta}(q, Q)$ small compared to $r$. 
By the choice of $r''$ above and by Lemma~\ref{L:symmetry},
\[
d_X(\alpha_{r''}, \gamma) \leq 2 m_{\alpha}(q, Q) \cdot \kappa(r'')
\leq \frac{r''}{2}. 
\]
We apply \lemref{Lem:surgery}, with radius being $r''$, to modify $\gamma$ 
to a $(9q, Q)$-quasi-geodesic ray $\gamma' \in \bfa$. Since $r' \leq r''/2$, 
we have $\gamma|_{r'} = \gamma'|_{r'}$. 

Also, $\gamma' \in \bfa \in \calU(\beta, r')$ and $m_{\beta}(9q, Q)$ is small compared to $r'$ (by point (2) above), therefore
\[
\gamma|_{r'} =\gamma' |_{r'} \subset \calN_\kappa\big(\beta, m_{\beta}(9q, Q)\big). 
\]
Hence, by the choice of $r'$ (point (3) above) and Definition \ref{D:k-morse}, we obtain
\[
\gamma|_r \subseteq \calN_\kappa\big(\beta, m_{\beta}(q, Q)\big). 
\]
This holds for every $\gamma \in \bfc$ with $m_\beta(q, Q)$ small compared to $r$, thus $\bfc \in \calU(\beta, r)$.
And this argument holds for every $\bfc \in \calU(\alpha, r'')$, 
therefore $\calU(\alpha, r'') \subset \calU(\beta, r)$.
\end{proof}

These properties for $\calB(x)$, $\calB(\bfb)$ and $\partial \calB(\bfb)$ are characteristic 
of the set of neighborhoods
of $\bfb$, as stated in the following proposition.

\begin{proposition}[\cite{Bourbaki}, Proposition 2]
Let $Y$ be a set. If to each element $y \in Y$ there corresponds a set $\calB(y)$ of
subsets of $Y$ such that properties (i) to (iv) from Lemma \ref{topologyconditions} above are satisfied, then there is a
unique topological structure on $Y$ such that for each $y \in Y$, 
$\calB(y)$ is the set of neighborhoods of $y$ in this topology. 
\end{proposition}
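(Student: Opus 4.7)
The plan is to recover the topology as the unique one whose open sets are characterized by being neighborhoods of each of their points. Concretely, I would declare
\[
\tau := \Bigl\{ U \subseteq Y \ST U \in \calB(y) \text{ for every } y \in U \Bigr\},
\]
and then verify in sequence that (a) $\tau$ is a topology on $Y$, (b) for each $y \in Y$ the $\tau$-neighborhoods of $y$ are exactly $\calB(y)$, and (c) no other topology has this property.

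For step (a), the empty set lies in $\tau$ vacuously and $Y$ lies in $\tau$ by property (i) applied to any member of $\calB(y)$. Closure under finite intersections follows from (ii): if $U_1, U_2 \in \tau$ and $y \in U_1 \cap U_2$, then $U_i \in \calB(y)$, so $U_1 \cap U_2 \in \calB(y)$. Closure under arbitrary unions follows from (i), since any union containing $y$ contains some $U_{\alpha_0} \in \calB(y)$ and so itself belongs to $\calB(y)$.

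For step (b), write $\calN(y)$ for the $\tau$-neighborhoods of $y$. The inclusion $\calN(y) \subseteq \calB(y)$ is immediate: a neighborhood $V$ contains an open $U \ni y$, and $U \in \calB(y)$ by definition of $\tau$, so (i) promotes $V$ into $\calB(y)$. The reverse inclusion is the heart of the argument. Given $V \in \calB(y)$, form
\[
U := \Bigl\{ z \in Y \ST V \in \calB(z) \Bigr\}.
\]
Property (iii) gives $z \in V$ whenever $V \in \calB(z)$, so $U \subseteq V$, and $y \in U$ since $V \in \calB(y)$. To see $U$ is open, fix $z \in U$. By (iv) applied to $V \in \calB(z)$, there is $W \in \calB(z)$ such that $V \in \calB(w)$ for every $w \in W$; that is, $W \subseteq U$. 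Property (i) then forces $U \in \calB(z)$, so $U \in \tau$. Thus $V$ contains an open neighborhood of $y$, i.e. $V \in \calN(y)$.

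For uniqueness (step (c)), suppose $\tau'$ is another topology on $Y$ whose neighborhood filter at each $y$ is $\calB(y)$. A set $U \subseteq Y$ is $\tau'$-open if and only if it is a $\tau'$-neighborhood of each of its points, i.e. $U \in \calB(y)$ for all $y \in U$, which is precisely the defining condition for $U \in \tau$. Hence $\tau' = \tau$.

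The main obstacle is step (b), and specifically exhibiting an open set inside a given $V \in \calB(y)$: property (iv) is doing the real work, and without it one cannot propagate membership in $\calB(\cdot)$ from $y$ to nearby points. The rest is bookkeeping with (i)--(iii).
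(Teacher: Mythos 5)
Your proof is correct and reproduces the standard Bourbaki argument; the paper itself does not prove this proposition but simply cites Bourbaki, so there is no paper proof to compare against beyond the reference. Your construction of $\tau$ as the sets that are ``$\calB$-members of all their points,'' and especially the construction of the open set $U := \{z \ST V \in \calB(z)\}$ inside a given $V \in \calB(y)$ via property (iv), is precisely how Bourbaki proves existence, and your uniqueness argument is likewise the standard one.

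One small point worth tightening: to get $Y \in \tau$ you invoke property (i) ``applied to any member of $\calB(y)$,'' which tacitly assumes each $\calB(y)$ is nonempty. This follows if one reads the finite-intersection axiom (ii) as including the empty intersection (yielding $Y \in \calB(y)$), which is Bourbaki's convention, and in any case it is automatic in the paper's application because Lemma \ref{L:Ubeta}(2) produces explicit elements of $\calB(\bfb)$. It would be cleaner either to state this convention or to note that nonemptiness of $\calB(y)$ is part of the hypothesis; without it, $\tau$ need not contain $Y$ and the argument for step (a) breaks down.
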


Thus we now use the sets $\partial \calB(b)$ to equip $\pka X$ with a topological 
structure and use the sets $\calB(b)$ and $\calB(x)$ to equip $X \cup \pka X$ 
with a topological structure. Note that, since neighborhoods in $\pka X$
are intersections of neighborhoods of $X \cup \pka X$ with $\pka X$, we
have that the inclusion $\pka X \subset X \cup \pka X$ is a topological 
embedding and $X \cup \pka X$ is a bordification of $X$. 

Recall that a set is open if it contains a neighborhood of each of its points. Thus, 
a set $\calW \subseteq \pka X$ is open if for every $\bfb \in \calW$ there is $\beta \in \bfb$ and  $r > 0$ such that $\partial \calU(\beta, r) \subset \calW$. A set 
$\calW \subset X \cup \pka X$ is open if its intersection with both $X$ and $\pka X$
is open. 

\subsection{Metrizability}
We now establish the metrizability of the space $\pka X$. To begin with, we need the following property of the topology:

\begin{lemma}\label{Lem:metrizable}
For each $\kappa$-Morse quasi-geodesic ray $\beta$ and $r > 0$, there exists a radius 
$r' >0$ such that for any point $\bfa \in \pka X$ there exists $r'' >0$ (depending
only on $\bfa$ and $r'$ and not on $\beta$) such that for every geodesic representative 
$\alpha_{0} \in \bfa$ 
\[
\calU(\alpha_{0}, r'') \cap \calU(\beta, r') \neq \emptyset 
\qquad \Longrightarrow\qquad \bfa \in \partial \calU(\beta, r).
\]
Similarly, for $x \in X$, let $B(x,1)$ be the ball of radius $1$ centered at $x$. Then 
\[
B(x,1) \cap \calU(\beta, r') \neq \emptyset 
\qquad \Longrightarrow\qquad x \in \calU(\beta, r).
\]
\end{lemma}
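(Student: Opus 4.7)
The strategy follows the proof of Claim~\ref{claim1}: use \lemref{Lem:surgery} to convert the intersection hypothesis into a sublinear closeness statement for $\alpha'|_{r'}$, and then invoke the $\kappa$-Morse property of $\beta$ to reduce the multiplicative constant from a large uniform one down to $m_\beta(q,Q)$. Given $\beta$ and $r$, set $M := \sup\{m_\beta(9q, Q) : m_\beta(q, Q) \leq r/(2\kappa(r))\}$, which is finite by properness of $m_\beta$, and pick $r'$ so large that $r' > 2r$, $M$ is small compared to $r'$, and $r' \geq R(\beta, r, M\kappa)$, where the last quantity is the threshold provided by Definition~\ref{D:k-morse} with sublinear function $M\kappa$ and radius $r$. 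Given $\bfa \in \pka X$ with a geodesic representative $\alpha_0$, choose $r'' \geq 2r'$ large enough that $m_{\alpha_0}(1,0) + 4 M_{\alpha_0}$ is small compared to $r''$, where $M_{\alpha_0} := \sup\{m_{\alpha_0}(q,Q) : m_\beta(q,Q) \leq r/(2\kappa(r))\}$.

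For the class case, suppose $\bfc \in \calU(\alpha_0, r'') \cap \calU(\beta, r')$ and pick a geodesic $\gamma \in \bfc$ (\lemref{L:Ubeta}(1)); let $\alpha' \in \bfa$ be an arbitrary $(q, Q)$-quasi-geodesic ray with $m_\beta(q, Q)$ small compared to $r$. The hypothesis $\gamma|_{r''} \subseteq \calN_\kappa(\alpha_0, m_{\alpha_0}(1, 0))$ together with \lemref{L:equivalence}(ii) applied to $\alpha_0 \sim \alpha'$ and the concavity inequality $\kappa(2t) \leq 2\kappa(t)$ gives, via a nearest-point projection to $\alpha_0$, the bound $d_X(\gamma_{r''}, \alpha') \leq r''/2$ by the choice of $r''$. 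Then \lemref{Lem:surgery} produces a $(9q, Q)$-quasi-geodesic $\gamma' \in \bfc$ with $\alpha'|_{r''/2} = \gamma'|_{r''/2}$, hence $\alpha'|_{r'} = \gamma'|_{r'}$ since $r'' \geq 2r'$. Since $m_\beta(9q, Q) \leq M$ is small compared to $r'$, applying $\bfc \in \calU(\beta, r')$ to the representative $\gamma' \in \bfc$ yields $\alpha'|_{r'} = \gamma'|_{r'} \subseteq \calN_\kappa(\beta, m_\beta(9q, Q)) \subseteq \calN_\kappa(\beta, M)$.

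Finally, since $r' \geq R(\beta, r, M\kappa)$, evaluating the previous inclusion at the Morse radius $R$ from Definition~\ref{D:k-morse} gives $d_X(\alpha'_R, \beta) \leq M\kappa(R)$, so the $\kappa$-Morse property of $\beta$ delivers $\alpha'|_r \subseteq \calN_\kappa(\beta, m_\beta(q, Q))$, proving $\bfa \in \partial\calU(\beta, r)$. For the point statement, given $y \in B(x, 1) \cap \calU(\beta, r')$ and a $(q, Q)$-quasi-geodesic $\alpha$ from $\go$ to $x$, extend $\alpha$ by the geodesic of length at most $1$ from $x$ to $y$ to obtain a quasi-geodesic $\tilde\alpha$ from $\go$ to $y$ whose constants change only by $O(1)$; applying $y \in \calU(\beta, r')$ to $\tilde\alpha$ produces $\tilde\alpha|_{r'} \subseteq \calN_\kappa(\beta, M')$ for a uniform constant $M'$ depending only on $\beta, r$, and the same Morse-property reduction as above finishes the argument. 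The main technical delicacy, as in Claim~\ref{claim1}, is arranging the quantifier structure so that $r'$ depends only on $(\beta, r)$ while all $\alpha_0$-specific (hence $\bfa$-specific) constants are absorbed into $r''$.
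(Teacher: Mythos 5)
Your argument follows the same route as the paper's proof (fix $M$ and $r'$ from $(\beta, r)$, choose $r''$ from $\bfa$, use the intersection point $\bfc$ plus the Surgery Lemma to convert agreement along $\alpha_0$ into agreement along $\beta$, then feed through the $\kappa$-Morse property of $\beta$), but there are a few genuine gaps.

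The main one is the quantifier over $\alpha_0$. The lemma asks for a single $r''$, depending only on $\bfa$ and $r'$, that works for \emph{every} geodesic representative $\alpha_0 \in \bfa$; you instead fix $\alpha_0$ first and choose $r''$ in terms of $m_{\alpha_0}(1,0) + 4 M_{\alpha_0}$. Different geodesic representatives of $\bfa$ may a priori carry different Morse gauges, so your $r''$ is not visibly uniform. The paper invokes \corref{Cor:m_beta} to produce a constant $u$ bounding $m_\star(1,0)+3m_\star(q,Q)$ simultaneously over all geodesic rays $\star \in \bfa$ (and all relevant $(q,Q)$), and that uniform $u$ is precisely what makes $r''$ independent of the representative.

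Second, your $M := \sup m_\beta(9q, Q)$ is tailored to the surgery step only. In the point-statement you replace the endpoint $x$ of a $(q,Q)$-quasi-geodesic by $y$, producing a $(q,Q+1)$-quasi-geodesic, so the sublinear function to feed into the $\kappa$-Morse property of $\beta$ is $m_\beta(q,Q+1)\cdot\kappa$, and for the single threshold $r' = R(\beta, r, M\kappa)$ to suffice you need $m_\beta(q,Q+1) \leq M$. The paper uses $M := \sup m_\beta(9q, Q+1)$ exactly to dominate both $(9q,Q)$ and $(q,Q+1)$; your ``uniform constant $M'$'' is left floating and not tied back to the choice of $r'$.

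A smaller omission: $\bfc \in \calU(\alpha_0, r'') \cap \calU(\beta, r')$ may be a point of $X$ rather than a class, in which case $\gamma$ should be taken to be a geodesic segment from $\go$ to $\bfc$ (whose length must exceed the Surgery Lemma threshold); the paper treats both cases explicitly, while your proof only addresses $\bfc \in \pka X$.
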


\begin{proof}
This will be done using the Surgery Lemma~\ref{Lem:surgery}. 
Pick a $\kappa$-Morse quasi-geodesic ray $\beta$ and $r > 0$. Let $\mathcal{Q} := \{ (q, Q) \ : \ m_{\beta}(q, Q) \leq \frac{r}{2\kappa(r)}\}$, 
which is bounded by properness. Set 
$$M := \sup_{(q, Q) \in \mathcal{Q}} m_{\beta}(9q, Q+1), $$
and $r' := R(\beta,  r, M \kappa)$. Let $\bfa \in \partial_{\kappa} X$. 

By Corollary ~\ref{Cor:m_beta}, there exists a constant $u > 0$ such that, 
for any geodesic ray $\star \in \bfa$ and any $(q, Q) \in \mathcal{Q}$ we have
\[
m_{\star}(1, 0) + 3 m_{\star}(q, Q) \leq u.\]
Let $R$ be the radius given in \lemref{Lem:surgery} associated to 
$q, Q$ and $2r'$, and let $r''$ be large enough so that 
\[
r'' \geq \max \{ 2u \cdot \kappa(r''), 2r', R\}.
\]
Let $\alpha_0$ be a geodesic ray in $\bfa$. By assumption, there is a point $\bfc$ 
inside the intersection
\[
\bfc \in \calU(\alpha_0, r'') \cap \calU(\beta, r').
\]
If $\bfc \in \pka X$, let $\gamma \in \bfc$ be a geodesic ray in this class and
if $\bfc \in X$, let $\gamma$ be a geodesic ray connecting $\go$ to $\bfc$. 
In either case, $\gamma_{r''}$ is well defined since, in the second case,  
$d_X(\go, \bfc) \geq r''$. Let $\alpha \in \bfa$ be a $(q, Q)$-quasi-geodesic ray with 
$m_\beta(q, Q)$ small compared 
to $r$. To conclude $\bfa \in \partial \calU(\beta, r)$ we need to show that 
$\alpha|_r \subset \calN_\kappa(\beta, m_\beta(q,Q))$.

Since $\bfc \in  \calU(\alpha_0, r'') $,
\[
d_X(\gamma_{r''}, \alpha_{0}) \leq m_{\alpha_{0}}(1, 0) \cdot \kappa(r'').
\]
Let $p  \in \pi_{\alpha_{0}}(\gamma_{r''})$. By definition of $u$ and $r''$, we have 
\[
\Norm p \leq r'' + m_{\alpha_0}(1, 0)\cdot \kappa(r'') \leq \frac{3}{2}r''.
\] 
Therefore, Lemma~\ref{L:equivalence} (ii) implies 
\[
d_X(p, \alpha) \leq 2 m_{\alpha_{0}}(q, Q)\cdot \kappa(p) 
  \leq 3 m_{\alpha_{0}}(q, Q) \cdot \kappa(r'').
\]
Hence,
\[d_X(\gamma_{r''}, \alpha) \leq d_X(\gamma_{r''}, p) + d_X(p, \alpha) \leq u \cdot \kappa(r'') \leq r''/2. \]
We can now apply the Surgery lemma (Lemma~\ref{Lem:surgery}) to $\alpha$ and $\gamma$ with radius $2r'$ to obtain a $(9q, Q)$-quasi-geodesic
ray  $\gamma'$ that 
is either in the class $\bfc$ if $\bfc \in \pka X$ or ends in $\bfc$ if $ \bfc \in X$
where $\gamma'|_{r'} = \alpha|_{r'}$.
Since $\bfc \in \calU(\beta, r')$, we have 
\[ \alpha|_{r'} = \gamma'|_{r'} \subset \calN_\kappa\big(\beta, m_{\beta}(9q, Q)\big). \]
Observing that $\alpha|_{r'}$ is $(q, Q)$-quasi-geodesic and letting 
$\kappa' = m_{\beta}(9q, Q) \cdot \kappa$, the definition of $r'$ and 
$\kappa$-Morse implies that 
\[
\alpha\vert_{r} \subset \calN_\kappa\big(\beta, m_{\beta}(q, Q)\big),
\]
hence $\bfa \in \calU(\beta, r)$.

To see the second assertion, assume $y \in B(x,1) \cap \calU(\beta, r')$. 
Let $\alpha$ be a $(q,Q)$-quasi-geodesic ray ending at $x$
where $m_\beta(q,Q)$ is small compared to $r$. 
Let $\gamma$ be a quasi-geodesic ray that is identical to $\alpha$ but at the 
last point in sent to $y$ instead of $x$. Then $\gamma$ is $(q,Q+1)$-quasi-geodesic. 
The definition of $r'$ and $\kappa$-Morse implies that 
\[
\alpha\vert_{r}= \gamma\vert_r \subset \calN_\kappa\big(\beta, m_{\beta}(q, Q)\big),
\]
hence $x \in \calU(\beta, r)$. 
\end{proof}

Our method for establishing metrizability is via the following criterion. 

\begin{theorem}[Theorem 3, \cite{Frink}] \label{Thm:Metrizable-condition} 
Assume, for every point $\bfb$ of a topological space, there exists a monotonically 
decreasing sequence $\calV_1(\bfb), \calV_2(\bfb), \cdots, \calV_i(\bfb), \cdots$
of neighborhoods whose intersection is $\bfb$ and such that the following holds: 
For every point $\bfb$ of the neighborhood space and every integer $i$, there exists 
an integer $j=j(\bfb, i) >i$ such that if $\bfa$ is a point for which $\calV_j(\bfa)$ and 
$\calV_j(\bfb)$ have a point in common then $\calV_j(\bfa) \subset \calV_i(\bfb)$. 
Then the space is homeomorphic to a metric space.
\end{theorem}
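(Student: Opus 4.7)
The plan is to apply the classical Chittenden--Frink chain-infimum construction to produce a compatible metric on the space $Y$ whose topology is described by the neighborhood filters $\calB(\bfb) = \{\calV_i(\bfb)\}_i$. The argument breaks into three steps, with the core technical work concentrated in the first.

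First, I would define a symmetric proto-distance $\rho \colon Y \times Y \to [0,1]$ by
\[
\rho(\bfa, \bfb) := \inf \{ 2^{-i} : \calV_i(\bfa) \cap \calV_i(\bfb) \neq \emptyset \},
\]
with the convention $\rho(\bfa,\bfa) = 0$. The monotonicity of the $\calV_i$ makes $\rho$ well defined, while the intersection condition $\bigcap_i \calV_i(\bfb) = \{\bfb\}$ forces $\rho(\bfa,\bfb) = 0 \iff \bfa = \bfb$. Using Frink's hypothesis at $\bfb$ and $i$ to produce the index $j = j(\bfb,i)$, I would extract a weak triangle inequality of the form
\[
\rho(\bfa,\bfc) \leq K \cdot \max(\rho(\bfa,\bfb), \rho(\bfb,\bfc))
\]
for a universal constant $K$, by tracing through the implication $\calV_j(\bfa)\cap\calV_j(\bfb)\neq\emptyset \Rightarrow \calV_j(\bfa) \subset \calV_i(\bfb)$ together with its symmetric counterpart at $\bfc$ to locate a point common to $\calV_i(\bfa)$ and $\calV_i(\bfc)$.

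Second, I would promote $\rho$ to an honest metric by the chain infimum
\[
d(\bfa,\bfb) := \inf \left\{ \sum_{k=1}^N \rho(z_{k-1}, z_k) : \bfa = z_0, z_1, \dots, z_N = \bfb \right\}.
\]
This is automatically symmetric and satisfies the triangle inequality, with $d \leq \rho$ trivially. The reverse bound $\rho \leq 2K \cdot d$ is Frink's lemma, proved by induction on chain length: the weak triangle inequality forces every finite chain from $\bfa$ to $\bfb$ to carry total $\rho$-mass comparable to $\rho(\bfa,\bfb)$, so the infimum cannot collapse to zero unless $\bfa = \bfb$. Compatibility of the $d$-topology with the original one then follows from the comparability of $d$ and $\rho$ together with the fact that, by the Frink hypothesis, the $\rho$-balls around $\bfb$ interleave with the neighborhoods $\calV_i(\bfb)$ and generate the same filter at every point.

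The main obstacle will be step one: Frink's hypothesis supplies the index $j$ only pointwise in $\bfb$, so producing a \emph{uniform} weak triangle inequality for $\rho$ requires careful symmetrization between the two basepoints $\bfa$ and $\bfc$, since a priori the inclusion $\calV_j(\bfa) \subset \calV_i(\bfb)$ does not immediately yield $\bfb \in \calV_i(\bfa)$. Navigating this asymmetry by diagonalizing the indices is the subtle combinatorial heart of Frink's 1937 argument and is where any proof of the theorem must do its real work.
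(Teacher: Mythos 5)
First, note that the paper itself does not prove this statement: it is quoted verbatim from Frink's 1937 paper and used as a black box, so there is no in-paper proof to compare against.

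Your sketch contains a genuine gap, and you have put your finger on it yourself without resolving it. The claimed uniform weak triangle inequality $\rho(\bfa,\bfc)\le K\max(\rho(\bfa,\bfb),\rho(\bfb,\bfc))$ with a universal constant $K$ does not follow from Frink's hypothesis. The hypothesis supplies a threshold index $j(\bfb,i)$ that is \emph{pointwise} in the middle point $\bfb$, with no control on how $j(\bfb,i)$ varies with $\bfb$. Tracing through, what you can actually extract is: if $\rho(\bfa,\bfb)\le 2^{-j(\bfb,i)}$ and $\rho(\bfb,\bfc)\le 2^{-j(\bfb,i)}$, then $\calV_{j(\bfb,i)}(\bfa)\subset\calV_i(\bfb)$ and $\calV_{j(\bfb,i)}(\bfc)\subset\calV_i(\bfb)$, hence $\bfa,\bfc\in\calV_i(\bfb)$. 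But to bound $\rho(\bfa,\bfc)$ you need a common point of $\calV_k(\bfa)$ and $\calV_k(\bfc)$ for a $k$ comparable to $i$, which is a statement about the neighborhood filters at $\bfa$ and $\bfc$, not at $\bfb$. Invoking the hypothesis at $\bfa$ or $\bfc$ instead brings in $j(\bfa,k)$ or $j(\bfc,k)$, which depend on those outer points, so the resulting threshold in any $\delta$--$\epsilon$ statement depends on the whole triple, not just on the target $\epsilon$. This is strictly weaker than a uniform relaxed triangle inequality, and without uniformity the chain-infimum lemma in your step two genuinely fails: a long chain can make $d(\bfa,\bfb)$ arbitrarily small while $\rho(\bfa,\bfb)$ stays bounded below, so the comparability $\rho\le 2Kd$ breaks and nothing forces $d$ to generate the right topology.

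The ``diagonalization of indices'' that you flag as the combinatorial heart is therefore not a symmetrization trick within your framework; it is a different argument. Frink does not obtain Theorem~3 by feeding this $\rho$ directly into a uniform chain lemma: the pointwise-to-uniform upgrade is the real content of the theorem, and it uses the topological structure more seriously than your plan does --- for instance, by building from the hypothesis a sequence of open covers with a star-refinement property and then invoking an Alexandroff--Urysohn-type metrization criterion, rather than by manufacturing a single quasi-metric with a uniform multiplicative defect. As written, your outline omits this step and so does not establish the theorem.
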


We check this condition for $\pka X$, using as neighborhoods $\calV_i(\bfb)$ the sets 
$\partial \calU(\beta, r)$ previously defined.

\begin{theorem}\label{Thm:Metrizable}
The space $\pka X$ is metrizable. 
\end{theorem}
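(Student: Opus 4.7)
The plan is to apply the Frink metrization criterion (Theorem~\ref{Thm:Metrizable-condition}). For each boundary point $\bfb \in \pka X$, I would fix a geodesic representative $\beta_0 \in \bfb$ (whose existence is provided by Lemma~\ref{L:Ubeta}(1)) and propose as a countable neighborhood basis the decreasing sequence
\[
\calV_j(\bfb) := \partial \calU(\beta_0, r_j(\bfb)),
\]
where $(r_j(\bfb))_{j \geq 1}$ is an increasing sequence of radii tending to infinity, chosen recursively so that $r_{j+1}(\bfb)$ exceeds the critical radius $r'(\beta_0, r_j(\bfb))$ coming out of Lemma~\ref{Lem:metrizable}. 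Monotonicity is immediate since $\calU(\beta_0, \param)$ is decreasing in its second argument, and the identity $\bigcap_j \calV_j(\bfb) = \{\bfb\}$ follows from Lemma~\ref{L:Ubeta}(3).

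The core content is verifying Frink's nesting condition: given $\bfb$ and $i$, find $j = j(\bfb, i) > i$ such that whenever $\calV_j(\bfa) \cap \calV_j(\bfb) \neq \emptyset$, one has $\calV_j(\bfa) \subseteq \calV_i(\bfb)$. I would pick $j$ by applying Lemma~\ref{Lem:metrizable} to $\beta_0$ and $r_i(\bfb)$ to obtain $r'$, and then require $r_j(\bfb) \geq r'$. Given a witness $\bfc \in \calV_j(\bfa) \cap \calV_j(\bfb)$ and arbitrary $\bfd \in \calV_j(\bfa)$, the idea is to invoke Lemma~\ref{Lem:metrizable} a second time, this time with $\bfa$ replaced by $\bfd$, to conclude $\bfd \in \partial \calU(\beta_0, r_i(\bfb))$. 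Exhibiting the required nonempty intersection $\calU(\delta_0, r''(\bfd, r')) \cap \calU(\beta_0, r')$ reduces to showing that $\bfc$ lies in it: the second factor is clear since $r_j(\bfb) \geq r'$, and the first should follow because the geodesic representatives of $\bfc$ and of $\bfd$ both fellow-travel $\alpha_0$ up to distance $r_j(\bfa)$ (by membership in $\calV_j(\bfa)$), hence fellow-travel each other up to a comparable distance via Lemma~\ref{L:symmetry} and Lemma~\ref{L:equivalence}.

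The main obstacle is the per-point dependence of $r''(\bfd, r')$ in Lemma~\ref{Lem:metrizable}, which prevents a naive uniform choice of $j$. To handle it, I would engineer the sequence $r_j(\bfa)$ to absorb this nonuniformity: define $r_j(\bfa)$ so that it dominates $r''(\bfa, s_j)$ for a fixed universal auxiliary sequence $s_j \to \infty$, and exploit that $\bfd \in \calV_j(\bfa)$ forces $\delta_0$ to shadow $\alpha_0$ on a long initial segment. Invoking the Surgery Lemma (Lemma~\ref{Lem:surgery}), one replaces $\delta_0$ by a quasi-geodesic representative of $\bfd$ whose initial portion coincides with $\alpha_0|_{r_j(\bfa)/2}$, so that on the relevant scale its Morse gauge is controlled by $m_{\alpha_0}$ through Corollary~\ref{Cor:m_beta}; this is what allows $r_j(\bfa)$ to dominate $r''(\bfd, r')$ uniformly in $\bfd \in \calV_j(\bfa)$. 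Once Frink's condition is established, Theorem~\ref{Thm:Metrizable-condition} produces a metric on $\pka X$ inducing the topology defined in Section~4.
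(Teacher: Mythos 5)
Your overall framework is right — Frink's criterion is exactly what the paper uses, and the neighborhood basis $\calV_j(\bfb) = \partial\calU(\beta_0, r_j(\bfb))$ is the correct shape. Points (1)--(3) of Lemma~\ref{L:Ubeta} handle monotonicity and the intersection, just as you say. But there is a genuine gap in the nesting step, and your proposed repair does not close it.

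The problem is that you try to verify $\calV_j(\bfa) \subset \calV_i(\bfb)$ pointwise: for each $\bfd \in \calV_j(\bfa)$, you apply Lemma~\ref{Lem:metrizable} to $\bfd$ and produce the intersection $\calU(\delta_0, r''(\bfd, r')) \cap \calU(\beta_0, r') \neq \emptyset$ witnessed by $\bfc$. For the witness to land in the first factor, you need two things that you cannot get: (a) a lower bound $r_j(\bfa) \geq r''(\bfd, r')$ holding uniformly over every $\bfd \in \calV_j(\bfa)$, and (b) estimates for closeness of a representative of $\bfc$ to $\delta_0$ measured against the Morse gauge $m_{\delta_0}$. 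Concerning (a): the quantity $r''(\bfd, r')$ in Lemma~\ref{Lem:metrizable} is defined through the Morse gauge of a geodesic representative $\delta_0$ of $\bfd$, and the Morse gauge is a global invariant of $\delta_0$, not controlled by the fact that $\delta_0$ shadows $\alpha_0$ out to radius $r_j(\bfa)$. Neither the Surgery Lemma nor Corollary~\ref{Cor:m_beta} resolves this: Corollary~\ref{Cor:m_beta} compares the gauges of a geodesic and an \emph{equivalent} quasi-geodesic (same boundary class), whereas $\delta_0$ and $\alpha_0$ are in different classes and can diverge badly after radius $r_j(\bfa)$. Concerning (b): the fellow-travelling you extract from $\bfc, \bfd \in \calV_j(\bfa)$ is controlled by $m_{\alpha_0}$, but membership $\bfc \in \calU(\delta_0, \cdot)$ requires bounds in terms of $m_{\delta_0}$, and these two gauges are a priori unrelated.

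The paper sidesteps both obstacles by never invoking Lemma~\ref{Lem:metrizable} for an arbitrary $\bfd$. It uses Lemma~\ref{Lem:metrizable} exactly once, on $\bfa$ itself, to conclude from the nonempty intersection that $\bfa \in \partial\calU(\beta_0, \rho_i(\bfb))$. It then invokes Claim~\ref{claim1} — which you do not use in the nesting step — to get a set containment $\partial\calU(\alpha_0, r''(\alpha_0, \rho_i(\bfb))) \subset \partial\calU(\beta_0, r_i(\bfb))$. The final ingredient is the precise choice $r_j(\bfa) := \max(j, r''(\alpha_0, j))$, which bakes $r''$ evaluated at $\alpha_0$ directly into the definition of the basis, so that $r_j(\bfa) \geq r''(\alpha_0, \rho_i(\bfb))$ for $j$ large; this gives $\calV_j(\bfa) \subset \calV_i(\bfb)$ immediately, with $r''$ ever only evaluated at the single class $\bfa$. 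Your recursive definition $r_{j+1}(\bfb) > r'(\beta_0, r_j(\bfb))$ omits $r''(\alpha_0, \cdot)$ from the definition of the basis, and without it the final inclusion does not follow. To fix your proof, adopt the paper's definition of $r_j$ and replace your second application of Lemma~\ref{Lem:metrizable} by an application of Claim~\ref{claim1}.
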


\begin{proof}
Our goal is to construct, for any $i \in \NN$ and $\bfb \in \pka X$, neighborhoods $\calV_i(\bfb)$ which satisfy the conditions of Theorem \ref{Thm:Metrizable-condition}. 

Recall that, given a $\kappa$-Morse quasi-geodesic ray $\beta$ and $r > 0$, we can define $r'$ as 
\[ r'(\beta, r) : = R(\beta, r, M \kappa),\]
as in the proof of Lemma~\ref{Lem:metrizable}.
Note that both in Claim~\ref{claim1} and in Lemma~\ref{Lem:metrizable}, $r''$ does not depend on $\beta$ or $r$, but it depends on $\alpha$, $r'$ and on   
$$\sup_{m_\beta(q, Q) \leq r} m_\alpha(q, Q).$$
Since $q, Q \leq m_{\beta}(q, Q) \leq r \leq r'$, the maximum value of $q, Q$ can be bounded in terms of $\alpha$, $r'$, 
without referring to $\beta$ or $r$.
Hence, we can consider the function $r''(\alpha, r')$ such that both Claim~\ref{claim1} and Lemma~\ref{Lem:metrizable} hold.

For $i \in \NN$ and $\bfa \in \pka X$, pick a geodesic representative $\alpha_0 \in \bfa$ and define
\[
\calV_i(\bfa) := \partial \calU(\alpha_0, r_i(\bfa)),
\qquad\text{where}\qquad
r_i(\bfa) := \max\big(i, r''(\alpha_0, i) \big).
\]
Also, given $\bfb$ and $i$, we define $\rho_i(\bfb) := r'(\beta_0, r_i(\bfb))$, and
\[
j= j(\bfb, i) := \Big\lceil r'\big( \beta_{0}, \rho_i(\bfb) \big)\Big\rceil, 
\]
where $\beta_{0}$ is a geodesic representative in $\bfb$. Assume $\calV_j(\bfa)$ and $\calV_j(\bfb)$ have a point in common, that is, 
\[
\partial \calU \big(\alpha_0, r_j(\bfa)\big) \cap \partial \calU \big( \beta_0, r_j(\bfb)\big) 
  \not = \emptyset. 
\]
Then, since $r_j(\bfa) \geq r''(\alpha_0, j)$ and $r_j(\bfb) \geq j$, by Lemma~\ref{Lem:metrizable} we have 
\[
\bfa \in \partial \calU \Big(\beta_0,  \rho_i(\bfb)  \Big). 
\]
Now, Claim~\ref{claim1} implies 
\[
\partial \calU \Big(\alpha_{0}, r''\big(\alpha_{0},  \rho_i(\bfb) \big) \Big) 
\subset \partial \calU \big(\beta_{0}, r_i(\bfb)\big). 
\]
But $r_j(\bfa)=\max\big(j, r''(\alpha_0, j) \big)$, thus
\[ r_j(\bfa) \geq  r''(\alpha_{0}, r'(\beta_{0}, \rho_i(\bfb)) ) 
\geq r''(\alpha_0, \rho_i(\bfb)). 
\]
Therefore, 
\[
\partial \calU \big(\alpha_0, r_j(\bfa)\big) \subset \partial \calU \big(\beta_0, r_i(\bfb)\big),
\]
which is to say $\calV_j(\bfa) \subset \calV_i(\bfb)$. 
The theorem now follows from \thmref{Thm:Metrizable-condition}. 
\end{proof} 

Similarly, we have 

\begin{theorem}\label{Thm:Union-Metrizable}
The space $X \cup \pka X$ is metrizable. 
\end{theorem}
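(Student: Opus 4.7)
The plan is to adapt the argument of Theorem \ref{Thm:Metrizable} by applying Frink's metrization criterion (Theorem \ref{Thm:Metrizable-condition}) to the larger space $X \cup \pka X$. We extend the definition of the neighborhood bases $\calV_i$ by setting
\[
\calV_i(\bfa) := \calU\big(\alpha_0, r_i(\bfa)\big) \quad \text{for } \bfa \in \pka X,
\qquad
\calV_i(x) := B\big(x, 1/i\big) \quad \text{for } x \in X,
\]
where $r_i(\bfa)$ is chosen as in the proof of Theorem \ref{Thm:Metrizable} but with the additional freedom to enlarge it to exploit the second statement of Lemma \ref{Lem:metrizable}. Since the balls $B(x, 1/i)$ shrink to $\{x\}$ in $X$ and the $\calU(\alpha_0, r_i(\bfa))$ shrink to $\{\bfa\}$ by Lemma \ref{L:Ubeta} (3), the monotonicity and intersection conditions of Frink's theorem are immediate.

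The verification of the main clause of Frink's criterion splits into four cases according to the types of $\bfa$ and $\bfb$. Case 1, in which both points lie in $\pka X$, requires no change from Theorem \ref{Thm:Metrizable}: the proof of Lemma \ref{Lem:metrizable} already handles a common point $\bfc \in \calU(\alpha_0, r'') \cap \calU(\beta_0, r')$ of either type, so the same choice of $j(\bfb,i)$ works verbatim. Case 4, in which both points are in $X$, follows from the triangle inequality by taking $j \geq 3i$. The interesting work is in the two mixed cases.

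For Case 2, with $\bfb \in \pka X$ and $\bfa = y \in X$, I will choose $j$ so large that $1/j \leq 1/2$ and so that the second statement of Lemma \ref{Lem:metrizable} applies with the pair $(r_i(\bfb), r_j(\bfb))$ in place of $(r, r')$. Then, if $z \in B(y, 1/j) \cap \calU(\beta_0, r_j(\bfb))$ and $w$ is any point of $B(y, 1/j)$, the triangle inequality gives $z \in B(w,1) \cap \calU(\beta_0, r_j(\bfb))$, so Lemma \ref{Lem:metrizable} yields $w \in \calU(\beta_0, r_i(\bfb)) = \calV_i(\bfb)$, as required. For Case 3, with $\bfb = x \in X$ and $\bfa \in \pka X$, I exploit that every point of $\calV_j(\bfa) = \calU(\alpha_0, r_j(\bfa))$ has norm at least $r_j(\bfa) \geq j$ by the definition of $\calU$, while every point of $B(x, 1/j)$ has norm at most $\Vert x \Vert + 1/j$. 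Therefore, taking $j > \Vert x \Vert + 1$ (which depends only on $\bfb = x$ and not on $\bfa$) the intersection $\calV_j(\bfa) \cap \calV_j(\bfb)$ is automatically empty and Frink's condition holds vacuously for every $\bfa \in \pka X$.

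The main obstacle is Case 3, where at first glance a containment of the form $\calV_j(\bfa) \subset B(x, 1/i)$ is impossible because $\calV_j(\bfa)$ contains boundary points; the resolution, as outlined above, is that the definition of $\calU$ forces all of its elements to be far from $\go$, so a uniform choice of $j$ renders the hypothesis of Frink's condition vacuous. Combining the four cases and setting
\[
j(\bfb, i) := \max\{j_1, j_2, j_3, j_4\},
\]
where each $j_\ell$ is the value produced in Case $\ell$, completes the verification of Frink's criterion and hence proves that $X \cup \pka X$ is metrizable.
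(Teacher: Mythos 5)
Your proposal follows the same overall route as the paper's: Frink's criterion (Theorem~\ref{Thm:Metrizable-condition}) applied to the extended neighborhood base, with $\calV_i(\bfa) = \calU(\alpha_0, r_i(\bfa))$ for boundary points and metric balls $B(x,1/i)$ for interior points. However, you carry out the case analysis more carefully than the published argument, and in doing so you identify and fill a genuine gap. The paper sets $j(x,i):=3i$ for $\bfb = x \in X$ and then only verifies the implication for $\bfa = y \in X$; the sub-case $\bfb \in X$, $\bfa \in \pka X$ is never addressed. That sub-case cannot be handled by a containment of the form $\calV_j(\bfa) \subset B(x,1/i)$, since $\calV_j(\bfa)$ always contains the boundary point $\bfa$, so the only way Frink's implication can hold is for the intersection $\calV_j(\bfa)\cap\calV_j(x)$ to be forced empty --- and $j=3i$ does not guarantee this when $\Norm{x}$ is large. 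Your fix --- choosing $j > \Norm{x}+1$ so that every point of $B(x,1/j)$ has norm $< j$ while every $X$-point of $\calU(\alpha_0, r_j(\bfa))$ has norm $\ge r_j(\bfa) \ge j$, making the hypothesis vacuous --- is exactly right, and correctly depends only on $\bfb = x$ and $i$ and not on $\bfa$, as Frink's criterion requires. Your treatment of the other mixed case ($\bfb \in \pka X$, $\bfa \in X$) also cleanly patches a small imprecision in the printed proof, which asserts $B(x,1) \subset \calU(\beta, r_i(\bfb))$ when Lemma~\ref{Lem:metrizable} only yields membership of $x$; applying the lemma pointwise to each $w \in B(x,1/j)$ via the common point $z \in B(w,1)$ gives the containment $\calV_j(x) \subset \calV_i(\bfb)$ that Frink actually needs. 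The one place you could tighten the writing is the phrase about choosing $r_i(\bfa)$ ``with the additional freedom to enlarge it'' --- no enlargement of $r_i$ is needed, only a possibly larger $j(\bfb,i)$, which is what you in fact use.
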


\begin{proof}
For $i \in \NN$ and $\bfa \in \pka X$, let 
$r_i(\bfa)$ be as in the proof of \thmref{Thm:Metrizable} and let 
\[
\calV_i(\bfa) := \calU(\alpha_0, r_i(\bfa)).
\]
For a point $x \in X$, we define $\calV_i(x) := B(x, \tfrac 1i)$, the ball of radius 
$\frac 1i$ centered around $x$. Since \lemref{Lem:metrizable} holds for
$\calU(\alpha_0, r_i(\bfa))$, the same proof as above works to check the conditions of 
\thmref {Thm:Metrizable-condition} for any point $\bfb \in \pka X$. 

For $x \in X$, we define $j(x,i) := 3 i$. Then, if 
\[
\calV_j(x) \cap \calV_j(y) \not = \emptyset,
\] 
there is a point $z \in B(x, \tfrac 1{3i}) \cap  B(y, \tfrac 1{3i})$ and, by the triangle 
inequality, 
\[
\calV_j(y)= B(y, \tfrac 1{3i}) \subset B(x, \tfrac 1i). 
\]

Also, if $\calV_j(x) \cap \calV_j(\bfb) \not = \emptyset$, for $x \in X$ and $\bfb \in \pka X$, 
then $B(x,1) \cap \calU(\beta, r_j(\bfb)) \neq \emptyset$. 
By the definition of $j(\bfb, i)$ and the second part of \lemref{Lem:metrizable}, 
this implies that  
\[
\calV_i(x) \subset B(x,1) \subset \calU(\beta, r_i(\bfb)).
\] 
Again, the theorem follows from \thmref{Thm:Metrizable-condition}. 
\end{proof}

We are now ready to establish the quasi-isometric invariance of $\pka X$. 

\begin{theorem}\label{invarianttopology}
Consider proper geodesic metric spaces $X$ and $Y$, let $\Phi \from X \to Y$ be a 
$(k, K)$-quasi-isometry and let $\kappa$ be a concave sublinear function. Then $\Phi$ induces a homeomorphism 
$\Phi^\star \from \pka X \to \pka Y$ 
where, for $\bfb \in \ps X$ and $\beta \in \bfb$, 
\[
\Phi^\star(\bfb) = [\Phi \circ \beta],
\]
where $[\cdot ]$ denotes the equivalence class of a quasi-geodesic ray.
\end{theorem}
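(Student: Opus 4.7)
The plan is to verify, in order, that $\Phi^\star$ is (a) well-defined on equivalence classes of $\kappa$-Morse quasi-geodesic rays, (b) bijective with inverse induced by a quasi-inverse of $\Phi$, and (c) continuous (hence a homeomorphism, by symmetry and metrizability via \thmref{Thm:Metrizable}). A preliminary step is to deal with base points: since $\Phi$ need not send $\go_X$ to $\go_Y$, we set $p := \Phi(\go_X)$ and note that any $(k,K)$-quasi-geodesic ray $\Phi \circ \beta$ based at $p$ can be concatenated with a geodesic segment $[\go_Y, p]$ to yield a quasi-geodesic ray $\widetilde{\Phi \circ \beta}$ based at $\go_Y$, and this concatenation changes only the constants of the quasi-geodesic, by additive terms bounded in $K$ and $\Vert p \Vert$. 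All sublinear estimates below are insensitive to this adjustment, so we suppress it in the notation.

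For well-definedness, let $\beta$ be a $\kappa$-Morse $(q,Q)$-quasi-geodesic ray in $X$ with Morse gauge $m_\beta$. First, $\Phi \circ \beta$ is a $(q', Q')$-quasi-geodesic ray in $Y$ with $q', Q'$ depending only on $(q, Q, k, K)$. To see it is $\kappa$-Morse, fix a sublinear function $\kappa_Y'$ and a $(q'', Q'')$-quasi-geodesic ray $\gamma$ in $Y$ with $d_Y(\gamma_R, \Phi \circ \beta) \leq \kappa_Y'(R)$. Applying a quasi-inverse $\Psi$ of $\Phi$ we obtain a quasi-geodesic ray $\Psi \circ \gamma$ in $X$ whose constants depend only on $(q'', Q'', k, K)$, and the distance $d_X((\Psi \circ \gamma)_{R'}, \beta)$ is bounded by $k \cdot \kappa_Y'(R) + K' $ for an appropriate $R'$ comparable to $R/k$. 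Since this bound is sublinear in $R'$, the $\kappa$-Morse property of $\beta$ gives $(\Psi \circ \gamma)|_{r'} \subset \calN_\kappa(\beta, m_\beta(q'', Q''))$ for any prescribed $r'$, and pushing forward by $\Phi$ and using that $\Phi$ is a $(k,K)$-quasi-isometry converts this into $\gamma|_r \subset \calN_\kappa(\Phi \circ \beta, m_{\Phi \circ \beta}(q'', Q''))$ for a suitable $r$ and a Morse gauge $m_{\Phi \circ \beta}$ depending only on $m_\beta$ and $(k,K)$. That $\Phi^\star$ respects the equivalence relation $\sim$ follows because $\Phi$ distorts distances and norms only by bounded affine factors, so $d_Y((\Phi \circ \alpha)_r, (\Phi \circ \beta)_r)/r \to 0$ whenever $d_X(\alpha_s, \beta_s)/s \to 0$.

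For bijectivity, let $\Psi$ be a quasi-inverse to $\Phi$, with $d_X(\Psi \circ \Phi, \mathrm{id}_X) \leq C$ and $d_Y(\Phi \circ \Psi, \mathrm{id}_Y) \leq C$. Then $\Psi \circ \Phi \circ \beta$ stays uniformly (hence sublinearly) close to $\beta$, so $\Psi^\star \circ \Phi^\star = \mathrm{id}_{\pka X}$, and symmetrically on $Y$; so $\Phi^\star$ is a bijection with inverse $\Psi^\star$.

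The main substantive step is continuity. Fix $\bfb \in \pka X$ and a basic neighborhood $\calU(\Phi \circ \beta_0, r)$ of $\Phi^\star(\bfb)$ in $Y$, where $\beta_0$ is a geodesic representative of $\bfb$. We must produce $r_1$ so that $\Phi^\star(\partial \calU(\beta_0, r_1)) \subset \calU(\Phi \circ \beta_0, r)$. Given a class $\bfa \in \partial \calU(\beta_0, r_1)$ and a geodesic representative $\alpha_0 \in \bfa$, we must show $(\Phi \circ \alpha_0)|_r$ lies in $\calN_\kappa(\Phi \circ \beta_0, m_{\Phi \circ \beta_0}(1,0))$. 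The idea is to pull the picture back by $\Psi$: the definition of $\calU(\beta_0, r_1)$ forces $\alpha_0|_{r_1}$ to lie in a $\kappa$-neighborhood of $\beta_0$ controlled by $m_{\beta_0}$, and pushing forward through $\Phi$ converts this into an analogous statement about $\Phi \circ \alpha_0$ up to bounded QI distortion; thereafter we invoke the definition of $\kappa$-Morse for $\Phi \circ \beta_0$ (with the sublinear error function obtained from $\kappa$ multiplied by the QI constants) to convert neighborhood control at radius $r_1$ in $Y$ back to radius $r$. The size of $r_1$ is dictated by this application of the Morse definition together with \lemref{Lem:metrizable} (adapted to $Y$), exactly as in the proof of Claim~\ref{claim1}. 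The same argument with $\Psi$ in place of $\Phi$ shows that $\Psi^\star$ is continuous, so $\Phi^\star$ is a homeomorphism. The main obstacle is bookkeeping: tracking how the sublinear functions $\kappa'$, $\kappa''$ and the Morse gauges transform under the QI, and in particular ensuring that the resulting $m_{\Phi \circ \beta_0}$ is genuinely a Morse gauge depending only on $m_{\beta_0}$ and $(k,K)$, so that the constants propagate cleanly through the neighborhood basis.
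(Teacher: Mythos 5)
Your plan matches the route the paper intends (the paper itself simply defers to Theorem 5.1 of \cite{QRT19}): transport the test data back and forth with a quasi-inverse $\Psi$, track how Morse gauges and the various sublinear error functions transform, and check continuity with the same neighborhood-basis mechanism used for metrizability. The base-point adjustment, the bijectivity argument, and the general shape of the continuity step are all fine.

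There is, however, one place where the argument as written does not go through, and it is more than bookkeeping: the well-definedness step. You assert $d_X\bigl((\Psi\circ\gamma)_{R'},\beta\bigr)\le k\,\kappa_Y'(R)+K'$, where $(\Psi\circ\gamma)_{R'}$ is the \emph{first-hitting-time} point of the pulled-back ray. What you actually control is $d_X(\Psi(\gamma_R),\beta)$, and $\Psi(\gamma_R)$ need not coincide with $(\Psi\circ\gamma)_{R'}$ for any $R'$: along a $(q',Q')$-quasi-geodesic ray based at $\go$, two points of the same norm $\rho$ can be separated by a distance of order $(q'^2-1)\rho$, which for $q'>1$ is linear in $\rho$ and overwhelms the sublinear budget. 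Since Definition \ref{D:k-morse} is phrased in terms of $\beta_R=\beta(t_R)$ with $t_R$ the first time at norm $R$, you cannot simply feed it the pulled-back data. The fix is exactly the device you invoke for continuity but not here: route $\Psi\circ\gamma$ onto a geodesic representative $\beta_0\in\bfb$ via the Surgery Lemma \ref{Lem:surgery}. Setting $r:=\Vert\Psi(\gamma_R)\Vert$, the bound $d_X(\Psi(\gamma_R),\beta_0)\le k\,\kappa_Y'(R)+K'$ gives $d_X\bigl((\beta_0)_r,\Psi\circ\gamma\bigr)\le r/2$ once $R$ is large (by sublinearity), so the lemma yields a $(9q',Q')$-quasi-geodesic ray $\gamma'$ agreeing with $\Psi\circ\gamma$ on the initial segment and with $\beta_0$ far out; then $\gamma'\sim\beta_0$ and Lemma \ref{L:equivalence} gives $\gamma'\subset\calN_\kappa\bigl(\beta_0,m_{\beta_0}(9q',Q')\bigr)$, which you push forward by $\Phi$. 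With this insertion, and the analogous care in the continuity step, the proof is complete.
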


The proof is identical to the proof of Theorem 5.1 in \cite{QRT19}.

\subsection{The union of $\pka X$}

We note that topologies of different sublinear boundaries are compatible.

\begin{proposition}[Proposition 4.10, \cite{QRT19}] 
Let $\kappa$ and $\kappa'$ be sublinear functions such that, for some $M > 0$
\[
\kappa'(t) \leq M \cdot \kappa(t), \qquad \forall \, t>0. 
\]
Then, $\partial_{\kappa'} X \subset \pka X$ as a subspace with the subspace topology.
\end{proposition}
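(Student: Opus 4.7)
The plan is to establish the inclusion at three levels: first as sets of equivalence classes, then as a topological embedding by comparing neighborhood bases in both directions.

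\textbf{Step 1 (Set-level inclusion).} The key observation is that since $\kappa'(t) \leq M \cdot \kappa(t)$ for all $t$, we have the pointwise inclusion of sublinear neighborhoods
\[
\calN_{\kappa'}(Z, m) \subseteq \calN_\kappa(Z, M\,m)
\]
for any closed set $Z$ and constant $m > 0$. Plugging this into Definition~\ref{D:k-morse}, I would show that any $\kappa'$-Morse ray $\beta$ with gauge $m_\beta^{\kappa'}$ is automatically $\kappa$-Morse with the gauge $m_\beta^\kappa := M \cdot m_\beta^{\kappa'}$: the $R$ supplied by the $\kappa'$-Morse condition (after replacing $\kappa''$ by an appropriately rescaled sublinear function) still forces $\beta|_r \subseteq \calN_{\kappa'}(Z, m_\beta^{\kappa'}) \subseteq \calN_\kappa(Z, m_\beta^\kappa)$. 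Since the equivalence relation $\sim$ of sublinear tracking does not reference $\kappa$, this descends to a well-defined injection of equivalence classes $\partial_{\kappa'} X \hookrightarrow \pka X$.

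\textbf{Step 2 (Subspace topology is coarser than the intrinsic one).} Given $\bfa \in \calU_\kappa(\beta, r) \cap \partial_{\kappa'} X$, I would use the neighborhood-basis property established in Claim~\ref{claim1} (applied to $\pka X$) to extract a $\kappa$-Morse ray $\alpha_0 \in \bfa$ and a radius $r_1$ with $\calU_\kappa(\alpha_0, r_1) \subseteq \calU_\kappa(\beta, r)$. By Lemma~\ref{L:Ubeta}(1) applied inside $\partial_{\kappa'} X$, I can choose this $\alpha_0$ to additionally be $\kappa'$-Morse. A direct matching of the smallness constants (using the inequality $r/(2\kappa'(r)) \geq r/(2M\kappa(r))$) yields $\calU_{\kappa'}(\alpha_0, r_1) \subseteq \calU_\kappa(\alpha_0, r_1)$, producing the required intrinsic neighborhood of $\bfa$ inside $\calU_\kappa(\beta, r) \cap \partial_{\kappa'} X$.

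\textbf{Step 3 (Intrinsic topology is coarser than the subspace).} This is the main obstacle. Given $\bfa \in \calU_{\kappa'}(\beta, r) \cap \partial_{\kappa'} X$, I need a radius $r_2$ and a $\kappa'$-Morse representative $\alpha_0 \in \bfa$ such that
\[
\calU_\kappa(\alpha_0, r_2) \cap \partial_{\kappa'} X \subseteq \calU_{\kappa'}(\beta, r).
\]
The obstruction is that any $\bfc$ in the left-hand intersection is a priori controlled only by a $\kappa$-tracking condition with respect to $\alpha_0$, whereas membership in the right-hand side requires a $\kappa'$-tracking condition with respect to $\beta$. The resolution is that $\bfc$ itself lies in $\partial_{\kappa'} X$, so any of its representatives is $\kappa'$-Morse. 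My plan is to adapt the Surgery Lemma argument of Lemma~\ref{Lem:metrizable} in the relative setting: choose $r_2$ large enough that the $\kappa$-tracking of a quasi-geodesic ray $\gamma \in \bfc$ against $\alpha_0$, combined with the sublinear tracking $\gamma \sim \alpha_0$ obtained via Lemma~\ref{L:equivalence} from $\bfc$ being $\kappa'$-Morse, lets us splice $\gamma$ into a quasi-geodesic that coincides with $\alpha_0|_{r_2/2}$ on the inside; then invoke the $\kappa'$-Morse property of $\beta$ (with the sublinear function $\kappa' \circ (\text{rescale})$ produced by this splice) to promote the conclusion to a $\kappa'$-neighborhood bound, namely $\gamma|_r \subseteq \calN_{\kappa'}(\beta, m_\beta^{\kappa'}(q,Q))$.

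Combining these three steps yields both that the map $\partial_{\kappa'} X \hookrightarrow \pka X$ is a continuous injection and that it is a topological embedding, giving exactly the subspace-topology statement of the proposition.
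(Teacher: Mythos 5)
Your overall strategy (show a set-level inclusion, then compare neighborhood bases in both directions) is the right one, and Steps 1 and 2 are essentially correct. Step 1's use of the pointwise inclusion $\calN_{\kappa'}(Z,m)\subseteq \calN_\kappa(Z,Mm)$ together with the comparison of smallness thresholds $\frac{r}{2\kappa'(r)}\ge \frac{r}{2M\kappa(r)}$ is exactly what is needed. Step 2 also works, although you can skip the appeal to Claim~\ref{claim1} entirely: with the gauge choice $m^\kappa_\beta := M\,m^{\kappa'}_\beta$ one checks directly that
$\calU_{\kappa'}(\beta,r)\cap \partial_{\kappa'}X \subseteq \calU_\kappa(\beta,r)$ with the \emph{same} center $\beta$ and \emph{same} radius $r$, since the $\kappa$-smallness condition on $(q,Q)$ implies the $\kappa'$-smallness condition and $\calN_{\kappa'}\subseteq\calN_\kappa$ with the corresponding constants.

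Step 3 has a genuine gap, on two counts. First, you invoke ``the sublinear tracking $\gamma\sim\alpha_0$ obtained via Lemma~\ref{L:equivalence} from $\bfc$ being $\kappa'$-Morse.'' Lemma~\ref{L:equivalence} takes $\gamma\sim\alpha_0$ as a \emph{hypothesis}, not as a conclusion; and here $\gamma\in\bfc$ while $\alpha_0\in\bfa$, two a priori different classes, so $\gamma\not\sim\alpha_0$ in general. The relation $\sim$ is not available to you, and the splice you are setting up is not justified as written. Second, the sublinear function you should feed into the Morse definition is a constant multiple of $\kappa$, not a rescaled $\kappa'$: the input tracking comes from the $\kappa$-neighborhood $\calN_\kappa(\cdot,\cdot)$.

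In fact, the surgery lemma is not needed here at all. The missing ingredient in Step 3 is simply the $\kappa'$-Morse property \emph{of $\beta$ itself}. Concretely, fix a $\kappa'$-Morse ray $\beta$ and $r>0$, and set $m_0 := \frac{r}{2\kappa'(r)}$. Let $\kappa''(t) := M m_0\,\kappa(t)$, which is sublinear. Apply the $\kappa'$-Morse property of $\beta$ with this $\kappa''$ and the given $r$ to get a radius $R_0$; then choose $r'\ge R_0$ large enough that $M m_0 \le \frac{r'}{2\kappa(r')}$. Given $\bfa\in \calU_\kappa(\beta,r')\cap\partial_{\kappa'}X$ and any $(q,Q)$-quasi-geodesic $\alpha\in\bfa$ with $m^{\kappa'}_\beta(q,Q)\le m_0$, the membership in $\calU_\kappa(\beta,r')$ gives $\alpha|_{r'}\subseteq\calN_\kappa(\beta,M m^{\kappa'}_\beta(q,Q))$, hence $d_X(\alpha_{R_0},\beta)\le M m_0\,\kappa(R_0)=\kappa''(R_0)$. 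Now the $\kappa'$-Morse property of $\beta$ returns $\alpha|_r\subseteq\calN_{\kappa'}(\beta,m^{\kappa'}_\beta(q,Q))$, i.e.\ $\bfa\in\calU_{\kappa'}(\beta,r)$. This establishes $\calU_\kappa(\beta,r')\cap\partial_{\kappa'}X\subseteq\calU_{\kappa'}(\beta,r)$, which together with the Step~2 inclusion shows that the two neighborhood filters at every $\bfb\in\partial_{\kappa'}X$ coincide; the fact that the candidate $\bfa$ lies in $\partial_{\kappa'}X$ is used only so that $\bfa$ is eligible to belong to $\calU_{\kappa'}(\beta,r)$ in the first place, not as a source of tracking information. (The paper defers to \cite{QRT19} for the proof; the mechanism above is the key step you need to make precise.)
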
 

The proof is identical to the proof of Proposition 4.10 from \cite{QRT19} and
is skipped. In view of this proposition, we can define the \emph{sublinearly Morse boundary} of $X$ as
\[
\partial X := \bigcup_\kappa \pka X
\]
which is the space of equivalence classes (up to sublinear fellow traveling) of all 
sublinearly Morse quasi-geodesic rays in $X$. 

\begin{remark} 
An open neighborhood $\calV$ of 
a point $\bfb \in \partial X$ can be described as follows: assume 
$\bfb \in \pka X$ for some $\kappa$ and choose a quasi-geodesic ray $\beta \in \bfb$
and a radius $r>0$. Let $\calU_\kappa(\beta, r)$ be the neighborhood of 
$\bfb$ in $(X \cup \pka X)$ and let $\calV$ be the closure of $\calU_\kappa(\beta, r)$
in $(X \cup \partial X)$. That is, a point in $\calV \cap \partial X$ is a class of 
$\kappa'$-sublinearly Morse quasi-geodesic rays for some $\kappa'$ different from 
$\kappa$ that are eventually contained in  $\calU_\kappa(\beta, r)$. The intersection 
$\calV \cap X$ equals
$X \cap \calU_\kappa(\beta, r)$. 

Similar arguments as in \thmref{Thm:Metrizable} and \thmref{Thm:Union-Metrizable}
can be used to show that $\partial X$ and $(X \cup \partial X)$ are also metrizable. 
We skip these, for the sake of brevity. 
\end{remark} 

\section{General projections and weakly sublinearly contracting sets} \label{S:weakly-con}

In order to deal with several applications to proper geodesic spaces (in particular, our applications to mapping class groups and relatively hyperbolic groups we shall see next), the usual notion of nearest point projection may be ill-suited; 
for instance, it is well-known that nearest point projection to a closed subset of a general (e.g., not hyperbolic) metric space 
need not be, even coarsely, well-defined. 

Thus, we now introduce a more general notion of projection, which we call $\kappa$\emph{-projection}, 
where we allow an additive error which is controlled by a sublinear function $\kappa$. 

Let us denote as $\mathcal{P}(Z)$ the set of subsets of $Z$, and let us use the notation $\kappa(x) := \kappa(\Vert x \Vert)$. 

\begin{definition} \label{weakprojection}
Let $(X, d_X)$ be a proper geodesic metric space and $Z \subseteq X$ a closed subset, and let 
$\kappa$ be a concave sublinear function. A map $\pi_{Z} \from X \to \mathcal{P}(Z)$ is a $\kappa$-\emph{projection}  if there exist constants $D_{1}, D_{2}$, depending only on $Z$ and $\kappa$, such that for any points $x \in X$ and $z \in Z$, 
\[
\diam_X(\{z \} \cup \pi_{Z}(x)) \leq D_{1} \cdot d_X(x, z) + D_{2} \cdot \kappa(x).
\]
\end{definition}

A $\kappa$-projection differs from a nearest point projection by a uniform multiplicative error and a sublinear additive error.

\begin{lemma} \label{projection-property}
Given a closed set $Z$, we have for any $x \in X$
 $$\diam_X(\{x \} \cup  \pi_{Z}(x)) \leq (D_{1} + 1) \cdot d_X(x, Z) + D_{2}\cdot \kappa(x).$$
\end{lemma}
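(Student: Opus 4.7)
The plan is to apply the definition of $\kappa$-projection with a particularly good choice of reference point, namely a nearest point of $Z$ to $x$, and then bound $\diam_X(\{x\} \cup \pi_Z(x))$ by splitting into two kinds of pairs: pairs of the form $(x,y)$ with $y \in \pi_Z(x)$, and pairs $(y_1, y_2)$ inside $\pi_Z(x)$.

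First, since $X$ is proper and $Z$ is closed, by the first bullet of the lemma on page 2 there exists $z^\star \in Z$ realizing the distance, i.e.\ $d_X(x, z^\star) = d_X(x, Z)$. Apply the defining inequality of a $\kappa$-projection (Definition \ref{weakprojection}) with this particular $z = z^\star$:
\[
\diam_X\big(\{z^\star\} \cup \pi_Z(x)\big) \leq D_1 \cdot d_X(x, z^\star) + D_2 \cdot \kappa(x) = D_1 \cdot d_X(x, Z) + D_2 \cdot \kappa(x).
\]

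Next, for any $y \in \pi_Z(x)$, the triangle inequality gives
\[
d_X(x, y) \leq d_X(x, z^\star) + d_X(z^\star, y) \leq d_X(x, Z) + D_1 \cdot d_X(x, Z) + D_2 \cdot \kappa(x) = (D_1 + 1)\cdot d_X(x, Z) + D_2 \cdot \kappa(x).
\]
Likewise, for any $y_1, y_2 \in \pi_Z(x)$, we have $d_X(y_1, y_2) \leq \diam_X(\{z^\star\} \cup \pi_Z(x)) \leq D_1 \cdot d_X(x, Z) + D_2 \cdot \kappa(x)$, which is certainly at most $(D_1+1)\cdot d_X(x,Z) + D_2 \cdot \kappa(x)$. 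Taking the maximum over both types of pairs yields the desired bound.

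There is essentially no obstacle here; the only mild subtlety is that a $\kappa$-projection is not assumed to send $x$ to a nearest point of $Z$, so one cannot naively read $d_X(x, y)$ off the definition. Plugging in a genuine nearest point $z^\star$ as the auxiliary point $z$ in Definition \ref{weakprojection} is what converts the bound relative to an arbitrary $z \in Z$ into a bound expressed in terms of $d_X(x, Z)$, picking up the extra $+1$ in the multiplicative constant from the single triangle-inequality step.
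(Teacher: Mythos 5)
Your proof is correct and follows essentially the same approach as the paper: pick a nearest point $z^\star \in Z$ realizing $d_X(x,Z)$, apply the $\kappa$-projection inequality with $z = z^\star$, and use the triangle inequality to absorb the extra $d_X(x,z^\star)$ term. The paper simply compresses your case split into the single inequality $\diam_X(\{x\}\cup\pi_Z(x)) \leq d_X(x,z^\star) + \diam_X(\{z^\star\}\cup\pi_Z(x))$, which is the same estimate stated more succinctly.
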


\begin{proof}
Let $z \in Z$ be a point that realizes $d_X(x, Z)$. Then, by triangle inequality and applying Definition \ref{weakprojection}, 
we obtain 
\begin{equation*}
\diam_X(\{x\} \cup \pi_{Z}(x) ) \leq d_X(x, z) + \diam_X(\{z\} \cup \pi_{Z}(x)) \leq  (D_{1}+1)\cdot d_X(x, Z) + D_{2} \cdot \kappa(x).
\qedhere
\end{equation*}
\end{proof}

We now formulate a general definition of $\kappa$\emph{-weakly contracting} with respect to a $\kappa$-projection $\pi_Z$.

\begin{definition}[$\kappa$-weakly contracting] \label{Def:generalContracting}
For a closed subspace $Z$ of a metric space $(X, d_X)$ and a $\kappa$-projection $\pi_Z$ onto $Z$, 
we say $Z$ is \emph{$\kappa$-weakly contracting} with respect to $\pi_Z$ if there are constants $C_{1}, C_{2}$, depending only on $Z$ and $\kappa$, such that, for every 
$x,y \in X$
\[
d_X(x, y) \leq C_{1} \cdot d_X( x, Z) \quad \Longrightarrow \quad
\diam_X \big( \pi_Z(x) \cup  \pi_Z(y)  \big) \leq C_{2} \cdot \kappa(x).
\]
\end{definition} 

In the special case that $\pi_Z$ is the nearest point projection and $C_1 = 1$, this property was called $\kappa$\emph{-contracting} in \cite{QRT19}. It was shown in \cite{QRT19} that, in the setting of CAT(0) spaces, this is stronger than the $\kappa$-Morse condition.

With respect to any projection we prove the following analogous statement of \cite[Theorem 3.14]{QRT19}:

\begin{theorem}[$\kappa$-weakly contracting implies sublinearly Morse] \label{Thm:W-Strong}
Let $\kappa$ be a concave sublinear function and let $Z$ be a closed subspace of $X$. 
Let $\pi_Z$ be a $\kappa$-projection onto $Z$ 
and suppose that $Z$ is $\kappa$-weakly contracting with respect to $\pi_Z$.
Then, there is a function $m_Z\from \RR^2 \to \RR$ such that, for every constant $r>0$ and every
sublinear function $\kappa'$, there is an $R= R(Z, r, \kappa')>0$ where the 
following holds: Let $\eta \from [0, \infty) \to X$ be a $(q, Q)$-quasi-geodesic ray 
so that $m_Z(q, Q)$ is small compared to $r$, let $t_r$ be the first time 
$\Norm{\eta(t_r)} = r$ and let $t_R$ be the first time $\Norm{\eta(t_R)} = R$. Then
\[
d_X\big(\eta(t_R), Z\big) \leq \kappa'(R)
\quad\Longrightarrow\quad
\eta([0, t_r]) \subset \calN_{\kappa}\big(Z, m_Z(q, Q)\big). 
\]
\end{theorem}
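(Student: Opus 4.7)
The plan is to adapt the proof of Theorem 3.14 in \cite{QRT19} from the CAT$(0)$ setting to a proper geodesic metric space, with the nearest-point projection replaced by a general $\kappa$-projection. We argue by contradiction: given a $(q,Q)$-quasi-geodesic ray $\eta$ with $d_X(\eta(t_R), Z) \leq \kappa'(R)$, suppose there exists a point $y = \eta(s_0) \in \eta([0, t_r])$ with
\[
D := d_X(y, Z) > m_Z(q,Q)\cdot \kappa(\Norm y),
\]
for a Morse gauge $m_Z(q,Q)$ to be chosen sufficiently large. The goal is to derive a contradiction by playing a lower bound on $\mathrm{diam}(\pi_Z \circ \eta)$ coming from $y$ being far from $Z$ against an upper bound produced by iterating the $\kappa$-weakly contracting property along a chain.

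First I would isolate the excursion: let $[s_-, s_+] \subseteq [0, t_R]$ be the maximal subinterval containing $s_0$ on which $d_X(\eta(s), Z) \geq D/4$. Choosing $R$ large enough that $\kappa'(R) < D/4$ (which uses only the sublinearity of $\kappa'$ relative to the size of $D$) forces $s_+ < t_R$, so $\eta(s_+)$ is exactly at distance $D/4$ from $Z$; similarly for $s_-$ unless $s_- = 0$. Next I would build a chain $y_0 = \eta(s_-), y_1, \ldots, y_N = \eta(s_+)$ along $\eta$, with arclength increments of size $\tfrac{C_1}{2}\cdot d_X(y_i, Z)$. This guarantees $d_X(y_i, y_{i+1}) \leq C_1\cdot d_X(y_i, Z)$, so that Definition~\ref{Def:generalContracting} applies and gives
\[
\diam_X\big(\pi_Z(y_i) \cup \pi_Z(y_{i+1})\big) \leq C_2 \cdot \kappa(\Norm{y_i}) \leq C_2 \cdot \kappa(2R),
\]
the last inequality using that $y_i$ lies on $\eta|_{[0,t_R]}$.

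I would then count: since $d_X(y_i, Z) \geq D/4$ on $[s_-, s_+]$, each step uses at least arc-length $C_1 D/8$, and the arc-length of $\eta|_{[s_-, s_+]}$ is at most $q R + Q$ by the quasi-geodesic property. Hence $N \lesssim qR/D$, and summing yields an upper bound
\[
\diam_X\big(\pi_Z(y_0) \cup \pi_Z(y_N)\big) \;\lesssim\; \frac{qR\,\kappa(2R)}{D}.
\]
For the matching lower bound, I would combine Lemma~\ref{projection-property} applied to $y_0$ and $y_N$ (both at distance $D/4$ from $Z$, so their projections lie in a uniformly bounded neighborhood of themselves) with the quasi-geodesic estimate $d_X(y_0, y_N) \geq D/q$ (which follows since $y$ is sandwiched between them at distance $\geq D/2$ from each, by the triangle inequality and $d_X(y, Z) = D$). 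This gives $\diam_X(\pi_Z(y_0) \cup \pi_Z(y_N)) \gtrsim D$, up to a sublinear additive error $O(\kappa(R))$ coming from the $\kappa$-projection slack $D_1, D_2$.

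Comparing the two bounds forces $D^2 \lesssim q R \,\kappa(2R)$. Since $D > m_Z(q, Q) \cdot \kappa(r)$ by assumption, choosing $m_Z(q,Q)$ large enough (depending only on $q, Q, C_1, C_2, D_1, D_2$) and then choosing $R = R(Z, r, \kappa')$ large relative to $r$, $\kappa$ and $\kappa'$, produces the desired contradiction; the sublinearity of $\kappa$ ensures such a choice is possible because $R \kappa(R)/\kappa(r)^2 \to \infty$ slower than needed. The main obstacle is the matching lower bound in Step 4: because $\pi_Z$ is only a $\kappa$-projection (set-valued, with both multiplicative and sublinear additive slack), one cannot use a clean CAT$(0)$-style comparison, and care must be taken to ensure that all error terms remain sublinear in $R$ and do not overwhelm the main $D$-term. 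A secondary subtlety is handling the boundary case $s_- = 0$, where $\go$ need not project close to itself; this is absorbed by enlarging $m_Z(q,Q)$ so that the basepoint contribution is dominated.
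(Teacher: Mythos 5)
The overall plan — build a chain along $\eta$ with step lengths proportional to distance-to-$Z$, apply the weak contracting property along each step, and compare upper and lower bounds — is the right template, and it is close to what the paper actually does (Claim A.5 in the appendix). But the final comparison in your outline does not close the argument, and I do not think it can be patched without changing the structure.

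Concretely: you upper-bound the number of steps by $N \lesssim qR/D$ using the trivial observation that the excursion has arclength at most $t_R \lesssim qR$, and you bound the contracting error on each step by $C_2\,\kappa(2R)$. This gives $\diam\big(\pi_Z(y_0)\cup\pi_Z(y_N)\big) \lesssim \tfrac{qR\,\kappa(2R)}{D}$, and combined with your lower bound $\gtrsim D$ you arrive at $D^2 \lesssim qR\,\kappa(2R)$. This inequality cannot yield a contradiction: $D\leq \Norm{y}\leq r$ is fixed once $r$ is fixed, while the right-hand side goes to infinity as $R\to\infty$ (recall $\kappa\geq 1$), so the inequality is \emph{trivially satisfied} for the large $R$ you intend to pick. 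Your remark that ``$R\kappa(R)/\kappa(r)^2\to\infty$ slower than needed'' has the direction backwards; its divergence is precisely what kills the argument. Even if you tighten the step-count to use the actual excursion length rather than $t_R$, you end up bounding the length of the excursion by itself (circularity), or, at best, with the norm $\Norm{y}$ in place of $R$, which gives $D\lesssim\sqrt{q\Norm{y}\kappa(\Norm{y})}$ — a square-root weaker than the needed $D\lesssim\kappa(\Norm{y})$. There is also a secondary issue in the lower bound: since $d_X(y_0,Z)=d_X(y_N,Z)=D/4$, Lemma~\ref{projection-property} introduces a \emph{linear-in-}$D$ error $(D_1+1)D/4$ on each side, not merely a sublinear $O(\kappa(R))$; depending on $D_1$ and $q$ this can swallow your claimed lower bound $\gtrsim D/q^2$. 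Both of these can be patched by renormalizing the excursion threshold, but the first problem is structural.

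The paper avoids this trap by comparing \emph{excursion-time} upper and lower bounds, not projection-diameter bounds. Both the lower bound (from the chain step lengths $C_1 d_X(\eta_i,Z) \geq C_1 m_0 \kappa(r_i)$) and the upper bound (from the quasi-geodesic inequality and the contracting sum $\sum C_2\kappa(r_i)$) on the excursion time $|s'-s|$ involve the same quantity $\sum_i \kappa(r_i)$, with coefficients $\tfrac{C_1 m_0 - Q}{q}$ and $qC_2$ respectively. Choosing $m_0$ large enough makes the coefficient on the lower-bound side strictly dominate, so the $\sum_i\kappa(r_i)$ terms cancel and one is left with a bound of the form $|s'-s| \leq m_1\big(d_X(\eta(s),Z)+d_X(\eta(s'),Z)\big) + \mathfrak{A}\cdot\kappa(\eta(s'))$, uniform in the starting data. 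This excursion bound is then applied once with $s'=t_R$ to show $t_{\rm last}\geq r$, and then applied again to each excursion inside $[0,t_{\rm last}]$. Without this cancellation (which requires tuning the Morse gauge $m_0$ against the step-count/contracting coefficients, and choosing the chain so that the step size scales with $\kappa(r_i)$ — not with a single number $D$), the crude bound $N\cdot C_2\kappa(R)$ is simply too lossy. If you want to salvage the contradiction framing, the point to change is not the lower bound in Step 4 but rather the bookkeeping in Step 3: you must keep $\kappa(r_i)$ per step rather than replace it by $\kappa(R)$, and you must compare excursion times rather than diameters so the $\sum\kappa(r_i)$ cancels.
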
 

The proof of this result is similar to the one in \cite{QRT19}, so we will postpone it to the appendix. 
Moreover, in the appendix we shall prove the following equivalence between $\kappa$-weakly contracting and $\kappa$-Morse 
(with a possibly different sublinear function) for any given closed set.

\begin{theorem}\label{sublinearlyequivalence}
Let $(X, \go)$ be a proper geodesic metric space with a fixed base point. Let $Z$ be a closed set and $\pi_Z$ be
 a $\kappa$-projection onto $Z$.
The  following hold:
\begin{enumerate}
\item If $Z$ is $\kappa$-weakly contracting with respect to $\pi_Z$,  then it is $\kappa$-Morse;
\item If $Z$ is $\kappa$-Morse, then it is $\kappa'$-weakly contracting with respect to $\pi_Z$ for some sublinear function $\kappa'$.
\end{enumerate}
\end{theorem}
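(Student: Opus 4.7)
Part (1) is immediate from \thmref{Thm:W-Strong}: the Morse gauge $m_Z$ produced there depends only on the contracting constants $C_1, C_2$, the projection constants $D_1, D_2$, and the function $\kappa$, certifying that $Z$ is $\kappa$-Morse. For part (2), the plan is to argue by contradiction. Fix a small constant $C_1>0$ (for concreteness, $C_1 = 1/4$) and define
\[
\kappa'(r) := \sup\Big\{ \diam_X\big(\pi_Z(x) \cup \pi_Z(y)\big) \ :\ \Vert x\Vert \leq r,\ d_X(x,y) \leq C_1 d_X(x,Z)\Big\}.
\]
By \lemref{projection-property} and properness of $X$, we have $\kappa'(r) < \infty$ for every $r$, and $\kappa'$ is non-decreasing. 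Taking $C_2 = 1$ in \defref{Def:generalContracting}, the theorem reduces to showing $\kappa'(r)/r \to 0$ as $r \to \infty$.

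Suppose this fails. Then there exist $\epsilon > 0$ and a sequence $(x_n, y_n)$ with $\Vert x_n \Vert \to \infty$, $d_X(x_n, y_n) \leq C_1 d_X(x_n, Z)$, and $\diam_X(\pi_Z(x_n) \cup \pi_Z(y_n)) \geq \epsilon \Vert x_n \Vert$. A pigeonhole argument on the diameter of a union of two sets yields $p_n \in \pi_Z(x_n)$ and $q_n \in \pi_Z(y_n)$ with $d_X(p_n, q_n) \geq \epsilon \Vert x_n\Vert/6$. Moreover $Z$ must be unbounded (otherwise projection diameters are uniformly bounded). Applying the $\kappa$-projection inequality three times along the chain $p_n, x_n, y_n, q_n$, together with the triangle inequality $d_X(y_n, Z) \leq (1+C_1) d_X(x_n, Z)$, gives a constant $c = c(C_1, D_1, D_2) > 0$ with $d_X(x_n, Z) \geq c \Vert x_n\Vert$ for all $n$ large.

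Next, concatenate the geodesic segments $[\go, p_n] \cup [p_n, x_n] \cup [x_n, y_n] \cup [y_n, q_n]$ and extend to a ray by a geodesic from $q_n$ to infinity, obtained via Arzel\`a-Ascoli applied to geodesics $[q_n, z_m]$ for a sequence $z_m \in Z$ with $\Vert z_m\Vert \to \infty$. Call the resulting ray $\beta_n$. The principal obstacle of the proof is verifying that $\beta_n$ is a $(q_0, Q_0)$-quasi-geodesic with constants independent of $n$; in the absence of \CAT-style convexity of the distance function this is delicate. The ingredients will be (i) the projection inequality bounding $d_X(p_n, x_n)$ and $d_X(y_n, q_n)$ by multiples of $d_X(x_n, Z)$; (ii) the observation that every interior point of $[p_n, x_n]$ still has $p_n$ as a coarse nearest-point projection onto $Z$, producing an ``obtuse angle'' condition at $p_n$ and analogously at $q_n$; (iii) a symmetry argument (the hypothesis $d(x,y) \leq C_1 d(x, Z)$ implies $d(x,y) \leq \tfrac{C_1}{1-C_1} d(y, Z)$, so we may exchange $x_n$ and $y_n$ if necessary) to arrange $\max(\Vert p_n\Vert, \Vert q_n\Vert) \geq c' \Vert x_n\Vert$; and (iv) the $\kappa$-Morse property of $Z$, which makes the limiting tail of $\beta_n$ sublinearly close to $Z$.

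By construction the tail of $\beta_n$ lies in $\calN_\kappa(Z, m_Z(1,0))$, so $d_X(\beta_n(t_R), Z) \leq m_Z(1,0) \cdot \kappa(R)$ for all sufficiently large $R$. Setting $r := \Vert x_n\Vert$, for $n$ large the smallness condition $m_Z(q_0, Q_0) \leq r/(2\kappa(r))$ is satisfied by sublinearity of $\kappa$. Applying \defref{D:k-morse} to $\beta_n$ with $\kappa''(R) := m_Z(1,0) \cdot \kappa(R)$ and $R$ chosen as large as needed, we will conclude $\beta_n|_r \subset \calN_\kappa(Z, m_Z(q_0, Q_0))$, whence $d_X(x_n, Z) \leq m_Z(q_0, Q_0) \cdot \kappa(\Vert x_n\Vert)$. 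Combined with $d_X(x_n, Z) \geq c\Vert x_n\Vert$ and the sublinearity of $\kappa$, this is a contradiction for $n$ large, completing the proof.
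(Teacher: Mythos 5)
Part (1) is handled exactly as in the paper: it is a direct consequence of \thmref{Thm:W-Strong}, so no concerns there.

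For part (2), however, there is a genuine gap. Your overall strategy (define $\kappa'$ as a supremum of projection diameters and argue its sublinearity by contradiction) has the same shape as the paper's argument, but the paper derives the contradiction by invoking \cite[Proposition 4.2]{ACGH} (restated as Proposition~\ref{Prop:Hume}): for a \emph{uniformly} Morse set, the nearest-point projection of a disjoint ball of radius $r$ has diameter bounded by a quantity that is computed explicitly from the Morse gauge and is eventually dominated by a sublinear function. Restricting $Z$ to a ball $B(\go, 4\Norm{x})$ converts the $\kappa$-Morse hypothesis into a uniform Morse hypothesis with gauge $m_Z(q,Q)\cdot\kappa(4\Norm{x})$, and feeding this into the ACGH estimate immediately gives sublinearity. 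Your plan, by contrast, sets out to rebuild that estimate from scratch by concatenating $[\go,p_n]\cup[p_n,x_n]\cup[x_n,y_n]\cup[y_n,q_n]$ into a ray $\beta_n$ and then applying Definition~\ref{D:k-morse} to $\beta_n$. You flag yourself that verifying $\beta_n$ is a $(q_0,Q_0)$-quasi-geodesic with constants independent of $n$ is ``the principal obstacle,'' and indeed this is where the argument breaks down: the ingredient you cite, an ``obtuse angle'' condition at $p_n$ and $q_n$, is a feature of CAT(0) or hyperbolic geometry and is not available in a general proper geodesic space. Moreover, the observation that interior points of $[p_n,x_n]$ project to $p_n$ uses genuine nearest-point projection, while here $\pi_Z$ is only a $\kappa$-projection with multiplicative and sublinear additive errors; and even when it does hold, having a common projection does not by itself prevent the concatenation from backtracking badly on intermediate subsegments. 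Controlling exactly this kind of concatenation is the content of the ACGH proposition, so in effect your outline presupposes the very lemma it needs. Finally, the Arzel\`a--Ascoli extension of $\beta_n$ past $q_n$ is underspecified: the limit of geodesics $[q_n,z_m]$ with $z_m\in Z$ need not remain near $Z$ unless one already knows it is part of a quasi-geodesic ray, which is again circular. To repair the proof you would either need to supply a direct verification of the quasi-geodesic bound (essentially reproving \cite[Proposition 4.2]{ACGH}), or else invoke that proposition as the paper does.
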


\section{The Poisson boundary} \label{S:Poisson}

We now show a general criterion (Theorem \ref{T:poiss-general}) for the $\kappa$-Morse boundary of a group to be identified with its Poisson boundary.

\subsection*{Random walks}
Let $G$ be a locally compact, second countable group, with left Haar measure $m$, and let $\mu$ be a Borel probability measure on $G$, 
which we assume to $\emph{spread-out}$, i.e. such that there exists $n$ for which $\mu^n$ is not singular w.r.t. $m$. 
Given $\mu$, we consider the \emph{step space} $(G^\mathbb{N}, \mu^\mathbb{N})$, whose elements we denote as $(g_n)$. 
The \emph{random walk driven by }$\mu$ is the $G$-valued stochastic process $(w_n)$, where for each $n$ we define the product
$$w_n := g_1 g_2 \dots g_n.$$
We denote as $(\Omega, \mathbb{P})$ the \emph{path space}, i.e. the space of sequences $(w_n)$, where $\mathbb{P}$ is the measure induced by pushing forward the measure $\mu^\mathbb{N}$ from the step space. Elements of $\Omega$ are called \emph{sample paths} and will be also denoted as $\omega$. Finally, let $T : \Omega \to \Omega$ be the left shift on the path space. 

\subsection*{Background on boundaries}
Let us recall some fundamental definitions from the boundary theory of random walks. We refer to \cite{Kai00} for more details. 
Let $(B, \mathcal{A})$ be a measurable space on which $G$ acts by measurable isomorphisms;  a measure $\nu$ on $B$ is $\mu$-\emph{stationary} if $\nu = \int_G g_\star \nu \ d\mu(g)$, and in that case the pair $(B, \nu)$ is called a $(G, \mu)$-\emph{space}. 
Recall that a \emph{$\mu$-boundary} is a measurable $(G, \mu)$-space $(B,\nu)$ such that there exists 
a $T$-invariant, measurable map $\textbf{bnd} : (\Omega, \mathbb{P}) \to (B, \nu)$, called the \emph{boundary map}. 

Moreover, a function $f: G \to \mathbb{R}$ is $\mu$-\emph{harmonic} if $f(g) = \int_G f(gh) \ d\mu(h)$ for any $g \in G$. 
We denote by $H^\infty(G, \mu)$ the space of bounded, $\mu$-harmonic functions. 
One says a $\mu$-boundary is the \emph{Poisson boundary} of $(G, \mu)$ if the map 
\[
\Phi : H^\infty(G, \mu) \to L^\infty(B, \nu)
\]
given by $\Phi(f)(g) := \int_B f \ dg_\star \nu$ is a bijection. 
The Poisson boundary $(B, \nu)$ is the maximal $\mu$-boundary, in the sense that for any other $\mu$-boundary $(B', \nu')$ there exists a $G$-equivariant, 
measurable map $p: (B, \nu) \to (B', \nu')$.

Finally, a metric $d$ on $G$ is \emph{temperate} if there exists $C$ such that 
\[
m(\{ g \in G \ : \ d(1, g) \leq R \}) \leq C e^{CR}
\] 
for any $R > 0$.  A measure $\mu$ has \emph{finite first moment} with respect to $d$ if $\int_G d(1, g) \ d\mu(g) < +\infty$. 

We will use the ray approximation criterion from \cite{Kai00} for the Poisson boundary (for this precise version, see \cite{FT}). 

\begin{theorem} \label{T:ray}
Let $G$ be a locally compact, second countable group equipped with a temperate metric $d$, and let $\mu$ be a spread-out probability measure on 
$G$ with finite first moment with respect to $d$.
Let $(B, \lambda)$ be a $\mu$-boundary, and suppose that there exist maps $\pi_n : B \to G$ for any $n \in \mathbb{N}$ 
such that for almost every sample path $\omega = (w_n)$ we have
\begin{equation} \label{E:sublin}
\lim_{n \to \infty} \frac{d\big(w_n, \pi_n(\textup{\textbf{bnd}}(\omega))\big)}{n} = 0.
\end{equation}
Then $(B, \lambda)$ is the Poisson boundary of $(G, \mu)$. 
\end{theorem}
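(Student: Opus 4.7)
The plan is to reduce the theorem to Kaimanovich's entropy (or conditional entropy) criterion, which characterizes the Poisson boundary of $(G,\mu)$ as the maximal $\mu$-boundary relative to which the normalized conditional differential entropy of $w_n$ vanishes. Since $(B,\lambda)$ is already assumed to be a $\mu$-boundary with boundary map $\textbf{bnd}$, the task reduces to controlling the sizes of the conditional distributions of $w_n$ given $\textbf{bnd}(\omega)=b$.

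First, I would translate the tracking hypothesis \eqref{E:sublin} into a concentration statement: for $\mathbb{P}$-almost every sample path $\omega$ and every $\varepsilon>0$, for all sufficiently large $n$ the position $w_n$ lies in the metric ball $B(\pi_n(b),\varepsilon n)$, where $b=\textbf{bnd}(\omega)$. Disintegrating $\mathbb{P}$ along $\textbf{bnd}$, this yields that for $\lambda$-a.e. $b \in B$ the conditional distribution of $w_n$ is essentially supported on $B(\pi_n(b),\varepsilon n)$ once $n$ is large.

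Next, I would use the temperate hypothesis on $d$ to bound the Haar measure of $B(\pi_n(b),\varepsilon n)$ by $C\exp(C\varepsilon n)$, so that its logarithmic volume grows as $o(n)$ after letting $\varepsilon$ tend to zero slowly with $n$. Combined with the spread-out assumption (so that $\mu^{\ast k}$ is absolutely continuous with respect to Haar measure for some $k$, making differential entropy meaningful) and the finite first moment (which gives integrability of the relevant log-densities), a version of the Shannon--McMillan--Breiman theorem in the lcsc setting implies that the normalized conditional differential entropy $\tfrac{1}{n}H(w_n \mid \textbf{bnd})$ tends to zero for $\lambda$-a.e. $b$. Kaimanovich's entropy criterion then identifies $(B,\lambda)$ with the Poisson boundary of $(G,\mu)$.

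The main obstacle is to carry out the entropy arguments rigorously in the general locally compact, second countable setting: one must work with differential entropies against Haar measure rather than counting measure, justify disintegration along $\textbf{bnd}$ for a merely measurable boundary map, and invoke the appropriate SMB theorem for random walks on such groups. All of these ingredients are developed in Kaimanovich's framework, and temperate-ness of $d$ is precisely the hypothesis that converts the geometric tracking estimate \eqref{E:sublin} into the sub-exponential volume--hence entropy--bound that the criterion demands.
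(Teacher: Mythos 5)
The paper does not give a proof of Theorem~\ref{T:ray}: it is quoted as the ray approximation criterion from Kaimanovich's work \cite{Kai00}, with \cite{FT} cited for the precise version needed in the locally compact, second countable setting with a temperate metric. So there is no internal argument to compare against; what you have done is reconstruct the underlying proof from those references.

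Your reconstruction is correct in outline and is in fact the standard derivation. The ray criterion does reduce to Kaimanovich's conditional entropy criterion: $(B,\lambda)$ is the Poisson boundary iff the normalized conditional differential entropy of $w_n$ given $\textbf{bnd}$ tends to zero, and the tracking hypothesis \eqref{E:sublin} together with temperate-ness of $d$ gives exactly the sub-exponential volume bound that forces this. The spread-out assumption guarantees $\mu^{\ast k}$ has a density against Haar measure so that differential entropy is meaningful, and finite first moment together with temperate-ness gives finite entropy and the integrability needed for the Shannon--McMillan--Breiman theorem. One point that deserves a little more care than your sketch gives it: \eqref{E:sublin} is an almost-sure-eventually statement, not a uniform support statement, so the conditional law of $w_n$ given $b$ is not literally supported on $B(\pi_n(b),\varepsilon n)$ for large $n$; one has to split the entropy into the contribution from the ball (controlled by the temperate volume bound) and the contribution from the exceptional tail (controlled because its probability tends to zero and entropy is upper semicontinuous / one uses a conditional SMB argument). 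This split is standard and is handled in the cited sources, but your phrase \emph{essentially supported} is papering over it. With that caveat, the proposal is a faithful account of how Theorem~\ref{T:ray} is actually established.
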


\subsection*{The $\kappa$-Morse boundary is the Poisson boundary}
We now apply this criterion to identify the Poisson boundary with the $\kappa$-Morse boundary. 
The following result was obtained in collaboration with Ilya Gekhtman.

\begin{theorem} \label{T:poiss-general}
Let $G$ be a finitely generated group, and let $(X, d_X)$ be a Cayley graph of $G$. 
Let $\mu$ be a probability measure on $G$ with finite first moment with respect to $d_X$, such that the semigroup generated 
by the support of $\mu$ is a non-amenable group. 
Let $\kappa$ be a concave sublinear function, and suppose that
for almost every sample path $\omega = (w_n)$, there exists a $\kappa$-Morse geodesic ray $\gamma_\omega$ such that 
\begin{equation} \label{E:sub-track}
\lim_{n \to \infty} \frac{d_X(w_n, \gamma_\omega)}{n} = 0.
\end{equation}
Then almost every sample path converges to a point in $\pka X$, and moreover the space $(\pka X, \nu)$, where $\nu$ is the hitting measure for the random walk, is a model for the Poisson boundary of $(G, \mu)$. 
\end{theorem}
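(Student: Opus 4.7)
The plan is to apply Kaimanovich's ray approximation criterion (Theorem~\ref{T:ray}) to the $\mu$-space $(\pka X, \nu)$, where $\nu$ is the hitting measure. The standing hypotheses are satisfied: the word metric on a finitely generated group is temperate (ball volumes are at most exponential), $\mu$ is spread-out since $G$ is discrete, and $\mu$ has finite first moment by assumption. Two tasks remain: (a) realize $\pka X$ as a $\mu$-boundary, and (b) construct measurable maps $\pi_n\colon \pka X \to G$ satisfying the sublinear estimate \eqref{E:sublin}.

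For (a), I first show $\mathbb{P}$-a.e.\ convergence $w_n \to [\gamma_\omega]$ in $X \cup \pka X$. Non-amenability of the semigroup generated by $\supp \mu$ gives transience, so $\Norm{w_n} \to \infty$ a.s. A pathwise dyadic upper envelope then converts the hypothesis $d_X(w_n, \gamma_\omega)/n \to 0$ into the existence of a concave sublinear function $\kappa_\omega'$ with $d_X(w_n, \gamma_\omega) \leq \kappa_\omega'(\Norm{w_n})$ for all $n$. Fix a geodesic representative $\beta \in [\gamma_\omega]$ (Lemma~\ref{L:Ubeta}(1)) and a basic neighborhood $\calU(\beta, r)$. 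Following the argument of Lemma~\ref{L:point-conv} with $\kappa_\omega'$ in place of the $C \cdot \kappa$ there, Lemma~\ref{L:equivalence} provides a sublinear bound for $d_X(w_n, \beta)$ in terms of $\Norm{w_n}$, and the $\kappa$-Morse property of $\beta$ applied to any extension of a quasi-geodesic segment $\alpha$ from $\go$ to $w_n$ to a quasi-geodesic ray yields $\alpha\vert_r \subset \calN_\kappa(\beta, m_\beta(q, Q))$ for all $n$ with $\Norm{w_n}$ sufficiently large. Hence $w_n \in \calU(\beta, r)$ eventually. Set $\textbf{bnd}(\omega) := [\gamma_\omega]$; this is shift-invariant and Borel measurable, and $\nu := \textbf{bnd}_* \mathbb{P}$ is the desired $\mu$-stationary measure.

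For (b), measurably select for each $\xi \in \pka X$ a geodesic ray $\gamma_\xi$ representing $\xi$, and set $\pi_n(\xi) \in G$ to be a group element within distance $1$ of $\gamma_\xi(\lfloor \ell n \rfloor)$, where $\ell := \lim \Norm{w_n}/n$ is the deterministic escape rate from Kingman's subadditive ergodic theorem. Fix a generic $\omega$ and write $\xi := \textbf{bnd}(\omega)$. Let $p_n \in \gamma_\omega$ be a nearest-point projection of $w_n$ and let $s_n := \Norm{p_n}$. Then $d_X(w_n, p_n) = o(n)$, so $|s_n - \Norm{w_n}| = o(n)$ and hence $|s_n - \ell n| = o(n)$. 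Since $\gamma_\omega \sim \gamma_\xi$, Lemma~\ref{L:equivalence} combined with comparison along their nearest-point projection yields $d_X(\gamma_\omega(s_n), \gamma_\xi(s_n)) = O(\kappa(s_n))$. The triangle inequality gives
\[
d_X(w_n, \pi_n(\xi)) \leq d_X(w_n, p_n) + d_X(\gamma_\omega(s_n), \gamma_\xi(s_n)) + \bigl|s_n - \lfloor \ell n \rfloor\bigr| + 1 = o(n),
\]
which is exactly \eqref{E:sublin}. Theorem~\ref{T:ray} now identifies $(\pka X, \nu)$ with the Poisson boundary of $(G, \mu)$.

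The main technical obstacle is the Borel selection $\xi \mapsto \gamma_\xi$: one must verify that the $\kappa$-Morse geodesic rays form a Borel subset of the Polish space of rays (with the compact-open topology) and that the equivalence relation $\sim$ admits a Borel transversal, via a Kuratowski--Ryll-Nardzewski-type argument. Once this is in place, step (a) is essentially a parametrized version of Lemma~\ref{L:point-conv}, and step (b) is a direct triangle-inequality computation exploiting the tracking provided by the equivalence class structure.
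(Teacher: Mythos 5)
Your proof follows the paper's approach exactly: positive drift from non-amenability, a.e.\ convergence in $X \cup \pka X$ via Lemma~\ref{L:point-conv} (your dyadic-envelope step is just a more explicit way of converting the $o(n)$ tracking into a sublinear function of $\Norm{w_n}$, which the paper does implicitly using $\Norm{w_n}/n \to \ell > 0$), and Kaimanovich's ray criterion applied to a geodesic-representative gauge $\pi_n(\xi) \approx \gamma_\xi(\lfloor \ell n \rfloor)$ -- the paper obtains the final $o(n)$ estimate from $d_X(\gamma_{r_n}, \alpha_{r_n})/n \to 0$, which uses only the definition of $\sim$, whereas your $O(\kappa(s_n))$ bound via Lemma~\ref{L:equivalence} is a stronger but equally sufficient estimate. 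The Borel-selection issue you flag at the end is not a genuine obstacle: Theorem~\ref{T:ray} does not require the gauges $\pi_n$ to be measurable, since the entropy estimate underlying the ray criterion is a pointwise bound on fibers of $\textbf{bnd}$ and only $\textbf{bnd}$ itself must be measurable (which both you and the paper establish via the metrizability of $X \cup \pka X$); this is why the paper, too, simply picks a geodesic representative without invoking a selection theorem.
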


\begin{proof}
By the subadditive ergodic theorem and finite first moment, the limit 
$$\ell := \lim_{n \to \infty} \frac{d_X(\go, w_n)}{n}$$
exists almost surely and is constant, and $\ell > 0$ since the group 
generated by the support of $\mu$ is non-amenable (see \cite[Theorem 8.14 and Corollary 12.5]{Woe}). 

By Lemma \ref{L:point-conv} and Eq. \eqref{E:sub-track}, almost every sequence converges to $[\gamma_\omega] \in \pka X$.
Thus, we can define $\textbf{bnd} : \Omega \to \partial_\kappa X$ as 
$$\textbf{bnd}(\omega) := \lim_{n \to \infty} w_n \in \partial_\kappa X,$$
which is $T$-invariant by definition. Moreover, $\textbf{bnd}$ is measurable, since it is a pointwise limit of the measurable functions $w_n$ 
with values in the space $X \cup \pka X$, which is metrizable by Theorem \ref{Thm:Union-Metrizable}.
Since $G$ is finitely generated, any word metric $d_X$ on it is temperate.

Finally, by Eq. \eqref{E:sub-track}, almost every sample path sublinearly tracks a $\kappa$-Morse quasi-geodesic ray. 
Hence, let us define $\pi_n : \pka X \to G$ as $\pi_n(\xi) := \alpha_{r_n}$ where 
$\alpha$ is a geodesic representative of the class of $\xi \in \pka X$, and $r_n := \lfloor \ell n \rfloor$. 

Now, let $\omega \in \Omega$ and $\gamma = \gamma_\omega$, and let $p_n$ be a nearest point projection of $w_n$ onto $\gamma$.
By \eqref{E:sub-track}, we have for almost every $\omega \in \Omega$
$$\lim_{n \to \infty} \frac{d_X(w_n, p_n)}{n} \to 0,$$ 
hence also $\frac{\Vert p_n \Vert}{n} \to \ell$.
Since $p_n$ and $\gamma_{r_n}$ lie on the same geodesic, this implies 
\begin{align} \label{E:track-2}
\frac{d_X(w_n, \gamma_{r_n})}{n} & \leq \frac{d_X(w_n, p_n)}{n} + \frac{d_X(p_n, \gamma_{r_n})}{n} \\ 
& \leq \frac{d_X(w_n, p_n)}{n} +  \frac{| \Vert p_n \Vert - r_n |}{n} \to 0 + \ell - \ell = 0
\end{align}
as $n \to \infty$. Finally, we obtain 
$$\frac{d_X(w_n, \pi_n(\textbf{bnd}(\omega)))}{n} = \frac{d_X(w_n, \alpha_{r_n})}{n} \leq \frac{d_X(w_n, \gamma_{r_n})}{n} + \frac{d_X(\gamma_{r_n}, \alpha_{r_n})}{n},$$
and the first term tends to $0$ because of \eqref{E:track-2}, while the second term tends to $0$ since $\alpha \sim \gamma$.
Hence, by Theorem \ref{T:ray}, $(\partial_\kappa X, \nu)$ is a model for the Poisson boundary of $(G, \mu)$. 
\end{proof}

\section{Boundaries of mapping class groups}

In this section, we show that for an appropriate choice of $\kappa$, the 
$\kappa$-Morse boundary of any mapping class group $G = \Map(S)$ can function as a topological model for 
the Poisson boundary of the pair $(G, \mu)$, where $\mu$ is any finitely-supported  
non-elementary measure.

We need to show that a generic sample path of such a random walk
sublinearly tracks a $\kappa$-Morse quasi-geodesic ray. We will do so but showing that, in fact, 
the limiting quasi-geodesic ray is $\kappa$-weakly contracting. 

\medskip

\subsection{Background on mapping class groups}
Let $S$ be a surface of finite hyperbolic type, let $\genus(S)$ be its genus and $\boundary(S)$ the number of its boundary components. 
Let $\Map(S)$ denote the mapping class group of $S$ equipped with a word metric $d_w$
associated to a finite generating set. That is, we are in the setting where 
$(X, d_X) = (\Map(S), d_{w})$.

\subsection*{Ending laminations}
Let $\calC(S)$ denote the curve graph of $S$ (see \cite{MM00} for definition and 
details). The curve graph is known to be $\delta$-hyperbolic
\cite{MM00}. 
By \cite{Klarreich}, the Gromov boundary of $\calC(S)$ can be identified with the space 
of ending laminations $\EL(S)$, that is, the space of minimal filling laminations after forgetting 
the measure. 

\subsection*{Subsurface projections}

By a subsurface $Y$ we always mean a connected $\pi_1$-injective subsurface
of $S$. For any subsurface $Y$ let $\calC(Y)$ denote the curve graph of $Y$.
Let $\partial Y$ denote the multi-curve 
consisting of all boundary components of $Y$. There is a projection map 
$\pi_Y \from \calC(S) \to \calC(Y)$ defined on a subset of $\calC(S)$ consisting of curves
that intersect $Y$. This is essentially a map that sends a curve $\alpha \in \calC(S)$ to 
a set of curves in $Y$ obtained from surgery between $\alpha$ and $\partial Y$. 
(Again, see \cite{MM00} for details). The set $\pi_Y(\alpha)$ has a uniformly bounded 
diameter in $\calC(Y)$, independent of $\alpha$ or $Y$. 

We can extend this projection to a map $\pi_Y \from \Map(S) \to \calC(Y)$ as follows. 
Consider a set $\theta$ of curves on $S$ that fill $S$. For example, 
following \cite{MM00}, we can assume $\theta$ is the union of a pants
decomposition and a set of \emph{dual curves}, one transverse to each curve
in the pants decomposition.  Then for $x \in \Map(S)$, define 
\[
\pi_Y(x) := \bigcup_{\alpha \in \theta} \pi_Y(x(\alpha)). 
\]
Again, the set $\pi_Y(x)$ has a uniformly bounded diameter in $\calC(Y)$. For, 
$x, y \in \Map(S)$ define 
\[
d_Y(x, y) := \diam_{\calC(Y)}(\pi_Y(x) \cup \pi_Y(y)).
\]
In particular, 
\[
d_S(x, y) := \diam_{\calC(S)} (x(\theta), y(\theta)). 
\]
Also, when $Y$ is an annulus with core curve $\alpha$, we often 
use $d_\alpha(x,y)$ instead of $d_Y(x,y)$. 

In the discussion above, $x$ and $y$ can be replaced with an ending lamination 
$\xi \in \EL(S)$ since $\xi$ has non-trivial projection to every subsurface
and $\pi_Y(\xi)$ is always well-defined. That is, we define 
\[
d_Y(x, \xi) := \diam_{\calC(Y)} (\pi_Y(x), \pi_Y(\xi)). 
\]

\subsection*{The distance formula}
In \cite{MM00}, it was shown that the word metric on $\Map(S)$ can be estimated 
up to uniform additive and multiplicative constants by these subsurface projection 
distances. To simplify the exposition, we adopt the following notation. We fix $S$ and a generating set for $\Map(S)$, 
and we say a constant $M$ is \emph{uniform} if it depends only on the topology of $S$ and the generating set. 
For two quantities $\textbf A$ and ${\textbf B}$, we write ${\textbf A} \prec {\textbf B}$ if 
there is a uniform constant $M$ such that
\[
{\textbf A} \leq M \cdot {\textbf B} + M. 
\]
We write ${\textbf A} \asymp {\textbf B}$ if ${\textbf A} \prec {\textbf B}$ and 
${\textbf B} \prec {\textbf A}$ and we use the notation $O({\textbf A})$ for a quantity 
that has an upper bound of $M \cdot {\textbf A}$.  Also, recall that, for $K>0$, 
\[
\lfloor \textbf A \rfloor_K := 
\begin{cases}
x & x \geq K\\
0 & x<K.
\end{cases}
\]
Now the Masur-Minsky distance formula can be stated as follows: 
there exists $K$ such that  
for $x,y \in \Map(S)$ we have:
\begin{equation}\label{df}
d_{w}(x, y) \asymp \sum_{Y \subseteq S} \big\lfloor d_{Y}(x, y) \big\rfloor_{K}.
\end{equation}

\subsection*{The hierarchy of geodesics}
To every pair of points $x, y \in \Map(S)$ one can associate a \emph{hierarchy
of geodesics} connecting $x(\theta)$ to $y(\theta)$ \cite[Theorem 4.6]{MM00}. The 
hierarchy $H=H(x,y)$ consists of a geodesic $[x,y]_S$ in $\calC(S)$ connecting 
$x(\theta)$ to $y(\theta)$ and other geodesics $[x,y]_Y$ in various curve graphs 
$\calC(Y)$, where $[x,y]_Y$ is essentially a geodesic connecting $\pi_Y(x)$ to 
$\pi_Y(y)$. Hence we write $H = \{ [x,y]_Y \}$. Besides $S$, other subsurfaces that appear 
in $H$ are described as follows: for every curve  $\alpha$ in $[x, y]_S$, we include every  
component of $S - \alpha$ that is not a pair of pants and the annulus $A_\alpha$ 
(the annulus whose core curve is $\alpha$). Also, if a subsurface $Y$ appears in $H$, 
then for every $\beta$ that appears in  $[x,y]_Y$, we also include
every component of $Y-\beta$ that is not a pair of pants and the annulus $A_\beta$.  
The length $|H|$ is defined to be the sum of the lengths $\big|[x,y]_Y\big|$ of geodesics 
$[x,y]_Y$. By \cite[Theorem 3.1]{MM00}, there exists $K$, depending only on the topology of $S$, such that 
for every subsurface $Y$, if $d_Y(x,y) \geq K$ then $Y$ is included in $H$. 
Furthermore,
by \cite[Theorems 6.10, 6.12 and 7.1]{MM00}  we have
\begin{equation} \label{eq:hierarchy}
d_w(x, y) \asymp |H(x,y)| \asymp \sum_{Y \textup{ in }H} \big|[x,y]_Y\big|. 
\end{equation} 
A \emph{resolution} $\calG(x,y)$ of a hierarchy $H(x,y)$ is a uniform quasi-geodesic in 
$\Map(S)$ connecting $x$ to $y$ where, for any subsurface $Y$, the projection of $\calG(x,y)$ to
$\calC(Y)$ is contained in a uniformly bounded neighborhood of the geodesic segment 
$[x,y]_Y$. 

We can also replace $x$ or $y$ with a point $\xi \in \EL$. 
We start with a (tight) geodesic $[x, \xi)_S$ in $\calC(S)$ and build $H(x, \xi)$ 
the same as before replacing, for every subsurface $Y$, $\pi_Y(y(\theta))$ with 
$\pi_Y(\xi)$. The resolution $\calG(x,\xi)$ of $H(x, \xi)$ is then a uniform quasi-geodesic 
in $\Map(S)$ starting from $x$ such that the \emph{shadow} of $\calG(x,\xi)$ in $\calC(S)$ 
(namely, the set $\{ z(\theta) : z \in \calG(x, \xi) \}$) converges to $\xi$. 

We use the hierarchy paths to show: 

\begin{proposition} \label{P:hierarchy}
Let $p := 3\genus(S) - 3 + \boundary(S)$ be the \emph{complexity} of $S$. For any $x, y \in X$, assume that 
$d_Y(x, y) \leq E$ for all $Y \subsetneq S$ and some $E > 1$. Then we have 
$$d_w(x, y) \prec d_S(x, y) \cdot E^p.$$
\end{proposition}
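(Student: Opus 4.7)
The natural approach is to invoke the hierarchy form of the distance formula recalled in \eqref{eq:hierarchy},
\[
d_w(x,y) \asymp \sum_{Y \text{ in } H(x,y)} \big|[x,y]_Y\big|,
\]
and to estimate the right-hand side using the hypothesis $d_Y(x,y) \leq E$ for every $Y \subsetneq S$. The $S$-stratum contributes $\big|[x,y]_S\big| \asymp d_S(x,y)$, and every proper $Y$ appearing in $H$ satisfies $\big|[x,y]_Y\big| \leq d_Y(x,y) + O(1) \leq E + O(1)$. Thus the whole problem reduces to counting how many proper subsurfaces occur in the hierarchy $H(x,y)$.

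The plan is to organize the proper subsurfaces by depth in the hierarchy tree rooted at $S$: declare $S$ to be at depth $0$, and declare $Y$ to be at depth $k+1$ when $Y$ is introduced as a non-pants component of $Y' \setminus \alpha$ or as the annulus $A_\alpha$, for some curve $\alpha$ of the geodesic $[x,y]_{Y'}$ with $Y'$ at depth $k$. By the hierarchy construction, each curve on a geodesic $[x,y]_{Y'}$ spawns at most a topological constant $c$ many children, while the number of curves on $[x,y]_{Y'}$ is $\big|[x,y]_{Y'}\big|+1$. Writing $N_k$ for the number of depth-$k$ subsurfaces in $H$, this gives $N_1 \leq c(d_S(x,y)+O(1))$, and
\[
N_{k+1} \leq c\big(E+O(1)\big) N_k, \qquad k \geq 1.
\]
The recursion is forced to terminate at $k = p$ because each descent strictly drops the complexity of the containing subsurface; this is precisely where the exponent $p$ is introduced.

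Summing the geometric progression, $\sum_{k=1}^{p} N_k \prec (d_S(x,y) + 1)\cdot(cE)^{p-1}$, and multiplying by the per-subsurface bound $E + O(1)$ yields
\[
\sum_{\text{proper } Y \in H} \big|[x,y]_Y\big| \prec (d_S(x,y)+1) \cdot E^p.
\]
Combining with the $S$-contribution and applying \eqref{eq:hierarchy} produces the claim $d_w(x,y) \prec d_S(x,y) \cdot E^p$ (using $E > 1$ to absorb constants). The main technical obstacle is the bookkeeping in the hierarchy counting: one must verify that the subsurfaces introduced per curve are bounded by a constant depending only on the topology of $S$, ensure that annuli are handled correctly at the bottom of the chain so that the depth truly caps at $p$, and argue that subsurfaces appearing in $H$ genuinely satisfy $|[x,y]_Y| \leq E + O(1)$ and not merely the raw projection bound. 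Once those are in place, the geometric-series structure of the counting forces the final exponent to match the complexity $p$, which is exactly the content of the proposition.
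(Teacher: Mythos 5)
Your proposal is correct and follows essentially the same route as the paper: both reduce the claim to bounding $|H(x,y)|$ via \eqref{eq:hierarchy}, using that each proper subsurface contributes $\prec E$ to the hierarchy length and that the hierarchy tree has depth bounded by the complexity $p$. The paper carries out the same count as an induction on the complexity of the ambient subsurface rather than as an explicit level-by-level tally of the tree, but this is only a cosmetic reorganization of the identical estimate.
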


\begin{proof}
This is essentially contained in \cite{MM00}. We sketch the proof here and refer
the reader to \cite{MM00} for definitions and details. 
In view of \eqnref{eq:hierarchy}, we need to show 
\[
|H(x, y)| \prec d_S(x, y) \cdot E^p. 
\]
The restriction of $H(x,y)$ to a subsurface $Y$ is again a hierarchy which we denote 
with $H_Y(x,y)$. We check the Proposition inductively. When $S$ is $S_{1,1}$ or 
$S_{0, 4}$, we have $p=1$ and for every curve $\alpha$ in $S$, $S-\alpha$ does not 
have any complementary component that is not a pair of pants. Also, by assumption, 
for every $\alpha \in [x,y]_S$, we 
have $\big| [x,y]_\alpha \big| \prec d_\alpha(x,y) \leq E$. Therefore, 
$|H(x,y)| \prec E \cdot \big|[x,y]_S \big| \prec E \cdot d_S(x,y) \leq E$, as required. 

Now let $S$ be a larger surface and assume, by induction, that for every subsurface 
$Y$, the hierarchy $H_Y(x,y)$ satisfies $|H_Y(x,y)| \prec d_Y(x, y) \cdot E^{p-1}$. We have
\[
|H(x,y)| \prec \sum_{\alpha \in [x,y]_S} \left( \big| [x, y]_\alpha \big| 
 + \sum_{Y\subset S-\alpha} |H_Y(x,y)| \right)
\prec \big| [x,y]_S \big| \cdot (E + 2 d_Y(x, y) \cdot E^{p-1}). 
\]
But $d_Y(x, y) \leq E$ and $\big|[x,y]_S\big| \prec d_S(x, y)$, thus $|H(x, y)| \prec d_S(x, y) \cdot E^p$. 
\end{proof}

\subsection*{Projections in mapping class groups}
Here, we recall the construction of the center of a triangle in $\Map(S)$ according to 
Eskin-Masur-Rafi \cite{Projection}. For $x \in \Map(S)$ and a subsurface $Y$, we denote 
$\pi_Y(x(\theta))$ simply by $x_Y$.  Also, as before, for $x, y \in \Map(S)$, the geodesic segment
in $\calC(Y)$ connecting $x_Y$ and $y_Y$ is denoted by $[x,y]_Y$. For any subsurface 
$Y$, the curve graph $\calC(Y)$ is 
$\delta$-hyperbolic for some uniform constant $\delta$. Thus, for any three points 
$x, y, z \in \Map(S)$ and every subsurface $Y$, there exists a point 
$\cent_Y(x,y,z)$ in $\calC(Y)$ that is $\delta$-close to all three geodesic segments 
$[x,y]_Y$, $[x, z]_Y$ and $[y, z]_Y$. We refer to $\cent_Y(x,y,z)$  as the \emph{center} 
of the triple $x_Y, y_Y, z_Y \in \calC(Y)$. 

It was shown in \cite{Projection} that there is an element $\eta \in \Map(S)$
that projects near the center of $x_Y$, $y_Y$, $z_Y$ for every subsurface $Y$. 
More precisely: 

\begin{lemma}[{\cite[Lemma 4.11]{Projection}}] \label{L:Projection}
There exists a constant $D$ such that the following holds. For any $x, y, z \in \Map(S)$, 
there exists a point $\eta \in \Map(S)$ such that, for any subsurface $Y \subseteq S$, 
we have 
\[
d_Y\big(\eta_Y, \cent_Y(x,y,z)\big) \leq D. 
\]
We call $\eta$ the \emph{center} of $x$, $y$ and $z$ and we denote 
it by $\cent(x, y, z)$. 
\end{lemma}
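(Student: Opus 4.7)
The plan is to produce $\eta$ by realizing the tuple $(c_Y)_Y$, where $c_Y := \cent_Y(x,y,z)$, as the subsurface projections of a single mapping class. The main input is the Masur--Minsky hierarchy machinery: any sufficiently \emph{consistent} assignment of a point $c_Y \in \calC(Y)$ to each subsurface $Y \subseteq S$ is uniformly close to the projection tuple of some point in $\Map(S)$.

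The first step is to verify that the tuple $(c_Y)_Y$ satisfies the Behrstock consistency inequality: there is a uniform constant $D_0$ such that for any two overlapping subsurfaces $Y \pitchfork Y'$,
\[
\min \bigl\{ d_Y(\partial Y', c_Y),\ d_{Y'}(\partial Y, c_{Y'}) \bigr\} \leq D_0.
\]
Since $\calC(Y)$ is $\delta$-hyperbolic, each $c_Y$ lies within $\delta$ of one of the three sides $[x,y]_Y$, $[y,z]_Y$, $[x,z]_Y$, and similarly for $c_{Y'}$. I would argue by a finite case analysis on which sides $c_Y$ and $c_{Y'}$ are near: in each case, pairwise Behrstock consistency (which is known for the actual mapping classes $x, y, z$) combines with the Bounded Geodesic Image Theorem of Masur--Minsky to propagate the inequality from the vertices to the centers. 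Concretely, if $c_Y$ is near $[x,y]_Y$ and $d_{Y'}(\partial Y, c_{Y'})$ is large, then by BGIT the geodesic in $\calC(Y')$ witnessing $c_{Y'}$ passes near $\partial Y$; projecting back to $\calC(Y)$ then forces $c_Y$ to be near $\pi_Y(\partial Y')$, bounding $d_Y(\partial Y', c_Y)$.

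The second step is to apply the Masur--Minsky realization theorem: given the consistent tuple $(c_Y)_Y$, one constructs a hierarchy $H$ whose geodesic in each $\calC(Y)$ passes uniformly close to $c_Y$, and then takes $\eta$ to be any mapping class on a resolution $\calG$ of $H$. By construction $d_Y(\eta_Y, c_Y) \leq D$ for a uniform constant $D$ depending only on the topology of $S$ and the chosen marking $\theta$.

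The principal obstacle is Step~1. The usual Behrstock inequality is stated for pairs of projections of a single mapping class, whereas here we must deduce consistency for a tuple whose entries are defined by triangle centers; the subtlety is that $c_Y$ and $c_{Y'}$ can lie near \emph{different} pairs of vertices of their respective triangles, so pairwise consistency must be chained through an intermediate vertex, and the additive errors from the three triangle sides must be controlled. The Bounded Geodesic Image Theorem, applied systematically, is precisely what keeps these errors uniform. Once Step~1 is in place, Step~2 is a direct invocation of \cite[Theorem~4.6]{MM00}, and the constant $D$ in the lemma is the sum of the constants arising in the two steps.
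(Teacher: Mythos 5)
Note first that the paper itself offers no proof of this lemma; the statement is quoted verbatim from \cite[Lemma 4.11]{Projection}, so there is no internal argument to compare against. Your overall plan---verify that the tuple $\bigl(\cent_Y(x,y,z)\bigr)_Y$ satisfies the Behrstock consistency conditions and then realize it as the projection tuple of a single mapping class---is the right one and, as far as I know, the intended one. But two steps need repair.

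The realization step is misattributed. Theorem~4.6 of \cite{MM00} constructs a hierarchy joining two \emph{markings}; it does not take an abstract consistent tuple $(c_Y)_Y$ as input. What you need is the consistency (realization) theorem of Behrstock--Kleiner--Minsky--Mosher, \emph{Geometry and rigidity of mapping class groups}, which says that any tuple satisfying the consistency inequalities lies within uniformly bounded distance, simultaneously in every $\calC(Y)$, of the projection data of some marking. The sentence ``one constructs a hierarchy whose geodesic in each $\calC(Y)$ passes uniformly close to $c_Y$'' is exactly the content of that theorem, not a direct corollary of \cite{MM00}, and cannot be waved in.

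The consistency check is also incomplete, and the overlap case runs backwards. You only discuss $Y \pitchfork Y'$; the realization theorem equally requires the nested condition for $Y \subsetneq Y'$ (roughly: if $d_{Y'}(\partial Y, c_{Y'})$ is large then the projection of $c_{Y'}$ to $\calC(Y)$ is close to $c_Y$), which needs its own argument. For overlapping $Y \pitchfork Y'$ the BGIT appeal as written goes the wrong way---if $d_{Y'}(\partial Y, c_{Y'})$ is large, no geodesic is forced to pass \emph{near} $\partial Y$. The clean argument is the tripod one: suppose $d_Y(\partial Y', c_Y)$ is large. Since $c_Y$ is $\delta$-close to all three sides of the triangle $(x_Y, y_Y, z_Y)$ in $\calC(Y)$, the point $\pi_Y(\partial Y')$ lies coarsely on a single leg of the tripod centered at $c_Y$, hence is far from two of the three vertices, say $d_Y(\partial Y', y)$ and $d_Y(\partial Y', z)$ are both large. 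Behrstock's inequality for the single mapping classes $y$ and $z$ then bounds both $d_{Y'}(\partial Y, y)$ and $d_{Y'}(\partial Y, z)$, so the side $[y,z]_{Y'}$ is short and lies near $\pi_{Y'}(\partial Y)$; since $c_{Y'}$ is $\delta$-close to $[y,z]_{Y'}$, one gets $d_{Y'}(\partial Y, c_{Y'})$ uniformly bounded. That is the estimate the realization theorem actually asks for.
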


Note that, as always, we can replace each of $x,y,z$ with an ending lamination 
$\xi \in \PML$. That is, $\cent(x,y, \xi)$ is a well-defined element of $\Map(S)$. 
From now on, we will denote as $\go$ the identity element in $\Map(S)$, which will 
function as base point.

\begin{definition}
Let $D$ be given from Lemma \ref{L:Projection}, and let $\xi \in \EL$ be an ending lamination on $S$. 
We define a \emph{$D$-cloud of a ray in the direction of $\xi$} to be
\[ 
\calZ(\go, \xi)  := \Big\{ z \in \Map(S) \ \big| \  d_{\calC(Y)} \big( z_Y,  [\go, \xi)_Y\big) \leq D \quad  \forall \, Y \Big\}. 
\]
\end{definition}
By construction, the resolution $\calG(\go, \xi)$ of the hierarchy $H(\go, \xi)$
is contained in $\calZ(\go, \xi)$. 
Fixing $\xi \in \EL$, we define a projection map
\begin{align*}
\Pi_\xi \from \Map(S) \to \calZ(\go, \xi)   \qquad\text{where}\qquad   \Pi_\xi(x) := \cent(\go, x, \xi), \qquad x \in \Map(S). 
\end{align*}

We now check that $\Pi_\xi$ satisfies the usual properties of a projection; in particular, it is a $\kappa$-projection 
according to Definition \ref{weakprojection}.

\begin{lemma}
For any $\xi \in \EL$, the map $\Pi_\xi$ is coarsely Lipschitz with respect to $d_w$. 
Furthermore, if 
$x \in \calZ(\go, \xi)$, then $d_w(x, \Pi_\xi(x))$ is uniformly bounded. As a consequence, $\Pi_\xi$ is a $\kappa$-projection. 
\end{lemma}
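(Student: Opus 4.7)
The plan is to prove the three assertions in turn: coarse Lipschitz, uniform bound on $\calZ(\go,\xi)$, and the $\kappa$-projection property. The main tools will be the Masur--Minsky distance formula \eqref{df}, \lemref{L:Projection}, and standard stability properties of coarse centers in $\delta$-hyperbolic spaces.

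For the coarse Lipschitz property, I would first work in each individual curve graph $\calC(Y)$. A classical fact in $\delta$-hyperbolic geometry is that if two geodesic triangles share two vertices, then their coarse centers are separated by at most the distance between the third vertices, up to an additive $O(\delta)$ error. Applied to the triangles $(\go_Y, x_Y, \xi_Y)$ and $(\go_Y, y_Y, \xi_Y)$ in $\calC(Y)$, this yields
\[
d_Y\big(\cent_Y(\go, x, \xi),\, \cent_Y(\go, y, \xi)\big) \leq d_Y(x, y) + O(\delta).
\]
Since \lemref{L:Projection} places $\pi_Y(\Pi_\xi(x))$ and $\pi_Y(\Pi_\xi(y))$ within $D$ of these centers for every subsurface, this promotes to $d_Y(\Pi_\xi(x), \Pi_\xi(y)) \leq d_Y(x, y) + C_0$ uniformly in $Y$. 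Feeding these inequalities into \eqref{df} (enlarging the threshold $K$ by $C_0$ to absorb the additive error) then gives $d_w(\Pi_\xi(x), \Pi_\xi(y)) \prec d_w(x, y)$.

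For the second claim, suppose $x \in \calZ(\go, \xi)$, so that $x_Y$ lies within $D$ of the geodesic ray $[\go, \xi)_Y$ in every $\calC(Y)$. When a vertex of a $\delta$-hyperbolic (possibly ideal) triangle is within $D$ of the opposite side, the triangle is uniformly thin near that vertex: the two sides emanating from $x_Y$ must fellow-travel $[\go,\xi)_Y$ up to the nearest-point projection of $x_Y$, forcing $\cent_Y(\go, x, \xi)$ to lie within $D + O(\delta)$ of $x_Y$. Combined with \lemref{L:Projection}, this gives $d_Y(x, \Pi_\xi(x)) \leq 2D + O(\delta)$ uniformly in $Y$. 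Choosing the threshold $K$ in \eqref{df} above this universal bound kills every summand, so $d_w(x, \Pi_\xi(x))$ is uniformly bounded.

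Finally, the $\kappa$-projection bound follows by triangle inequality: for any $z \in \calZ(\go, \xi)$ and $x \in \Map(S)$,
\[
d_w(z, \Pi_\xi(x)) \leq d_w(z, \Pi_\xi(z)) + d_w(\Pi_\xi(z), \Pi_\xi(x)) \leq O(1) + L\cdot d_w(z, x) + L,
\]
using the second claim and the coarse Lipschitz estimate with constant $L$. Since $\kappa \geq 1$, this is exactly the $\kappa$-projection inequality with $D_1 = L$ and $D_2$ absorbing the additive constants. The step I expect to be the main obstacle is the careful accounting of additive constants through the distance formula in the coarse Lipschitz step, because each threshold change interacts with the multiplicative constants in \eqref{df}; the center-stability inputs themselves are classical hyperbolic-geometry facts that can be cited.
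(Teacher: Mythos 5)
Your proof is correct and mirrors the paper's at the level of ideas, but you aggregate the per-curve-graph estimates into a word-metric estimate differently. The paper first reduces to the case $d_w(x,x') \le 1$, uses hyperbolicity of each $\calC(Y)$ to get a single uniform bound $d_Y(\cent_Y(\go,x,\xi), \cent_Y(\go,x',\xi)) \le C_2$ for all $Y \subseteq S$ simultaneously, and then invokes Proposition~\ref{P:hierarchy} (which it has already proved) to conclude $d_w(\Pi_\xi(x),\Pi_\xi(x')) \prec C_2^{p+1}$; the coarse Lipschitz property of a map from a geodesic space then follows automatically from this bounded-displacement estimate on edges. You instead establish the additive-error Lipschitz bound $d_Y(\Pi_\xi(x),\Pi_\xi(y)) \le d_Y(x,y)+C_0$ for arbitrary pairs and feed it into the Masur--Minsky distance formula \eqref{df}, shifting the threshold from $K$ to $K+C_0$ to absorb the additive error. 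This works, but requires the care you flag: one must note that each surviving term satisfies $d_Y(x,y) \ge K$, hence $d_Y(\Pi_\xi(x),\Pi_\xi(y)) \le 2\,d_Y(x,y)$, and that the distance formula's implicit constants depend on the chosen threshold (which is fine here since $C_0$ is uniform). The paper's route sidesteps this accounting entirely by working edge-by-edge and reusing Proposition~\ref{P:hierarchy}; yours is more self-contained but slightly more delicate. For the second claim and the $\kappa$-projection conclusion your argument agrees with the paper's in substance: a uniform bound $d_Y(x,\Pi_\xi(x)) \le 2D+O(\delta)$ in every $Y$, aggregated (by the distance formula in your case, by Proposition~\ref{P:hierarchy} in the paper's), followed by the triangle inequality to verify Definition~\ref{weakprojection}.
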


\begin{proof}
Consider points $x, x' \in \Map(S)$ where $d_{w}(x, x') \leq 1$.
Then $x(\theta)$ and $x'(\theta)$ have a uniformly bounded intersection 
number, which implies that there exists a uniform constant $C_1>0$ such that
\[
\forall \, Y, \qquad  d_Y(x_Y, x'_Y) \leq C_1. 
\]
Let $\eta := \cent(\go, x, \xi)$ and $\eta' := \cent(\go, x', \xi)$. 
Since $\calC(Y)$ is hyperbolic, the dependence of $\eta_Y$ on $x_Y$ is Lipschitz, 
that is, there exists a uniform constant $C_2>0$ such that 
\[
 \forall \, Y \subseteq S, \qquad  d_Y (\eta_Y, \eta'_Y) \leq C_2. 
\]
Now, Proposition \ref{P:hierarchy} implies that 
\[
d_{w} (\Pi_\xi(x), \Pi_\xi(x')) \prec (C_2)^{p+1}
\]
which means $\Pi_\xi$ is coarsely Lipschitz. Similarly, if $x \in \calZ(\go, \xi)$
then for $\eta = \cent(\go, x, \xi)$ we have $ d_Y (x_Y, \eta_Y) \leq C_2$
for all subsurfaces $Y$ and hence, $d_Y(x, \Pi_\xi(x))\prec  (C_2)^{p+1}$. 
\end{proof}

This projection has the following desirable property as shown by Duchin-Rafi \cite{Duchin-Rafi}.

\begin{theorem}[\cite{Duchin-Rafi}, Theorem 4.2]\label{duchin-rafi}
There exist constants $B_{1}, B_{2}$ depending on the topology of the surface $S$ 
and $D$ such that, for $x, y \in \Map(S)$,
\[
d_{w}(x, \calZ(\go, \xi)) \geq B_{1} \cdot d_{w}(x, y) \qquad \Longrightarrow \qquad d_{S}(\Pi_\xi(x), \Pi_\xi(y)) \leq B_{2}.
\] 
\end{theorem}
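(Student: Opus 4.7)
The plan is to reduce the statement to the classical contraction property of nearest-point projection onto a quasi-convex subset in the Gromov hyperbolic curve graph $\calC(S)$. The first observation is that for any $x \in \Map(S)$, the mapping class $\Pi_\xi(x) = \cent(\go, x, \xi)$ has its $\calC(S)$-projection $\pi_S(\Pi_\xi(x))$ within $O(D + \delta)$ of the nearest-point projection of $x_S$ onto the geodesic ray $[\go, \xi)_S$. This is because, in any $\delta$-hyperbolic space, the $\delta$-center of a geodesic triangle coincides up to $\delta$ with the nearest-point projection of each vertex onto the opposite side, and the $\delta$-center $\cent_S(\go, x, \xi)$ is $D$-close to $\pi_S(\Pi_\xi(x))$ by Lemma~\ref{L:Projection}.

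Given this reduction, the target bound $d_S(\Pi_\xi(x), \Pi_\xi(y)) \leq B_2$ amounts (up to a constant $O(D+\delta)$) to bounding the $\calC(S)$-distance between the nearest-point projections of $x_S$ and $y_S$ onto the ray $[\go, \xi)_S$. By the standard hyperbolic contraction lemma, if $d_S(x_S, y_S) < d_S(x_S, [\go, \xi)_S) - 2\delta$, then these two projections differ by at most $O(\delta)$. So it would suffice to choose $B_1, B_2$ so that the hypothesis forces this $d_S$-inequality.

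The main task is therefore to translate the hypothesis $d_w(x, \calZ) \geq B_1 \cdot d_w(x, y)$ from the word metric into such a $d_S$-comparison. The inequality $d_S(x_S, y_S) \leq C_1 \cdot d_w(x, y)$ is automatic from the fact that $\pi_S$ is coarsely Lipschitz (a direct consequence of the distance formula \eqref{df}). For the opposite direction, one uses a hierarchy path realizing $d_w(x, \calZ)$, namely from $x$ to a nearest point $z \in \calZ$: its $\calC(S)$-shadow is a geodesic from $x_S$ to within $D$ of $[\go, \xi)_S$, and one combines this with Proposition~\ref{P:hierarchy} (comparing $d_w$ and $d_S$ through the topological complexity $p$) to get a lower bound on $d_S(x_S, [\go, \xi)_S)$ in terms of $d_w(x, \calZ)$, modulo the contributions of proper subsurfaces.

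The main obstacle is exactly this last step. A mapping class $x$ can be $d_w$-far from $\calZ$ purely because of large projections to proper subsurfaces (for instance, a large Dehn twist around a curve whose projection to the ray is bounded), while $x_S$ remains close to $[\go, \xi)_S$ in $\calC(S)$; so the naive inequality $d_w(x, \calZ) \lesssim d_S(x_S, [\go, \xi)_S)$ fails. The correct argument uses that such proper-subsurface contributions are inherited by $y$ (since $d_w(x,y)$ is small relative to $d_w(x, \calZ)$) and are absorbed by the center construction in the corresponding $\calC(Y)$, so that they do not contribute to $d_S(\Pi_\xi(x), \Pi_\xi(y))$. Making this rigorous requires the Bounded Geodesic Image theorem and Behrstock-type inequalities applied along the hierarchy, and is the main technical content of \cite{Duchin-Rafi}.
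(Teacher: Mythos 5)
The paper itself does not give a self-contained proof of Theorem~\ref{duchin-rafi}: immediately after the statement it cites \cite{Duchin-Rafi} for the cobounded case and points to Proposition~\ref{P:bounded-proj} (the relatively hyperbolic analogue, with $\hG$ playing the role of $\calC(S)$ and Lemma~\ref{L:P-bound} as the key estimate) for the general strategy. Against that standard your proposal identifies the correct opening move (the $\calC(S)$-shadow of $\Pi_\xi(x)$ is close to the nearest-point projection of $x_S$ onto $[\go,\xi)_S$) and correctly locates the real difficulty, but the reduction you then set up does not work. You want to derive $d_S(x_S,y_S) < d_S(x_S,[\go,\xi)_S) - 2\delta$ from the hypothesis and feed it into the standard contraction lemma for a ball disjoint from the ray. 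That inequality simply does not follow from $d_w(x,\calZ)\geq B_1\,d_w(x,y)$: for instance, if $x$ and $y$ both project near the ray in $\calC(S)$ (think $x=T_\alpha^N$, $y=T_\alpha^{N/2}$ for a curve $\alpha$ on the ray), then both sides of the would-be $\calC(S)$-inequality are $O(1)$ and the strict inequality may fail, even though the hypothesis can hold and the conclusion is still true. Your reduction therefore discards genuine cases, and your remarks that the subsurface contributions ``are absorbed by the center construction in $\calC(Y)$, so that they do not contribute to $d_S(\Pi_\xi(x),\Pi_\xi(y))$'' do not repair this — subsurface projections never contribute to a $\calC(S)$-distance, so there is nothing to be absorbed; the issue is rather how they enter the two $d_w$-quantities in the hypothesis.

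The argument the paper actually has in mind (spelled out for relatively hyperbolic groups in Proposition~\ref{P:bounded-proj} and Lemma~\ref{L:P-bound}) runs by contradiction and never tries to establish your $\calC(S)$-inequality. Assume $d_S(\Pi_\xi(x),\Pi_\xi(y))\geq L$. A thin-quadrilateral argument in $\calC(S)$ then gives $d_S\bigl(x_S,\Pi_\xi(x)_S\bigr)\prec d_S(x_S,y_S)$, and, more importantly, for every proper subsurface $Y$ with $d_Y\bigl(x,\Pi_\xi(x)\bigr)$ large, the Bounded Geodesic Image theorem together with the same thin-quadrilateral picture forces $\partial Y$ to lie near $[x_S,y_S]$ while avoiding the far side of the quadrilateral, so that $d_Y\bigl(x,\Pi_\xi(x)\bigr)\prec d_Y(x,y)$ (this is the analogue of Lemma~\ref{L:P-bound}(2); note no Behrstock inequality is required). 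Feeding all of these term-by-term estimates into the Masur--Minsky distance formula yields $d_w(x,\calZ)\leq d_w\bigl(x,\Pi_\xi(x)\bigr)\prec d_w(x,y)$, contradicting the hypothesis once $B_1$ is chosen larger than the implied constant. So your intuition about where the obstruction lies is right, but the mechanism that resolves it is the distance-formula contradiction, not a translation of the hypothesis into a $\calC(S)$-disjointness condition, and your write-up leaves that mechanism as a black box attributed to the reference rather than supplying the step where the proof actually happens.
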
 

In \cite{Duchin-Rafi}, the theorem is proven under the assumption that the geodesic (or cloud) is cobounded. However, 
the result holds in general: we will see in Proposition \ref{P:bounded-proj} a detailed proof for relatively hyperbolic groups, 
which can be easily adapted to mapping class groups.

\subsection*{Logarithmic projections}

We now consider the set of points in $\EL$ that have \emph{logarithmically bounded 
projection} to all subsurfaces. Given a proper subsurface $Y \subsetneq S$, let
$\partial Y$ denote the multi-curve of boundary components of $Y$ and define 
\[
\Norm{Y}_S :=d_{S}(\theta, \partial Y). 
\] 
Similarly, for $x \in \Map(S)$, define
\[
\Norm{x}_S :=d_{S}\big(\theta, x(\theta) \big).  
\]

\begin{definition}
For a constant $c > 0$, let $\calL_c$ be the set of points $\xi \in \EL$ such that 
\begin{equation} \label{E:log-proj}
 d_Y(\go, \xi) \leq c \cdot  \log \Norm{Y}_S
\end{equation}
for every subsurface $Y \subsetneq S$.
\end{definition}

\begin{proposition} \label{P:contracting-morse}
For any $\xi \in \calL_c$, the set $\calZ(\go, \xi)$ is $\kappa$-weakly contracting, where 
$\kappa(r) = \log^{p}(r)$. Furthermore, any resolution $\calG(\go, \xi)$ of
the hierarchy $H(\go, \xi)$ is also $\kappa$-Morse.
\end{proposition}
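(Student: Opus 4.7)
The plan is to first show that $Z := \calZ(\go, \xi)$ is $\kappa$-weakly contracting with respect to $\Pi_\xi$, then apply \thmref{Thm:W-Strong} to obtain the $\kappa$-Morse property, and finally transfer that property to any resolution $\calG(\go, \xi)$ by a bounded Hausdorff distance argument.

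Setting $C_1 := B_1^{-1}$ with $B_1$ from \thmref{duchin-rafi}, suppose $d_w(x,y) \leq C_1 \cdot d_w(x, Z)$. The Duchin--Rafi theorem then yields $d_S(\Pi_\xi(x), \Pi_\xi(y)) \leq B_2$, so in $\calC(S)$ the two image points are uniformly close. My goal is to upgrade this to a word-metric bound of order $\log^p(\Norm{x})$ via \propref{P:hierarchy}, which requires a uniform bound on $d_Y(\Pi_\xi(x), \Pi_\xi(y))$ for every proper subsurface $Y$.

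To obtain that bound, note that $\Pi_\xi(x), \Pi_\xi(y) \in Z$ have $\calC(Y)$-projections within $D$ of the geodesic $[\go, \xi)_Y$, so
\[
d_Y(\Pi_\xi(x), \Pi_\xi(y)) \leq 2D + d_Y(\go, \xi) \leq 2D + c \log \Norm{Y}_S,
\]
using $\xi \in \calL_c$. To estimate $\Norm{Y}_S$: since $\go \in Z$ we have $d_w(x, Z) \leq \Norm{x}$, so $\Norm{y} \leq (1+C_1)\Norm{x}$, and \lemref{L:twice} gives $\Norm{\Pi_\xi(x)}, \Norm{\Pi_\xi(y)} \leq O(\Norm{x})$. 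Any subsurface $Y$ contributing to the hierarchy between $\Pi_\xi(x)$ and $\Pi_\xi(y)$ has $\partial Y$ lying on the $\calC(S)$-geodesic joining them, which by $\delta$-hyperbolicity of $\calC(S)$ fellow-travels a sub-segment of $[\go, \xi)_S$ of length $O(\Norm{x})$. Hence $\Norm{Y}_S \prec \Norm{x}$, and therefore $d_Y(\Pi_\xi(x), \Pi_\xi(y)) \leq c' \log \Norm{x}$ for a uniform $c'$. Applying \propref{P:hierarchy} with $E := c'\log\Norm{x}$ and $d_S(\Pi_\xi(x), \Pi_\xi(y)) \leq B_2$ gives
\[
d_w(\Pi_\xi(x), \Pi_\xi(y)) \prec B_2 \cdot (c'\log \Norm{x})^p = O(\log^p \Norm{x}) = O\big(\kappa(\Norm{x})\big),
\]
which is exactly the $\kappa$-weakly contracting inequality from \defref{Def:generalContracting}.

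With $Z$ being $\kappa$-weakly contracting, \thmref{Thm:W-Strong} yields that $Z$ is $\kappa$-Morse. Since $\calG(\go, \xi) \subset Z$ sits within uniformly bounded Hausdorff distance of $Z$ (by the construction of resolutions of hierarchies \cite{MM00}), the $\kappa$-Morse property passes from $Z$ to $\calG(\go, \xi)$ after enlarging the Morse gauge by a uniform additive constant --- any quasi-geodesic ray sublinearly tracking $\calG$ also sublinearly tracks $Z$, and conversely a $\kappa$-neighborhood of $Z$ is contained in a slightly larger $\kappa$-neighborhood of $\calG$. The main obstacle I expect is cleanly executing the second step above, namely justifying that \emph{every} subsurface appearing in the hierarchy between $\Pi_\xi(x)$ and $\Pi_\xi(y)$ satisfies $\Norm{Y}_S = O(\Norm{x})$ rather than some larger quantity that might arise from subsurfaces nested deep into the subsurface projections; this is where the cloud description of $Z$ together with thin-triangle arguments in $\calC(S)$ must be applied carefully.
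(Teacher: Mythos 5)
Your first half --- showing $\calZ(\go, \xi)$ is $\kappa$-weakly contracting with respect to $\Pi_\xi$ --- is correct and follows essentially the same route as the paper: Duchin--Rafi controls $d_S$, the $\calL_c$ condition plus the $D$-cloud description controls $d_Y$ by $c\log\Norm{Y}_S$, the Bounded Geodesic Image / thin-triangle argument reduces $\Norm{Y}_S$ to $O(\Norm{x}_S)$, and Proposition~\ref{P:hierarchy} converts the uniform subsurface bound into a $\log^p$ word-metric bound. (A minor note: Lemma~\ref{L:twice} is stated for genuine nearest point projections, not for $\Pi_\xi$; the correct way to bound $\Norm{\Pi_\xi(x)}$ is to use that $\Pi_\xi$ is a $\kappa$-projection, taking $z=\go$ in Definition~\ref{weakprojection}, or to work in $\calC(S)$ where $\Pi_\xi(x)$ sits near the coarse nearest point projection of $x$ to $[\go,\xi)_S$. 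This is cosmetic and easily fixed.)

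The second half has a genuine gap. You assert that $\calG(\go,\xi)$ and $\calZ(\go,\xi)$ ``sit within uniformly bounded Hausdorff distance'' and attribute this to the construction of resolutions in \cite{MM00}. This is false. Points of $\calZ$ only have their $\calC(Y)$-projections pinned to within $D$ of $[\go,\xi)_Y$, and these geodesics have length on the order of $c\log\Norm{Y}_S$, which is \emph{unbounded}; a point $z\in\calZ$ and the corresponding slice of $\calG$ can therefore drift apart in $\calC(Y)$ by $\approx c\log\Norm{Y}_S$, giving via the distance formula a word-metric distance of order $\log^p(\cdot)$. So $\sup_{z\in\calZ} d_w(z,\calG)$ is not bounded --- it is only \emph{sublinear}. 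Because the Morse gauge in Definition~\ref{D:k-morse} multiplies $\kappa(\Norm{x})$, an unbounded defect cannot simply be absorbed as ``a uniform additive constant,'' and the transfer of the Morse property from $\calZ$ to $\calG$ requires the quantitative estimate you are missing. The paper's proof does exactly this: given $x$ in a quasi-geodesic that sublinearly tracks $\calG\subset\calZ$, it finds $y=\Pi_\xi(x)\in\calZ$ close to $x$, then picks $z\in\calG$ whose $\calC(S)$-shadow is within $D$ of $y_S$, and shows via the $\calL_c$ condition and Proposition~\ref{P:hierarchy} that $d_w(y,z)\prec_c\log^p\Norm{x}_S\prec\kappa(x)$. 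That sublinear (not uniform) bound is precisely what lets the $\kappa$-neighborhood of $\calZ$ be exchanged for a $\kappa$-neighborhood of $\calG$ with an enlarged gauge $O_c(m_\calZ)$. Your argument needs this extra step to be correct.
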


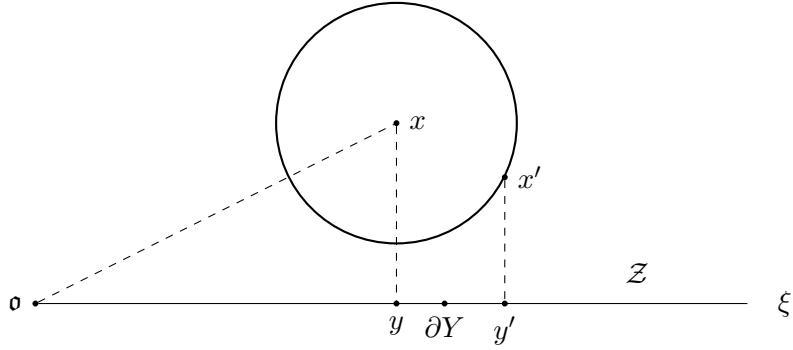
\begin{figure}[h!]
\begin{tikzpicture}[scale=0.8]
\tikzstyle{vertex} =[circle,draw,fill=black,thick, inner sep=0pt,minimum size=.5 mm]
[thick, 
    scale=1,
    vertex/.style={circle,draw,fill=black,thick,
                   inner sep=0pt,minimum size= .5 mm},
                  
      trans/.style={thick,->, shorten >=6pt,shorten <=6pt,>=stealth},
   ]

  \node[vertex] (o) at (-1,0)[label=left:$\go$] {}; 
  \node(a) at (11,0)[label=right:$\xi$] {}; 
  \node at (9,0)[label=above:$\calZ$] {}; 

  \draw (o)--(a){};
  \node [vertex] (b) at (5,3) [label=right:$x$] {};
  \node [vertex] (c) at (6.8,2.1) [label=right:$x'$] {};
  \draw[dashed] (o) to (b){};
  \draw[dashed] (b) to (5, 0){};
  \draw[dashed] (c) to (6.8, 0){};

  \node [vertex] (b) at (5,0) [label=below:$y$] {};
  \node [vertex] (b) at (5.8,0) [label=below:$\partial Y$] {};
  \node [vertex] (b) at (6.8,0) [label=below:$y'$] {};
  \draw[thick] (5,3) circle (2cm);
\end{tikzpicture}
\caption{The projections of $x$ and $x'$ to $\calZ$ are $\kappa$-close.}
\end{figure}

\begin{proof}
In this proof, we use the notations $\prec_c$ and $O_c$ to mean that the implicit constants additionally depend on $c$. 
Let $\calZ= \calZ(\go, \xi)$. Given $x, x' \in X$ where  
\[
B_1 \cdot d_w(x, x') < d_w(x, \calZ),
\]
let $y = \Pi_\xi(x)$, $y' = \Pi_\xi(x')$. We claim that, for every proper subsurface $Y$,
\[
d_Y(y, y') \prec_c \log \Norm{x}_S. 
\]

Since $\calC(S)$ is hyperbolic, nearest point projection in $\calC(S)$ is coarsely distance decreasing, hence $\Norm{y}_S \prec \Norm{x}_S$. Also, by 
Theorem \ref{duchin-rafi} 
\begin{equation} \label{eq:DR}
d_{S}(y, y') \leq B_2
\end{equation}
therefore, $\Norm{y'}_S \prec \Norm{x}_S$. Which means, for every curve $\alpha$ in the 
geodesic segment $[y, y']_S$ in $\calC(S)$ we have 
$d_S(\go, \alpha) \prec \Norm{x}_S$. The Bounded Geodesic Image Theorem \cite[Theorem 3.1]{MM00}
implies that if $d_Y(y, y')$ is large then $d_{\calC(S)}([y, y']_S, \partial Y) \prec 1$, hence 
\[
\Norm{Y}_S \prec \Norm{x}_S. 
\]

By the definition of $\Pi_\xi$, $y_Y$ and $y'_Y$ are $D$-close to the geodesic
segment $[\go, \xi]_Y$ in $\calC(Y)$ and, by assumption, the length of this 
segment is at most a uniform multiple of $\log \Norm{Y}_S$. 
Therefore, 
\[
d_Y(y, y') \prec \big| [\go, \xi]_Y\big| \prec_c \log \Norm{Y}_S \prec \log \Norm{x}_S. 
\]
In view of \eqnref{eq:DR} and Proposition \ref{P:hierarchy}, we get 
\[
d_w(y,y') \prec_c  \log^p \Norm{x}_S. 
\]
Now, by Theorem \ref{sublinearlyequivalence}, $\calZ(\go, \xi)$ is 
$\kappa$-Morse. Let $m_\calZ$ be the associated Morse gauge for $\calZ(\go, \xi)$. 

Now we show $\calG(\go, \xi)$ is also $\kappa$-Morse. Assume $\kappa'$ and 
$r>0$ be given (see \defref{D:k-morse}) and, using the fact that 
$\calZ(\go, \xi)$ is $\kappa$-Morse, let $R$ be a radius such that, for any 
$(q,Q)$-quasi-geodesic ray 
$\beta$ in $\Map(S)$ with $m_\calZ(q,Q)$ small compared to $r$, we have 
\[
d_w(\beta_R, \calZ(\go, \xi) ) \leq \kappa'(R) 
\qquad \Longrightarrow\qquad
\beta|_r \subset \calN(\calZ(\go, \xi), m_\calZ(q,Q)). 
\]
Also, assume 
\[
d_w(\beta_R, \calG(\go, \xi) ) \leq \kappa'(R). 
\]
We need to show that every $x \in \beta|_r$ is close to $\calG(\go, \xi)$. 

Since $\calG(\go, \xi) \subset \calZ(\go, \xi)$ we can still conclude that 
there is a point $y \in \calZ(\go, \xi)$ with
\[
d_w(x,y) \leq m_\calZ(q, Q) \cdot \kappa(x). 
\]
In fact $y$ can be taken to be $\Pi_\xi(x)$ and hence $\Norm{y}_S \prec \Norm{x}_S$. 
Let $z$ be a point in $\calG(\go, \xi)$ where $d_S(z_S , y_S) \leq D$
(such a point exists since the shadow of $\calG(\go,\xi)$ to $\calC(S)$ is the
geodesic ray $[\go, \xi)_S$). Since $y,z \in \calZ(\go, \xi)$, we have for every 
subsurface $Y$ that 
\[
d_Y(y,z) \prec_c \log \max(\Norm{y}_S, \Norm{z}_S) \prec \log (\Norm{x}_S + D) 
 \prec \log \Norm{x}_S.
\]
Therefore, by \propref{P:hierarchy}, we have 
\[
d_w(y,z) \prec_c  \log^p \Norm{x}_S \prec \kappa(x).
\]
And hence, 
\[
d_w(x,z) \leq d_w(x,y) + d_w(y,z) \prec_c m_\calZ(q,Q) \cdot \kappa(x). 
\]
We have shown 
\[
\beta|_r \subset \calN\Big(\calG(\go, \xi), O_c\big(m_\calZ(q,Q)\big)\Big). 
\]
That is, $\calG(\go, \xi)$ is $\kappa$-Morse with a Morse gauge 
$m_\calG = O_c(m_\calZ)$. 
\end{proof}

\subsection{Convergence to the $\kappa$-Morse boundary}

Let $\mu$ be a probability measure on $\Map(S)$. We say that $\mu$ is \emph{non-elementary} if the semigroup generated by its support 
contains two pseudo-Anosov elements with disjoint fixed sets in $PMF$. 

Let us recall some useful facts on random walks on the mapping class group.

\begin{theorem} \label{T:RW}
Let $\mu$ be a finitely supported, non-elementary probability measure on $\Map(S)$. Then: 
\begin{enumerate}
\item
For almost every sample path $\omega = (w_n)$, the sequence $(w_n)$ converges to a point $\xi_\omega$ in the Gromov boundary of $\calC(S)$, which is $\EL(S)$. 
\item
Moreover, there exists $\ell > 0, c < 1$ such that 
$$\mathbb{P}\left( d_S(\go, w_n) \geq \ell n  \right) \geq 1 - c^n$$
for any $n$.
\item
Further, for any $k > 0$ there exists $C > 0$ such that 
$$\mathbb{P}\left( d_S(w_n, \gamma_\omega) \geq C \log n \right) \leq n^{-k}$$
for any $n$, where $\gamma_\omega = [\go, \xi_\omega)_S$. 
\end{enumerate}
\end{theorem}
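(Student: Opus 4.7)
All three statements are standard consequences of the theory of random walks on groups acting on Gromov hyperbolic spaces, applied to the action of $\Map(S)$ on the curve graph $\calC(S)$, which is $\delta$-hyperbolic by \cite{MM00}. The non-elementarity hypothesis, namely that the semigroup generated by the support of $\mu$ contains two pseudo-Anosov elements with disjoint fixed sets in $\PMF$, translates via the Klarreich identification $\partial \calC(S) = \EL(S)$ into a non-elementary action of this semigroup on $\calC(S)$. The plan is to cite parts (1) and (2) essentially verbatim from the literature, and to sketch a reversal argument for part (3).

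For part (1), I would invoke the convergence theorem of Maher--Tiozzo \cite{MaherTiozzo}: for a non-elementary probability measure on a countable group acting by isometries on a separable Gromov hyperbolic space, the orbit map of the random walk converges almost surely to a point in the Gromov boundary. Applied with orbit point $\theta$, this yields a limit $\xi_\omega \in \partial \calC(S)$, which by Klarreich's theorem \cite{Klarreich} lies in $\EL(S)$. For part (2), positive drift $\ell>0$ in $\calC(S)$ and exponential decay for the event $\{d_S(\go, w_n) \leq \ell n\}$ are precisely the content of \cite{MaherExp} and of the large deviation estimate in \cite{MaherTiozzo}: the finite support hypothesis on $\mu$ supplies the finite exponential moment of the step distribution in $\calC(S)$ required by those results.

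For part (3), the strategy is a reversal argument. Let $\check \mu$ denote the reflected measure, and for each $n$ consider the reversed walk based at $w_n$, with increments $v_k := w_n^{-1} w_{n+k}$, which is a $\check\mu$-walk. Almost surely this reversed walk converges to some $\check\xi_n \in \EL(S)$, and in a $\delta$-hyperbolic space one has the elementary estimate
\[
d_S(w_n, \gamma_\omega) \;\leq\; (\go \cdot \check\xi_n)_{w_n} + O(\delta),
\]
where the Gromov product is computed based at $w_n$. The event that this Gromov product exceeds $R$ is exactly the event that the forward walk $(w_k)_{k\leq n}$ and the reversed walk $(w_n^{-1} w_{n+j})_{j \geq 0}$ share an unusually long common excursion in $\calC(S)$. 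By independence of forward and reversed increments together with the linear-progress-with-exponential-decay from part (2) applied to both walks, one concludes
\[
\mathbb{P}\bigl( d_S(w_n, \gamma_\omega) \geq R \bigr) \;\leq\; C_1 e^{-c_1 R},
\]
with constants $C_1, c_1$ independent of $n$. Choosing $R = C \log n$ with $C \geq k/c_1$ gives the required polynomial bound $n^{-k}$.

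The main obstacle is part (3). Parts (1) and (2) follow essentially by citation, but the uniform-in-$n$ exponential tail on the deviation $d_S(w_n, \gamma_\omega)$ requires some care: one needs to verify that the Gromov product bound is valid for the specific geodesic ray $\gamma_\omega = [\go, \xi_\omega)_S$ rather than merely for some quasi-geodesic tracking it, and the reversal argument must be made uniform in $n$. A cleaner alternative is to appeal to the deviation inequality of Maher \cite{MaherExp} or Sisto \cite{sisto-track}, where the logarithmic deviation from the limit geodesic is proved in precisely this form; indeed this is the form in which such results are used elsewhere in the present paper.
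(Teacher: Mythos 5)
The paper itself gives no proof of this theorem: it simply records that part (2) is due to Maher (\cite{Maher}, \cite{MaherExp}) while parts (1) and (3) are proven in \cite{MaherTiozzo}. Your treatment of (1) and (2) is exactly this citation strategy, and your remark that (3) can be cited directly from the literature (you mention \cite{MaherExp} and \cite{sisto-track}, whereas the paper points to \cite{MaherTiozzo}; all are reasonable sources for a logarithmic-deviation estimate with polynomial tail) brings your proposal in line with what the paper actually does.

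However, the supplementary reversal argument you sketch for (3) contains a genuine slip. The process you define, $v_k := w_n^{-1} w_{n+k}$, has increments $g_{n+1},\dots,g_{n+k}$ and is therefore a \emph{$\mu$-walk}, not a $\check\mu$-walk; moreover, as the tail of the original walk, it converges to $\xi_\omega$ rather than to a new boundary point $\check\xi_n$. The process whose increments actually follow $\check\mu$ is the backward walk $u_k := w_n^{-1} w_{n-k}$ ($0 \leq k \leq n$), which points toward $\go$ rather than toward infinity. The correct version of the reversal argument compares the backward $\check\mu$-walk $(u_k)_{k \leq n}$ (reaching $w_n^{-1}\go$) with the forward $\mu$-walk $(v_k)_{k \geq 0}$; these are genuinely independent, and one then bounds the Gromov product $(\go \cdot \xi_\omega)_{w_n}$ using linear progress with exponential decay for each. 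As written, your argument conflates the two directions and so does not directly yield independence of the two branches. Since you also propose the cleaner citation route, the overall proposal stands, but the ad hoc reversal sketch would need this repair to be a self-contained proof.
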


Claim (2) is proven by Maher (\cite{Maher}, \cite{MaherExp}), while (1) and (3) are proven in \cite{MaherTiozzo}. 
Also, exactly the same proof as in \cite[Theorem A.17]{QRT19} (inspired by \cite{ST} and \cite[Lemma 4.4]{sisto-track}), yields for the mapping class group:

\begin{theorem}  \label{T:log-exc}
Let $\mu$ be a finitely supported, non-elementary probability measure on $\Map(S)$.
Then for any $k > 0$ there exists $C > 0$ such that for all $n$ we have
$$\mathbb{P}\left(\sup_Y d_{Y}(\go, w_n) \geq C \log n \right) \leq C n^{-k},$$
where the supremum is taken over all (proper) subsurfaces $Y$ of $S$. 
As a consequence, for almost every sample path there exists $C > 0$ such that for all $n$
$$\sup_Y d_{Y}(\go, w_n ) \leq C \log n.$$
\end{theorem}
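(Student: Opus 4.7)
The plan is to prove a polynomial tail bound $\mathbb{P}(E_n) \leq C n^{-k}$ for the event
\[
E_n = \{\exists\, Y \subsetneq S : d_Y(\go, w_n) \geq C \log n\},
\]
and then deduce the almost-sure statement by a Borel--Cantelli argument. The strategy parallels \cite[Theorem A.17]{QRT19}, which in turn adapts ideas of Sisto--Taylor \cite{ST}.

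First, I would reduce the supremum over all subsurfaces to polynomially many relevant ones. By the Bounded Geodesic Image Theorem of Masur--Minsky, if $d_Y(\go, w_n) \geq K$ for the uniform constant $K$ appearing in \eqref{df}, then every tight geodesic in $\calC(S)$ from $\go(\theta)$ to $w_n(\theta)$ passes within distance $1$ of $\partial Y$. By Theorem \ref{T:RW} (2) and (3), with probability at least $1 - O(n^{-k-1})$ one has $d_S(\go, w_n) \leq O(n)$ and the shadow $(w_j(\theta))_{j \leq n}$ stays within $O(\log n)$ of a fixed geodesic ray $\gamma_\omega$ in $\calC(S)$. On this event, any subsurface $Y$ witnessing $E_n$ must have $\partial Y$ within $O(\log n)$ of an initial segment of $\gamma_\omega$ of length $O(n)$; since only boundedly many subsurfaces share a given boundary multicurve, the number of relevant $Y$ is polynomial in $n$.

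Next, I would establish a uniform single-subsurface deviation estimate: for each subsurface $Y$ and each $L \geq 1$,
\[
\mathbb{P}\big(d_Y(\go, w_n) \geq L\big) \leq C_0 e^{-c_0 L},
\]
with constants independent of $Y$ and $n$. The idea is that BGI converts a deep projection to $Y$ into the requirement that a long subsegment of a geodesic in $\calC(S)$ from $\go(\theta)$ to $w_n(\theta)$ linger near $\partial Y$; this forces a long stretch of the random walk to make sublinear progress in $\calC(S)$, which is ruled out by Maher's exponential estimates (Theorem \ref{T:RW} (2)) applied after an appropriate shift. Setting $L = C \log n$ with $C$ chosen so that $c_0 C \geq k + 3$, summing over the polynomially many relevant subsurfaces identified in the previous step, and absorbing the $O(n^{-k-1})$ exceptional event, yields $\mathbb{P}(E_n) \leq C' n^{-k}$. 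This proves the first assertion. For the second, choose $k \geq 2$ so that $\sum_n \mathbb{P}(E_n) < \infty$; Borel--Cantelli then gives that almost surely $E_n$ holds for only finitely many $n$, and one absorbs the finitely many exceptional $n$ by enlarging $C$, using that $\sup_Y d_Y(\go, w_n)$ is finite for each fixed $n$ by the distance formula \eqref{df} and finiteness of $d_w(\go, w_n)$.

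The main obstacle is the uniform single-subsurface deviation bound: the exponential tail for progress in $\calC(S)$ itself is classical, but transporting it to $d_Y$ with constants \emph{independent of $Y$} is the technical heart of the argument, and is where the results of \cite{MaherTiozzo}, \cite{ST}, and \cite{sisto-track} enter in an essential way. The counting step is a combinatorial consequence of Theorem \ref{T:RW} (3), and the concluding Borel--Cantelli is routine.
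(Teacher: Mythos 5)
Your plan has a genuine gap in the counting step. You reduce the supremum over all proper subsurfaces $Y$ to a union bound over ``polynomially many relevant'' ones by arguing that BGI forces $\partial Y$ to lie within $O(\log n)$ of an initial segment of $\gamma_\omega$ of length $O(n)$, and that only boundedly many subsurfaces share a boundary multicurve. But the curve complex $\calC(S)$ is \emph{locally infinite}: for any vertex $v$ on $\gamma_\omega$ there are infinitely many multicurves disjoint from $v$, hence infinitely many candidate boundaries $\partial Y$ within distance~$1$ of $v$, let alone within $O(\log n)$. So the set of subsurfaces you would need to union over is infinite, not polynomial in $n$, and the uniform single-subsurface tail bound $\mathbb{P}(d_Y(\go, w_n) \geq L) \leq C_0 e^{-c_0 L}$ --- while true --- cannot be summed over this set. (The distance formula does give that, \emph{for each sample path}, at most $O(d_w(\go,w_n)) = O(n)$ subsurfaces have projection above the threshold $K$, but that is a random collection and does not license a union bound.)

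The argument in \cite[Theorem A.17]{QRT19}, following \cite{sisto-track} and \cite{ST}, avoids this by taking the union bound over \emph{time steps} rather than over subsurfaces. Roughly: if $d_Y(\go, w_n)$ is large, BGI together with linear progress in $\calC(S)$ (Theorem \ref{T:RW}) pins down a window of times $j < k \leq n$ during which the shadow of the walk lingers near $\partial Y$; conditioning on the walk up to time $j$ identifies a bounded family of candidate subsurfaces and yields, via the Markov property and the exponential progress estimate, a conditional tail bound $\leq C e^{-cL}$. Summing over the $O(n^2)$ pairs $(j,k)$ gives $\mathbb{P}(\sup_Y d_Y(\go, w_n) \geq L) \leq C n^2 e^{-cL}$, and $L = C' \log n$ with $c C' > k+2$ finishes. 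Note this inverts your assessment of where the difficulty lies: the uniform per-subsurface exponential bound, which you call the technical heart, is essentially a special case of the conditional estimate; it is the replacement of ``sum over subsurfaces'' by ``sum over times, conditioning'' that is the non-routine step, and it is exactly what your proposal omits. Your Borel--Cantelli reduction of the a.s.\ statement from the quantitative one is fine.
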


We now prove show that almost every sample path converges to a point in the $\kappa$-Morse boundary of the mapping class group, 
where $\kappa(r) = \log^p(r)$.

\begin{theorem} \label{T:converge}
Let $\mu$ be a finitely supported, non-elementary probability measure on $\Map(S)$, and let $\kappa(r) := \log^p(r)$. Then:
\begin{enumerate}
\item
Almost every sample path $\omega = (w_n)$ converges to a point in the $\kappa$-Morse boundary with respect to the topology of $X \cup \pka X$;
\item
Moreover, for almost every sample path there exists a $\kappa$-Morse geodesic ray $\calG_\omega$ in $\Map(S)$ such 
that  
$$\limsup_{n \to \infty} \frac{d(w_n, \calG_\omega)}{\log^{p+1}(n)} < +\infty.$$
\end{enumerate}
\end{theorem}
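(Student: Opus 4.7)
The plan is to prove part (2) (the tracking estimate) directly, and then deduce part (1) as a topological consequence via \lemref{L:point-conv}. All the ingredients are already in place: \thmref{T:RW} gives linear drift and logarithmic tracking in $\calC(S)$, \thmref{T:log-exc} gives logarithmic bounds on every subsurface projection, and \propref{P:contracting-morse} supplies the $\kappa$-Morse property for the hierarchy resolution of any ending lamination with logarithmic projections.

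First, I would show that, almost surely, the limiting lamination $\xi_\omega$ lies in $\calL_c$ for some $c = c(\omega) > 0$. Given a proper subsurface $Y \subsetneq S$ with $\Norm{Y}_S$ large, choose the smallest $n$ with $d_S(\go, w_n) \geq 2 \Norm{Y}_S$; linear drift in $\calC(S)$ from \thmref{T:RW}(2) guarantees $n \lesssim \Norm{Y}_S$, so $\log n \lesssim \log \Norm{Y}_S$. Since $w_n$ has traveled past $\partial Y$ in $\calC(S)$ and $w_n \to \xi_\omega$ in the Gromov boundary, the ray $[w_n, \xi_\omega)_S$ stays uniformly far from $\partial Y$, so the Bounded Geodesic Image Theorem yields $d_Y(w_n, \xi_\omega) = O(1)$. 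Combined with $d_Y(\go, w_n) \leq C \log n$ from \thmref{T:log-exc}, the triangle inequality in $\calC(Y)$ gives $d_Y(\go, \xi_\omega) \lesssim \log \Norm{Y}_S$. Setting $\calG_\omega := \calG(\go, \xi_\omega)$, \propref{P:contracting-morse} then certifies that $\calG_\omega$ is $\kappa$-Morse for $\kappa = \log^p$.

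For the tracking estimate itself, pick $z_n \in \calG_\omega$ whose shadow in $\calC(S)$ is as close as possible to $w_n(\theta)$; since the shadow of $\calG_\omega$ is the ray $[\go, \xi_\omega)_S$ and $d_S(w_n, [\go, \xi_\omega)_S) \lesssim \log n$ by \thmref{T:RW}(3), we obtain $d_S(w_n, z_n) \lesssim \log n$. For each proper subsurface $Y$ contributing non-trivially, the Bounded Geodesic Image Theorem forces $\Norm{Y}_S \lesssim n$, so combining \thmref{T:log-exc} with the $\calL_c$ bound from the previous step yields
\[
d_Y(w_n, z_n) \leq d_Y(\go, w_n) + d_Y(\go, z_n) \lesssim \log n + \log \Norm{Y}_S \lesssim \log n.
\]
Applying \propref{P:hierarchy} with $E = O(\log n)$ converts this into the word-metric estimate $d_w(w_n, z_n) \lesssim \log n \cdot (\log n)^p = \log^{p+1}(n)$, which is part (2).

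Finally, since $\Norm{w_n} \asymp n$ by the drift of $d_X$, the estimate reads $d_w(w_n, \calG_\omega) \leq C \log^{p+1}(\Norm{w_n}) = C \kappa'(\Norm{w_n})$ with $\kappa'(r) := \log^{p+1}(r)$. By \lemref{L:point-conv}, $w_n$ converges to $[\calG_\omega]$ in $X \cup \partial_{\kappa'} X$. The compatibility statement cited in Proposition~4.10 of \cite{QRT19} identifies $X \cup \pka X$ with a topological subspace of $X \cup \partial_{\kappa'} X$, and since $[\calG_\omega] \in \pka X$ from the previous step, the convergence descends to the topology of $X \cup \pka X$, giving part (1). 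The main obstacle is the tracking estimate: the logarithmic control on every subsurface projection must be carefully funneled through \propref{P:hierarchy}, and the exponent $p+1$ hinges on the main-surface progress $d_S(w_n, z_n)$ being only $O(\log n)$, not larger. A secondary subtlety is passing from $\kappa'$-convergence to $\kappa$-convergence via compatibility rather than invoking \lemref{L:point-conv} in $\pka X$ directly, since the bound $\log^{p+1}(n) = (\log n) \cdot \kappa(n)$ does not have a constant ratio to $\kappa$.
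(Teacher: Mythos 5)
Your proposal is essentially correct and reaches the same conclusion via a closely related route, but the technical scaffolding differs from the paper's in two notable ways. First, the paper builds the bound $d_Y(\go, \xi_\omega) \lesssim \log \Norm{Y}_S$ indirectly through the Eskin--Masur--Rafi center $c_n = \cent(\go, w_n, \xi_\omega)$: Step~1 of the paper compares $w_n$, $w_{2n}$, $c_n$, $c_{2n}$ and uses the Markov property together with BGI to control $\sup_Y d_Y(w_n, c_n)$, and only then transfers the estimate to $\xi_\omega$ (Steps~2--3). You bypass $c_n$ altogether: given $Y$, you pick the minimal $n$ with $d_S(\go, w_n) \geq 2\Norm{Y}_S$ and apply BGI to the ray $[w_n(\theta), \xi_\omega)_S$, which (by the hyperbolicity of $\calC(S)$ and the $O(\log n)$ tracking of $w_n(\theta)$ by $\gamma_\omega$) stays far from $\partial Y$ once $\Norm{Y}_S$ is large. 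This is a valid shortcut, though you should spell out that the finitely many $Y$ with $\Norm{Y}_S$ below the a.s. threshold $n_0(\omega)$ — and with $\partial Y$ near $\gamma_\omega$ but large projection — are absorbed into the random constant $c(\omega)$, exactly as in the paper's Step~3. Second, for the tracking estimate you compare $w_n$ directly with a point $z_n$ on the resolution whose shadow is near $w_n(\theta)$, whereas the paper compares $w_n$ with $c_n$ and then notes $d_w(c_n, \calG_\omega) = O(1)$. Both routes are valid; yours uses the triangle inequality $d_Y(w_n, z_n) \leq d_Y(\go, w_n) + d_Y(\go, z_n)$ together with the $\calL_c$ bound, while the paper funnels everything through \eqref{E:supY1} and then Proposition~\ref{P:hierarchy}.

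On part~(1), your care is warranted: the bound $\log^{p+1}(n) = (\log n) \cdot \kappa(n)$ is not a constant multiple of $\kappa$, so the hypothesis of Lemma~\ref{L:point-conv} as literally stated is not satisfied. Your workaround — apply Lemma~\ref{L:point-conv} in $\partial_{\kappa'} X$ with $\kappa' = \log^{p+1}$ (noting that $\kappa$-Morse implies $\kappa'$-Morse since $\kappa \leq \kappa'$) and then use the compatibility of bordifications — is a valid fix. An alternative, closer to what the paper implicitly does, is to observe that the proof of Lemma~\ref{L:point-conv} never uses that the tracking function is a constant multiple of $\kappa$; any sublinear function $\kappa''$ works, since $\kappa''$ enters only as the auxiliary sublinear function $\kappa'$ in Definition~\ref{D:k-morse}. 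Either fix is fine; the paper's one-line invocation of Lemma~\ref{L:point-conv} is slightly imprecise on its face but not wrong in substance.
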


\begin{proof}
By Theorem \ref{T:RW}, for almost every sample path, $w_n \theta$ converges to 
a point $\xi_\omega$ on the boundary of $\calC(S)$ (with respect to the topology on $\calC(S) \cup \EL(S)$). 

Let $\calG = \calG(\go, \xi_\omega)$ be 
a resolution of a hierarchy towards $\xi_\omega$ and let $\gamma_\omega := [\go, \xi_\omega)_S$ be 
the shadow of $\calG$, which is a geodesic ray in $\calC(S)$ starting from  $\theta$ and limiting to $\xi_\omega$. 

By Proposition \ref{P:contracting-morse}, in order to prove that $\mathcal{G}$ is $\kappa$-Morse, 
it is sufficient to prove that 
there is a constant $c$ such that  
\begin{equation} \label{E:supY}
\sup_Y d_Y(\go, \xi_\omega) \leq c \log d_S(\go, \partial Y).
\end{equation}

In the next few steps, we will show that \eqref{E:supY} holds for almost every $\omega$.

\medskip
Let $p_n$ be a nearest point projection (in $\calC(S)$) of $w_n \theta$ to $\gamma_\omega$ 
and $c_n := \cent(\go, w_n , \xi_\omega)$.  
By the definition of center and the hyperbolicity of $\calC(S)$, there is $D'>0$ depending on $D$ and $\delta$
such that $d_S(c_n, p_n) \leq D'$ for any $n$. 

\medskip
\textbf{Step 1.}
We claim that there exists $C > 0$ such that 
\begin{equation} \label{E:supY1}
\mathbb{P}(\sup_Y d_Y(w_n, c_n) \geq  C \log n) \leq C n^{-2}
\end{equation}
for all $n$.

\begin{proof}
Since the drift of the random walk is positive with exponential decay (Theorem \ref{T:RW} (2)), we have by the Markov property
that there exists $0 < C_0 < 1$ such that
\begin{equation} \label{E:C0}
\mathbb{P}(d_S(w_n, w_{2n}) \leq \ell  n) = \mathbb{P}(d_S(\go, w_{n} ) \leq \ell  n) \leq (C_0)^n\qquad \forall n
\end{equation}
where $\ell > 0$ is the drift of the random walk.
By Theorem \ref{T:RW} (3), there exists $C_1 > 0$ so that 
\begin{equation} \label{E:C1}
\mathbb{P}( d_S(w_n, p_n) \geq C_1 \log n) \leq C_1 n^{-2} \qquad \forall n
\end{equation}
and, by Theorem \ref{T:log-exc}, there exists $C_2 >0$ such that
\begin{equation} \label{E:C2}
\mathbb{P}(\sup_Y d_Y(w_n, w_{2n}) \geq C_2 \log n) = \mathbb{P}(\sup_Y d_Y(\go, w_{n})  \geq C_2 \log n) \leq C_2 n^{-2} \qquad \forall n.
\end{equation}
Now, since projection in a $\delta$-hyperbolic space is coarsely distance-decreasing, 
$$d_Y(w_n, w_{2n}) \leq C_2 \log n$$ 
implies 
$$d_Y(c_n, c_{2n}) \leq C_2 \log n + 2 D,$$
for some $D$ which depends only on $\delta$, hence we also have for some new constant $C_3 >0$
\begin{equation} \label{E:C3}
\mathbb{P}(\sup_Y d_Y(c_n, c_{2n}) \geq C_3 \log n) \leq C_3 n^{-2} \qquad \forall n.
\end{equation}
Now, the complement of the union of all events expressed by \eqref{E:C0}, \eqref{E:C1},\eqref{E:C2},\eqref{E:C3} 
has measure at least $1 - C_5 n^{-2}$ for some new $C_5$. We will consider from now on a sample path in such a set.

Let us now pick a subsurface $Y$. By the bounded geodesic image theorem, there exists $C_4$ (independent of $Y$) such that if $\partial Y$ is far from the geodesic segment $[w_n, p_n]$ in $\calC(S)$, namely 
$$d_{\calC(S)}(\partial Y, [w_n, p_n]) \geq C_4,$$ 
then $d_Y(w_n, c_n) \leq C_4$ is uniformly bounded.

On the other hand, if $d_{\calC(S)}(\partial Y, [w_n, p_n]) \leq C_4$, let us denote by $q_1$ a nearest point projection of $\partial Y$ onto 
$[w_{n}, p_{n}]$ and by $q_2$ a nearest point projection of $\partial Y$ onto $[w_{2n}, p_{2n}]$. Then
\begin{align*}
d_S(\partial Y, [w_{2n}, p_{2n}]) = d_S(\partial Y, q_2) &  \geq d_S(w_n, w_{2n}) - d_S(\partial Y, q_1) - d_S(q_1, w_n) - d_S(w_{2n}, q_2) \\
						 &  \geq d_S(w_n, w_{2n}) - d_S(\partial Y, q_1) - d_S(w_n, p_n) - d_S(w_{2n}, p_{2n}) \\
& \geq \ell n - C_4 - C_1 \log n - C_1 \log (2 n) > C_4
\end{align*}
for $n$ sufficiently large (independently of $Y$ and $\omega$). Hence, by the bounded geodesic image theorem, 
$$d_Y(w_{2n}, c_{2n}) \leq C_4.$$
By putting these estimates together and by the triangle inequality we obtain 
\begin{align*}
d_Y(w_n, c_n) & \leq d_Y(w_n , w_{2n}) + d_Y(w_{2n} , c_{2n}) + d_Y(c_{2n}, c_n) \\
& \leq C_2 \log n + C_4 + C_3 \log n \leq C \log n 
\end{align*}
by setting $C$ appropriately, which proves the claim.
\end{proof}

\textbf{Step 2.}
We show that there exists a constant $C_7$ such that for any $n$ we have 
\begin{equation} \label{E:sup2}
\mathbb{P}(\sup_Y d_Y(\go, c_n) \geq C_7 \log d_S(\go, p_n) + C_7) \leq C_7 n^{-2}.
\end{equation}

\begin{proof}
First note that by the triangle inequality if $d_S(\go, w_n) \geq \ell n$ and $d_S(w_n, p_n) \leq C_1 \log n$ then
\begin{align*}
d_S(\go, p_n) & \geq d_S(\go, w_n) - d_S(w_n, p_n) \\
& \geq \ell n - C_1 \log n \geq \ell n/2
\end{align*}
for $n$ sufficiently large, hence by \eqref{E:C0} and \eqref{E:C1} there exists $C_6$ for which
$$\mathbb{P}(d_S(\go, p_n) \geq \ell n/2) \geq 1 - C_6 n^{-2}\qquad \forall n.$$
Then, by the triangle inequality
\begin{align*}
d_Y(\go, c_n) & \leq d_Y(\go, w_n) + d_Y(w_n, c_n) \\
\intertext{and by Theorem \ref{T:log-exc} and Step 1.}
& \leq C_2 \log n + C \log n \\
& \leq (C_2 + C) \log d_S(\go, p_n) + (C_2 + C) \log(2/\ell) \\ 
& \leq C_7 \log d_S(\go, p_n) + C_7
\end{align*}
for the appropriate choice of $C_7$.
\end{proof}

\textbf{Step 3.}
For almost every $\omega$, there exists $C_8(\omega)$ such that,  
\begin{equation} \label{E:sup3}
d_Y(\go, \xi_\omega) \leq C_8(\omega) \log d_S(\go, \partial Y)
\end{equation} 
for every subsurface $Y \neq S$.

\begin{proof}
By Step 2. and Borel-Cantelli, for almost every $\omega$ there exists $n_0 = n_0(\omega)$ such that 
\begin{equation}\label{E:sup-last}
\sup_Y d_Y(\go, c_n) \leq C_7 \log d_S(\go, p_n) + C_7
\end{equation}
for any $n \geq n_0$.

Now, observe that the random walk is finitely supported, so $d_S(w_n, w_{n+1})$ is uniformly bounded, hence, since nearest point 
projection is coarsely distance decreasing, there exists $D_1 > 0$ such that 
$$d_S(p_n, p_{n+1}) \leq D_1$$ 
is also uniformly bounded.

Now, let $Y$ be a subsurface. By the bounded geodesic image theorem, $d_Y(\go, \xi_\omega) \leq C_4$ unless $\partial Y$ lies in a $C_4$-neighborhood of $[\go, \xi_\omega)_S$. 
Let us suppose that $\partial Y$ lies in a $C_4$-neighborhood of $[\go, \xi_\omega)_S$
and let $n = n(Y, \omega)$ be the smallest integer such that $d_S(\go, p_{n}) \geq d_S(\go, \partial Y) + C_4$. 

Note that by minimality, $d_S(\go, p_n) - D_1 \leq d_S(\go, p_{n-1}) \leq d_S(\go, \partial Y) + C_4 \leq d_S(\go, p_{n})$, hence $d_S(\go, p_n) \leq d_S(\go, \partial Y) +D_1 + C_4$.

Moreover, by the bounded geodesic image theorem, 
$$|d_Y(\go, \xi_\omega) - d_Y(\go, c_n)| \leq C_4$$
since the projection of $[p_n, \xi)_S$ to $\calC{(Y)}$ is uniformly bounded.
Hence by \eqref{E:sup-last}, if $n \geq n_0(\omega)$ then 
\begin{align*}
d_Y(\go, \xi_\omega) & \leq d_Y(\go, c_n) + C_4 \\
& \leq C_7 \log d_S(\go, p_n) +  C_7 + C_4 \\ 
& \leq C_7 \log (d_S(\go, \partial Y) + D_1 + C_4) + C_7 + C_4
\end{align*}
On the other hand, if $n(Y, \omega) \leq n_0(\omega)$ then $d_S(\go, \partial Y) \leq d_S(\go, p_n) \leq d_S(\go, p_{n_0(\omega)})$, 
and there are at most finitely many such subsurfaces $Y$ for which the projection of $[\go, \xi_\omega)_S$ onto $\calC(Y)$ is large. 
Hence 
$$\sup_{Y : n(Y, \omega) \leq n_0} d_Y(\go, \xi_\omega) < + \infty,$$
thus the claim follows by adjusting the constant to a constant $C_8(\omega)$, which depends on $\omega$,
to take into account the initial part.
\end{proof}

Now, from \eqref{E:supY} and Proposition \ref{P:contracting-morse}, we obtain that the quasi-geodesic ray $\mathcal{G}$ in $\Map(S)$ is $\kappa$-Morse. 

\bigskip
Let us now prove the second claim, namely the tracking estimate between the sample path and the geodesic ray. 
From that and Lemma \ref{L:point-conv}, it follows that almost every sample path converges in the topology of $X \cup \pka X$.

For a given sample path $\omega = (w_n)$, let $\xi \in \EL(S)$ be the ending lamination the path $(w_n)$ is converging to, 
and let $c_n := \cent(\go, w_n, \xi)$. 
Note that by construction the projection of $c_n$ onto $\calC(S)$ is within uniformly bounded distance of $p_n$. 
Then, by equation \eqref{E:supY1}, there exists $C > 0$ such that 
$$\mathbb{P}(\sup_Y d_Y(w_n, c_n) \geq  C \log n) \leq C n^{-2}$$
for any $n$. 
Applying Proposition \ref{P:hierarchy} with $D = \log n$, there exists $C > 0$ such that 
\begin{equation} \label{E:sub-tracking}
\mathbb{P}(d_w(w_n, c_n) \geq C \log^{p+1}(n) ) \leq C n^{-2}
\end{equation}
for any $n$. 
As a consequence, the Borel-Cantelli lemma implies that 
for almost every $\omega \in \Omega$ there exists $n_0$ such that 
\begin{equation} \label{E:sub-tr2}
d_w(w_n, c_n) \leq C \log^{p+1}(n)
\end{equation}
for all $n \geq n_0$.  The claim follows by noting that $d_w(w_n, \mathcal{G}) \leq d_w(w_n, c_n) + O(1)$.

Finally, by Lemmas \ref{L:Ubeta} and \ref{L:equivalence}, we can replace the quasi-geodesic $\calG$ by a geodesic in the same equivalence class. 
\end{proof}

We now complete the proof of Theorem \ref{T:PB-intro} by identifying the $\kappa$-Morse boundary with the Poisson boundary. 

\begin{theorem} \label{T:mcg-poiss}
Let $\mu$ be a non-elementary, finitely supported measure on $G = \Map(S)$.
Then for $\kappa(r) := \log^p(r)$, the $\kappa$-Morse boundary is a model for the Poisson boundary of $(G, \mu)$. 
\end{theorem}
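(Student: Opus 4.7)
The plan is to deduce this theorem as a direct application of Theorem \ref{T:poiss-general}, with the sublinear tracking hypothesis supplied by Theorem \ref{T:converge}. I shall verify each hypothesis of Theorem \ref{T:poiss-general} in turn.

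First, I note that $(X, d_X) = (\Map(S), d_w)$ is a Cayley graph of $G = \Map(S)$ with respect to a finite generating set. Since $\mu$ is finitely supported, it automatically has finite first moment with respect to $d_w$, so this hypothesis is immediate. Next, I must check that the semigroup generated by the support of $\mu$ is a non-amenable group. Since $\mu$ is non-elementary, its support generates (as a semigroup) a subgroup containing two pseudo-Anosov elements with disjoint fixed point sets in $\PMF$; by a standard ping-pong argument, sufficiently high powers of these two elements generate a free subgroup of rank two, hence the subgroup is non-amenable.

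The key remaining step is to verify the sublinear tracking condition \eqref{E:sub-track} for $\kappa(r) = \log^p(r)$. This is precisely where Theorem \ref{T:converge}(2) is applied: for almost every sample path $\omega = (w_n)$, there exists a $\kappa$-Morse geodesic ray $\gamma_\omega$ in $\Map(S)$ with
\[
\limsup_{n \to \infty} \frac{d_w(w_n, \gamma_\omega)}{\log^{p+1}(n)} < +\infty.
\]
Since $\log^{p+1}(n)/n \to 0$ as $n \to \infty$, this immediately implies
\[
\lim_{n \to \infty} \frac{d_w(w_n, \gamma_\omega)}{n} = 0,
\]
which is exactly hypothesis \eqref{E:sub-track} of Theorem \ref{T:poiss-general}.

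With all hypotheses verified, Theorem \ref{T:poiss-general} applies directly and yields that $(\partial_\kappa \Map(S), \nu)$, where $\nu$ is the hitting measure, is a model for the Poisson boundary of $(\Map(S), \mu)$. There is no real obstacle here since the deep work has been done in establishing Theorem \ref{T:converge} (the logarithmic subsurface projection bounds from Theorem \ref{T:log-exc}, the $\kappa$-weak contraction of hierarchy resolutions from Proposition \ref{P:contracting-morse}, and the distance formula estimates through Proposition \ref{P:hierarchy}); the current theorem is the harvest of those efforts via the general criterion.
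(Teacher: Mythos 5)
Your proof is correct and follows the same route as the paper: cite Theorem \ref{T:converge} for the sublinear tracking of a $\kappa$-Morse geodesic, then invoke Theorem \ref{T:poiss-general}. The only difference is that you spell out the verification of the remaining hypotheses of Theorem \ref{T:poiss-general} (finite first moment, non-amenability of the semigroup generated by the support), which the paper leaves implicit; this added detail is welcome, though note the minor imprecision that a ping-pong argument on two pseudo-Anosovs with disjoint fixed sets produces a free sub-\emph{semigroup}, and some care is needed to see that the semigroup generated by the support is actually a non-amenable \emph{group} as demanded by the hypothesis, a point the paper also elides.
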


\begin{proof}
By Theorem \ref{T:converge}, almost every sample path sublinearly tracks a $\kappa$-Morse geodesic ray, with $\kappa(r) = \log^p(r)$, 
so we can apply Theorem \ref{T:poiss-general}.
\end{proof}

\section{Relatively hyperbolic groups}

\subsection{Background}

Let $G$ be a finitely generated group, and let $P_{1}, P_{2}, \dots, P_{k}$ be a set of subgroups of $G$ which we call \emph{peripheral subgroups}. 
Fix a finite generating set $S$.  Let  $(G, d_{G})$ denote the Cayley graph of $G$ with respect to $S$, equipped 
with the word metric. Following \cite{sistopaper}, we denote by $(\widehat{G}, d_{\widehat{G}})$
the metric graph obtained from the Cayley graph of $G$ by adding an edge 
between every pair of distinct vertices contained in a left coset $P$ of a peripheral
subgroup. Setting up this way, $G$ and $\hG$ have the same
vertex set, so any set in $G$ can also be considered as a set in $\hG$. However, for $x,y \in G$, $d_{\hG}(x,y) \leq d_G(x,y)$. 
In this Section, $[x, y]$ will denote the geodesic segment between $x, y$ in the metric $d_{\hG}$
and $\llbracket x, y \rrbracket$ the geodesic segment in $d_G$.

\begin{definition}
A group $G$ is \emph{relatively hyperbolic}, relative to peripheral subgroups $P_{1}, P_{2}, \dots, P_{k}$,  if the graph $\hG$ has the properties:
\begin{itemize}
\item It is $\delta$-hyperbolic;
\item It is \emph{fine}: for each integer $n$, every edge belongs to only finitely many simple cycles of length $n$.
\end{itemize}
The collection of all cosets of peripheral subgroups is denoted by $\calP$. We shall denote as $\calN_D(P)$ the $D$-neighborhood of $P$ in the metric $d_G$.
\end{definition}

Moreover, we say a relatively hyperbolic group is \emph{non-elementary} if it is infinite, not virtually cyclic, and each $P_i$ is infinite and with infinite index in $G$. We now fix a non-elementary relatively hyperbolic group $G$ and a generating set. We shall use the symbols $\prec$ and $\asymp$ as before, where the implicit constants depend only on $G$ and the generating set; we shall write $\asymp_K$ if the implicit constants additionally depend on $K$. By definition of relative hyperbolicity, the graph $\widehat{G}$ is $\delta$-hyperbolic for some $\delta > 0$.  Moreover, for $P \in \mathcal{P}$ we let  $\pi_P : G \to P$ be a nearest point projection to $P$, and denote 
\[
d_{P}(x, y): = d_{G}(\pi_{P}(x), \pi_{P}(y)).
\] 

We need some properties of relatively hyperbolic groups from \cite{sistopaper}, \cite{sisto-track}. A \emph{lift} of a geodesic ray in $\hG$ is a path in $G$ obtained by substituting edges labeled by an element of some $P_{i}$, and possibly their endpoints, with a geodesic in the corresponding left coset.
Given $D, R$ and a geodesic segment $\gamma$ in $G$, we define the set $\textup{deep}_{D, R}(\gamma)$ as the set of points $p$ of $\gamma$ 
that belong to some subgeodesic $\llbracket x_1, y_1 \rrbracket$ of $\gamma$ with endpoints in $\calN_{D}(P)$ for some $P \in \calP$ and with $d_G(x_1, p) > R, d_G(y_1, p) > R$. 
A point which does not belong to $\textup{deep}_{D, R}(\gamma)$  is called a \emph{transition point}.

\begin{proposition} \label{P:RH-recall}
Let $G$ be a relatively hyperbolic group, and fix a generating set. Then we have: 
\begin{enumerate}
\item \textup{(Coarse lifting property, \cite[Proposition 1.14]{sistopaper})} There exist uniform constants $q_0, Q_0>0$ such that if $\alpha$ is a geodesic in $\widehat{G}$, then its lifts are $(q_0, Q_0)$-quasi-geodesics in $G$. 
\item \textup{(Distance formula, \cite[Theorem 0.1]{sistopaper})} There exists $K_0$ such that, for any $K \geq K_0$ and every pair of points $x, y \in G$,
\[
d_{G}(x, y) \asymp_K \sum_{P \in \calP} \lfloor d_{P}(x, y) \rfloor_{K} + d_{\widehat{G}}(x, y).
\]
\item \textup{(Bounded geodesic image theorem, \cite[Lemma 1.15]{sistopaper})}
There exists $L_0 \geq 0$ such that, if $d_P(x, y) \geq L_0$ for some $P \in \mathcal{P}$, 
all geodesics in $\widehat{G}$ connecting $x$ to $y$ contain an edge in $P$. 
Moreover, for every $q, Q>0$ there is $D = D(q, Q)$ such that every 
$(q, Q)$-quasi-geodesics in $G$ connecting $x$ and $y$ intersect the balls
of radius $D$, $B_D(\pi_P(x))$ and $B_D(\pi_P(y))$. Let $D_0 = D(1,0)$ be the 
constant associated to geodesics. 
\item \textup{(Deep components, \cite[Lemma 3.3]{sisto-track})}
There exist $D, t, R$ such that for any $x, y \in G$, the set
$\textup{deep}_{D, R}(\llbracket x, y \rrbracket)$ is contained in a disjoint union of subgeodesics of $\llbracket x, y \rrbracket$, each contained in $\calN_{t D}(P)$
for some $P \in \calP$. We call each subgeodesic a \emph{deep component of $\llbracket x, y \rrbracket$ along $P$}. 
Moreover, if $d_P(x, y) \geq L_0$, then $\llbracket x, y \rrbracket$ contains a deep component along $P$. 
\end{enumerate}
\end{proposition}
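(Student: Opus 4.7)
The four assertions of this proposition are not really new to the paper: they are a consolidated list of known facts about relatively hyperbolic groups, gathered here so that the subsequent arguments can invoke them with uniform notation. My plan is therefore not to reprove them, but to indicate for each item the precise reference and the geometric reason it holds, keeping track of how the constants depend on the generating set so that later applications are legitimate.

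For (1), the idea is that when one replaces a peripheral edge of a geodesic of $\widehat{G}$ by a $d_G$-geodesic inside the corresponding coset $P\in\calP$, the resulting path cannot be shortened by an excursion through some other coset $P'$, because any such shortcut would already have been visible as an edge in $\widehat{G}$. This gives uniform quasi-geodesic constants $(q_0,Q_0)$ depending only on $G$ and the generating set; the detailed argument is \cite[Proposition 1.14]{sistopaper}. For (2), the distance formula is the relative analogue of Masur--Minsky: the coned-off distance $d_{\widehat{G}}$ contributes one summand, while each deep excursion into a peripheral $P$ contributes roughly $d_P(x,y)$. The upper bound is obtained by concatenating a lift as in (1) with geodesic arcs inside the peripherals; the lower bound uses that, for $K$ large enough, any $G$-geodesic from $x$ to $y$ must traverse every peripheral in which $d_P(x,y)\geq K$. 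This is \cite[Theorem 0.1]{sistopaper}.

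For (3), the plan is to exploit hyperbolicity of $\widehat{G}$ together with fineness: if $d_P(x,y)$ is at least $L_0$, then by the definition of the electric metric any $\widehat{G}$-geodesic $[x,y]$ must use an edge inside $P$, and by fineness a $(q,Q)$-quasi-geodesic in $G$ then comes within a bounded distance (depending only on $q,Q$) of both $\pi_P(x)$ and $\pi_P(y)$; see \cite[Lemma 1.15]{sistopaper}. For (4), the deep--component decomposition is a direct consequence of the bounded coset penetration property: once a $d_G$-geodesic travels more than $R$ inside $\calN_D(P)$ past the nearest transition point, it is trapped in a uniform neighborhood $\calN_{tD}(P)$ of that coset until it exits, so the deep set splits as a disjoint union of such arcs; combined with (3) this gives the final sentence of (4). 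The detailed argument is \cite[Lemma 3.3]{sisto-track}.

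The main obstacle, if one insisted on a self-contained proof, would be item (2), since the comparison between the two metrics requires a careful interplay between the hyperbolicity of $\widehat{G}$, the bounded geodesic image property from (3), and the transition point structure from (4); the other three items are comparatively direct consequences of $\delta$-hyperbolicity and fineness of $\widehat{G}$ together with the definition of the projections $\pi_P$. Since the purpose of the proposition is solely to set up notation and constants for the arguments in the rest of the section, I would present it with a single line of proof pointing to \cite{sistopaper,sisto-track} and a brief comment fixing the choice of constants $K_0, L_0, D_0, D, t, R$ once and for all.
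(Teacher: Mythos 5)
Your proposal is correct and follows the same route as the paper: the paper also presents this proposition purely as a consolidated list of citations to \cite{sistopaper} and \cite{sisto-track}, with no proof supplied in the text. Your intuitive glosses on each item (lifting, distance formula, bounded geodesic image, deep components) are accurate and consistent with the cited sources.
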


Let $\go \in G$ be the vertex representing the identity element and consider an infinite geodesic ray  $\gamma$ in  $(G, d_{G})$ starting at $\go$.
For $x \in G$, define $\Norm{x}_{\hG} := d_{\hG}(\go, x)$. Also,  for $P \in \calP$, define $\Norm{P}_{\hG} := d_{\hG}(\go, P)$. 
We will include in our $\kappa$-Morse boundary the geodesic rays which have excursion in each peripheral set bounded by a multiple of $\kappa$ of its $\hG$-norm. 
To be precise, we have the following. 

\begin{definition}
Let $D_{0}$ be given by Proposition~\ref{P:RH-recall} (3). We say that $\gamma$ has \emph{$\kappa$-excursion} with respect to $\calP$ if there exists a constant $E_{\gamma}$ such that, for each $P \in \calP$, we have
\[
\diam_{G}(\gamma \cap \calN_{D_0} (P)) 
  \leq E_{\gamma} \cdot \kappa(\Norm{P}_{\hG}),
\]
where $ \calN_{D_0} (P)$ is the $D_0$-neighborhood of $P$ in $(G, d_G)$. 
That is, the amount of time $\gamma$ stays near $P$ grows sublinearly with 
$\Norm{P}_{\hG}$.
\end{definition}

Our goal is to prove that if $\gamma$ has $\kappa$-excursion, then it 
is $\kappa$-Morse. In fact, we will first show that $\gamma$ is $\kappa$-weakly 
contracting (see \defref{Def:generalContracting}) and then use \thmref{Thm:W-Strong} to conclude that 
$\gamma$ is $\kappa$-Morse. 

Let $\gamma$ be a geodesic ray in $G$, and let us define $\pi_\gamma \from G \to \gamma$ to be a nearest point projection onto $\gamma$
in the metric $d_{\hG}$. 
Note that this is well-defined up to bounded distance in $\hG$, but we will fix one such choice for the remainder of this Section. 

By hyperbolicity of $\hG$, there exist $L_1, R_0$ (depending on $\delta$) such that the following holds
(see e.g. \cite[Proposition 3.4]{Maher}): 
for any $x, y \in G$, if $d_{\hG}(\pi_\gamma(x), \pi_\gamma(y)) \geq L_1$, the geodesic $[x, y]$ in $\hG$ and the broken geodesic 
$\gamma' = [x, \pi_\gamma(x)] \cup [\pi_\gamma(x), \pi_\gamma(y)] \cup [\pi_\gamma(y), y]$ lie within a $R_0$-neighborhood of each other in the metric $d_{\hG}$.
Moreover, any geodesic segment $[x, \overline{x}]$ and $[y, \overline{y}]$, with $[\overline{x}, \overline{y}]$ a subsegment of $[\pi_\gamma(x), \pi_\gamma(y)]$, belongs 
to an $R_0$-neighborhood of $\gamma'$. 

We denote as $\calP_{\gamma, x}$ the set of $P \in \calP$ such that 
$P$ (which has diameter 1 in $\hG$) lies within $d_{\hG}$-distance $R_1 := 1 + 4 R_0$ of $\pi_\gamma(x)$. Now, define 
a projection $\Pi_\gamma \from G \to \gamma$ by 
\[
\Pi_\gamma(x) := \bigcup_{P \in \calP_{\gamma, x}} ( \gamma \cap \calN_{D_0} (P)) 
\]
if $ \calP_{\gamma, x} \neq \emptyset$, and $\Pi_\gamma(x) := \pi_\gamma(x)$ otherwise.
As usual, the image of $\Pi_\gamma$ is a subset of $\gamma$ that could have a large 
diameter. 

\begin{figure}[h!]
\begin{tikzpicture}[scale=1]
\tikzstyle{vertex} =[circle,draw,fill=black,thick, inner sep=0pt,minimum size=.5 mm]
[thick, 
    scale=1,
    vertex/.style={circle,draw,fill=black,thick,
                   inner sep=0pt,minimum size= .5 mm},
                  
      trans/.style={thick,->, shorten >=6pt,shorten <=6pt,>=stealth},
   ]

\draw [thick](-3,0) to (9,0){};

\node at (9,0)[label=right:$\gamma$] {}; 
 \node at (4, -0.25) [label=right:$\tiny D_{0}$] {}; 
  \node at (0, 0.05) [label=above:$\tiny D_{0}$] {}; 
    \node at (-1.9, -0.05) [label=below:$\tiny D_{0}$] {}; 
    \node at (1,1.1)[label=right:$\pi_{\gamma}(x) \text{ in } \hG$] {}; 

\tikzstyle{every node}=[trapezium, draw, thick, minimum width=2cm,
trapezium left angle=120, trapezium right angle=60]

\node[trapezium] (p1)at (-1,0)[label=below:$P_{1}$] {};
    
    \node[trapezium stretches body]  (p2) at (3,-0.5)[label=below:$P_{2}$]{};
    
     \draw [decorate,decoration={brace,amplitude=2pt},xshift=0pt,yshift=0pt]
  (4,0) -- (4,-0.5) {};
  
    \draw [decorate,decoration={brace,amplitude=2.5pt},xshift=0pt,yshift=0pt]
  (-1.7,-0.05)-- (-2.1, -0.05) {};
   \draw [decorate,decoration={brace,amplitude=2pt},xshift=0pt,yshift=0pt]
 (-0.2, 0.05) -- (0.2, 0.05){};
\node[trapezium]at (7,0)  (p3) [label=below:$P_{3}$]{};
\node [vertex] (a) at (0,2) [label=above:$x$]{};
\draw [dashed] (a) to (p1){};
\draw [dashed] (a) to (p2){};
\draw [red, very thick] (-2.1, 0) to (0.2,0){};
\draw [red, very thick] (2,0) to (4,0){};
\end{tikzpicture}
\caption{Definition of $\Pi_\gamma(x)$. Suppose $P_{1}, P_{2}$ and $P_{3}$ are peripheral sets that are within distance $D_{0}$ of $\gamma$ in $G$, and that, out of the three, $P_{1}, P_{2} \in \calP_{\gamma, x}$; then we consider the union of intersections
of $\gamma$ with the $D_{0}$-neighborhoods of $P_{1}$ and $P_{2}$, which are illustrated in red.} 
\end{figure}
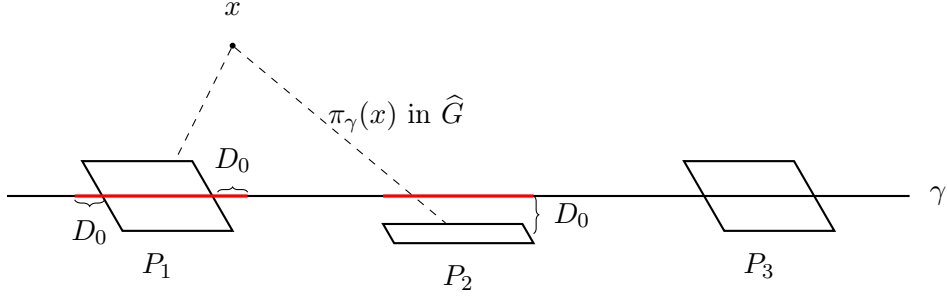

Finally, given $x, y \in G$, we define $c_{\gamma, y}(x)$ to be the point of $\gamma$ closest to $\pi_\gamma(x)$ such that 
the segment $\llbracket \pi_\gamma(x), c_{\gamma, y}(x) \rrbracket$ contains all deep components of all $P$ in $\calP_{\gamma, x}$. 

\begin{lemma}\label{L:P-bound}
There exists $L > 0$ such that the following holds. Let $x, y \in G$, and let $\gamma$ be a geodesic ray based at $\go$, and suppose that $d_{\hG}(\pi_\gamma(x), \pi_\gamma(y)) \geq L$. Let $x_\gamma := c_{\gamma, y}(x)$.
\begin{enumerate}
\item
Then we have
$$d_{\hG}(x, x_\gamma) \leq d_{\hG}(x, y) + L.$$
\item For any $P \in \calP$, if $d_P(x, x_\gamma) \geq L$, we have 
$$d_P(x,x_\gamma) \leq d_P(x, y) +L.$$
\end{enumerate}
\end{lemma}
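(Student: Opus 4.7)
The plan is to combine stability of nearest-point projections in the $\delta$-hyperbolic space $\hG$ with the bounded geodesic image theorem (Proposition~\ref{P:RH-recall}(3)). Both parts rest on the preliminary observation that $d_{\hG}(\pi_\gamma(x), x_\gamma)$ is bounded by a uniform constant $L'$, independent of $x$, $y$, and $\gamma$. I will deduce this from two facts: the set $\calP_{\gamma, x}$ has uniformly bounded cardinality, since $\calP_{\gamma, x}$ consists of peripheral cosets lying within a fixed $\hG$-ball of radius $R_1$ around $\pi_\gamma(x)$ and $\hG$ is fine; and each peripheral coset has $\hG$-diameter $1$. Consequently the subsegment of $\gamma$ from $\pi_\gamma(x)$ to $x_\gamma$, which contains precisely the deep components associated to peripherals in $\calP_{\gamma, x}$, has uniformly bounded length in $\hG$.

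For part (1), I invoke the standard hyperbolic-space fact that once $d_{\hG}(\pi_\gamma(x), \pi_\gamma(y))$ exceeds a threshold $L_1$, the broken path $[x, \pi_\gamma(x)]_{\hG} \cup [\pi_\gamma(x), \pi_\gamma(y)]_{\hG} \cup [\pi_\gamma(y), y]_{\hG}$ is a uniform $(1, K_1)$-quasi-geodesic in $\hG$ and lies in a uniform $\hG$-neighborhood of $[x, y]_{\hG}$. Comparing lengths yields $d_{\hG}(x, \pi_\gamma(x)) \leq d_{\hG}(x, y) + K_1$, and combining with the preliminary bound gives $d_{\hG}(x, x_\gamma) \leq d_{\hG}(x, y) + L$ upon setting $L := \max(L_1, L' + K_1)$.

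For part (2), I reduce the claim by the triangle inequality $d_P(x, x_\gamma) \leq d_P(x, y) + d_P(x_\gamma, y)$: it suffices to show that $d_P(x_\gamma, y)$ is uniformly bounded, since then $L$ can be enlarged to absorb this bound. If instead $d_P(x_\gamma, y) \geq L_0$, Proposition~\ref{P:RH-recall}(3) places $P$ in a uniform $\hG$-neighborhood of the broken path $[x_\gamma, \pi_\gamma(y)]_\gamma \cup [\pi_\gamma(y), y]_{\hG}$; simultaneously the hypothesis $d_P(x, x_\gamma) \geq L \geq L_0$ places $P$ near the broken path $[x, \pi_\gamma(x)]_{\hG} \cup [\pi_\gamma(x), x_\gamma]_\gamma$, whose second factor has $\hG$-length at most $L'$. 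I then split according to whether $P \in \calP_{\gamma, x}$. If yes, the definition of $x_\gamma$ forces all deep components of $P$ on $\gamma$ to lie before $x_\gamma$, and the hypothesis $d_{\hG}(\pi_\gamma(x), \pi_\gamma(y)) \geq L$ keeps $P$ (which is $R_1$-close to $\pi_\gamma(x)$) $\hG$-far from $\pi_\gamma(y)$ and from the segment past $x_\gamma$. If no, then $P$ is not $\hG$-close to $\pi_\gamma(x)$, hence not close to the short segment $[\pi_\gamma(x), x_\gamma]_\gamma$; so $P$ must be close to an interior point of $[x, \pi_\gamma(x)]_{\hG}$, and is again separated from the continuation past $x_\gamma$. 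In either case the contrapositive of Proposition~\ref{P:RH-recall}(3) contradicts the assumed largeness of $d_P(x_\gamma, y)$.

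The main obstacle is the case analysis in part (2): verifying that the broken quasi-geodesic from $x$ to $y$ cannot revisit the peripheral coset $P$ after passing through $x_\gamma$. This requires coordinating three ingredients—fineness of $\hG$, the deep-component definition of $x_\gamma$, and the threshold on $d_{\hG}(\pi_\gamma(x), \pi_\gamma(y))$—to pin $P$ to the initial portion of the quasi-geodesic in both cases. Once this is in hand, enlarging $L$ to also absorb the uniform bound on $d_P(x_\gamma, y)$ completes the argument.
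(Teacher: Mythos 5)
Your preliminary observation --- that $d_{\hG}(\pi_\gamma(x), x_\gamma)$ is uniformly bounded --- is correct, but the justification you give is not. Fineness (each edge lies on finitely many simple cycles of each length) does \emph{not} bound the number of peripheral cosets in a fixed $\hG$-ball around a vertex; since $\hG$ is not locally finite, that number is typically infinite, so ``bounded cardinality of $\calP_{\gamma, x}$'' fails. The correct reason the bound holds is different: each deep component of $\gamma$ along a $P \in \calP_{\gamma, x}$ lies within $d_G$-distance (hence $\hG$-distance) $tD$ of $P$, and $P$ has $\hG$-diameter $1$ and lies within $\hG$-distance $R_1$ of $\pi_\gamma(x)$, so the farther endpoint of every relevant deep component is within $\hG$-distance $tD + 1 + R_1$ of $\pi_\gamma(x)$. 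More importantly, the paper never needs this observation: for part (1) it places $x_\gamma$ on the broken geodesic through $\pi_\gamma(x), \pi_\gamma(y)$ and uses $\delta$-thinness of the quadrilateral $x, \pi_\gamma(x), \pi_\gamma(y), y$ to put $x_\gamma$ within $R_0$ of $[x,y]_{\hG}$, from which the estimate follows by projecting and comparing; no uniform bound on $d_{\hG}(\pi_\gamma(x), x_\gamma)$ appears.

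For part (2), there is a genuine gap which you yourself flag. You propose to bound $d_P(x_\gamma, y)$ in one shot by applying Proposition~\ref{P:RH-recall}(3) to the broken path from $x_\gamma$ through $y_\gamma$ to $y$, but that proposition applies to $\hG$-geodesics or to $(q,Q)$-quasi-geodesics in $(G,d_G)$, and the concatenation of a $\gamma$-segment with an $\hG$-geodesic is not obviously either; you would first need to replace it by a genuine quasi-geodesic or argue separately. The paper avoids this by splitting $d_P(x,y) \geq d_P(x,x_\gamma) - d_P(x_\gamma, y_\gamma) - d_P(y_\gamma, y)$ and bounding the two error terms one at a time: $d_P(x_\gamma, y_\gamma) \leq L_0$ follows from a dichotomy on whether $d_P(\pi_\gamma(x), x_\gamma)$ is large (in which case a deep component along $P$ lies in $\llbracket\pi_\gamma(x), x_\gamma\rrbracket$ and $x_\gamma$ being a transition point rules out a second one) or small (in which case BGI produces a $P$-vertex on $[x,\pi_\gamma(x)]_{\hG}$, and a hypothetical $P$-vertex on $[x_\gamma, y_\gamma]_{\hG}$ would force $P \in \calP_{\gamma,x}$, again a contradiction); and $d_P(y_\gamma, y) \leq L_0$ follows because the $\hG$-geodesics $[x,x_\gamma]$ and $[y,y_\gamma]$ are provably $\hG$-far apart once $d_{\hG}(\pi_\gamma(x),\pi_\gamma(y))$ is large, so they cannot both pass within distance $1$ of $P$. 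Your case split on $P \in \calP_{\gamma,x}$ is in the right spirit, but as written it does not yet ``pin $P$ to the initial portion'' --- the quantitative step that makes this work is precisely the $\hG$-separation of $[x,x_\gamma]$ from $[y,y_\gamma]$, which you do not supply.
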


\begin{proof}
Let $y_\gamma := \pi_\gamma(y)$.

(1) By the choice of $L_1, R_0$, if $d_{\widehat{G}}(\pi_\gamma(x), \pi_\gamma(y)) \geq L_1$, the geodesic $\gamma_1 := [x, y]$ in $\hG$ and the broken geodesic  
\[
\gamma_2 
  := [x, \pi_\gamma(x)] \cup [\pi_\gamma(x), \pi_\gamma(y)] \cup [\pi_\gamma(y), y]
\] 
lie in a $R_0$-neighborhood of each other for the metric $d_{\hG}$. 
Let $p_1, p_2$ be nearest point projections (in $\hG$), respectively, of $x_\gamma$ and $y_\gamma$ onto $[x, y]$.
This implies 
\begin{align}\label{deltathin}
d_{\widehat{G}}(x, y) & \geq d_{\widehat{G}}(x, p_1)  \geq d_{\widehat{G}}(x, x_\gamma) - R_0 
\end{align}
which proves (1), as long as $L \geq R_0$.

\begin{figure}[h!]
\begin{tikzpicture}[scale=0.8]
\tikzstyle{vertex} =[circle,draw,fill=black,thick, inner sep=0pt,minimum size=.5 mm]
[thick, 
    scale=1,
    vertex/.style={circle,draw,fill=black,thick,
                   inner sep=0pt,minimum size= .5 mm},
                  
      trans/.style={thick,->, shorten >=6pt,shorten <=6pt,>=stealth},
   ]

  \node[vertex] (a) at (0,0)[label=below:$\pi_\gamma(x)$] {}; 
  \node[vertex] (b) at (6,0)[label=below:$y_\gamma$] {}; 
  \node[vertex] (c) at (-1,2)[label=below:$x$] {}; 
  \node[vertex] (d) at (7,2)[label=below:$y$] {}; 
  \node[vertex] (e) at (1,0)[label=below:$x_{\gamma}$] {}; 
  \node[vertex]  at (1, 0.82)[label= above:$p_1$] {}; 
  \node[vertex]  at (5,0.82)[label=above:$p_2$] {};

  \draw [-, thick] (c) to [bend left=25] (a);
  \draw [-, thick] (d) to [bend right=25] (b);
  \draw [-, thick] (c) to [bend right=40] (d);
  \draw (-2, 0) to (8, 0);
  \node at (8,0)[label=right:$\gamma$] {}; 
\end{tikzpicture}
\caption{A thin quadrilateral in $\hG$.}
\end{figure}

\medskip

(2) To prove (2), suppose that $d_P(x, x_\gamma) \geq 2 L_0$ is large, where $L_0$ is given by Proposition~\ref{P:RH-recall} (3). 
First, we claim that $d_P(x_\gamma, y_\gamma) \leq L_0$. By the triangle inequality, 
$$d_P(x, x_\gamma) \leq d_P(x, \pi_\gamma(x)) + d_P(\pi_\gamma(x), x_\gamma)$$
hence there are two cases: either $d_P(x, \pi_\gamma(x)) \geq L_0$ or $d_P(\pi_\gamma(x), x_\gamma) \geq L_0$.

If $d_P(\pi_\gamma(x), x_\gamma) \geq L_0$, then  by Proposition~\ref{P:RH-recall} (4), 
the geodesic segment $\llbracket \pi_\gamma(x), x_\gamma \rrbracket$ in $G$ 
contains a deep component along $P$. 
Then, since $x_\gamma$ is a transition point and deep components are disjoint, the segment $\llbracket x_\gamma, y_\gamma \rrbracket$ in $G$
has no deep component along $P$. 
This implies by Proposition~\ref{P:RH-recall} (4) that $d_P(x_\gamma, y_\gamma) \leq L_0$, as claimed. 

Otherwise, we can assume that $d_P(\pi_\gamma(x), x_\gamma) \leq L_0$ and $d_P(x, \pi_\gamma(x)) \geq L_0$. 
First, by the bounded geodesic image theorem, the geodesic segment $[x, \pi_\gamma(x)]$ in $\hG$ contains an edge in $P$; 
let $p \in P$ be a vertex of this edge. 
Now, by contradiction, suppose that $d_P(x_\gamma, y_\gamma) \geq L_0$; then, by the bounded geodesic image theorem, 
$[x_\gamma, y_\gamma]$ also contains an edge in $P$. Thus, there exists $p' \in [x_\gamma, y_\gamma] \cap P$ with 
$d_{\hG}(p, p') \leq 1$. 
Thus, using that $\gamma_1$ and $\gamma_2$ lie in a $R_0$-neighborhood of each other,
$$d_{\hG}(p, \pi_\gamma(x)) \leq d_{\hG}(p, p') + 4 R_0 \leq 1 +  4 R_0 = R_1.$$
Hence, $P$ belongs to $\calP_{\gamma, x}$, which, as above, implies $d_P(x_\gamma, y_\gamma) \leq L_0$.

We now claim that 
\[ d_{\hG}([x, x_\gamma], [y, y_\gamma]) \geq 2, \] 
which again by Proposition~\ref{P:RH-recall} (3) implies $d_P(y, y_\gamma) \leq L_0$. 
Indeed, let $q_1$ be a point in $[x, x_\gamma]$ and $q_2$ be a point in $[y, y_\gamma]$; by hyperbolicity, $d_{\hG}(q_1, [x, p_1]) \leq R_0$ and also
$d_{\hG}(q_2, [y, p_2]) \leq R_0$ ; 
hence, 
$$d_{\hG}(q_1, q_2) \geq d_{\hG}(p_1, p_2) - 2 R_0 \geq d_{\hG}(\pi_\gamma(x), \pi_\gamma(y)) - 2 R_2 \geq L - 2 R_2 \geq 2$$
where $R_2  = 2R_0 + D_0 + R_1$, provided that we choose $L \geq 2 R_2 + 2$. 

Finally, by the triangle inequality 
\begin{align*}
d_P(x, y) & \geq d_P(x, x_\gamma) - d_P(x_\gamma, y_\gamma) - d_P(y_\gamma, y) \\
& \geq d_P(x, x_\gamma) - 2 L_0.
\end{align*}
Thus, if we choose $L := \max\{ L_1, R_0, 2 R_2 + 2, 2 L_0\}$, both (1) and (2) hold. 
\end{proof}

\begin{proposition} \label{P:bounded-proj}
Let $\gamma$ be a geodesic ray with $\kappa$-excursion. Then, the map 
$\Pi_\gamma$ defined above is a $\kappa$-projection map. 
Furthermore, there exist $D_{1} < 1, D_{2}>1$ such that for any two points 
$x, y \in G$ we have
\[
d_{G}(x, y) \leq D_{1} \cdot d_{G}(x, \gamma) \qquad \Longrightarrow \qquad d_{\widehat{G}}(\pi_{\gamma}(x), \pi_{\gamma}(y)) \leq D_{2}.
\] 
\end{proposition}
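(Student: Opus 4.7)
The plan is to establish both claims by combining the distance formula and the bounded geodesic image theorem (Proposition~\ref{P:RH-recall}) with the $\kappa$-excursion hypothesis on $\gamma$, using the thin-quadrilateral geometry of the $\delta$-hyperbolic graph $\hG$ to translate between $\hG$- and $G$-estimates. Throughout, the fineness of $\hG$ plays a crucial role by uniformly bounding the number of peripheral cosets lying in any bounded $\hG$-ball.

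For claim (1), that $\Pi_\gamma$ is a $\kappa$-projection, I would begin by bounding the intrinsic $G$-diameter of $\Pi_\gamma(x)$. The peripherals $P \in \calP_{\gamma, x}$ lie within $\hG$-distance $R_1$ of $\pi_\gamma(x)$, so by fineness there is a uniform bound on their number. The triangle inequality in $\hG$ (as in Lemma~\ref{L:twice}) gives $\|P\|_{\hG} \leq 2\|x\| + O(1)$, and the $\kappa$-excursion hypothesis then bounds each $\diam_G(\gamma \cap \calN_{D_0}(P))$ by $O(\kappa(\|x\|))$. To compare points of $\Pi_\gamma(x)$ along $\gamma$ or to compare with a test point $z \in \gamma$, I would apply the distance formula to the $\gamma$-subsegment joining them: using $d_{\hG}(z,p) \leq 2 d_G(x,z) + O(1)$, which follows from $d_{\hG}(x, \gamma) \leq d_{\hG}(x, z)$ since $z \in \gamma$, the $\hG$-length of the subsegment bounds the number of peripherals visited, while $\kappa$-excursion bounds the $G$-length of each visit. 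This yields the required bound $\diam_G(\{z\} \cup \Pi_\gamma(x)) \leq D_1 d_G(x,z) + D_2 \kappa(\|x\|)$.

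For claim (2), I would argue by contrapositive. Assuming $d_{\hG}(\pi_\gamma(x), \pi_\gamma(y)) > D_2$ for $D_2$ large, the thin-quadrilateral argument in $\hG$ applied to $x, y, q, p$ (with $p = \pi_\gamma(x)$ and $q = \pi_\gamma(y)$) produces a point $z \in [x,y]$ with $d_{\hG}(z,p) \leq R_0$, hence $d_{\hG}(x,p) \leq d_{\hG}(x,y) + R_0 \leq d_G(x,y) + R_0$. Applying the distance formula to $d_G(x,p)$, each peripheral $P$ with $d_P(x,p) \geq L_0$ corresponds via bounded geodesic image to an edge of $[x,p]$ inside $P$, which by $\delta$-thinness must lie within $\hG$-distance $R_0$ of one of the sides $[x,y]$, $[y,q]$, or $[q,p] \subset \gamma$. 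Peripherals close to $[x,y]$ or $[y,q]$ have $d_P(x,p) \leq d_P(x,y) + O(1)$, whose total is absorbed into $d_G(x,y)$ by the distance formula; peripherals close to $[q,p]$ lie near $\gamma$ and so have $\kappa$-excursion-controlled contributions of size $O(\kappa(\|x\|))$ each, with uniformly bounded count by fineness of $\hG$ near $p$. Summing gives $d_G(x,\gamma) \leq d_G(x,p) \leq C_1 d_G(x,y) + C_2 \kappa(\|x\|)$; combining with $d_G(x,y) \leq D_1 d_G(x,\gamma)$ for $D_1 < 1/C_1$ then forces both $d_G(x,y)$ and $d_G(x,\gamma)$ to be $O(\kappa(\|x\|))$, whence the coarse $1$-Lipschitz property of the $\hG$-projection gives $d_{\hG}(p,q) \leq d_{\hG}(x,y) + O(1)$ and a contradiction once $D_2$ is chosen appropriately.

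The main obstacle I anticipate is the third case in claim (2): converting $\hG$-proximity of a peripheral $P$ to $\gamma$ into the $G$-proximity required to invoke $\kappa$-excursion. This needs the bounded coset penetration property to show that a peripheral within $\hG$-distance $R_0$ of $\gamma$ must intersect a $G$-neighborhood of comparable size. A secondary delicacy is ensuring the total peripheral contribution along $[q,p]$ remains $O(\kappa(\|x\|))$ rather than growing with $d_{\hG}(x,p)$; this is where the local finiteness of peripherals in $\hG$-balls, provided by fineness, becomes essential to closing the estimate with uniform constants $D_1, D_2$.
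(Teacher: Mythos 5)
There is a genuine gap in the contrapositive argument for the second claim, and it stems from invoking the $\kappa$-excursion hypothesis where the paper's proof deliberately avoids it.

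You assume $d_{\hG}(\pi_\gamma(x), \pi_\gamma(y)) > D_2$ and derive
\[
d_G(x,\gamma) \leq d_G(x,p) \leq C_1\, d_G(x,y) + C_2\, \kappa(\Norm{x}),
\]
then combine with $d_G(x,y) \leq D_1\, d_G(x,\gamma)$ to conclude $d_G(x,\gamma) = O(\kappa(\Norm{x}))$ and $d_G(x,y) = O(\kappa(\Norm{x}))$. But this is not a contradiction: $\kappa(\Norm{x})$ is unbounded, so it does not force $d_{\hG}(\pi_\gamma(x), \pi_\gamma(y))$ below any constant. The coarse Lipschitz estimate $d_{\hG}(\pi_\gamma(x), \pi_\gamma(y)) \leq d_{\hG}(x,y) + O(\delta)$ only bounds the projection by $O(\kappa(\Norm{x}))$, not by a uniform $D_2$. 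The $\kappa$-term you introduce is fatal to the argument; the proposition asserts a \emph{constant} bound and no amount of constant-tuning of $D_1, D_2$ absorbs an unbounded $\kappa$-error. A secondary problem is your appeal to ``fineness near $p$'' to uniformly bound the count of peripherals along $[\pi_\gamma(y), \pi_\gamma(x)]$: fineness bounds the number of cosets passing through a fixed edge, not the number met along a segment of $\hG$-length $> D_2$, which in your contrapositive is not bounded above.

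The paper avoids the $\kappa$-term entirely by replacing $p = \pi_\gamma(x)$ with the auxiliary point $x_\gamma = c_{\gamma,y}(x)$, defined to be the first point on $\gamma$ past $\pi_\gamma(x)$ beyond all deep components of peripherals in $\calP_{\gamma,x}$. The whole point of Lemma~\ref{L:P-bound} is that, for this adjusted point, one has the \emph{term-by-term} comparisons $d_{\hG}(x, x_\gamma) \leq d_{\hG}(x,y) + L$ and $d_P(x, x_\gamma) \leq d_P(x,y) + L$ whenever $d_P(x, x_\gamma) \geq L$. Feeding these directly into the distance formula gives $d_G(x, x_\gamma) \prec d_G(x,y)$ with no excursion term at all, and the contradiction with $d_G(x,y) \leq D_1\, d_G(x,\gamma) \leq D_1\, d_G(x,x_\gamma)$ is immediate. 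Note that $\kappa$-excursion enters nowhere in the proof of this second claim; it is only used in the separate argument that $\Pi_\gamma$ is a $\kappa$-projection. The technical move you are missing is precisely this shift from $\pi_\gamma(x)$ to the endpoint of its deep components, which is what prevents a single long excursion sitting under $\pi_\gamma(x)$ from polluting every $d_P$-estimate.
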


\begin{proof}
We now fix $L$ as given by Lemma \ref{L:P-bound}, and we start by contradiction, by assuming that $d_{\widehat{G}}(\pi_\gamma(x), \pi_\gamma(y)) \geq L$.
By Lemma \ref{L:P-bound}, $d_{\hG}(x, x_\gamma) \prec d_{\hG}(x,y)$, and $d_P(x, x_\gamma) \prec d_P(x, y)$ 
whenever $d_P(x, x_\gamma)$ is large enough.
Now, applying the Distance formula (Proposition~\ref{P:RH-recall} (2)) to the pair of points $(x, y)$ we have:
\begin{align*}
d_{G}(x, y) &\asymp \sum_{P \in \calP} \lfloor d_{P}(x, y) \rfloor_{L} + d_{\widehat{G}}(x, y)\\
	& \succ \sum_{P \in \calP} \lfloor d_{P}(x, x_\gamma) \rfloor_{L} + d_{\widehat{G}}(x, x_\gamma) - O(\delta) \qquad \textup{ by Eq.}~\eqref{deltathin} \\
	& \asymp d_{G}(x, x_\gamma). 
\end{align*}
That is to say, there exists $D_{1} = D_{1}(L, \delta) $ such that 
\[
d_{G}(x, y) \geq D_{1} \cdot d_{G}(x, x_\gamma),
\]
which is a contradiction since $x_\gamma \in \gamma$. Therefore, setting $D_2 = L$ yields
\begin{equation*}
d_{G}(x, y) \leq D_{1} \cdot d_{G}(x, \gamma) \qquad  \Longrightarrow \qquad d_{\widehat{G}}((\pi_\gamma(x), \pi_\gamma(y)) \leq D_{2}.
\qedhere
\end{equation*}

\end{proof}

We now show that every $\kappa$-excursion geodesic ray is $\kappa$-weakly contracting. 

\begin{proposition}\label{excursionimpliescontracting}
Every $\kappa$-excursion geodesic ray $\gamma \in (G, d_{G})$ is $\kappa$-weakly contracting.  That is to say, there exists $D_{1} < 1, D_{2}>1$ such that for any two points $x, y \in G$ we have
\[
d_{G}(x, y) \leq D_{1} \cdot d_{G}(x, \gamma) \qquad  \Longrightarrow \qquad \diam_{G}(\Pi_{\gamma}(x) \cup \Pi_{\gamma}(y)) \leq D_{2} \cdot \kappa(x).
\] 
As a consequence, every $\kappa$-excursion geodesic ray is $\kappa$-Morse.
\end{proposition}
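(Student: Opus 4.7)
The plan is to combine Proposition \ref{P:bounded-proj} with the distance formula (Proposition \ref{P:RH-recall}(2)) and the $\kappa$-excursion hypothesis on $\gamma$. Suppose $d_G(x, y) \leq D_1 \cdot d_G(x, \gamma)$; then Proposition \ref{P:bounded-proj} yields $d_{\hG}(\pi_\gamma(x), \pi_\gamma(y)) \leq D_2$, so every $P \in \calP_{\gamma, x} \cup \calP_{\gamma, y}$ lies within $\hG$-distance $R_1 + D_2$ of $\pi_\gamma(x)$. For any $p, q \in \Pi_\gamma(x) \cup \Pi_\gamma(y)$, each lies in $\gamma \cap \calN_{D_0}(P)$ for some such $P$, so $d_{\hG}(p, q) \leq C_0 := 2(D_0 + R_1 + D_2)$. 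Moreover, since $\pi_\gamma$ is nearest point projection in $\hG$, applying Lemma \ref{L:twice} in $\hG$ yields $\Norm{p}_{\hG} \leq 2\Norm{x}_{\hG} + C_0 \leq 2\Norm{x} + C_0$.

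Since $p, q \in \gamma$, the $G$-distance $d_G(p, q)$ equals the length of the subsegment $\gamma$ between them, which I will estimate via the distance formula:
\[ d_G(p, q) \prec \sum_{Q \in \calP} \lfloor d_Q(p, q) \rfloor_K + d_{\hG}(p, q). \]
The last term is bounded by $C_0$. For the sum, only cosets $Q$ with $d_Q(p, q) > \max(K, L_0)$ contribute; the bounded geodesic image theorem (Proposition \ref{P:RH-recall}(3)) forces each such $Q$ to contain an edge of any $\hG$-geodesic from $p$ to $q$. Since that geodesic has length at most $C_0$, there are at most $C_0$ such $Q$'s, and each satisfies $d_{\hG}(Q, p) \leq C_0$, hence $\Norm{Q}_{\hG} \leq \Norm{p}_{\hG} + C_0 \leq 2\Norm{x} + 2 C_0$.

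For each such $Q$, applying the bounded geodesic image theorem in $G$ to the $G$-geodesic subsegment of $\gamma$ from $p$ to $q$ shows that it intersects $\calN_{D_0}(Q)$ in a set of $G$-diameter at least $d_Q(p, q) - 2 D_0$. Since $\gamma$ has $\kappa$-excursion, this gives
\[ d_Q(p, q) \leq E_\gamma \cdot \kappa(\Norm{Q}_{\hG}) + 2 D_0 \leq E_\gamma \cdot \kappa(2\Norm{x} + 2 C_0) + 2 D_0 \prec \kappa(\Norm{x}), \]
where the final bound uses monotonicity and concavity of $\kappa$, together with the scaling inequality $\kappa(\lambda t) \leq \lambda \kappa(t)$ for $\lambda > 1$. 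Summing over the at most $C_0$ relevant cosets and absorbing the bounded $d_{\hG}$ term yields $d_G(p, q) \leq D_2' \cdot \kappa(\Norm{x})$ for an appropriate $D_2'$. This establishes the $\kappa$-weakly contracting condition, and the $\kappa$-Morse conclusion then follows immediately from Theorem \ref{Thm:W-Strong}, since $\Pi_\gamma$ is already a $\kappa$-projection by Proposition \ref{P:bounded-proj}.

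The main technical obstacle is orchestrating the interplay between the two metrics: $\hG$-distances bound both the number of peripheral cosets that can contribute (via bounded geodesic image) and the scale $\Norm{Q}_{\hG}$ at which the $\kappa$-excursion bound is evaluated, while $G$-distances are what we ultimately need to control. The $\kappa$-excursion hypothesis is precisely the bridge between the two, and concavity of $\kappa$ is what lets us absorb the constant multiplicative losses incurred when passing from $\Norm{x}$ to $\Norm{Q}_{\hG}$.
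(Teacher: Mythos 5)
Your proof is correct and follows essentially the same route as the paper: apply Proposition~\ref{P:bounded-proj} to bound the $\hG$-distance between projections, use the bounded geodesic image theorem to identify the boundedly many contributing peripheral cosets, bound each coset's contribution via the $\kappa$-excursion hypothesis, and assemble with the distance formula. Your version is in fact slightly more careful than the paper's, since you bound $d_G(p,q)$ for arbitrary $p,q \in \Pi_\gamma(x)\cup\Pi_\gamma(y)$ rather than just the distance between two distinguished points, which is what the definition of $\kappa$-weakly contracting actually requires.
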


\begin{proof}
By Proposition \ref{P:bounded-proj},  $d_{\widehat{G}}(x_\gamma, y_\gamma) \leq D_2$, so there are only boundedly many $P$ 
intersecting $[x_\gamma, y_\gamma]$ in $\widehat{G}$; let $\calP_0$ denote the set of such $P$. By definition of $\kappa$-excursion, for each $P \in \calP_0$, $d_P(x_\gamma, y_\gamma)$ is bounded above by 
$\kappa(\Vert P \Vert_{\hG})$. 

We claim that for all $P \in \calP_0$ we have $\Vert P \Vert_{\hG} \prec \Vert x \Vert$, hence also $\kappa(\Vert P \Vert_{\hG}) \prec \kappa(x)$.
Indeed, since $P$ intersects $[x_\gamma, y_\gamma]$ and $d_{\widehat{G}}(x_\gamma, y_\gamma)$ is bounded, 
\[
\Vert P \Vert_{\hG} = d_{\widehat{G}}(\go, P) \prec d_{\widehat{G}}(\go, x_\gamma);
\] 
then, since nearest point projection in the $\delta$-hyperbolic space $\hG$ is coarsely 
distance decreasing, 
$d_{\widehat{G}}(\go, x_\gamma) \prec d_{\widehat{G}}(\go, x)$, and finally
\[
d_{\widehat{G}}(\go, x) \prec d_G(\go, x)
\]
since the inclusion $G \to \hG$ is Lipschitz. 

Thus, the claim together with the previous estimates and the distance formula yields
$$d_G(x_\gamma, y_\gamma) \asymp \sum_{P \in \calP} \lfloor d_P(x_\gamma, y_\gamma)  \rfloor_L \prec \sum_{P \in \calP_0} \kappa( \Vert P \Vert_{\hG})  \prec D_2 \cdot  \kappa(x).$$

Finally, by Theorem~\ref{Thm:W-Strong}, a $\kappa$-weakly contracting geodesic ray is $\kappa$-Morse.
\end{proof}

We now show that the map $\Pi_\gamma$ is a $\kappa$-projection.

\begin{proposition} 
Let $\gamma$ be a geodesic ray with $\kappa$-excursion. Then, the map 
$\Pi_\gamma$ defined above is a $\kappa$-projection map. 
\end{proposition}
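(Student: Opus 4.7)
\emph{Plan.} Fix $x\in G$ and $z\in\gamma$; I seek constants $D_1, D_2$ (depending only on $\gamma$ and $\kappa$) such that
\[\diam_G(\{z\}\cup\Pi_\gamma(x)) \le D_1\, d_G(x,z) + D_2\, \kappa(x).\]
I split the argument into (a) a bound $\diam_G(\Pi_\gamma(x)\cup\{\pi_\gamma(x)\}) \le C_1\, \kappa(x)$ and (b) an approximate-nearest-point estimate $d_G(x, \pi_\gamma(x)) \le d_G(x,\gamma) + C_2\, \kappa(x)$. Combined with the triangle inequality $d_G(z,w) \le d_G(z,x) + d_G(x,\pi_\gamma(x)) + d_G(\pi_\gamma(x),w)$ and $d_G(x,\gamma)\le d_G(x,z)$, these give the affine bound with $D_1=1$ and $D_2=C_1+C_2$.

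\emph{Step (a).} Every $w\in\Pi_\gamma(x)$ lies in $\gamma\cap\calN_{D_0}(P)$ for some $P\in\calP_{\gamma,x}$, and by the argument in the proof of Proposition \ref{excursionimpliescontracting}, $\Norm{P}_{\hG}\le\Norm{x}+R_1$, so $\kappa(\Norm{P}_{\hG})\prec\kappa(x)$ and $\kappa$-excursion gives $\diam_G(\gamma\cap\calN_{D_0}(P))\prec\kappa(x)$ for each individual piece. To compare pieces from distinct $P_1, P_2\in\calP_{\gamma,x}$, pick $w_i\in\gamma\cap\calN_{D_0}(P_i)$ and note $d_{\hG}(w_1,w_2)\le 2(D_0+R_1)$. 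The $\hG$-geodesic from $w_1$ to $w_2$ has uniformly bounded length, and therefore crosses only uniformly boundedly many peripheral cosets; by the bounded geodesic image theorem (Proposition \ref{P:RH-recall}(3)) and the distance formula applied to the $G$-subgeodesic $\gamma[w_1,w_2]$, each contributing coset $Q$ has $\Norm{Q}_{\hG}\prec\Norm{x}$ and carries a deep component of $\gamma[w_1,w_2]$ (Proposition \ref{P:RH-recall}(4)) of $G$-length $\prec\kappa(\Norm{Q}_{\hG})\prec\kappa(x)$. Summing yields $d_G(w_1,w_2)\prec\kappa(x)$, and the same argument comparing $\pi_\gamma(x)$ with any $w\in\Pi_\gamma(x)$ gives $d_G(\pi_\gamma(x),w)\prec\kappa(x)$.

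\emph{Step (b).} Let $x^*\in\gamma$ realize $d_G(x,\gamma)$; since $d_G(x,\pi_\gamma(x))\le d_G(x,x^*)+d_G(x^*,\pi_\gamma(x))$, it suffices to show $d_G(x^*,\pi_\gamma(x))\prec\kappa(x)$. The points $x^*$ and $\pi_\gamma(x)$ are the $G$- and $\hG$-nearest projections of $x$ onto $\gamma$; any discrepancy between them can arise only when $x$ is $\hG$-close to a peripheral coset $P$ that $\gamma$ actually enters, i.e.\ when $P\in\calP_{\gamma,x}$ and $\gamma\cap\calN_{D_0}(P)\neq\emptyset$. Within such a coset all points of $\gamma\cap\calN_{D_0}(P)$ are $\hG$-equidistant from $x$, so switching between $\hG$-nearest candidates on $\gamma$ shifts the projection by at most $\diam_G(\gamma\cap\calN_{D_0}(P))\prec\kappa(x)$, and in particular the $G$-nearest point of $\gamma\cap\calN_{D_0}(P)$ to $x$ lies within $G$-distance $\prec\kappa(x)$ of $\pi_\gamma(x)$. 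If $x^*$ is a transition point not inside any such piece, Bowditch's bounded coset penetration (applied to the $G$-geodesic $[x,x^*]$ and a lift of any $\hG$-geodesic $[x,\pi_\gamma(x)]$, both being $(q_0,Q_0)$-quasi-geodesics in $G$ of comparable length) forces $\pi_\gamma(x)$ to also be a nearby transition point, and the distance formula argument of Step (a) again gives $d_G(x^*,\pi_\gamma(x))\prec\kappa(x)$.

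\emph{Main obstacle.} The delicate ingredient is Step (b), where I must make precise the intuition that the $G$-nearest and $\hG$-nearest projections of $x$ onto $\gamma$ differ by only a sublinear amount in $\Norm{x}$. The key point is that all ambiguity between the two notions of nearest point is confined to peripheral coset excursions of $\gamma$, whose $G$-diameters are $\prec\kappa(x)$ by the $\kappa$-excursion hypothesis. Formalizing this step requires Bowditch's bounded coset penetration together with the lifting property (Proposition \ref{P:RH-recall}(1)) in order to compare the $G$-geodesic and $\hG$-geodesic from $x$ to $\gamma$ and conclude that they interact with the same peripheral cosets up to uniform constants. A minor bookkeeping point is reconciling the $\calN_{D_0}$-neighborhoods in the definition of $\kappa$-excursion with the $\calN_{tD}$-neighborhoods appearing in the deep components lemma; this is harmless since the sublinear rate survives a uniform enlargement of the constant $E_\gamma$.
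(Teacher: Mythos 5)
Your overall strategy is a genuine departure from the paper's. The paper never attempts to compare $\pi_\gamma(x)$ with a $G$-nearest point projection $x^*$; instead, for a given $z\in\gamma$ it works with the auxiliary transition point $x_\gamma := c_{\gamma, z}(x)$ (the point clearing all deep components of $P\in\calP_{\gamma,x}$ from $\pi_\gamma(x)$ towards $z$), and case-splits on whether $d_{\hG}(x_\gamma, z)\geq L$. In the far case the thin-quadrilateral argument of Lemma~\ref{L:P-bound} gives $d_{\hG}(x_\gamma,z)\prec d_{\hG}(x,z)$ and $d_P(x_\gamma,z)\prec d_P(x,z)$, so the distance formula yields $d_G(x_\gamma,z)\prec d_G(x,z)$; in the near case the $\kappa$-excursion hypothesis together with the distance formula gives $d_G(x_\gamma,z)\prec\kappa(\Vert x\Vert)$. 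This sidesteps entirely the question of where the $G$-nearest point sits. Your Step (a) is essentially sound, and indeed fills in some details the paper passes over regarding the diameter of the disconnected union $\Pi_\gamma(x)$.

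Step (b), however, has a genuine gap. You assert $d_G(x^*,\pi_\gamma(x))\prec\kappa(\Vert x\Vert)$ --- an approximate-nearest-point statement with multiplicative constant $1$ on $d_G(x,\gamma)$. This is strictly stronger than what the paper establishes: the paper's conclusion applied to $z=x^*$ gives only $\diam_G(\Pi_\gamma(x)\cup\{x^*\})\prec d_G(x,\gamma)+\kappa(\Vert x\Vert)$, so $\pi_\gamma(x)$ is permitted to be a definite multiple of $d_G(x,\gamma)$ away from $x^*$. Your argument for (b) is also not self-contained: you invoke bounded coset penetration on the $G$-geodesic $[x,x^*]$ and a lift of the $\hG$-geodesic from $x$ to $\pi_\gamma(x)$, but these two paths have different terminal endpoints, whereas BCP compares quasi-geodesics with the \emph{same} endpoints; and your hypothesis that the two paths ``have comparable length in $G$'' is essentially the estimate $d_G(x,\pi_\gamma(x))\prec d_G(x,x^*)$ that (b) is supposed to establish, so the reasoning is circular. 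If you wish to keep your decomposition, you should either weaken (b) to $d_G(x,\pi_\gamma(x))\leq C\, d_G(x,\gamma)+C'\kappa(\Vert x\Vert)$ for some $C>1$ (which still suffices, with $D_1 = C+1$ in the final assembly), or route the argument through the transition point $c_{\gamma,z}(x)$ and the case-split on $d_{\hG}(x_\gamma,z)$ as the paper does.
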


\begin{proof}
Let $x \in X$ and $z \in \gamma$,  and let $x_\gamma := c_{\gamma, z}(x)$. Then, if $d_{\hG}(x_\gamma, z) \geq L$, we have, 
as in the proof of Lemma \ref{L:P-bound}, that $d_{\hG}(x_\gamma, z) \prec d_{\hG}(x, z)$ and $d_{P}(x_\gamma, z) \prec d_{P}(x, z)$ for any $P \in \calP$, hence 
by the Distance formula  (Proposition~\ref{P:RH-recall} (2))
$$d_G(x_\gamma, z) \prec d_G(x, z).$$
On the other hand, if $d_{\hG}(x_\gamma, z) \leq L$, then, as in the proof of Proposition \ref{excursionimpliescontracting},
$$d_G(x_\gamma, z) \prec \kappa(\Norm{x_\gamma}_{\hG}) \prec \kappa(\Norm{x}_{\hG}).$$
Moreover, since $\gamma$ has $\kappa$-excursion, we have $\textup{diam}_G(\Pi_\gamma(x)) \prec E_\gamma \cdot \kappa(\Norm{x_\gamma}_{\hG}) \prec E_\gamma \cdot \kappa(\Norm{x}_{\hG})$.
Thus, we obtain 
$$\textup{diam}_G(\Pi_\gamma(x) \cup \{z\}) \leq C_1 \cdot d_G(x, z) + C_2 \cdot \kappa(x),$$
where $C_1, C_2$ depend only on $\gamma$, completing the proof.
\end{proof}

The following is our main result on relatively hyperbolic groups. 

\begin{theorem} \label{T:RH}
Let $\mu$ be a finitely supported probability measure on a relatively hyperbolic group $G$, 
and let $S$ be a finite generating set. 
Let $\kappa(r) := \log r$, and let $\pka X$ be the $\kappa$-Morse boundary of the Cayley graph $X$ of $G$ with respect to $S$. 
Then: 
\begin{enumerate}
\item Almost every sample path $(w_n)$ converges to a point in $\pka X$; 
\item The pair $(\pka X, \nu)$, where $\nu$ is the hitting measure of the random walk on $\pka X$, is a model for the Poisson boundary of $(G, \mu)$. 
\end{enumerate}
\end{theorem}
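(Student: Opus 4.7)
The plan is to verify the hypothesis of Theorem \ref{T:poiss-general}: we will show that for almost every sample path $\omega = (w_n)$, there exists a $\kappa$-Morse geodesic ray $\gamma_\omega$ in $G$ (with $\kappa(r) = \log r$) satisfying $d_X(w_n, \gamma_\omega)/n \to 0$. Once established, Theorem \ref{T:poiss-general} immediately yields both convergence of almost every sample path in $X \cup \pka X$ and the identification of $(\pka X, \nu)$ with the Poisson boundary.

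First, the projected walk on the $\delta$-hyperbolic graph $\hG$ is a non-elementary random walk on a Gromov hyperbolic space, so by Maher--Tiozzo \cite{MaherTiozzo} (applied to the $G$-action on $\hG$, which is non-elementary since $G$ is non-elementary relatively hyperbolic and $\supp(\mu)$ generates $G$), the sequence $w_n$ converges almost surely to a point $\xi_\omega$ in the Bowditch boundary $\partial \hG$, with positive drift $\ell > 0$ and exponential decay of large deviations. Choose a geodesic ray $\gamma_\omega$ in $(G, d_G)$ from $\go$ to $\xi_\omega$, obtained by lifting an infinite $\hG$-geodesic toward $\xi_\omega$ and replacing each peripheral edge by a $G$-geodesic inside the corresponding coset (Proposition \ref{P:RH-recall} (1)).

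Second, the key step is to prove that $\gamma_\omega$ has $\log$-excursion, namely that there is $E = E(\omega)$ such that
\[
\diam_G\big(\gamma_\omega \cap \calN_{D_0}(P)\big) \leq E \cdot \log \Norm{P}_{\hG}
\qquad \forall\, P \in \calP.
\]
This is the relatively hyperbolic analogue of Theorem \ref{T:log-exc} and is essentially contained in Sisto's work \cite{sisto-track}: combining the exponential large deviation estimates for the drift in $\hG$ with a direct computation for peripheral projections, one obtains that for any $k > 0$ there is $C > 0$ with
\[
\mathbb{P}\big(\sup_{P \in \calP} d_P(\go, w_n) \geq C \log n\big) \leq C n^{-k}.
\]
A Borel--Cantelli argument, combined with the bounded geodesic image theorem (Proposition \ref{P:RH-recall} (3)) applied to the centers $c_n = \cent(\go, w_n, \xi_\omega)$ inside $\hG$, then transfers this logarithmic bound to the limiting ray $\gamma_\omega$: every peripheral coset $P$ that meets $\gamma_\omega$ deeply must appear along $[\go, w_n]$ for some $n$ with $\Norm{P}_{\hG} \asymp n$, and its deep component has diameter controlled by $d_P(\go, \xi_\omega) \prec \log \Norm{P}_{\hG}$ via the bounded coset penetration property. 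By Proposition \ref{excursionimpliescontracting}, $\gamma_\omega$ is therefore $\kappa$-Morse for $\kappa(r) = \log r$.

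Third, sublinear tracking $d_X(w_n, \gamma_\omega)/n \to 0$ is already established by Sisto's $\sqrt{n \log n}$-tracking result \cite{sisto-track} for random walks on relatively hyperbolic groups. Alternatively, it can be derived directly from the distance formula (Proposition \ref{P:RH-recall} (2)) applied to $(w_n, c_n)$, combined with the logarithmic peripheral excursions and the fact that $c_n$ lies within bounded $\hG$-distance of the nearest point projection of $w_n$ onto $\gamma_\omega$. With these three ingredients in place, Theorem \ref{T:poiss-general} concludes the proof. The principal obstacle is the uniform control of all peripheral excursions by $\log \Norm{P}_{\hG}$ simultaneously; this is where the bounded coset penetration property and the Borel--Cantelli argument from \cite{sisto-track} are essential, and the argument closely parallels the mapping class group proof in Theorem \ref{T:converge}, with peripheral cosets replacing proper subsurfaces and the exponent $p$ collapsing to $1$ since the hierarchy in the relatively hyperbolic setting has depth one.
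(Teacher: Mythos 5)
Your overall strategy is correct and matches the paper's: verify the tracking hypothesis of Theorem \ref{T:poiss-general} by showing that almost every sample path converges to $\xi_\omega \in \partial\hG$ and that the lift $\gamma_\omega$ of the $\hG$-geodesic towards $\xi_\omega$ has $\log$-excursion, then invoke Proposition \ref{excursionimpliescontracting}. The observation that $p$ collapses to $1$ is also correct.

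However, there is a genuine gap at the crux of the argument. You invoke ``centers $c_n = \cent(\go, w_n, \xi_\omega)$ inside $\hG$'' to transfer the logarithmic bound from finite sample paths to the limiting ray. The paper explicitly flags that the center construction (Lemma \ref{L:Projection}, due to Eskin--Masur--Rafi) has no analogue for relatively hyperbolic groups: a quasi-center in $\hG$ is a vertex, hence a group element, but there is no consistency theorem guaranteeing that its peripheral projections $\pi_P(c_n)$ sit near the center of the projected triple for every $P$. Relatedly, the step corresponding to \eqref{E:C3} in the mapping class group proof used that nearest point projection to each $\calC(Y)$ is coarsely distance-decreasing, and this fails for nearest point projection to a peripheral coset $P$ in $(G,d_G)$: $d_P(\cdot,\cdot)$ is not controlled by $d_G(\cdot,\cdot)$, only by whether a $\hG$-geodesic enters $P$. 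Your sketch does not address either issue. The paper's fix is to replace the center by the bespoke point $c_n := c_{\gamma,w_{2n}}(w_n)$ on $\gamma$ itself (the furthest transition point along $\gamma$ containing the deep components near $\pi_\gamma(w_n)$), and then to prove Lemma \ref{L:P-bound}~(2), namely $d_P(w_n, c_n) \leq d_P(w_n, w_{2n}) + L$ whenever $d_P(w_n,c_n)$ is large, directly from the bounded geodesic image theorem and thin quadrilaterals in $\hG$, bypassing any distance-decreasing property of $\pi_P$. Without this (or an equivalent construction), the Borel--Cantelli transfer to the limiting ray does not go through as written.

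The remaining ingredients you cite are sound: Maher--Tiozzo gives convergence to $\partial\hG$ with positive drift and exponential decay, Sisto's \cite[Lemma 4.4]{sisto-track} gives $\mathbb{P}(\sup_P d_P(\go, w_n) \geq C\log n) \leq C n^{-k}$, and Sisto's $\sqrt{n\log n}$ tracking suffices for \eqref{E:sub-track}. The gap is specifically in the step that converts logarithmic peripheral bounds along $(w_n)$ into logarithmic excursion bounds for $\gamma_\omega$.
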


\begin{proof}
The proof is fairly similar to the proof of Theorem \ref{T:converge} for the mapping class group, 
after replacing subsurfaces $Y$ by peripherals $P \in \calP$, $d_Y$ by $d_P$, and the curve complex $\calC(S)$ by $\hG$.
Note, however, that one difference is that there need not be a hyperbolic space analogous to $\calC(Y)$ for each $P \in \calP$; 
moreover, the concept of \emph{center} is not well-defined.
Essentially, the only step where the proof as written does not immediately generalize is the proof of Eq. \eqref{E:C3}, where we do \emph{not} know that nearest point projections are distance decreasing (not even coarsely).
We shall give an alternative proof of this point, not using the hyperbolicity of $\calC(Y)$.

By Theorem \ref{T:RW}, almost every sample path $\omega = (w_n)$ converges to 
a point $\xi_\omega$ in the Gromov boundary of  $\widehat{G}$. 
Consider a geodesic ray in $\hG$ joining the base point $\go$ and $\xi_\omega$, 
and let $\gamma = \gamma_\omega$ be a lift to $G$.

Let $c_n := c_{\gamma, w_{2n}}(w_n)$, following the notation before Lemma \ref{L:P-bound}. 
We replace Step 1 in the proof of Theorem \ref{T:converge} with the following.

\medskip
\textbf{Step 1.}
We claim that there exists $C > 0$ such that 
\begin{equation} \label{E:supP}
\mathbb{P}(\sup_P d_P(w_n, c_n) \geq  C \log n) \leq C n^{-2}
\end{equation}
for all $n$.

\begin{proof}
Since the drift of the random walk is positive with exponential decay (Theorem \ref{T:RW} (2)), we have by the Markov property
that there exists $0 < C_0 < 1$ such that
\begin{equation} \label{E:C0-RH}
\mathbb{P}(d_{\hG}(w_n, w_{2n}) \leq \ell  n) = \mathbb{P}(d_{\hG}(\go, w_{n} ) \leq \ell  n) \leq (C_0)^n\qquad \forall n
\end{equation}
where $\ell > 0$ is the drift of the random walk.
By Theorem \ref{T:RW} (3), and recalling that $c_n$ projects close to, there exists $C_1 > 0$ so that 
\begin{equation} \label{E:C1-RH}
\mathbb{P}( d_{\hG}(w_n, c_n) \geq C_1 \log n) \leq C_1 n^{-2} \qquad \forall n.
\end{equation}
If a sample path lies in the complement of the union of the events expressed by \eqref{E:C0-RH}, \eqref{E:C1-RH}, we have 
\begin{align*}
d_{\hG}(c_n, c_{2n}) &  \geq d_{\hG}(w_n, w_{2n}) - d_{\hG}(w_n, c_n) - d_{\hG}(w_{2n}, c_{2n}) \\
& \geq \ell n - C_1 \log n - C_2 \log (2n) \geq L
\end{align*}
where $L$ is given by Lemma \ref{L:P-bound}, hence, by Lemma \ref{L:P-bound}, we have 
\begin{equation} \label{E:small-proj}
d_P(w_n, c_n) \leq d_P(w_n, w_{2n}) +L
\end{equation}
for any $P \in \calP$. 
Moreover, by \cite[Lemma 4.4]{sisto-track}, there exists $C_2 > 0$ for which 
\begin{equation} \label{E:C2-RH}
\mathbb{P}(\sup_P d_P(w_n, w_{2n}) \geq C_2 \log n) = \mathbb{P}(\sup_P d_Y(\go, w_{n})  \geq C_2 \log n) \leq C_2 n^{-2} \qquad \forall n.
\end{equation}
hence, by combining Eq. \eqref{E:small-proj} and \eqref{E:C2-RH} we obtain \eqref{E:supP}. 
\end{proof}
Then, we proceed exactly as in Theorem \ref{T:converge} (Steps 2 and 3), 
proving that for almost every $\omega \in \Omega$, there is a constant $c$ such that  
\begin{equation} \label{E:supY-RH}
\sup_{P \in \mathcal{P}} \diam_G(\gamma \cap \calN_{D_0}(P)) \leq c \log d_{\hG}(\go, P).
\end{equation}

Hence, by Proposition \ref{excursionimpliescontracting}, the geodesic ray $\gamma_\omega$ has 
$\kappa$-excursion, hence it is $\kappa$-Morse. This shows (1). 

Finally, (2) follows by Theorem \ref{T:ray} using Theorem \ref{T:poiss-general}. 
\end{proof}

\begin{remark}
Continuing as in the proof of Theorem \ref{T:converge}, the above argument also shows the following tracking result:
for almost every sample path there exists a geodesic ray $\gamma$ in $G$ such that 
$$\limsup_{n \to \infty} \frac{d(w_n, \gamma)}{\log^2(n)} < + \infty.$$
However, \cite{sisto-track} already shows the stronger tracking result with $\log(n)$ instead of $\log^2(n)$, hence we do not 
write out the details.
\end{remark}

\begin{appendix}

\section{General projection and Weakly $\kappa$-Contracting Property}

We begin this appendix by proving the following, announced in Section \ref{S:weakly-con}.

\begin{theorem}[$\kappa$-weakly contracting implies sublinearly Morse] \label{Thm:W-Strong-app}
Let $\kappa$ be a concave sublinear function and let $Z$ be a closed subspace of $X$. 
Let $\pi_Z$ be a $\kappa$-projection onto $Z$ and suppose that $Z$ is $\kappa$-weakly contracting with respect to $\pi_Z$.
Then, there is a function $m_Z\from \RR^2 \to \RR$ such that, for every constant $r>0$ and every
sublinear function $\kappa'$, there is an $R= R(Z, r, \kappa')>0$ where the 
following holds: Let $\eta \from [0, \infty) \to X$ be a $(q, Q)$-quasi-geodesic ray 
so that $m_Z(q, Q)$ is small compared to $r$, let $t_r$ be the first time 
$\Norm{\eta(t_r)} = r$ and let $t_R$ be the first time $\Norm{\eta(t_R)} = R$. Then
\[
d_X\big(\eta(t_R), Z\big) \leq \kappa'(R)
\quad\Longrightarrow\quad
\eta([0, t_r]) \subset \calN_{\kappa}\big(Z, m_Z(q, Q)\big). 
\]
\end{theorem}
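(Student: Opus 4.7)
The plan is to prove the contrapositive, following the scheme of \cite[Theorem 3.14]{QRT19} adapted to the present, more general setting where $\pi_Z$ is only an abstract $\kappa$-projection and the contracting constant $C_1$ may be larger than $1$. I would first cook up the Morse gauge $m_Z(q,Q)$ as an explicit function of $q, Q$ and the constants $C_1, C_2, D_1, D_2$ arising from Definitions~\ref{weakprojection} and~\ref{Def:generalContracting}; the precise expression will emerge from the estimates below, but one should expect at least quadratic dependence on $q$ and $Q$. Then, given $r>0$ and a sublinear function $\kappa'$, I would choose $R$ large enough, depending on $r$, $\kappa(r)$, $\kappa'$ and the four constants, so that any would-be failure of the conclusion forces a quantitative contradiction.

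Suppose some point $p = \eta(s^\ast)$ with $s^\ast \leq t_r$ satisfies $d_X(p, Z) > m_Z(q, Q)\cdot \kappa(\Vert p \Vert)$, while $d_X(\eta(t_R), Z) \leq \kappa'(R)$. The first step is to isolate an excursion of $\eta$ away from $Z$: following $\eta$ past $p$, let $q^\sharp = \eta(s^\sharp)$ be the first point with $d_X(q^\sharp, Z) \leq \tfrac12 d_X(p, Z)$; such a point exists because $\kappa'(R) \ll d_X(p, Z)$ once $R$ is sufficiently large. Throughout $\eta|_{[s^\ast, s^\sharp]}$ the distance to $Z$ is at least $\rho := \tfrac12 d_X(p, Z)$. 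Next, I would subdivide this subsegment into points $p = p_0, p_1, \dots, p_N = q^\sharp$ with $d_X(p_i, p_{i+1})$ chosen just below $C_1 \cdot d_X(p_i, Z)$, so that each consecutive pair satisfies the hypothesis of the $\kappa$-weakly contracting condition. Summing the resulting bounds $\diam_X(\pi_Z(p_i) \cup \pi_Z(p_{i+1})) \leq C_2 \cdot \kappa(\Vert p_i \Vert)$ and controlling $\kappa(\Vert p_i \Vert)$ by a constant multiple of $\kappa(R)$ via concavity of $\kappa$, I obtain an upper bound of the shape $\diam_X(\pi_Z(p) \cup \pi_Z(q^\sharp)) \leq N \cdot C_2 \cdot \kappa(R)$, where $N$ is controlled by the quasi-geodesic length of $\eta|_{[s^\ast, s^\sharp]}$ divided by $C_1 \rho$.

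The contradiction comes from a lower bound on the same diameter supplied by the $\kappa$-projection property. Lemma~\ref{projection-property} and the triangle inequality give $d_X(p, \pi_Z(p)) \geq \rho$, while $d_X(q^\sharp, \pi_Z(q^\sharp)) \leq (D_1+1) \tfrac12 d_X(p,Z) + D_2 \kappa(R)$. Careful bookkeeping of these bounds, using the $(q,Q)$-quasi-geodesic comparison between $d_X(p, q^\sharp)$ and $d_X(\pi_Z(p), \pi_Z(q^\sharp))$, forces the two projections to be separated by a quantity comparable to $\rho$, up to sublinear corrections. Matching this with the upper bound and recalling $\rho \geq \tfrac12 m_Z(q,Q) \kappa(\Vert p\Vert)$, one obtains an inequality that fails whenever $m_Z(q, Q)$ is chosen larger than an explicit function of $q, Q, C_1, C_2, D_1, D_2$; this yields the contradiction and pins down the Morse gauge.

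The main obstacle is to convert the depth $\rho$ into a genuine lower bound on the projection separation in a setting that lacks CAT(0) comparison triangles. The device I expect to use is a dyadic scale argument: rather than extracting a single excursion of depth $\rho$, one iterates the construction on geometrically decreasing depths $\rho, \rho/2, \rho/4, \dots$, so that the weak-contracting estimates telescope over only logarithmically many scales and the total sublinear error remains sublinear. In this form, the argument replaces the CAT(0) geometry used in \cite[Theorem 3.14]{QRT19} by a purely metric bookkeeping of scales and subdivisions, while every other ingredient of the proof (subdivision, application of the $\kappa$-weakly contracting property, concavity of $\kappa$, and the triangle inequality) carries over verbatim.
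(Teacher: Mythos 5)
Your general strategy—subdividing an excursion away from $Z$ into pieces of length controlled by $C_1$ times the current depth, summing the contracting bounds, and then comparing this against a quasi-geodesic bound—is indeed the heart of the paper's argument. However, two of the specific steps you propose do not work, and they differ substantially from how the paper actually closes the argument.

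First, your definition of $q^\sharp$ has a concrete gap: you claim $q^\sharp$ exists ``because $\kappa'(R) \ll d_X(p,Z)$ once $R$ is sufficiently large.'' This is backwards. The quantity $d_X(p,Z)$ for $p \in \eta([0,t_r])$ is bounded in terms of $r$ (roughly by $r$ plus a constant), independently of $R$, while $\kappa'(R)$ is typically unbounded in $R$. Increasing $R$ makes $\kappa'(R)$ \emph{larger}, not smaller, so for $R$ large the hypothesis $d_X(\eta(t_R),Z) \le \kappa'(R)$ gives you no guarantee that $\eta$ ever attains depth $\tfrac12 d_X(p,Z)$ before $t_R$. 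The paper sidesteps this by not fixing a target half-depth at all: it proves a general bound (Claim~\ref{claimA5}) on the length of \emph{any} excursion outside $\calN_\kappa(Z,m_0)$ in terms of the distances to $Z$ at its two endpoints, whatever those happen to be, and then applies it first to $[t_{\rm last},t_R]$ (where one endpoint has depth $\le m_0\kappa(R)$ and the other has depth $\le\kappa'(R)$) and then to the internal excursions contained in $[0,t_{\rm last}]$.

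Second, the proposed contradiction—deriving a lower bound on $\diam_X(\pi_Z(p)\cup\pi_Z(q^\sharp))$ of order $\rho$—cannot hold in general. There is no mechanism forcing the two projections to be separated; for instance the quasi-geodesic can wander outward and return so that both project to nearly the same point, and all your inequalities then only give $d_X(\pi_Z(p),\pi_Z(q^\sharp)) \ge \rho - O(\rho) - (\text{sublinear})$, which is vacuous. The paper's actual comparison is of a different kind: both the upper bound on the excursion length $|s'-s|$ (coming from the triangle inequality through the projections, which produces a term $qC_2\sum\kappa(r_i)$) and the lower bound on $|s'-s|$ (coming from the spacing $d_X(\eta_i,\eta_{i+1}) = C_1 d_X(\eta_i,Z) \ge C_1 m_0 \kappa(r_i)$ of the subdivision points) involve the same quantity $\sum_{i}\kappa(r_i)$; choosing $m_0$ large makes the coefficient on the lower-bound side dominate, and the resulting cancellation yields a bound on $\sum\kappa(r_i)$ in terms of the endpoint distances plus a sublinear error. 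That is the actual mechanism, and it neither requires nor yields any separation of projections. Your final ``dyadic scale'' suggestion is not needed once this is set up, and as stated it is too vague to verify; the single subdivision already makes the estimates telescope.

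In short: the subdivision idea is right, but you should (a) drop the fixed half-depth target for $q^\sharp$ and bound excursions in terms of both endpoint distances, and (b) replace the attempted lower bound on projection separation by the comparison of upper and lower bounds on the excursion length through the common term $\sum\kappa(r_i)$, choosing the threshold constant $m_0$ to make the two bounds incompatible unless the excursion is short.
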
 

\begin{proof}
Let $C_{1}, C_{2}, D_{1}, D_{2}$ be the constants which appear in the definitions of  $\kappa$-projection and $\kappa$-weakly contracting (Definition~\ref{weakprojection} and Definition~\ref{Def:generalContracting}). 
Note that the condition of being $\kappa$-weakly contracting becomes weaker as $C_1$ gets smaller, hence we can assume that $C_1 \leq 1/2$.
We first set 
\begin{equation} \label{Eq:Conditions}
m_0 := \max \left \{ \frac{q(q C_{2} + q +1) + Q}{C_{1}}, \frac{2 C_{2}(D_{1}+ 1)}{(q-1)},  Q \right\},
\qquad 
m_1 := q( C_{2}+1 )(D_{1} + 1). 
\end{equation} 
\begin{claim}\label{claimA5}
Consider a time interval $[s,s']$ during which $\eta$ is outside of 
$\calN_{\kappa}(Z, m_0)$. Then there exists a constant $\mathfrak{A}$ depending only on $\{C_{1}, C_{2}, D_{1}, D_{2}, q, Q \}$, such that 
\begin{equation}  \label{Eq:End-Point} 
|s'-s| \leq m_1 \big( d_X\big(\eta(s), Z\big)+ d_X\big(\eta(s'), Z\big) \big) + \mathfrak{A} \cdot \kappa(\eta(s')).
\end{equation} 
\end{claim} 

\begin{proof}[Proof of  Claim~\ref{claimA5}] \renewcommand{\qedsymbol}{$\blacksquare$}
Let 
\[
s = t_{0} < t_{1} < t_{2}< \dots < t_{\ell} = s'
\]
be a sequence of times such that, for $i=0, \dots, {\ell-2}$, we have $t_{i+1}$ is a first time after $t_i$ where 
\begin{equation} \label{E:def-eta}
d_X\big(\eta(t_i), \eta(t_{i+1}) \big) = C_{1} d_X(\eta(t_i), Z)
\quad\text{and}\quad 
d_X\big(\eta(t_{\ell-1}), \eta(t_\ell) \big) \leq C_{1} d_X(\eta(t_{\ell-1}), Z).
\end{equation}
To simplify the notation, we define
\[
\eta_i := \eta(t_i), \qquad r_i := \Norm{\eta(t_i)}
\]
and moreover, we pick some $\pi_i \in \pi_{Z}(\eta_i)$ and let 
\[
d^{\pi}_i := d_X(\eta_i, \pi_{i}), \qquad d_i := d_X(\eta_i, Z).
\]
Note that, by assumption
\begin{equation}\label{assumption1}
d^{\pi}_i \geq d_i = d_{X}(\eta_{i}, Z) \geq m_0 \cdot \kappa(r_i ).
\end{equation}

\begin{claim}\label{claimA6} We have the inequality
$d^{\pi}_{\ell-1} \leq 2( D_{1}+ 1) \,d^{\pi}_{\ell} +D_{2}\cdot \kappa(\eta_{\ell-1}) $.
\end{claim}
\begin{proof} [Proof of Claim~\ref{claimA6}]
By the triangle inequality and Eq. \eqref{E:def-eta}, 
\begin{align*}
d_X(\eta_{\ell-1}, Z) & \leq d_X(\eta_{\ell-1}, \eta_{\ell}) + d_X(\eta_\ell, Z)  \\
			       & \leq C_1 d_X( \eta_{\ell-1}, Z) + d_X(\eta_\ell, Z)
\end{align*}
hence, using $C_1 \leq 1/2$, 
\begin{align*}
d_X(\eta_{\ell-1}, Z) & \leq \frac{1}{1-C_1} d_X(\eta_\ell, Z) \leq 2 d_X(\eta_\ell, Z). 
\end{align*}
Thus, by Lemma \ref{projection-property}, 
\begin{align*}
d^{\pi}_{\ell-1} &\leq (D_{1} + 1) d_X(\eta_{\ell-1}, Z) + D_{2}\cdot \kappa(\eta_{\ell-1})\\
\intertext{and by the above equation}
                       & \leq  2 (D_{1} + 1) d_X(\eta_{\ell}, Z)+D_{2}\cdot \kappa(\eta_{\ell-1}) \\
                       &\leq 2 (D_{1} + 1) d^{\pi}_{\ell}+D_{2}\cdot \kappa(\eta_{\ell-1}).
                       \qedhere
\end{align*}
\end{proof}

Now, since $Z$ is $\kappa$-weakly contracting, by Definition \ref{Def:generalContracting} we get
\[
d_X\big( \pi_0 , \pi_\ell \big) \leq 
\sum_{i=0}^{\ell-1} d_X \big( \pi_i  , \pi_{i+1}  \big) 
\leq \sum_{i=0}^{\ell-1} C_{2} \cdot \kappa(r_i). 
\]
But $\eta$ is $(q, Q)$-quasi-geodesic, hence, 
\begin{align} 
|s'-s| & \leq q \, d_X(\eta_0, \eta_\ell) + Q \notag \\
  & \leq q \left( d^{\pi}_0 + d_X\big( \pi_0 , \pi_\ell \big)  + d^{\pi}_\ell \right) + Q 
  \label{Eq:Upper} \\
  & \leq  q \, C_{2}  \left( \sum_{i=0}^{\ell-1}  \kappa(r_i) \right) +  
    q \, (d^{\pi}_0 + d^{\pi}_\ell) + Q.  \notag
\end{align}
On the other hand, 
\[
|s'-s| = \sum_{i=0}^{\ell-1} |t_{i+1}- t_i| \ge 
\frac{1}{q} \sum_{i=0}^{\ell-1} \left(   d_X(\eta_i, \eta_{i+1}) -Q \right). 
\]
Meanwhile, for $i=0, \dots, \ell-2$ we have $d_X(\eta_i, \eta_{i+1}) =C_{1} d_X(\eta_{i}, Z)$. 
Furthermore, we have by triangle inequality,
\[
d_X(\eta_{\ell-1}, \eta_\ell) + d^{\pi}_{\ell} + d_X(\pi_{\ell-1}, \pi_{\ell}) \geq  d^{\pi}_{\ell-1} \geq d_X(\eta_{\ell-1}, Z), 
\]
which gives 
\[
d_X(\eta_{\ell-1}, \eta_\ell) \geq d_X(\eta_{\ell-1}, Z) - d^{\pi}_{\ell} - C_{2}\cdot \kappa(r_{\ell-1}).
\]
Hence, together with Equation~\eqref{assumption1} and using $C_1 \leq 1$ we have  
\begin{align*}
|s'-s| 
 & \geq \frac{1}{q} \sum_{i=0}^{\ell-1} \left( C_{1} d_X(\eta_{i}, Z) -Q \right) 
   -  \frac{d^{\pi}_\ell + C_{2}\cdot \kappa(r_{\ell-1}) }{q} \\
& \geq  \frac{1}{q}  \sum_{i=0}^{\ell-1} \left( C_{1} m_0 \cdot \kappa(r_i) -Q \right) 
   -  \frac{d^{\pi}_\ell}{q}-C_{2} \frac{\kappa(r_{\ell-1}) }{q}  &  \text{By Equation~\eqref{assumption1}}\\
   & \geq \frac{1}{q} \sum_{i=0}^{\ell-1} \left( C_{1} m_0 \cdot \kappa(r_i) -Q \cdot \kappa(r_i)  \right) 
   -  \frac{d^{\pi}_\ell}{q}-C_{2} \frac{\kappa(r_{\ell-1}) }{q} & \text{$\kappa(t) \geq 1$ for all $t$} \\
    & \geq \left(  \frac{C_{1} m_0 - Q}{q} \right) \sum_{i=0}^{\ell-1}  \kappa(r_i)  
   -  \frac{d^{\pi}_\ell}{q}-C_{2} \frac{\kappa(r_{\ell-1}) }{q}. 
\end{align*}
Combining the above inequality with \eqnref{Eq:Upper} we get 
\begin{align}\label{Morseequation}
q \, (d^{\pi}_0 + d^{\pi}_\ell) + Q + \frac{d^{\pi}_\ell}{q} + C_{2} \frac{\kappa(r_{\ell-1})}{q} & \geq  
  \left(  \frac{C_{1} m_0 - Q}{q}  - q \, C_{2}   \right)  \sum_{i=0}^{\ell-1} \kappa(r_i) \\
  &  \geq  (q+ 1)  \sum_{i=0}^{\ell-1} \kappa(r_i) \notag,
\end{align}
where in the last step we plugged in the definition of $m_{0}$ from \eqref{Eq:Conditions}.

By \eqref{assumption1} we also have
\begin{align*}
q \, (d^{\pi}_0 + d^{\pi}_\ell) + Q + \frac{d^{\pi}_\ell}{q} + C_{2} \frac{\kappa(r_{\ell-1})}{q}  &\leq q \, (d^{\pi}_0 + d^{\pi}_\ell) + Q + \frac{d^{\pi}_\ell}{q} + C_{2} \frac{d^{\pi}_{\ell-1}}{m_{0}\,q} \\
\intertext{By the expression of $m_{0}$ in \eqref{Eq:Conditions}, $Q \leq m_{0}$ and by Equation~\eqref{assumption1}, $m_{0} \leq d^{\pi}_{0}$. Thus we have $Q \leq d_{0}^{\pi}$ and again by plugging in  $m_{0}$  and using Claim \ref{claimA6}, we obtain}
 q \, (d^{\pi}_0 + d^{\pi}_\ell) + Q + \frac{d^{\pi}_\ell}{q} + C_{2} \frac{d^{\pi}_{\ell-1}}{m_{0}\,q}& \leq(q + 1)(d^{\pi}_0 + d^{\pi}_\ell) + \frac{C_{2} D_{2}}{m_{0}\, q}\cdot \kappa(\eta_{\ell-1}).
\end{align*}
Plugging this inequality into Equation~\eqref{Morseequation}, we get
\begin{align*}
\sum_{i=0}^{\ell-1} \kappa(r_i) &\leq d^{\pi}_0 + d^{\pi}_\ell + \frac{C_{2} D_{2}}{m_{0}\, q (q+1)}\cdot \kappa(\eta_{\ell-1}) \\
                                                   &\leq (D_{1} + 1)( d_0 + d_\ell ) + D_{2} (\kappa(\eta_{0}) + \kappa(\eta_{\ell}) ) +\frac{C_{2} D_{2}}{m_{0}\, q (q+1)}\cdot \kappa(\eta_{\ell-1})
                                                   \end{align*}
where we recall $d_{i} = d_{X}(\eta_{i}, Z)$, and the last inequality comes from Lemma~\ref{projection-property} (note the difference between $d_i$ and $d_i^{\pi} = d_X(\eta_i, \pi_i)$). 

Lastly, we claim that since $\eta$ is a quasi-geodesic ray, there is a constant $C_3$, related to $q, Q$, such that 
$\eta_{i} \leq C_3 \cdot \eta(s') + C_3$ for $i = 0, \dots, \ell$, hence also $\kappa(\eta_{i}) \leq 2 C_3 \cdot \kappa(\eta(s')) + \kappa(2 C_3) $; thus, to shorten the preceding expression,
let $\mathfrak{A}$ be a constant, depending on $\{ C_1, C_{2}, D_{1}, D_{2}, q, Q,  \kappa \}$, such that
\[
q (C_2+ 1) D_{2} (\kappa(\eta_{0}) + \kappa(\eta_{\ell}) ) + Q + \frac{ C_{2}^2 D_{2}}{m_{0} (q+1)}\cdot \kappa(\eta_{\ell-1}) \leq \mathfrak{A} \cdot \kappa(\eta(s')).
\]
By \eqnref{Eq:Upper} and the definition $m_1= q( C_{2}+1 )(D_{1} + 1)$ from \eqref{Eq:Conditions},
\[
 |s'-s| \leq m_1 (d_0 + d_\ell) + \mathfrak{A} \cdot \kappa(\eta(s')). 
\]
This proves Claim~\ref{claimA5}.
\end{proof} 

\begin{figure}[h!]
\begin{tikzpicture}[scale=0.7]
 \tikzstyle{vertex} =[circle,draw,fill=black,thick, inner sep=0pt,minimum size=.5 mm]
[thick, 
    scale=1,
    vertex/.style={circle,draw,fill=black,thick,
                   inner sep=0pt,minimum size= .5 mm},
                  
      trans/.style={thick,->, shorten >=6pt,shorten <=6pt,>=stealth},
   ]

  \node[vertex] (o) at (0,0)[label=left:$\go$] {}; 
  \node(a) at (13,0)[label=right:$Z$] {}; 
  
  \draw (o)--(a){};
  \draw [dashed] (0, 0.3) to [bend left = 8] (12,1.5){};
  \node at (8.5,1.3){$m_{0} \cdot \kappa(R)$};
  \draw [dashed] (0, 0.6) to [bend left = 10] (5,2){};
  \node at (3.2,2.2){\small $m_{Z}(q, Q) \cdot \kappa(r)$};
     
 \draw [dashed] (12, 4.5) to (12,0) {};
 \node at (12,0)[label=below:$R$] {};
 \draw [dashed] (5, 4.5) to (5,0){};
 \node at (5,0)[label=below:$r$] {};
        
 \draw [decorate,decoration={brace,amplitude=10pt},xshift=0pt,yshift=0pt]
  (12,3.7) -- (12,0)  node [thick, black,midway,xshift=0pt,yshift=0pt] {};       
 \node at (13.3,1.7) {$\kappa'(R)$};
        
 \node[vertex] at (5.4,1.35)[label=below right:$t_{\rm last}$] {}; 
 \node[vertex] at (1.65, 0.68)[label=below right:$s$] {}; 
 \node[vertex] at (4.34,1.18)[label=above right:$s'$] {}; 
 
  \pgfsetlinewidth{1pt}
  \pgfsetplottension{.75}
  \pgfplothandlercurveto
  \pgfplotstreamstart
  \pgfplotstreampoint{\pgfpoint{0cm}{0cm}}  
  \pgfplotstreampoint{\pgfpoint{1cm}{-.6cm}}   
  \pgfplotstreampoint{\pgfpoint{2cm}{1.3cm}}
  \pgfplotstreampoint{\pgfpoint{3cm}{1.0cm}}
  \pgfplotstreampoint{\pgfpoint{4cm}{1.5cm}}
  \pgfplotstreampoint{\pgfpoint{5cm}{0.5cm}}
  \pgfplotstreampoint{\pgfpoint{6cm}{3cm}}
  \pgfplotstreampoint{\pgfpoint{7cm}{2.2cm}}
  \pgfplotstreampoint{\pgfpoint{8cm}{3.6cm}}
  \pgfplotstreampoint{\pgfpoint{9cm}{3.1cm}}
  \pgfplotstreampoint{\pgfpoint{10cm}{3.6cm}}
  \pgfplotstreampoint{\pgfpoint{11cm}{3.1cm}}
  \pgfplotstreampoint{\pgfpoint{12cm}{3.7cm}}
  \pgfplotstreamend
  \pgfusepath{stroke} 
  \end{tikzpicture}
  
\label{Fig:Strong2} 
\caption{The proof of Theorem \ref{Thm:W-Strong-app}.}
\end{figure}
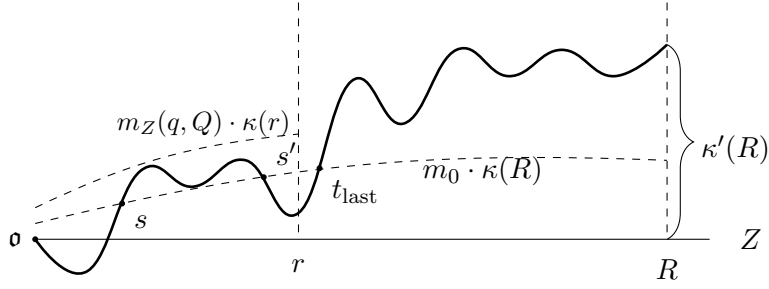

Now let $t_{\rm last}$ be the last time $\eta$ is in $\calN_{\kappa}(Z, m_0)$ and 
consider the quasi-geodesic path $\eta([t_{\rm last}, t_R])$. Since this path is 
outside of $\calN_\kappa(Z, m_0)$, we can use \eqnref{Eq:End-Point} to get 
\[
| t_R-t_{\rm last} | \leq m_1 \big( d_X(\eta(t_{\rm last}), Z)+ d_X(\eta(t_R), Z) \big) + \mathfrak{A} \cdot \kappa(R). 
\]
But 
\begin{align*}
d_X(\eta(t_{\rm last}), Z) &\leq m_0 \cdot \kappa(\eta(t_{\rm last})) & \text{ by the choice of $t_{\rm last}$}\\
  &\leq m_0 \cdot \kappa(R) & \text{since $\kappa$ is monotone}
\end{align*}
and we have by assumption $d_X(\eta(t_R), Z) \leq  \kappa'(R)$. Therefore,
\[
|t_R-t_{\rm last} | \leq m_0 \, m_1\cdot \kappa(R)+  m_1 \cdot  \kappa'(R)+ \mathfrak{A} \cdot \kappa(R).
\]
Since $\eta$ is $(q, Q)$-quasi-geodesic, we obtain $R = d_X(\eta(0), \eta(t_{R})) \leq q t_{R} + Q$, hence
$$t_{R} \geq \frac{R - Q}{q}.$$
Since $m_0$ and $m_1$ are given and $\kappa$ and $\kappa'$ are 
sublinear, there is a value of $R$ depending on $m_0$, $m_1$,  $r$, $\mathfrak{A}$,
$\kappa$ and $\kappa'$ such that 
\[
 m_0 \cdot m_1 \cdot \kappa(R)+ m_1 \cdot \kappa'(R) + \mathfrak{A} \cdot \kappa(R) \leq  
 \frac{R - Q}{q} - r.
\]
For any such $R$, we then have 
\[  t_{\rm last} \geq t_{R} - 
 \frac{R - Q}{q} + r \geq r. \] 
We show that $\eta([0, t_{\rm last}])$ stays in a larger $\kappa$-neighborhood
of $Z$. Consider any other subinterval $[s,s'] \subset [0, t_{\rm last}]$ where 
$\eta$ exits $\calN_\kappa(Z, m_0)$. By taking $[s,s']$ as large as possible, 
we can assume $\eta(s), \eta(s') \in \calN_\kappa(Z, m_0)$. 
In this case, 
\[
d_X(\eta(s), Z) \leq m_0 \cdot \kappa(\eta(s)) 
\qquad\text{and}\qquad
d_X(\eta(s'), Z) \leq m_0 \cdot \kappa(\eta(s')).  
\]
Again applying \eqnref{Eq:End-Point}, we get 
\begin{equation} \label{E:s-s}
|s'-s| \leq  m_0 \, m_1 \cdot \big(\kappa(\eta(s)) + \kappa(\eta(s'))\big) + \mathfrak{A} \cdot \kappa(\eta(s'))
\end{equation}
and thus 
\begin{align*}
d_X(\eta(s'), \eta(s)) & \leq q \, m_0 \, m_1 \cdot \big(\kappa(\eta(s)) + \kappa(\eta(s'))\big) + q\, \mathfrak{A} \cdot \kappa(\eta(s')) + Q \\
& \leq ( 2 q \, m_0 \, m_1 + q \,  \mathfrak{A} + Q )  \cdot \max \big(\kappa(\eta(s)), \kappa(\eta(s'))\big). 
\end{align*}
Applying the Sublinear Estimation Lemma (\cite[Lemma 3.2]{QRT19}), we obtain 
\[
\kappa(\eta(s')) \leq  m_2 \cdot \kappa(\eta(s))
\]
for some $m_2$ depending on $q$, $Q$ and $\kappa$. 
Therefore, by plugging this inequality back into \eqref{E:s-s}, we have for any $t \in [s, s']$
\begin{equation} \label{Eq:t-s}
|t-s| \leq \left( m_0 \, m_1 (1+ m_2)  + \mathfrak{A} \, m_2 \right) \cdot \kappa(\eta(s)) = m_3 \cdot \kappa(\eta(s)).
\end{equation}
with $m_3 = \left( m_0 \, m_1 (1+ m_2)  + \mathfrak{A} \, m_2 \right)$. As before, this implies, 
\[
d_X(\eta(t), \eta(s)) 
\leq q \, m_3 \cdot \kappa(\eta(s)) +  Q \leq ( q \, m_3 + Q) \cdot \kappa(\eta(s)).  
\]
Applying \cite[Lemma 3.2]{QRT19} again, we have 
\begin{equation} \label{Eq:m3}
\kappa(\eta(s)) \leq m_4 \cdot \kappa(\eta(t)),
\end{equation}
for some $m_4$ depending on $q$, $Q$ and $\kappa$. 

Now, for any $t \in [s, s']$ we have
\begin{align*}
d_X(\eta(t), Z)& \leq d_X(\eta(t), \eta(s)) + r_0 \\
& \leq q \, |t-s| + Q + m_0 \cdot \kappa(\eta(s))\\
& \leq  \left( q \, m_3  + Q + m_0\right) \cdot \kappa(\eta(s))
  \tag{\eqnref{Eq:t-s}}\\
&  \leq   \left( q \, m_3  + Q + m_0\right) \, m_4 \cdot \kappa(\eta(t)).  
  \tag{\eqnref{Eq:m3}}
\end{align*}
Now setting 
\begin{equation}\label{mz}
m_Z(q, Q) =  \left( q \, m_3  + Q + m_0\right) \, m_4 
\end{equation}
we have the inclusion
\[
\eta([s, s']) \subset \calN_{\kappa}\big(Z, m_Z(q, Q)\big)
\qquad\text{and hence}\qquad 
\eta([0, t_{\rm last}]) \subset \calN_{\kappa}\big(Z, m_Z(q, Q)\big).
\]
The $R$ we have chosen depends on the value of $q$ and $Q$. However, 
the assumption that $m_Z(q, Q)$ is small compared to $r$ (see \eqnref{Eq:Small}) 
gives an upper bound for the values of $q$ and $Q$. Hence, we can choose $R$ to be the radius associated to the largest possible value 
for $q$ and the largest possible value for $Q$. This finishes the proof. 

Note that, the assumption that $m_Z(q, Q)$ is small compared to $r$ is not 
really needed here and any upper bound on the values of $q$ and $Q$ would
suffice. But this is the assumption we will have later on and hence it is natural to state the
theorem this way. 
\end{proof}

Next, we show that being $\kappa$-Morse implies being $\kappa'$-weakly contracting, with respect to $\kappa$-projection maps, for a sublinear function $\kappa'$. Note that $\kappa'$ is not assumed to be the same function as $\kappa$. 
This is parallel to Theorem 3.8 in \cite{QRT19}, where we show that in a CAT(0) space being $\kappa$-contracting is equivalent to being $\kappa$-Morse  (with the same $\kappa$); an identical statement cannot hold in general for proper geodesic spaces, as evidenced by the following example. 

\begin{example}
We give here a folklore example of a geodesic ray in proper metric space which is $1$-Morse but not $1$-contracting
(see also the related \cite[Example 3.4]{ACGH}). The points $x_{i}$ form loops with the geodesic ray $\gamma$ such that each path going through $x_{i}$ represents a detour that is locally
an $(i, 0)$ quasi-geodesic segment. This geodesic has the following properties:
\begin{itemize}
\item $\gamma$ is 1-Morse, as any $(q, Q)$-quasi-geodesic ray must lie in a $q$-neighborhood of $\gamma$ (in particular, it only 
goes through finitely many loops);
\item $\gamma$ is not 1-contracting, but it is $\sqrt{t}$-contracting, as $\Norm{x_{i}} \succcurlyeq i^{2}$ and, if $\pi_\gamma$ is the nearest point projection to $\gamma$, we have $\diam(\pi_\gamma(x_{i})) = i$. 
\end{itemize}
Thus it makes sense for us to prove in general that a $\kappa$-Morse set is $\kappa'$-weakly contracting for some sublinear function $\kappa'$.

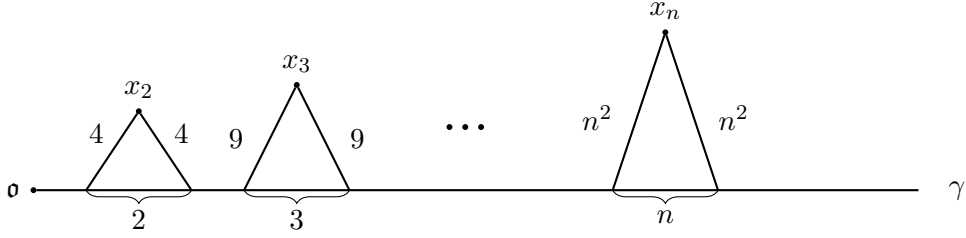
\begin{figure}[h!]
\begin{tikzpicture}[scale=0.7]
 \tikzstyle{vertex} =[circle,draw,fill=black,thick, inner sep=0pt,minimum size=.5 mm]
[thick, 
    scale=1,
    vertex/.style={circle,draw,fill=black,thick,
                   inner sep=0pt,minimum size= .5 mm},
                  
      trans/.style={thick,->, shorten >=6pt,shorten <=6pt,>=stealth},
   ]

   \node[vertex] (o) at (0,0)[label=left:$\go$] {}; 
   \node(a) at (17,0)[label=right:$\gamma$] {}; 
  
   \node[vertex](x1) at (2, 1.5)[label=above:$x_{2}$]{};
   \node[vertex](x2) at (5, 2)[label=above:$x_{3}$]{};
   \node[vertex](x4) at (12, 3)[label=above:$x_{n}$]{};
   
   \node[vertex] at (7.9, 1.2){};
   \node[vertex] at (8.2, 1.2){};
   \node[vertex] at (8.5, 1.2){};
  
   \draw [thick](x1)--(1,0) node[pos=.8, above, label=above:$4$]{};
   \draw [thick](x1)--(3,0)node[pos=.8, above, label=above:$4$]{};{};
   
   \draw [thick](x2)--(4,0)node[pos=.6, above, label=left:$9$]{};
   \draw  [thick](x2)--(6,0)node[pos=.6, above, label=right:$9$]{};
   
   \draw [thick](x4)--(11,0)node[pos=.6, above, label=left:$n^{2}$]{};
   \draw  [thick](x4)--(13,0)node[pos=.6, above, label=right:$n^{2}$]{};

    \draw [decorate,decoration={brace,amplitude=5pt},xshift=0pt,yshift=0pt]
  (3,0) -- (1,0)  node [black,midway,xshift=0pt,yshift=0pt, label=below: $2$] {};
  
    \draw [decorate,decoration={brace,amplitude=5pt},xshift=0pt,yshift=0pt]
  (6,0) -- (4,0)  node [black,midway,xshift=0pt,yshift=0pt, label=below: $3$] {};
  
    \draw [decorate,decoration={brace,amplitude=5pt},xshift=0pt,yshift=0pt]
  (13,0) -- (11,0)  node [black,midway,xshift=0pt,yshift=0pt, label=below: $n$] {};

    \draw[thick](o)--(a)[label=right:$\gamma$] {}; 
\end{tikzpicture}
\caption{A $1$-Morse geodesic ray which is $\sqrt{t}$-contracting.}
\end{figure}
\end{example}

To begin with, a set $Z$ is Morse if it is $\kappa$-Morse for $\kappa = 1$, and its Morse gauge is denoted by $ m_{Z}(q, Q) $. We say that a closed set $Z$
 is \emph{$\rho$-radius-contracting} if there exists a function $\rho$ such that, for each ball $B$ of radius $r$ that is disjoint from $Z$, the nearest point projection of $B$ to $Z$ is a set whose diameter is bounded above by $\rho(r)$. We present Proposition 4.2 in \cite{ACGH} here with notation adapted to that of this paper. Furthermore, we take into account that in the setting of this paper, nearest point projection is nonempty:

\begin{proposition}[Proposition 4.2 \cite{ACGH}]\label{Prop:Hume}
Let $Z$ be a closed subspace of a geodesic metric space $X$. Suppose $Z$ is Morse with Morse gauge $ m_{Z}(q, Q)$. Then there is a  sublinear function $\rho$, 
depending on $m_{Z}(q, Q)$, such that the nearest point projection of balls of radius $r$ (disjoint from $Z$) onto $Z$ is bounded above by $\rho(r)$.
 Specifically, $\rho$ is obtained as follows: 
\[ 
\rho(r) : = \sup_{s} \bigg\{  s \leq 4r \text{ and } s \leq 18 \, m_{Z}\left( \frac {12 r}{s}, 0  \right)\bigg \}.
\]
\end{proposition}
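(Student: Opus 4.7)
The plan is a classical concatenation-plus-Morse argument, essentially following the proof of \cite[Proposition 4.2]{ACGH}: given two points in $B$, we form a piecewise-geodesic path through $Z$ joining their projections, check that this path is a quasi-geodesic with constants depending on $r$ and on the spread of the projections, and then invoke the Morse property to restrict how far this path can stray from $Z$, thereby constraining the spread of the projections themselves.

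In detail, given $y_1, y_2 \in B = B(x, r)$ disjoint from $Z$ and $z_i \in \pi_Z(y_i)$, set $d_i := d_X(y_i, Z)$ and $s := d_X(z_1, z_2)$; the goal is to show both $s \leq 4r$ and $s \leq 18\, m_Z(12r/s, 0)$, so that $s$ lies in the supremum set defining $\rho(r)$. First come elementary estimates: since each $z_j$ is a nearest projection, $d_j \leq d_X(y_j, z_k)$ for $k \neq j$, and together with $d_X(y_1, y_2) \leq 2r$ and the triangle inequality this yields $|d_1 - d_2| \leq 2r$ as well as $s \leq d_1 + d_2 + 2r$, hence $d_i \geq (s - 4r)/2$.

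The heart of the argument is to consider the concatenated path
$$\alpha := [z_1, y_1] \cup [y_1, y_2] \cup [y_2, z_2]$$
of total length $L = d_1 + d_X(y_1, y_2) + d_2 \leq 2 d_1 + 4r$. A direct verification---exploiting that on each segment $[z_i, y_i]$ the nearest projection to $Z$ from any interior point is exactly $z_i$---shows $\alpha$ is a $(K, 0)$-quasi-geodesic with $K$ controlled by $L/s$, and the precise numerics land on $K = 12r/s$. Since both endpoints of $\alpha$ lie on $Z$, applying the Morse property with $\kappa' \equiv 0$ forces $\alpha \subset \mathcal{N}(Z, m_Z(K, 0))$; in particular $d_1 \leq m_Z(12r/s, 0)$. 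Combining with the lower bound $d_1 \geq (s - 4r)/2$ yields $s \leq 4r + 2\, m_Z(12r/s, 0) \leq 18\, m_Z(12r/s, 0)$, while $s \leq 4r$ follows from the same comparison in the regime where the Morse term is small.

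The main obstacle will be the careful bookkeeping of the quasi-geodesic constant $K = 12r/s$: one must verify the quasi-geodesic inequality $|ab|_\alpha \leq K\, d_X(a,b)$ for pairs of points $a, b$ lying on different pieces of $\alpha$, and balance the implicit interplay between $L$, $d_1$, and the Morse bound $m_Z(K, 0)$ to produce the explicit constants $4, 12, 18$ appearing in the statement. A secondary issue---the sublinearity of the resulting $\rho$---is a short exercise: writing $r = \rho(r)\cdot q/12$ where $\rho(r) = 18\, m_Z(q, 0)$, the properness of $m_Z$ forces $q \to \infty$ as $r \to \infty$, so $\rho(r)/r = 12/q \to 0$.
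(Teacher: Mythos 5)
First, a structural point: the paper does not prove this proposition itself. The sentence immediately preceding it says ``We present Proposition 4.2 in \cite{ACGH} here with notation adapted to that of this paper,'' and no proof is given. So there is no in-paper argument to compare with; what follows is an assessment of your plan against the strategy it invokes.

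Your overall strategy --- concatenate $[z_1,y_1]\cup[y_1,y_2]\cup[y_2,z_2]$, show it is a quasi-geodesic with endpoints on $Z$, apply the Morse property, and close the loop against $d_1\geq(s-4r)/2$ --- is indeed the argument of \cite[Prop.~4.2]{ACGH}, and the sublinearity check for $\rho$ at the end is correct. However, the two places you flag as ``bookkeeping'' are in fact substantive gaps, and neither resolves the way you suggest.

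\textbf{The quasi-geodesic constant.} You claim $\alpha$ is a $(12r/s,0)$-quasi-geodesic from $L\leq 2d_1+4r$. For the endpoint pair $(z_1,z_2)$ this already forces $L\leq 12r$, i.e.\ $d_1\leq 4r$, and you never establish this. It can genuinely fail: the hypothesis ``$B(x,r)$ disjoint from $Z$'' only gives $r\leq d_X(x,Z)$, and if $d_X(x,Z)\gg r$ then $d_1\approx d_X(x,Z)\gg r$. In \cite{ACGH} the statement is for the \emph{maximal} disjoint ball, $r=d_X(x,Z)$, which gives $d_i<2r$ and hence $L<6r$; you should either impose that restriction explicitly (and explain why it suffices for the way the proposition is used in Proposition~\ref{Morseimpliescontracting}) or supply a separate argument for $r<d_X(x,Z)$ --- note that monotonicity of $\rho$ points the wrong way, so the small-$r$ case does \emph{not} reduce to the maximal-ball case. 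Separately, the concatenation can backtrack at the corners $y_1,y_2$ (e.g.\ two geodesics from $y_1$ that coincide on an initial segment, as happens in a tree), which kills any $(q,0)$-quasi-geodesic estimate: a backtrack of length $\epsilon$ gives two path-points at path-distance $2\epsilon$ but metric distance $0$. Your observation that the nearest-point projection of any interior point of $[z_i,y_i]$ is $z_i$ is correct, but it does not rule this out. One must allow a nonzero additive constant $Q$ (of order $r$, since the backtrack at each corner is at most $d_X(y_1,y_2)\leq 2r$), after which the Morse gauge used is $m_Z(K,Q)$ rather than $m_Z(K,0)$; reconciling this with the $m_Z(\cdot,0)$ appearing in $\rho$ requires an explicit remark.

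\textbf{The final combination.} From $d_1\geq(s-4r)/2$ and $d_1\leq m_Z(12r/s,0)$ you correctly deduce $s\leq 4r+2\,m_Z(12r/s,0)$, but the next step ``$\,\leq 18\,m_Z(12r/s,0)$'' would require $4r\leq 16\,m_Z(12r/s,0)$, which is a \emph{lower} bound on the Morse gauge that nothing supplies. Likewise ``$s\leq 4r$ follows from the same comparison in the regime where the Morse term is small'' is not an argument. Because (for monotone $m_Z$) the defining set $\{s':s'\leq 4r\}\cap\{s':s'\leq 18\,m_Z(12r/s',0)\}$ is an interval $[0,\rho(r)]$, showing $s\leq\rho(r)$ really does require establishing \emph{both} inequalities for $s$, and the bound $s\leq 4r+2\,m_Z(12r/s,0)$ is strictly weaker than this since $\rho(r)\leq 4r$. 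A genuine case split on whether $s\leq 4r$, together with a more careful trade-off between $r$, $d_1$, and the Morse gauge, is where the constants $4,12,18$ must come from; that work is exactly what you have deferred, and it is not a routine computation.
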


Given a point $x \in X$
 and a sublinearly Morse set $Z \subseteq X$, first we note that if a ball $B(x, r)$ is disjoint from $Z$, then  
 \[ r \leq d_X(x, Z) \leq d_X(x, \go) = \Norm x.\]
We call a set $Z$ \emph{sublinearly weakly contracting} if it is $\kappa$-weakly contracting for some sublinear function $\kappa$. 

\begin{proposition}\label{Morseimpliescontracting}
Let $Z$ be a $\kappa$-Morse set in a proper geodesic space $X$, with Morse gauge $m_{Z}(q, Q)$. 
Then $Z$ is sublinearly weakly contracting with respect to any $\kappa$-projection.
\end{proposition}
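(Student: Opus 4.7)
The plan is to adapt the Arzhantseva--Cashen--Gruber--Hume argument recalled in Proposition \ref{Prop:Hume} to the $\kappa$-Morse setting, and to handle a general $\kappa$-projection via Lemma \ref{projection-property}. Let $D_1, D_2$ denote the constants of the $\kappa$-projection $\pi_Z$ and let $m_Z$ denote the Morse gauge of $Z$. I seek constants $C_1, C_2 > 0$ and a sublinear function $\kappa'$ such that $d_X(x, y) \leq C_1 d_X(x, Z)$ implies $\diam(\pi_Z(x) \cup \pi_Z(y)) \leq C_2 \kappa'(\Norm{x})$.

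I would split into two regimes. In the regime $d_X(x, Z) \leq C_0 \kappa(\Norm{x})$ for a fixed constant $C_0$, Lemma \ref{projection-property} together with the triangle inequality bounds $\diam(\pi_Z(x) \cup \pi_Z(y))$ by a constant multiple of $\kappa(\Norm{x})$. In the hard regime $d_X(x, Z) \gg \kappa(\Norm{x})$, I argue by contradiction: select $p_1 \in \pi_Z(x)$ and $p_2 \in \pi_Z(y)$ realizing a putative large projection diameter $s$, and form the piecewise geodesic $\sigma := [p_1, x] \cup [x, y] \cup [y, p_2]$. Using the $\kappa$-projection inequality together with $d_X(x, y) \leq C_1 d_X(x, Z)$, the total length of $\sigma$ is $\Lambda \cdot d_X(x, Z) + O(\kappa(\Norm{x}))$ with $\Lambda = \Lambda(C_1, D_1)$, and standard quasi-geodesic estimates make $\sigma$ into a $(q, Q)$-quasi-geodesic segment with $q \asymp \Lambda \cdot d_X(x, Z)/s$. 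I extend $\sigma$ to a quasi-geodesic ray $\beta$ based at $\go$ by prepending the geodesic $[\go, p_1]$ and appending a geodesic ray starting at $p_2$.

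The ray $\beta$ passes through $p_2 \in Z$ at some radius $R$, so $d_X(\beta_R, Z) = 0$, and Definition \ref{D:k-morse} then forces $\beta|_r \subset \mathcal{N}_\kappa(Z, m_Z(q, Q))$ for any $r \leq R$ with $m_Z(q, Q)$ small compared to $r$ in the sense of \eqref{Eq:Small}. Evaluating at the interior point $x$ yields $d_X(x, Z) \leq m_Z(q, Q) \cdot \kappa(\Norm{x})$; combined with the hard-regime hypothesis $d_X(x, Z) \gg \kappa(\Norm{x})$, this forces $m_Z(q, Q) \geq C_0$, and properness of $m_Z$ then gives a lower bound on $q$. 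Via the relation $q \asymp \Lambda \cdot d_X(x, Z)/s$, this inverts to an upper bound $s \leq \kappa'(\Norm{x})$ for some sublinear $\kappa'$, along lines parallel to the definition of $\rho$ in Proposition \ref{Prop:Hume} and using the Sublinear Estimation Lemma (cf. the proof of Theorem \ref{Thm:W-Strong-app}).

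The main obstacle is twofold. First, the quasi-geodesic constants $(q, Q)$ of the constructed ray $\beta$ depend on the very quantity $s$ we wish to bound, so the inversion step requires careful bookkeeping to ensure the smallness of $m_Z(q, Q)$ relative to $r$ is maintained throughout. Second, the tail of $\beta$ past $p_2$ must be chosen compatibly with the sublinear closeness condition of $\kappa$-Morse at large radii; in general $Z$ need not contain an unbounded arc past $p_2$, so one may need to reroute via a point on $Z$ using Lemma \ref{L:equivalence} or the Surgery Lemma \ref{Lem:surgery}. Finally, $C_1$ should be chosen smaller than $1/(D_1 + 1)$ so that the $D_1 \cdot d_X(x, Z)$ slack appearing in the $\kappa$-projection inequality does not overwhelm the argument.
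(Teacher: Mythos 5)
Your proposal takes a genuinely different route from the paper's. Where you attempt to build a quasi-geodesic ray through the projection points and invoke Definition~\ref{D:k-morse} directly, the paper reduces to the classical (uniform) Morse case by restricting $Z$ to the ball $B(\go, 4\Norm{x})$: inside this ball $\kappa$ is bounded by $\kappa(4\Norm{x})$, so $Z \cap B(\go, 4\Norm{x})$ is Morse in the ordinary sense with gauge $m_Z(q,Q)\cdot\kappa(4\Norm{x})$, and then Proposition~\ref{Prop:Hume} (ACGH) applies verbatim to yield $s \leq 18\, m_Z(12r/s, 0)\cdot\kappa(4\Norm{x})$. The sublinearity of $s$ then follows from the same contradiction argument you gesture at. The ball-restriction move is precisely what lets the paper avoid manipulating Definition~\ref{D:k-morse} by hand, and it sidesteps every obstacle you flag.

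Those obstacles are real gaps in your version, not just bookkeeping. First, you assert $\sigma := [p_1, x] \cup [x, y] \cup [y, p_2]$ is a $(q,Q)$-quasi-geodesic with $q \asymp \Lambda\, d_X(x,Z)/s$, but that only checks the endpoint pair $(p_1,p_2)$; intermediate pairs on a piecewise geodesic can be far in arc-length yet close in $X$, and you have no control on them without the contracting conclusion already in hand --- exactly the circularity the ACGH lemma is built to break. What you \emph{can} say for free is that $\sigma$ is a $(1,L)$-quasi-geodesic with $L \approx d_X(x,Z)$, but then $Q \approx \Norm{x}$ and the hypothesis that $m_Z(q,Q)$ is small compared to $r$ in the sense of~\eqref{Eq:Small} fails outright for large $\Norm{x}$, so Definition~\ref{D:k-morse} never engages. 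Second, the definition hands you a specific $R = R(r,\kappa',q,Q)$ at which you must check $d_X(\beta_R, Z) \leq \kappa'(R)$; your construction places $p_2$ on $\beta$ at a radius you do not control, and an arbitrary geodesic tail appended past $p_2$ gives no bound on $d_X(\beta_R, Z)$. You flag this, but the proposed repairs do not apply: Lemma~\ref{L:equivalence} presupposes a ray already known to be $\kappa$-Morse and in a fixed equivalence class, and the Surgery Lemma~\ref{Lem:surgery} replaces the \emph{initial} segment of a ray (keeping a prescribed tail), which requires you to already possess a ray whose tail stays near $Z$ --- the very thing a general closed set $Z$ does not provide. The clean path is the paper's: absorb the $\kappa$-error into a constant by working in $B(\go, 4\Norm{x})$, then cite Proposition~\ref{Prop:Hume}.
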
 

\begin{proof}
It suffices to prove the statement for nearest point projections, since by Lemma~\ref{projection-property}, the uniform multiplicative error and the sublinear additive error will not contradict the conclusion. 
Consider a ball $B(x, r)$, disjoint from $Z$, and centered at 
$x$ with radius $r$. Observe first of all that $B(x, r)$ is inside the ball of radius $\Norm x+r \leq 2 \Norm x$.  Thus the distance between any point $y \in B(x, r)$ and a point in its nearest point projection $y_{1} \in \pi^{(near)}_{Z}(y)$ is also bounded above by $2 \Norm x$. We have by triangle inequality 
\[\Norm{y_{1}} \leq 4\Norm x.\]
That is to say, given a ball  $B(x, r)$, we only have to consider its projection to $Z \cap B(\go, 4 \Norm x)$.  
Since $Z$ is $\kappa$-Morse, the set $Z \cap B(\go, 4 \Norm x)$ is Morse with its Morse gauge $m_{Z}(q, Q) \cdot \kappa(4x)$. Now let $s$ denote the function that measures the diameter of the projection of disjoint ball $B(x, r)$ to $Z$. By  Proposition~\ref{Prop:Hume}, 
\[
s \leq 18 \, m_{Z}  \left(  \frac {12r}{s} , 0\right)\cdot \kappa(4x) \quad \text{by definition of $\kappa$-Morse.}
\]
Since $r \leq \Norm x$, we have 
\[
s \leq 18 \, m_{Z}\left(  \frac {12 \Norm x}{s} , 0 \right) \cdot \kappa(4 x).
\]
Suppose $s$ is not a sublinear function of $\Norm x$, that is to say,  as $\Norm x \to \infty$, there exists a sequence of disjoint balls $\{B_{i} \}$ with projections $\{ s_{i} \}$ such that there exists a positive number $c$ such that 
\[ \lim_{i \to \infty} \frac{s_{i}}{\Norm x_{i}} \geq c \]
then for every $\epsilon$, there exists $N$ such that for all $j > N$, 
\begin{align*} 
s_{j} &\leq 18 \, m_{Z}\left(  \frac {12 \Norm x_{j}}{s_{j}} , 0\right) \cdot \kappa(4x_{j})\\
   &\leq 18 \, m_{Z}\left(  12 \left(\frac 1c -\epsilon\right) , 0\right) \cdot \kappa(4  x_{j}).
\end{align*}
That is to say, $s$ is bounded above by a sublinear function of $\Norm x$, which means $s$ itself is a sublinear function of $\Norm x$, which is contrary to our assumption. Therefore, there does not exists such a sequence of balls 
and thus $s$ is a sublinear function of $\Norm x$, which we denote as $\kappa'(x)$.

Lastly, by Lemma~\ref{projection-property}, the same claim holds for all $\kappa$-projection maps.
\end{proof}

To summarize, we prove the equivalence between $\kappa$-weakly contracting
and $\kappa$-Morse (for a possibly different $\kappa$) for any given closed set:
\begin{theorem}\label{sublinearlyequivalence}
Let $(X, \go)$ be a proper geodesic metric space with a fixed base point. Let $Z$ be a closed set and $\pi$ be a 
$\kappa$-projection in the sense of Definition~\ref{weakprojection}.
The  following hold:
\begin{enumerate}
\item If $Z$ is $\kappa$-weakly contracting with respect to $\pi$,  then it is $\kappa$-Morse;
\item If $Z$ is $\kappa$-Morse, then it is $\kappa'$-weakly contracting with respect to $\pi$ for some sublinear function $\kappa'$.
\end{enumerate}
\end{theorem}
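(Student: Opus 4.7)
My plan is to prove the two directions separately. Direction (1) will essentially be a repackaging of a stronger quantitative statement, while direction (2) requires a non-trivial reduction to a classical result of Arzhantseva--Cashen--Gruber--Hume on radius-contracting sets.

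For part (1), I would invoke Theorem~\ref{Thm:W-Strong-app}, which already contains the hard work. Starting from the constants $C_1, C_2, D_1, D_2$ governing the $\kappa$-projection $\pi_Z$ and the $\kappa$-weakly contracting property, I would construct a Morse gauge $m_Z(q,Q)$ depending on these constants and on $q, Q$. The core idea is a divide-and-conquer argument along a $(q,Q)$-quasi-geodesic ray $\eta$: subdivide any interval on which $\eta$ escapes an appropriate $\kappa$-neighborhood of $Z$ into consecutive sub-pieces whose endpoints double their distance to $Z$, bound the length of each sub-piece using $\kappa$-weakly contracting to compare $\pi_Z$-images, then compare the resulting sum against the $(q,Q)$-quasi-geodesic lower bound for the same length. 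The two estimates together force the excursion to be short in a way quantifiable by $m_Z(q,Q)$ and $\kappa$, so any point of $\eta|_r$ must lie in $\calN_\kappa(Z, m_Z(q,Q))$ once the terminal point is sublinearly close to $Z$.

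For part (2), I first reduce to nearest point projection: since any $\kappa$-projection differs from nearest point projection by a uniformly Lipschitz and sublinearly additive term (Lemma~\ref{projection-property}), proving the conclusion for nearest point projection and then absorbing the extra error will suffice. Given a ball $B(x,r)$ disjoint from $Z$, note that $r \leq d_X(x,Z) \leq \Norm x$, so by the triangle inequality the nearest point projection of $B(x,r)$ onto $Z$ lands in $Z \cap B(\go, 4\Norm x)$. On this truncated piece, the $\kappa$-Morse condition with sublinear error becomes a classical Morse condition with Morse gauge bounded by $m_Z(q,Q) \cdot \kappa(4\Norm x)$, since within a ball of radius $4\Norm x$ the $\kappa$-error is uniformly bounded. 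This is the key observation that lets me apply Proposition~\ref{Prop:Hume}, yielding the implicit inequality
\[
s \leq 18 \, m_Z\!\left(\tfrac{12 r}{s},\, 0\right) \cdot \kappa(4 \Norm x),
\]
where $s$ is the diameter of the projection of $B(x,r)$.

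The main obstacle will be extracting sublinear growth of $s$ in $\Norm x$ from this implicit inequality, since $m_Z$ is itself a function of $r/s$. I would argue by contradiction: suppose there is a sequence of disjoint balls $B(x_i, r_i)$ with projection diameters $s_i$ for which $s_i / \Norm{x_i}$ stays above some $c>0$. Then the ratio $12 \Norm{x_i}/s_i$ is uniformly bounded by $12/c$, hence $m_Z(12 \Norm{x_i}/s_i, 0)$ is bounded by a constant, and the inequality degenerates to $s_i \preceq \kappa(\Norm{x_i})$, contradicting the assumed linear lower bound. This forces $s$ to be sublinear in $\Norm x$, defining the desired $\kappa'$; absorbing the $\kappa$-projection error via Lemma~\ref{projection-property} then yields $\kappa'$-weakly contracting with respect to $\pi_Z$.
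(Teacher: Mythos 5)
Your proposal is correct and follows essentially the same route as the paper: part (1) is exactly the cited Theorem~\ref{Thm:W-Strong-app} (with the same subdivision-of-excursions argument, though the sub-pieces advance by a fixed fraction $C_1$ of the distance to $Z$ rather than "doubling" it), and part (2) reproduces Proposition~\ref{Morseimpliescontracting} step for step, including the reduction to nearest point projection via Lemma~\ref{projection-property}, the truncation to $Z \cap B(\go, 4\Norm x)$, the appeal to Proposition~\ref{Prop:Hume}, and the contradiction argument on the ratio $s/\Norm x$.
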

\begin{proof}
If $Z$ is $\kappa$ weakly contracting, then by Theorem~\ref{Def:generalContracting} it is 
$\kappa$-Morse. On the other hand, if $Z$ is $\kappa$-Morse, by Proposition~\ref{Morseimpliescontracting}  there exists a sublinear function $\kappa'$ for which $Z$ is $\kappa'$-weakly contracting. 
\end{proof}

\end{appendix}

\bibliographystyle{alpha}

\begin{thebibliography}{AKB12}

\bibitem[AK11]{AK11}
Y. Algom-Kfir, 
\newblock{\em  Strongly contracting geodesics in outer space}, 
\newblock{Geom. Topol. 15 (2011), no. 4, 2181--2233.}

\bibitem[ACGH17]{ACGH}
G. Arzhantseva, C. Cashen, D. Gruber, and D. Hume,
\newblock{\em Characterizations of Morse quasi-geodesics via superlinear divergence and sublinear contraction},
\newblock{Doc. Math. 22 (2017), 1193--1224.}

\bibitem[ACT15]{ACT}
G. Arzhantseva, C. Cashen and J. Tao, 
\newblock{\em Growth tight actions}, 
\newblock{Pacific J. Math. 278 (2015), no. 1, 1--49.}

\bibitem[Beh06]{Be06} 
J. Behrstock, 
\newblock{\em Asymptotic geometry of the mapping class group and Teichm\"uller space}, 
\newblock{Geom. Topol. 10 (2006), 1523--1578.}

\bibitem[BHS17]{HHG}
J. Behrstock, M. Hagen and A. Sisto, 
\newblock{\em Hierarchically hyperbolic spaces, I: Curve complexes for cubical groups}, 
\newblock{Geom. Topol. 21 (2017), no. 3, 1731--1804.}

\bibitem[BF09]{BF09}
M. Bestvina and K. Fujiwara, 
\newblock{ \em A characterization of higher rank symmetric spaces via bounded cohomology}, 
\newblock{Geom. Funct. Anal. 19 (2009), no. 1, 11--40.}

\bibitem[Bou98]{Bourbaki}
N. Bourbaki,
\newblock{\em General topology. Chapters 1-4},
\newblock{Translated from the French. Reprint of the 1989 English translation. Elements of Mathematics (Berlin)}, 
Springer-Verlag, Berlin, 1998. vii+437. 

\bibitem[Cas16]{cashen}
C. Cashen,
\newblock{\em Quasi-isometries need not induce homeomorphisms of contracting boundaries with the Gromov product topology},
\newblock{Anal. Geom. Metr. Spaces 4 (2016), no. 1, 278--281.}

\bibitem[CM19]{cashenmackay}
C. Cashen and J. Mackay,
\newblock{\em A metrizable topology on the contracting boundary of a group},
\newblock{Trans. Amer. Math. Soc. 372 (2019), no. 3, 1555--1600.}

\bibitem[CS15]{CS15}
R. Charney and H. Sultan, 
\newblock{\em Contracting boundaries of CAT(0) spaces},
\newblock{J. Topol. 8 (2015), no. 1, 93--117.}

\bibitem[CD60]{CD60}
G. Choquet and J. Deny,
 \newblock{\em Sur l' {\'e}quation de convolution $\mu = \mu * \sigma$},
 \newblock{C. R. Math. Acad. Sci. Paris 250 (1960), 799--801.}  

\bibitem[Cor18]{Morse}
M. Cordes,
\newblock {\em Morse boundaries of proper geodesic spaces},
\newblock {Groups Geom. Dyn. 11 (2017), no. 4, 1281--1306.}

\bibitem[CDG20]{CDG20}
M. Cordes, M. Dussaule, and I. Gekhtman, 
\newblock{\em An embedding of the Morse boundary in the Martin boundary}, 
\newblock{preprint arXiv:2004.14624}.

\bibitem[CK00]{CK00}
C. B. Croke and B. Kleiner, 
\newblock{\em Spaces with nonpositive curvature and their ideal boundaries},
\newblock{Topology 39 (2000), no. 3, 549--556.}

\bibitem[Dru00]{Drutu}
C. Drutu, 
\newblock{\em Quasi-isometric classification of non-uniform latticed in semisimple groups of higher rank}, 
\newblock{Geom. Funct. Anal. 10 (2000), 327--388.}

\bibitem[DR09]{Duchin-Rafi}
M. Duchin and K. Rafi,
\newblock{\em Divergence of Geodesics in Teichm\"uller Space and the Mapping Class Group}, 
\newblock{Geom. Funct. Anal. 19 (2009), 722--742.} 

\bibitem[DM60]{DM60}
 E. B. Dynkin and M. B. Maljutov, 
 \newblock{\em Random walk on groups with a finite number of generators},
 \newblock {Dokl. Akad. Nauk SSSR 137 (1961), 1042--1045.}

\bibitem[EFW12]{EFW12}
A. Eskin, D. Fisher and K. Whyte, 
\newblock{\em Coarse differentiation of quasi-isometries I: Spaces not quasi-isometric to Cayley graphs},
\newblock {Ann. of Math. (2) 176 (2012), no. 1, 221--260. }

\bibitem[EFW13]{EFW13}
A. Eskin, D. Fisher and K. Whyte, 
\newblock{\em Coarse differentiation of quasi-isometries II: Rigidity for Sol and lamplighter groups}, 
\newblock {Ann. of Math. (2) 177 (2013), no. 3, 869--910. }

\bibitem[EMR18]{EMR18}
A. Eskin, H. Masur and K. Rafi,
\newblock{\em Rigidity of Teichm\"uller space},
\newblock {Geom. Topol. 22 (2018), no. 7, 4259--4306. }

\bibitem[EMR17]{Projection}
A. Eskin, H. Masur and K. Rafi,
\newblock{\em Large scale rank of Teichm\"uller space}, 
\newblock{Duke Math. J. 166 (2017), no. 8, 1517--1572.}

\bibitem[Far98]{Far}
B. Farb, 
\newblock{\em Relatively hyperbolic groups}, 
\newblock{Geom. Funct. Anal. 8 (1998), 810--840.}

\bibitem[FM98]{braids}
B. Farb and H. Masur,
\newblock{\em  Superrigidity and mapping class groups},
\newblock{Topology 37 (1998), 1169--1176.}

\bibitem[Flo80]{Floyd}
W. Floyd, 
\newblock{\em Group completions and limit sets of Kleinian groups},
\newblock{Invent. Math. 57 (1980), 205--218.}

\bibitem[FT18]{FT}
B. Forghani and G. Tiozzo, 
\newblock{\em Shannon's theorem for locally compact groups}, 
\newblock{preprint arXiv:1812.07292.}

\bibitem[Fri37]{Frink}
A. Frink,
\newblock{\em Distance functions and the metrization problem},
\newblock{Bull. Amer. Math. Soc. 43 (1937), no. 2, 133--142.}

\bibitem[Fur63]{Furstenberg}
H. Furstenberg,
\newblock{\em A Poisson Formula for Semi-Simple Lie Groups},
\newblock {Ann. of Math. (2) 77 (1963), no. 2, 335--386}.

\bibitem[GM12]{GM}
F. Gautero and F. Math\'eus,
\newblock{\em Poisson boundary of groups acting on $\RR$-trees},
\newblock {Israel J. Math. 191 (2012), no. 2, 585--646.}

\bibitem[GQR20]{GQR}
I. Gekhtman, Y. Qing and K. Rafi, 
\newblock{\em QI-invariant model of Poisson boundaries of CAT(0) groups}, 
in preparation. 

\bibitem[Gro87]{Gro87}
M. Gromov, 
\newblock{{\em Hyperbolic groups}, in S. Gersten (ed.), {\em Essays in group theory}}, 
\newblock{Mathematical Sciences Research Institute Publications 8, Springer, New York, 1987, 75--263.}

\bibitem[Kai94]{Kai1}
V. Kaimanovich, 
\newblock {\em The Poisson boundary of hyperbolic groups},
\newblock {C. R. Acad. Sci. Paris 318 (1994), no. 1, 59--64.}

\bibitem[Kai00]{Kai00}
V. Kaimanovich, 
\newblock{\em The Poisson formula for groups with hyperbolic properties},
\newblock{Ann. of Math. (2) 152 (2000), no. 3, 659--692.}

\bibitem[KM96]{KM2}
V. Kaimanovich and H. Masur, 
\newblock{\em The Poisson boundary of the mapping class group},
\newblock {Invent. Math. 125 (1996), no. 2, 221--264. }

\bibitem[KM98]{KM}
V. Kaimanovich and H. Masur, 
\newblock{\em The Poisson boundary of Teichm{\"u}ller space},
\newblock {J. Funct. Anal. 156 (1998), no. 2, 301--332. }

\bibitem[Kar11]{asympCAT(0)}
A. Kar,
\newblock{\em Asymptotically CAT(0) groups},
\newblock {Publ. Mat. 55 (2011), no. 1, 67--91.}

\bibitem[KM99]{KarlMarg}
A. Karlsson and G. Margulis,
\newblock{\em A multiplicative ergodic theorem and nonpositively curved spaces},
\newblock{Comm. Math. Phys. 208 (1999), no. 1, 107--123. }

\bibitem[KN04]{KN}
A. Karlsson and G. Noskov, 
\newblock{\em Some groups having only elementary actions on metric spaces with hyperbolic boundaries}, 
\newblock{Geom. Dedicata 104 (2004), 119--137.}


\bibitem[Kla98]{Klarreich}
E. Klarreich, 
\newblock{\em The Boundary at Infinity of the Curve Complex and the Relative Teichm\"uller Space}, 
\newblock{preprint arXiv:1803.10339.}

\bibitem[Mah10]{Maher}
J. Maher,  
\newblock{\em Linear progress in the complex of curves},
\newblock{Trans. Amer. Math. Soc. 362 (2010), no. 6, 2963--2991.}

\bibitem[Mah12]{MaherExp}
J. Maher,
\newblock{\em Exponential decay in the mapping class group},
\newblock{J. Lond. Math. Soc. (2) 86 (2012), no. 2, 366--386.}

\bibitem[MT18]{MaherTiozzo}
J. Maher and G. Tiozzo, 
\newblock{Random walks on weakly hyperbolic groups}, 
\newblock{J. Reine Angew. Math. 742, (2018), 187--239.} 

\bibitem[MM00]{MM00}
H. Masur  and Y. Minsky,  
\newblock{\em Geometry of the complex of curves. II. Hierarchical structure},
\newblock{Geom. Funct. Anal. 10 (2000), no. 4, 902--974.}

\bibitem[Mor24]{Mor24}
H. M. Morse, 
\newblock{\em A fundamental class of geodesics on any closed surface of genus greater than one}, 
\newblock{Trans. Amer. Math. Soc. 26 (1924), 25--60.} 

\bibitem[Qin16]{qing1}
Y. Qing,
\newblock{\em Geometry of Right-Angled Coxeter Groups on the Croke-Kleiner Spaces},
\newblock{Geom. Dedicata 183 (2016), no. 1, 113--122.}

\bibitem[QRT19]{QRT19}
Y. Qing, K. Rafi and G. Tiozzo,
\newblock{\em Sublinearly Morse boundary I: CAT(0) spaces},
\newblock {preprint arXiv:1909.02096} 

\bibitem[QT19]{QT19}
Y. Qing and G. Tiozzo, 
\newblock{\em Excursions of generic geodesics in right-angled Artin groups and graph products}, 
\newblock{Int. Math. Res. Not. IMRN (2019), rnz294.}

\bibitem[RV18]{RV}
K. Rafi and Y. Verberne, 
\newblock{\em Geodesics in the mapping class groups}, 
\newblock{preprint arXiv:1810.12489.}

\bibitem[Ser83]{Series}
C. Series, 
\newblock{\em Martin boundaries of random walks on Fuchsian groups}, 
\newblock{Israel J. Math. 44  (1983), 221--242.}

\bibitem[Sis13]{sistopaper}
A. Sisto, 
 \newblock{\em Projections and relative hyperbolicity},
 \newblock{Enseign. Math. (2) 59 (2013), no. 1-2, 165--181.}

\bibitem[Sis17]{sisto-track}
A. Sisto,
\newblock{\em Tracking rates of random walks}, 
\newblock{Israel J. Math. 220 (2017), 1--28.}

\bibitem[Sis18]{Si18}
A. Sisto, 
\newblock{\em Contracting elements and random walks}, 
\newblock{J. Reine Angew. Math. 742 (2018), 79--114.}

\bibitem[ST19]{ST}
A. Sisto and S. Taylor,
\newblock {\em Largest projections for random walks and shortest curves for random mapping tori},
\newblock {Math. Res. Lett. 26 (2019) no. 1, 293--321.}

\bibitem[Tio15]{Ti15}
G. Tiozzo, 
\newblock{\em Sublinear deviation between geodesics and sample paths},
\newblock{Duke Math. J. 164 (2015), no. 3, 511--539.}

\bibitem[Woe00]{Woe}
W. Woess, 
\newblock{\em Random Walks on Infinite Graphs and Groups}, 
\newblock{Cambridge Tracts in Mathematics, Cambridge University Press, Cambridge, 2000.} 

\bibitem[Yan20]{Ya20}
W. Yang, 
\newblock{\em Genericity of contracting elements in groups}, 
\newblock{Math. Ann. 376 (2020), 823--861.}

\end{thebibliography}

\end{document}